\makeatletter \setcounter{page}{1}
\numberwithin{equation}{section}
\def\EquationsBySection{\def\theequation
{\thesection.\arabic{equation}}
\@addtoreset{equation}{section}}
\newcommand\old[1]{}
\newtheorem{theorem}{Theorem}[section]
\newtheorem{lemma}{Lemma}[section]
\newtheorem{proposition}{Proposition}[section]
\newtheorem{remark}{Remark}[section]
 \newcommand{\Bp}{\begin{proof}}
 \newcommand{\Ep}{\end{proof}}
\renewcommand{\theequation}{\thesection.\arabic{equation}}
\newcommand{\dif}{\mathrm{d}}
 \newcommand{\R}{\mathbb{R}}
 \def\p{\partial}
 \def\m{m}
 \newcommand{\beq}{\begin{equation}}
\newcommand{\eeq}{\end{equation}}
\newcommand{\I}{\mathbf{I}}
\title{Degenerate bifurcation of two-fold doubly-connected uniformly rotating vortex patches}	
\author{Yuchen Wang${}^{1,2}$}
\address{School of Mathematics, Tianjin Normal Univeristy \\ Tianjin, China \& School of Mathematics and Statistics, Central China Normal University\\ Wuhan 430079, Hubei, China }
\email{wangyuchen@mail.nankai.edu.cn}
\author{Xin Xu${}^3$}
\address{South China Normal University, Guangdong, China}
\email{xuxin1994pkq@163.com}
\author{Maolin Zhou${}^4$}
\address{Chern Institute of Mathematics and LPMC, Nankai University, Tianjin, China}
\email{zhouml123@nankai.edu.cn}
\begin{document}

\maketitle

%\author[a]{Yuchen Wang}
%\affil[a]

%\thanks{${}^\dagger$ Partially supported by the National Science Foundation of China No. 11831009 and the funding of innovating activities in Science and Technology of Hubei Province. }
%\and {\sc }
%\thanks{}}

%\author{Xin Xu${}^2$}
%\address{Chern Institute of Mathematics and LPMC, Nankai University\\ Tianjin, China}
%\email{}
%\thanks{${}^\#$ Partially supported by NSFC (Nos. 12022111, 11771341) andthe Fundamental Research Funds for the Central Universities (No. 2042021kf1059)}
%\author{Maolin Zhou${}^3$}
%\address{Chern Institute of Mathematics and LPMC, Nankai University\\ Tianjin, China}
%\email{}

\begin{abstract}
In this paper, we obtain families of two-fold doubly-connected uniformly rotating vortex patches of the 2-D incompressible Euler equations emanating from some specific annuli. The main difficulty comes from strong degeneracy of the problem, neither the kernel of linearization is one-dimensional nor the transeversallity condition holds. To this end, we make a detailed analysis on the nonlinear functional and the bifurcation curves are obtained by perturbing real algebraic varieties defined by truncated polynomials. In addition, our result partially answers an problem proposed by Hmidi and Mateu in \cite{Hmidi2016a} (\emph{Adv.Math.302 (2016), 799-850}).

{\bf Keywords:} Rotating vortex patch; Incompressible Euler equation; Higher dimensional kernel; Degenerate bifurcation

\end{abstract}

%{\bf Acknowledgement:} The first author would like to thank Prof. Chongchun Zeng for helpful discussions and suggestions. Maolin Zhou is supported by the National Key Research and Development Program of China (2021YFA1002400), Nankai Zhide Foundation and National Science Foundation of China (No. 12271437, 11971498). Yuchen Wang is partially supported by the National Science Foundation of China No. 11831009 and the funding of innovating activities in Science and Technology of Hubei Province.

%{\bf Statements and Declarations:} No conflict of interest exists in the submission of this manuscript. No data was used for the research described in this manuscript.

\section{Introduction} \label{S:Intro}

Consider the two dimensional incompressible Euler equations
\beq \label{E:Euler-E}
\begin{cases}
\p_t u + (u \cdot \nabla) u = - \nabla p,\quad x \in \R^2, \\
\nabla \cdot u = 0,  \\
u \rightarrow 0, \quad x \rightarrow \infty
\end{cases}
\eeq
where $u=(u^1,u^2)^t$ is the velocity of fluids and $p$ is the scalar pressure. Under mild smooth conditions, we shall mainly work with the equivalent vorticity formulation
\beq \label{E:Euler-V}
\begin{cases}
\p_t \omega + (u \cdot \nabla) \omega = 0,  \\
u = -\nabla^{\perp}(-\Delta)^{-1} \omega,  \quad a^\perp := (-a_2,a_1)^t,
\end{cases}
\eeq
with the vorticity field given by $\omega :=\p_1 u^2 - \p_2 u^1$. It is clearly a closed PDEs system concerned $\omega$ and the global well-posedness for smooth solutions and Yudovich-type weak solutions $\omega \in L^1(\R^2) \cap L^\infty(\R^2)$ are well-known, see for example \cite{MP1994,Yud1963}, namely they uniquely determine two dimensional Euler flows globally in time.

Among many other candidates, the vortex patch solutions which are Yudovich-type solutions in the form of
\[
\omega = \sum_{j=1}^N \kappa_j \I_{D_j}, \;\; \text{ $\I_D$ is the characteristic function of domain $D$,}
\]
received particular attention. Here $\kappa_1,\ldots,\kappa_N$ are non-zero constants and $D_1,\ldots,D_N$ are smooth bounded domains. In our opinion, the reason is two-fold. On the one hand, vorticity is transported by the planar Euler flows therefore the evolution of vortex patches is much simpler than others. It indeed involves boundary dynamics of vortical domains only and one could found that the structure is much more nice. For example vortex patches have an invertible symplectic structure, cf. \cite{Berti2023} and \cite{LWZ2019}. On the other hand, it is well-known that measurable functions could be decomposed into the sums of characteristic functions, it also shed a light on understanding the complicated vortex phenomena.

As one describes the unknown boundary of vortical domains by $z(s,t)$ with the arc-length parameter $s$,  vortex patch dynamics is given by
\beq \label{E:Evo-VP}
\p_t z(s,t) \cdot \mathbf{n}(z) = u(z,t) \cdot \mathbf{n}(z) \quad z \in \p D(t),
\eeq
where $\mathbf{n}(z)$ denotes the unit outward normal to $\p D(t)$. Here we only need to consider velocity on the normal directions since the other one does not change the shape
but only a re-parameterization. The global regularity of vortex patch was firstly proved by Chemin \cite{Che1993} via a paradifferential approach. See Bertozzi and Constantin \cite{BC94} and Serfati \cite{Serfati1994} for another elementary proofs. Very recently,  Kiselev and Luo \cite{KisLuo2023} obtained the global regularity of vortex patches for initial data of Sobolev class. The ill-posedness of vortex patches in some critical function spaces was also obtained.

As we mentioned above, the dynamics of vortex patch is more tractable than others, but it is also very delicate due to the highly non-linearity and non-locality.  A first and accessible step is to understand the stationary structures. Note that the 2-dim Euler equation is $S^1$-invariant on the plane. Without loss of generality, we fix the center of vorticity at the origin due to several conservation laws of the vorticity therefore we shall mainly focus on the uniformly rotating vortex patches
\beq
\omega(t,x) = \omega_0(e^{-i \Omega t} x), \quad \omega_0 = \I_D
\eeq
with the angular velocity $\Omega>0$ throughout this paper. Due to \eqref{E:Evo-VP}, the uniformly rotating vortex patch satisfies
\beq \label{E:Steady-V}
\Omega z^\perp \cdot  \mathbf{n}= u \cdot \mathbf{n}, \quad \text{ for } z \in \p D.
\eeq
With the assistance of complex analysis, it can be written equivalently in the formulation
\beq
{\rm Re}\left(\overline{u}  \mathbf{n}\right) = {\rm Re}\left(-i \Omega \overline{z}  \mathbf{n}\right).
\eeq
Here $u$ and $\mathbf{n}$ are considered as complex-valued functions defined on the complex plane
\[
z=x_1+\sqrt{-1}x_2 \in \mathbb{C} \text{ with canonical inner product } a \cdot b := {\rm Re}(a \overline{b}), a,b \in \mathbb{C}.
\]
%The vortical domain rigidly rotates around the origin without deformation on the shapes. Recall the stream-vorticity formula in \eqref{E:Euler-V}.
Furthermore, due to the Stokes' theorem, the complex conjugate of velocity could be represented as
\beq
\overline{u}(z,t) = - \frac{i}{2 \pi} \int_{D} \frac{1}{z - \xi } d \mu_\xi = \frac{1}{4 \pi } \int_{\p D} \frac{\overline{\xi}-\overline{z}}{\xi - z} d \xi.
\eeq
The rotating vortex patch \eqref{E:Steady-V} is expressed in the compact formula
\beq\label{E:Steady-VP-1}
{\rm Im} \left( (2 \Omega \overline{z} + I(z)) \mathbf{n} \right) = 0, \quad z \in \p D
\eeq
with the Cauchy integral operator
\beq
I(z) = \frac{1}{2 \pi i} \int_{\p D} \frac{\overline{\xi-z}}{\xi -z} d \xi.
\eeq
It is a typical nonlinear elliptic free boundary problem where the vortical domain $D$ is the main unknown. %In particular, $\mathbf{I}_D$ is stationary vortex patch if $\p D$ solves \eqref{E:Steady-VP-1} with $\Omega=0$.

Existence and regularity of stationary/uniformly rotating vortex patches of the 2-dim incompressible Euler equation have been extensively studied in last several decades.
It is not hard to found that circular vortex patches (the Rankine vortex) as well as annular patches are uniformly rotating vortex patches for any angular velocity. A remarkable explicit example is the Kirchhoff ellipse vortex patch
\[
\omega=\mathbf{I}_{E_{a,b}},\;\; \Omega = \frac{ab}{(a+b)^2},
\]
where $E_{a,b}$ is an ellipse with the semi-axes $a,b>0$. More uniformly rotating vortex patches were found near the circular vortex patch by implementing the remarkable local bifurcation approach proposed by Crandall and Rabinowitz in \cite{CR1971}.
%\begin{lemma}[Crandall-Rabinowitz local bifurcation theorem\cite{CR1971}] \label{L:CR}
%	Suppose $X$ and $Y$ are two Banach spaces, $V$ is a neighborhood of $0$ in $X$ and $F: \R \times V \rightarrow Y$ is of class $C^k, k \geq 2$ satisfying $F(\lambda,0) =0$ for any $\lambda \in \R$. Suppose also that
%	\begin{itemize}
%		\item $L=\p_x F(\lambda_0,0)$ is a Fredholm operator of index zero.
%		\item The kernel of $L$ is 1-dimensional
%		\item Let ${\rm ker} L = \{ \xi \in X: \xi = s \xi_0 \text{ for some $s \in \R$} \}$ where $\xi_0 \in X \setminus \{0\}$. The transversality condition holds:
%		\[
%		\p_{\lambda,x}^2 F(\lambda_0,0)(1,\xi_0) \notin {\rm Range}(L).
%		\]
%	\end{itemize}
%	Then $(\lambda_0,0)$ is a bifurcation point. More precisely, there exists $\varepsilon >0$ and a branch of solutions
%	\[
%	\big\{ (\lambda,x) = (\Lambda(s),s\chi(s)), s \in \R, |s| < \epsilon \big\} \subset \R \times X
%	\]
%	such that $F(\Lambda(s),s\chi(s)) =0$ for $|s| < \epsilon$ with $\Lambda(0)=0$ and $\chi(0)=\xi_0$.
%\end{lemma}
In \cite{Burbea1982} Burbea pointed out  there are plenty of $m$-fold uniformly rotating vortex patches emanating from the disk, which are referred to the $m$-fold Kelvin waves or $V$-states for $\m \geq 3$ nowaday. A mathematically rigorous proof was given by Hmidi, Mateu and Verda \cite{Hmidi2013}, where they also proved that the boundary of rotating patches is $C^\infty$ smooth. The smoothness was soon improved to real analyticity by Castro, Cordoba and Gomez-Serrano in \cite{Castro2016}. It worths to point out that the Kirchhoff elliptic vortex patch indeed emanates from the circular patch as $2$-fold bifurcations. Note that the linearized contour dynamics at the elliptical vortex patch
is degenerate at the ratio $a:b=3:1$ is degenerate due to Love in 1893. Hmidi and Mateu \cite{Hmidi2016} and Castro, Cordoba and Gomez-Serrano in \cite{Castro2016} proved that a secondary bifurcation would occur on the two-fold branch in the bifurcation diagram of Rankine vortex. The global bifurcation of circular vortex patch was studied by Hassainia, Masmoudi and Wheeler in \cite{HMW2020}.% therefore the bifurcation analysis on the simply-connected vortex patch is rather complete. \\

The situation is much more delicate if the boundary of vortical domain has more than one connected components. For brevity we consider the doubly-connected domains. Suppose $D=D_1 \setminus D_2$ where each $D_j$ is a simply-connected bounded domain and $\Gamma_j = \p D_j$ denotes its boundary respectively, $j=1,2$. The presence of another connected component of boundary implies that \eqref{E:Steady-VP-1} is no longer an equation but an elliptic PDEs system
\beq \label{E:Doub-VP}
\begin{cases}
	{\rm Im} \left( (2\Omega \overline{z} + I_1(z)-I_2(z)) \mathbf{n}(z) \right) = 0, & \quad \forall \; z \in \Gamma_1, \\
	{\rm Im} \left( (2\Omega \overline{z} + I_1(z)-I_2(z)) \mathbf{n}(z) \right) = 0, & \quad \forall \; z \in \Gamma_2,
\end{cases}
\eeq
where the integral operators are defined as
\[
I_j(z) = \frac{1}{2 \pi i} \int_{\Gamma_j} \frac{\overline{\xi-z}}{\xi -z} d \xi, \;  j=1,2.
\]
$I_j(z)$ is a normal integral operator if $z \in \p D_i, i \neq j$ but singular otherwise.
It is clearly that annulus $A_{b} = \{ b \leq |z| \leq 1\},\, 0<b<1$ are doubly-connected uniformly rotating vortex patches hence it is natural to expect that bifurcation argument also works on obtaining doubly-connected rotating vortex patch. In \cite{Hoz2016a}, de la Hoz, Hmidi, Mateu and Verdera studied the linearization of \eqref{E:Doub-VP} on annulus $A_b$ and obtained

{\bf Theorem A} [cf. Theorem 1 in \cite{Hoz2016a}]
{\it
	Suppose $\lambda =1 -2 \Omega$ and $$
	\mathcal{S}\triangleq  \left\{\lambda \in \R: \Delta_m(\lambda,b) = 0, \text{ for some integers $m \geq 0$}\right\}
	$$
	denotes the dispersion relation, where
	$$
	\Delta_m(\lambda,b) := \left((1-\lambda)+b^2 + m(b^2-\lambda) \right) (m(1-\lambda)-\lambda) + b^{2m+2}.
	$$
	Denote by $\mathcal{L}_{\lambda,b}$ the linearized operator of \eqref{E:Doub-VP} at $A_b$ with angular velocity $\Omega$. Then $\mathcal{L}_{\lambda,b}$ has non-trivial kernel if and only if $\lambda \in \mathcal{S}$ for some $m \in \mathbb{N}_0$ and $ 0<b<1$.
	
	When $\lambda \neq \frac{1+b^2}{2}$, the kernel is the 1-dim vector space generated by
	\[
	w \in \mathbb{T} \rightarrow \left((m(1-\lambda)-\lambda)\overline{w}^m,-b^m \overline{w}^m \right)
	\]
	where $m$ is the unique integer such that $\Delta_m(\lambda,b)=0$. The range of $\mathcal{L}_{\lambda,b}$ is closed and its co-kernel is 1-dim when $\lambda \in \mathcal{S} \setminus \{1,b^2,\frac{1+b^2}{2}\}$.
	The transversality holds for $\lambda \in \mathcal{S} \setminus \{\frac{1+b^2}{2}\}$.
	
	On the other hand, when $\lambda = \frac{1+b^2}{2}$,  dimension of kernel and the co-kernel of $\mathcal{L}_{\lambda,b}$ is either $1$ or $2$. It is dimension $2$ if and only if there exists $n \geq 2$ such that $\Delta_n(\frac{1+b^2}{2},b)=0$.
}

The spectral properties of the linearized operator imply the following alternatives occur:
\begin{enumerate}
	\item {\bf Simple eigenvalues:} For $\lambda \in \mathcal{S} \setminus \{1,b^2,\frac{1+b^2}{2}\}$, in \cite{Hoz2016a} de la Hoz, Hmidi, Mateu and Verdera  obtained families of non-radial $m$-fold symmetric doubly-connected vortex patches emanating from the annulus $\{z:b \leq |z| \leq 1\}$ at the angular velocity
	\[%\label{E:Eigen-dou}
	\Omega_m = \frac{1-b^2}{4} \pm \frac{1}{2m} \sqrt{\left(\frac{m(1-b^2)}{2}-1\right)^2-b^{2m}}, \;\; m \geq 3
	\]
	by implementing the standard Crandall-Rabinowitz theorem. In particular, if $\lambda=1$  they obtained a family of translated annulus. Note that in this case all the eigenvalues are simple. In \cite{Hmidi2017} Hmidi and Renault proved that continuation of these local bifurcation would form a loop for $m \geq 3$ if a pair of eigenvalues are close enough to each other. \\

	\item {\bf Single eigenvector without transversality condition:} In \cite{Hmidi2016a}, the author further studied the case $\lambda = \frac{1+b^2}{2}$ where the transversality does not hold. They proved that there exist families of non-radial two-fold doubly-connected vortex patches emanating from the annulus $\{z:b \leq |z| \leq 1\}$ with angular velocity
	\[
	\Omega_2 = \frac{1-b^2}{4},\; \;b\in (0,1) \setminus \{ b_{2p}\}_{p \geq 2}
	\]
	where $b_{2p}$ solves
	\[
	\big(p\left(1-b_{2p}^2\right) - 1\big)^2  -b_{2p}^{4p} = 0, \quad \text{ for } p \geq 2.
	\]
These bifurcations are transcritical. On the other hand, there is no $m$-fold symmetric doubly-connected vortex patches emanating from the annulus $\{z:b \leq |z| \leq 1\}$ at angular velocity $\Omega_m = \frac{1-b_m^2}{4}$ for each $m \geq 3$, where the unique $b_{m}$ solves
	\[
	\big(\frac{m}{2}\left(1-b_{m}^2\right) - 1\big)^2  -b_{m}^{2m} = 0, \quad \text{ for } m \geq 3.
	\]
	It coincides with the presence of loops proved in \cite{Hmidi2017}. The main idea is to study the Taylor expansion  of the reduced functional up to the second-order  where the loss of transversality implies $\lambda$ would occur at least quadratic. However, there are a countable set $b_{2p}$ are left to open since the function space is carefully chosen here to ensure the kernel of linearization is one-dimensional. \\

	\item {\bf Infinitely many co-dimension:} When $\lambda=b^2$, existence of non-trivial uniformly rotating doubly-connected vortex patch is also left to open. Although the kernel is one-dimensional in this case while the transversality holds,  the co-kernel is $\infty$-dimensional hence $\mathcal{L}_{\lambda,b}$ would not be a Fredholm operator.

		\item {\bf Double eigenvectors without transversality condition:} The case $\Omega_2 = \frac{1-b_{2p}^2}{4}, p \geq 2$ is left to open. This problem is much more subtle since  both the kernel and the co-kernel of the linearized operator are two-dimensional, as well as the transversality condition fails. \\
	\end{enumerate}
The last two cases remains open since they may beyond standard bifurcational analysis.
%\red{The last two cases remains open. Among them, the third case with $\lambda =b^2$ poses a particularly interesting challenge due to the infinitely dimensional co-kernel, where the Fredholm index is not $0$ since the kernel is $1$-dimensional. It seems that the nonlinear functional would be vanished on most of directions in the co-kernel except one. On the other hand, in \cite{GPSS2021}, G\'omez-Serrano, Park, Shi and Yao proved a uniformly rotating vortex on $\R^2$ must be radial for $\Omega < 0$ or $\Omega \geq \frac{1}{2}$ no matter the simply-connectedness of the vortical domain. Their proof is in a variational flavor hence it holds for all rotating vortex patches. For given $0<b<1$, we guess that the lower bound would be $\Omega \geq \frac{1-b^2}{2}$ if the uniformly rotating vortex patch is located on the bifurcation curves emanating from the annulus $A_b$, $0<b<1$.}
In this paper, we focus on the case $4$. The main result is given as follow
\begin{theorem} \label{T:main}
 There are non-trivial two-fold uniformly rotating vortex patches emanating from the annulus $A_{b_{2p}}$ with angular velocity $\Omega =\frac{1-b_{2p}^2}{4}$ for $p =2, 3, 4$, where $0<b_{2p}<1$ satisfies the algebraic equation
	\beq \label{E:b-2m}
	b^{2p}=p-1-pb^2.
	\eeq
Moreover, these bifurcation are transcritical.
\end{theorem}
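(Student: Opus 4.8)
The plan is to recast the free-boundary system \eqref{E:Doub-VP} as the zero set of a nonlinear map and to carry out a Lyapunov--Schmidt reduction adapted to the two-dimensional degeneracy identified in Theorem A. First I would parametrize the two boundary curves of a perturbed annulus by a pair of $2\pi$-periodic functions $f=(f_1,f_2)$ and write \eqref{E:Doub-VP} as $F(\Omega,f)=0$, where $F$ maps a neighbourhood of $(\Omega_*,0)$ in $\R\times X$ into $Y$, with $\Omega_*=\tfrac{1-b_{2p}^2}{4}$ and $X,Y$ Hölder (or real-analytic) spaces of \emph{two-fold symmetric} perturbations, i.e. spanned by the odd modes $\overline{w}^{2k-1}$ so that the induced symmetry $2\mid(m+1)$ holds. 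The annulus is the trivial branch $F(\Omega,0)=0$, and one checks $F$ is $C^2$ near $(\Omega_*,0)$ and that $\mathcal{L}:=D_fF(\Omega_*,0)$ is Fredholm of index zero. By Theorem A, at $b=b_{2p}$ and $\lambda=\tfrac{1+b_{2p}^2}{2}$ the kernel and co-kernel of $\mathcal{L}$ are both two-dimensional, generated on the kernel side by the mode $m=1$ eigenfunction $v_1$ and the mode $m=2p-1$ eigenfunction $v_2$ (the latter forced by \eqref{E:b-2m}); the transversality condition fails because the mode $m=1$ dispersion relation $\Delta_1(\cdot,b)$ has a \emph{double} root at $\Omega_*$.

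\textbf{Reduction and symmetry.} With projections $P$ onto $\mathrm{Ran}\,\mathcal{L}$ and $Q=I-P$ onto a fixed complement $Y_0\cong\mathrm{coker}\,\mathcal{L}$, I would write $f=t_1v_1+t_2v_2+w$ with $w$ in a complement of $\ker\mathcal{L}$, solve the auxiliary equation $PF(\Omega,t_1v_1+t_2v_2+w)=0$ for $w=w(\Omega,t_1,t_2)=O(|t|^2+|\Omega-\Omega_*|\,|t|)$ by the implicit function theorem, and substitute to obtain the reduced map
\[
\Phi(\Omega,t_1,t_2)=QF\big(\Omega,\,t_1v_1+t_2v_2+w(\Omega,t_1,t_2)\big)\in\R^2 .
\]
Finding nontrivial patches is then equivalent to solving $\Phi=0$ with $(t_1,t_2)\neq(0,0)$. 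The decisive structural input is the $S^1$-equivariance inherited from the rotational invariance of the Euler flow together with the reflection symmetry $w\mapsto\overline w$: under $R_\alpha:w\mapsto e^{i\alpha}w$ the amplitudes transform by $(t_1,t_2)\mapsto(e^{-i\alpha}t_1,e^{-i(2p-1)\alpha}t_2)$, and the two components of $\Phi$ transform with conjugate weights. This equivariance annihilates all non-resonant Taylor monomials and leaves, after setting $\mu=\Omega-\Omega_*$, a system whose leading part consists of two weighted-homogeneous polynomials $P_1,P_2$ in $(t_1,t_2,\mu)$; these truncated polynomials cut out a real algebraic variety whose branches encode the candidate bifurcation curves.

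\textbf{Algebraic solution and persistence.} The core of the argument is to compute the finitely many surviving Taylor coefficients of $\Phi$ up to the order dictated by the resonances and to show that the truncated system $P_1=P_2=0$ possesses a \emph{nondegenerate} real solution with $(t_1,t_2)\neq0$ along which $\mu$ depends linearly on the amplitude --- the signature of a transcritical bifurcation. Once such a simple zero is exhibited, an appropriate (possibly anisotropic) rescaling $t_1=\varepsilon a_1$, $t_2=\varepsilon^{\sigma}a_2$, $\mu=\varepsilon^{\tau}\beta$ turns $\Phi=0$ into a small perturbation of $P_1=P_2=0$, and a final application of the implicit function theorem (the Jacobian of $(P_1,P_2)$ being invertible at the chosen root) produces a genuine local branch of two-fold patches emanating from $A_{b_{2p}}$, transcritical in nature.

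\textbf{Main obstacle.} I expect the decisive difficulty to be precisely the algebraic step. Because transversality fails and the kernel is two-dimensional, the second-order expansion is insufficient and the reduction must be pushed to higher order, where the number of resonant coefficients --- and hence the shape of $P_1,P_2$ --- depends on the arithmetic of the second mode $2p-1$. Verifying that the truncated variety actually carries a nondegenerate real point seems to require explicit, $p$-dependent computation, which is why the conclusion is obtained only for $p=2,3,4$; for larger $p$ the leading system either degenerates or its solvability is not transparent. Controlling the regularity of $w(\Omega,t_1,t_2)$ and ensuring that the rescaled remainder is genuinely subordinate to $(P_1,P_2)$ are the technical points that make the perturbation of the algebraic variety rigorous.
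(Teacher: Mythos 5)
Your skeleton --- Lyapunov--Schmidt reduction with a two-dimensional kernel and co-kernel, expansion of the reduced map to higher order, and persistence of a root of a truncated polynomial system --- matches the paper's strategy in outline, but two of your load-bearing steps do not survive contact with the actual structure of the problem. First, the equivariance argument you invoke to ``annihilate all non-resonant Taylor monomials'' is not available here: the function spaces are already restricted to real Fourier coefficients (the reflection symmetry has been used to normalize the rotational freedom), so the amplitudes $(t_1,t_2)$ are real and carry no residual $S^1$-action; and even in a complexified setting the decisive vanishings are not resonance phenomena. For instance, for $p=2$ the quartic self-interaction of the first kernel mode is frequency-admissible in the $e_4$ co-kernel direction, yet its coefficient vanishes because of the specific cancellation $c_6=-48b^2+96b^2-48b^2=0$ for the eigenvector $x^1=(b,1)^t\overline{w}$ (Lemma \ref{lem2}); likewise $Q_2\partial_{ttt}F_2=0$ and $Q_2\partial_\lambda\partial_{tt}F_2=0$ are genuine cancellations obtained by residue calculus, not by mode arithmetic. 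Without these computations you cannot write down the truncated polynomials $P_1,P_2$, and without them the rescaling exponents $\sigma,\tau$ in your final step are undetermined --- so the argument is circular at exactly the point you yourself flag as the decisive difficulty.

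Second, the paper's reduction is organized differently, and the difference is what makes the degenerate balance tractable. Instead of treating $(t_1,t_2)$ as two free amplitudes, it restricts to rays $x_a=x^1+ax^2$ in the kernel and studies the blown-up map $F_2(\lambda,t;a)$ of \eqref{E:F2} as a two-dimensional system in $(\lambda,t)$ with $a$ as a parameter. The Hessian computation (Proposition \ref{d2F2} together with Lemma \ref{L:non-deg}) shows the quadratic truncation has only the trivial zero when $a\neq 0$, so all bifurcation is funneled through the degenerate direction $a=0$; there the $\mathbb{W}_2$-component is $O(1)$ and is solved for $t=t(\lambda,a)$ by the implicit function theorem, while the $\mathbb{W}_{2p}$-component is $O(a)$ and must be balanced against a higher-order term. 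The actual content of the proof is the identification, by lengthy explicit computation for $p=2,3,4$, of the unique non-vanishing coefficient $\partial_\lambda^2\partial_t^{p-1}Q_2F_2(\lambda_{2p},0;0)$ among all derivatives of the weak component up to order $p+1$ (Lemma \ref{L:Key-L}); balancing it against the $O(a\lambda^2)$ terms yields $\lambda\sim a^{1/(p-1)}$ with $\lambda$ linear in the amplitude along the branch, hence a transcritical bifurcation. Your proposal correctly anticipates the shape of the answer, but it neither supplies this dominant-balance mechanism nor a workable substitute for the explicit cancellations that justify it.
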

\begin{remark}
The type of bifurcation is consistent with results for $b \in (0,1) \setminus \{b_{2p}\}, p \geq 2$ in \cite{Hmidi2016a}. We believe that our approach would work for any $p \geq 5$ but a more complicated computation seems unavoidable. On the other hand, our consideration might also work for the doubly-connected rotating vortex patches in the active scalar equations \cite{Hmidi2016c} or the quasi-geostrophic shallower water equations \cite{Roulley2023}.
\end{remark}
The main difficulty comes from the degeneracy of the functional: the kernel of linearization is two dimensional and the transversality condition does not hold. We are aware of some papers when they occur alternatively. For example the local bifurcation with simple eigenvalue without transversality condition was considered by Liu, Shi, and Wang in \cite{Liu2013} and Hmidi and Mateu for the vortex patch \cite{Hmidi2016a}. On the other hand, an abstract framework concerned the bifurcation problem with higher-dimensional kernel was also introduced in \cite{Kie2012} but the conditions seem hard to verify.

Heuristically speaking, the transversality condition implies $\lambda$ dominates the linear terms in the Taylor expansion of the bifurcation equation hence a pitch-fork bifurcation may occur. Otherwise, one has to carefully check the Taylor expansion of reduced functional then found the leading terms. In this paper we design a more realizable approach for the real-analytical bifurcation problem with higher-dimensional kernel with less calculations.

Roughly speaking, Our analysis is actually based on calculating the Taylor expansion of the reduced functional \eqref{E:F2} on the annulus $A_{b_{2p}}$ up to the $p+1$-th order for $p \geq 2$. We introduce an auxiliary parameter $a\in \R$ and explore the structure of solutions near $a=0$. We found that the two components of \eqref{E:Doub-VP} after a Lyapunov-Schmidt reduction are not equal. One is $O(1)$ and the other is $O(a)$. In particular, we observe the higher-order derivatives of the 'weaker' component  projected onto the co-kernel space nearly vanish except only one term up to the $p+1$-th order,  $p=2,3,4$. With this fact, we are able to identify the dominated terms in the bifurcation equations and non-trivial solutions follows. See Section \ref{S:Scheme} for more details.
We conjecture that this observation is true for all $p\ge2$. It is the first result, up to the author's knowledge, dealing with the bifurcation problem in this very degenerate setting. % We hope the argument would be also usable for another problems. \\

%\red}

Moreover, based on the computation result for $p=2,3,4$, one would found that the degeneracy increase linearly. Note that $b_{2p} \rightarrow 1$ as $p \rightarrow +\infty$ and $\lambda_{2p} \rightarrow 1$. It somehow coincides with the case $\lambda=1$ where non-trivial solutions are translating annuli. It indicates that the functional $G=(G_1,G_2)$ actually vanish on the most components in Fourier series.

The paper is organized as follows. Section \ref{S:Pre} is devoted to the setting up of rotating vortex patches in the analytic framework and illustrate the scheme of the degenerate bifurcation problem. The main result is presented in Section \ref{S:Scheme} where we introduce a systematic scheme. In Section \ref{S:Deriv-12} and \ref{S:Degenerate}, we compute the Jacobian and the Hessian of the reduced nonlinear function and identify the degenerate direction, which enables us to determine the bifurcation solutions for the cases of $p=2,3,4$ with the computation results of higher-order derivatives up to fifth-order presented in Section \ref{S:Higher-D}. For the case $p=2$, only third-order derivatives are required, see Section \ref{S:Third}; for $p=3$, both third-order and forth-order derivatives are utilized, see Section \ref{S:Third} and \ref{S:Fourth}; and for the case $p=4$, higher-order derivatives up to the fifth are computed, see Section \ref{S:Third}, \ref{S:Fourth} and \ref{S:Fifth}. In summary, with enough computations, all results for any $p\ge2$ can be determined.

\section{Set-up}\label{S:Pre}

\subsection{Conformal mappings and regularity}

For the doubly-connected bounded domain $D= D_1 \setminus D_2$ where $D_1$ and $D_2$ are smooth simply-connected bounded domains, we follow the conformal mapping parameterizations. Due to the Riemannian mapping theorem, there exist unique conformal mappings
\beq \label{E:CM}
\begin{split}
 \phi_j(w) & = b_jw + \sum_{n \geq 0} \frac{a_{j,n}}{w^n},\quad \R^2 \setminus B_1 \rightarrow  \R^2 \setminus D_j, \\
& \phi_j(\infty)=\infty, \; j=1,2, \;  b_1=1,b_2=b,
\end{split}
\eeq
where $a_{j,n} \in \R$ due to the domain is rotation invariant and $0<b<1$ is a prescribed parameter. The conformal mappings are uniquely determined by ${\rm Re} (\phi_j \big|_{\mathbb{T}})$ respectively, which is one-one corresponding to the imaginary part of $\phi_j \big|_{\mathbb{T}}$ by the Hilbert transform.
 \begin{remark}
Here we adopt the conformal mappings for the exterior domains rather than disks since the automorphism group of the unit disk has three degree of freedom.
\end{remark}
A vortex field $\omega = \I_D, D=D_1 \setminus D_2$ is a rotating vortex patch if conformal mappings  $\phi_1$ and $\phi_2$ solve the equations
\beq \label{E:Non}
G_j(\lambda,\phi_1,\phi_2)(w)\triangleq {\rm{Im}} \left\{ \left((1-\lambda) \overline{\phi_j(w)}+I(\phi_j(w))\right)w\phi_j'(w)\right\}=0,\,\forall w\in \mathbb{T}, j=1,2,
\eeq
where $\lambda =1-2\Omega$ and
\beq \label{E:Cauchy-m}
I(z)\triangleq \frac{1}{2\pi i}\int_\mathbb{T} \frac{\overline{z}-\overline{\phi_1(\xi)}}{z-\phi_1(\xi)}\phi_1'(\xi)\dif \xi-\frac{1}{2\pi i}\int_\mathbb{T} \frac{\overline{z}-\overline{\phi_2(\xi)}}{z-\phi_2(\xi)}\phi_2'(\xi)\dif \xi\triangleq I_1(z)-I_2(z)
\eeq
is a nonlocal Cauchy-type integral operator.

Let $\phi_j=b_j{\rm{Id}}+f_j$, $f=(f_1,f_2)$ and $G(\lambda,f)= (G_1(\lambda,\phi_1,\phi_2),G_2(\lambda,\phi_1,\phi_2)): \mathbf{X} \rightarrow \mathbf{Y}$ where
\[
\begin{split}
	\mathbf{X}:=&  \left\{ \sum_{n \geq 0}
	A_n
	\overline{w}^n,\quad w \in \mathbb{T} \right\} \subset  \left(C^{1,\alpha}(\mathbb{T})\right)^2, A_n \in \R^2, \\
	\mathbf{Y}: = & \left\{ \sum_{n \geq 1}
	B_n {\rm Im}(\overline{w})^n, \quad w \in \mathbb{T} \right\} \subset   \left(C^{\alpha}(\mathbb{T})\right)^2, B_n \in \R^2.
\end{split}
\]
In \cite{Hoz2016a}, the author have proved $G \in C^1$.  Inspired by \cite{HMW2020}, the regularity can be improved to real analytical in a small neighbourhood of annular patches.
\begin{lemma} \label{L:Real-analyticity}
$G=(G_1,G_2)$ is real-analytic mappings on $B_\delta(0) \subset C^{1,\alpha}(\mathbb{T}) \times C^{1,\alpha}(\mathbb{T})$ for $0<\alpha<1$ and small $\delta>0$.
\end{lemma}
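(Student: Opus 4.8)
The plan is to follow the method of \cite{HMW2020} and establish real-analyticity through the standard criterion for maps between real Banach spaces: after complexifying $C^{1,\alpha}(\mathbb{T})$, it suffices to show that $G$ extends to a map on a complex neighbourhood of $0$ which is locally bounded and Gâteaux-holomorphic, i.e.\ for every pair $f,h$ the scalar map $\zeta \mapsto G(f+\zeta h)$ is holomorphic on a disk $\{|\zeta|<r\}$. Local boundedness together with Gâteaux-holomorphy then yields Fréchet-holomorphy, hence real-analyticity of the restriction to $B_\delta(0)$. Accordingly I would reduce the whole argument to analyticity of each building block of $G_j = \mathrm{Im}\{((1-\lambda)\overline{\phi_j}+I(\phi_j))\,w\phi_j'\}$, viewing $G$ as a map into $(C^\alpha(\mathbb{T}))^2$.

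First I would dispose of the elementary operations. The outer structure is assembled from complex conjugation (bounded real-linear), the map $\phi_j \mapsto w\phi_j'$ (bounded linear $C^{1,\alpha}\to C^\alpha$, the loss of one derivative being absorbed by the weaker target $C^\alpha$), pointwise multiplication (continuous bilinear, hence analytic), and taking the imaginary part (bounded real-linear). Each of these is trivially analytic, so the entire difficulty is concentrated in the nonlocal Cauchy-type operator $I=I_1-I_2$ defined in \eqref{E:Cauchy-m}. I would split $I$ into a non-singular part and a singular self-interaction part. When $I_k$ is evaluated at $z=\phi_i(w)$ with $i\neq k$, the two boundary curves are disjoint: for $\delta$ small the perturbed curves stay in a neighbourhood of the disjoint circles $|z|=1$ and $|z|=b$, so $z-\phi_k(\xi)$ is bounded away from zero, the kernel is jointly holomorphic in $(z,\phi_k)$, and the resulting operator is manifestly analytic (indeed smoothing) into $C^\alpha$.

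The genuinely singular case is the self-interaction $z=\phi_k(w)$, where the denominator vanishes at $\xi=w$. Here I would desingularize using the divided-difference representation. On $\mathbb{T}$ one has $\overline{w}-\overline{\xi}=-(w-\xi)/(w\xi)$, so the kernel factors as
\[
\frac{\overline{\phi_k(w)} - \overline{\phi_k(\xi)}}{\phi_k(w) - \phi_k(\xi)}\,\phi_k'(\xi) = \left(\frac{\overline{\phi_k(w)} - \overline{\phi_k(\xi)}}{\overline{w} - \overline{\xi}}\right)\left(\frac{\phi_k(w) - \phi_k(\xi)}{w - \xi}\right)^{-1}\left(-\frac{1}{w\xi}\right)\phi_k'(\xi).
\]
The divided-difference map $\phi \mapsto \big(\tfrac{\phi(w)-\phi(\xi)}{w-\xi}\big)$ is \emph{linear} and bounded from $C^{1,\alpha}(\mathbb{T})$ into $C^{\alpha}(\mathbb{T}\times\mathbb{T})$ (agreeing with $\phi'(w)$ on the diagonal), and likewise for $\overline{\phi_k}$; thus the only nonlinearity is the reciprocal, which is analytic on the set where its argument is bounded away from $0$. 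Writing $\phi_k=b_k\,\mathrm{Id}+f_k$ gives $\tfrac{\phi_k(w)-\phi_k(\xi)}{w-\xi}=b_k+\big(\tfrac{f_k(w)-f_k(\xi)}{w-\xi}\big)$, whose second term is $O(\|f_k\|_{C^{1,\alpha}})=O(\delta)$ uniformly on $\mathbb{T}\times\mathbb{T}$, so for $\delta$ small this quantity stays within $|b_k|/2$ of $b_k\neq0$ and its reciprocal is analytic. Consequently the desingularized kernel is an analytic $C^\alpha(\mathbb{T}\times\mathbb{T})$-valued function of $f$, and integrating it against $\dif\xi$ yields an analytic map into $C^\alpha(\mathbb{T})$.

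The main obstacle, and the only place where genuine estimates are needed, is exactly this singular self-interaction term: one must (i) make precise the uniform lower bound on $\big|\tfrac{\phi_k(w)-\phi_k(\xi)}{w-\xi}\big|$ coming from the smallness of $f_k$, so that the reciprocal — and hence Gâteaux-holomorphy in $\zeta$ — survives on a full complex disk, and (ii) supply the Hölder/local boundedness estimate showing the desingularized integral operator maps $C^{1,\alpha}$ boundedly into $C^\alpha$ with bounds uniform over $B_\delta(0)$. Granting these two ingredients, $G$ is a finite sum, product, reciprocal and composition of analytic maps, and is therefore real-analytic on $B_\delta(0)$, which completes the proof.
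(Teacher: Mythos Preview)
Your argument is correct and hinges on exactly the same key estimate as the paper, namely the uniform lower bound
\[
\left|\frac{\phi_k(w)-\phi_k(\xi)}{w-\xi}\right| \;\geq\; b_k - \|f_k\|_{C^{1,\alpha}} \;>\; \frac{b_k}{2}
\]
for $\delta$ small. The difference lies in how the singular self-interaction is handled. The paper does not carry out the desingularization by hand: it simply verifies that $\phi_j$ lies in the open set
\[
U^{1,\alpha} = \Big\{\phi\in C^{1,\alpha}(\mathbb{T}) : \inf_{s_1\neq s_2}\big|\tfrac{\phi(s_1)-\phi(s_2)}{s_1-s_2}\big|>0\Big\}
\]
via the displayed lower bound, and then invokes as a black box the result of Lanza de Cristoforis and Preciso that the Cauchy integral $\phi\mapsto \mathcal{C}(\phi)$ is real-analytic from $U^{k,\alpha}$ into $\mathcal{L}(C^{k,\alpha})$, together with a companion result for the non-singular cross term. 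Your route instead sketches a direct proof of (a $C^\alpha$-target version of) that lemma: divided differences are bounded linear into $C^\alpha(\mathbb{T}\times\mathbb{T})$, the reciprocal is analytic on the Banach algebra $C^\alpha$ away from zero, and integration in $\xi$ is bounded linear.

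What each buys: the paper's approach is shorter on the page and gives the stronger conclusion that $I$ preserves the full $C^{1,\alpha}$ regularity (though only $C^\alpha$ is needed for $\mathbf{Y}$). Your approach is more self-contained and makes transparent \emph{why} analyticity holds, at the cost of leaving the two estimates you flag at the end (uniform lower bound on complex perturbations, and the $C^{1,\alpha}\to C^\alpha$ mapping bound for the desingularized integral) as stated-but-not-fully-written steps. Both are valid; the paper's is the more efficient write-up, yours the more explanatory one.
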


It is sufficient to prove real-analyticity of the nonlinear operator concerned with $\phi_j,j=1,2$
\[
\begin{split}
	I(\phi_j(z)) & = I_1(\phi_j(z)) - I_2(\phi_j(z)) \\
	& = \frac{1}{2\pi i}\int_\mathbb{T} \frac{\overline{\phi_j(z)}-\overline{\phi_1(\xi)}}{\phi_j(z)-\phi_1(\xi)}\phi_1'(\xi)\dif \xi-\frac{1}{2\pi i}\int_\mathbb{T} \frac{\overline{\phi_j(z)}-\overline{\phi_2(\xi)}}{\phi_j(z)-\phi_2(\xi)}\phi_2'(\xi)\dif \xi,\; z \in \mathbb{T}, j=1,2,
\end{split}
\]
which consists of a singular integral operator  $I_j[\phi_j,\overline{\phi_j(z) - \phi_j(\cdot)}](z),\; j=1,2$,
\[
I_j[\phi_j,f](z):=\frac{1}{2\pi i}\int_\mathbb{T} \frac{f(\xi)}{\phi_j(z)-\phi_j(\xi)}\phi_j'(\xi)\dif \xi,
\]
and a normal integral operator
\[
K_j[\phi_1,\phi_2,f](z) = \frac{1}{2\pi i}\int_\mathbb{T} \frac{f(\xi)}{\phi_j(z)-\phi_k(\xi)}\phi_k'(\xi)\dif \xi, \; z \in \mathbb{T}, \;j,k=1,2,\;j \neq k.
\]
Recall the following result given by de Crisoforis and Preciso in \cite{LP99}.
\begin{lemma}
	Consider the set $U^{k,\alpha}$ defined by
	\beq \label{E:fun-space}
	U^{k,\alpha} = \left\{\phi \in C^{k,\alpha}(\mathbb{T}) \big| \inf_{ s_1 \neq s_2} \left| \frac{\phi(s_1)-\phi(s_2)}{s_1-s_2} \right|>0 \right\}
	\eeq
	The Cauchy integral operator
	$$
	\mathcal{C}(\phi):=\frac{1}{2\pi i} \int_{\mathbb{T}} \frac{\overline{\phi(z)} - \overline{\phi(\xi)}}{\phi(z) - \phi(\xi)} \phi^\prime(\xi) d \xi
	$$
	defines a real-analytic mapping from $U^{k,\alpha}$ to $\mathcal{L}(C^{k,\alpha}(\mathbb{T}))$, where $\mathcal{L}(C^{k,\alpha}(\mathbb{T}))$ is the linear operator from $C^{k,\alpha}(\mathbb{T})$ to itself.
\end{lemma}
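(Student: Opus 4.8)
The plan is to write $\mathcal{C}$ as a composition $\phi \mapsto m_\phi \mapsto \mathrm{Op}(m_\phi)$ in which the first arrow carries \emph{all} of the $\phi$-dependence into a non-singular symbol $m_\phi$ on $\mathbb{T}\times\mathbb{T}$, while the second arrow is a \emph{fixed} bounded linear map into $\mathcal{L}(C^{k,\alpha}(\mathbb{T}))$ built from the classical Cauchy kernel. Real-analyticity of $\mathcal{C}$ then splits into an elementary analyticity statement for the symbol map and the classical Schauder boundedness of the Cauchy singular integral. To begin, introduce the divided difference $g_\phi(z,\xi):=\bigl(\phi(z)-\phi(\xi)\bigr)/(z-\xi)$, extended to the diagonal by $g_\phi(z,z)=\phi'(z)$. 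A Taylor-with-remainder argument shows that $\phi\mapsto g_\phi$ is a \emph{bounded linear} map $C^{k,\alpha}(\mathbb{T})\to C^{k-1,\alpha}(\mathbb{T}\times\mathbb{T})$, and the defining inequality of $U^{k,\alpha}$ is exactly $\min_{\mathbb{T}\times\mathbb{T}}|g_\phi|>0$, i.e. that $g_\phi$ is a nowhere-vanishing element of the Banach algebra $C^{k-1,\alpha}(\mathbb{T}\times\mathbb{T})$. Since the units of this algebra form an open set and $\phi\mapsto g_\phi$ is continuous, $U^{k,\alpha}$ is open.

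Next I would prove that $\phi\mapsto m_\phi:=\phi'\,g_\phi^{-1}$ (and, for the conjugate numerator appearing above, $\phi\mapsto \overline{g_\phi}\,g_\phi^{-1}\phi'$) is real-analytic from $U^{k,\alpha}$ into $C^{k-1,\alpha}(\mathbb{T}\times\mathbb{T})$. The one nonlinear ingredient is the inversion $g\mapsto g^{-1}$, which on a Banach algebra is real-analytic on the open set of units: locally it is the Neumann series $(g_0+h)^{-1}=g_0^{-1}\sum_{n\ge0}(-h\,g_0^{-1})^n$, convergent in the algebra norm for small $\|h\|$, and a nowhere-vanishing Hölder function is precisely a unit with reciprocal again in $C^{k-1,\alpha}$. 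Combining this with the facts that $\phi\mapsto g_\phi$ and $\phi\mapsto\phi'$ are bounded linear, $\phi\mapsto\overline{g_\phi}$ is bounded real-linear, and algebra multiplication is bounded bilinear, and using that finite products and compositions of real-analytic Banach-space maps are real-analytic, yields the claim.

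To assemble, I use $\phi(z)-\phi(\xi)=(z-\xi)\,g_\phi(z,\xi)$ to factor the kernel of $\mathcal{C}(\phi)$ as the \emph{fixed} Cauchy kernel $1/(z-\xi)$ times the symbol $m_\phi$. The symbol-to-operator map $m\mapsto\bigl[f\mapsto \tfrac{1}{2\pi i}\int_{\mathbb{T}}\tfrac{m(z,\xi)}{z-\xi}f(\xi)\,d\xi\bigr]$ is bounded linear from $C^{k-1,\alpha}(\mathbb{T}\times\mathbb{T})$ into $\mathcal{L}(C^{k,\alpha}(\mathbb{T}))$, by the classical $C^{k,\alpha}$-mapping property of the Cauchy singular integral (Privalov, Muskhelishvili). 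Thus $\mathcal{C}$ is the composition of the real-analytic symbol map with a bounded linear map, hence real-analytic.

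The main obstacle is the regularity bookkeeping concealed in the last step: one must show that symbol-times-Cauchy-kernel genuinely lands in $\mathcal{L}(C^{k,\alpha}(\mathbb{T}))$ with operator-norm bounds that are \emph{uniform along the Neumann expansion}, rather than merely verifying the existence of Gâteaux derivatives. Concretely this requires (i) estimating the $C^{k,\alpha}\to C^{k,\alpha}$ operator norm in terms of the $C^{k-1,\alpha}(\mathbb{T}\times\mathbb{T})$ norm of the symbol, which brings in the full $C^{k,\alpha}$ boundedness of the singular Cauchy operator together with product and commutator estimates in Hölder spaces, and (ii) quantitative control of the inversion $g_\phi\mapsto g_\phi^{-1}$ so that the power series converges in the norm of $\mathcal{L}(C^{k,\alpha}(\mathbb{T}))$ and not only pointwise; this is exactly where the uniform lower bound built into the definition of $U^{k,\alpha}$ is indispensable.
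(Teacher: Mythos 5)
You should first note that the paper does not prove this lemma at all: it is quoted verbatim from Lanza de Cristoforis--Preciso \cite{LP99} (see also \cite{Cris2001}), so your attempt has to be measured against that argument. The first half of your plan is sound and close in spirit to theirs: the divided difference $g_\phi(z,\xi)=(\phi(z)-\phi(\xi))/(z-\xi)$ depends boundedly and linearly on $\phi$, the condition defining $U^{k,\alpha}$ says exactly that $g_\phi$ is a unit in the Banach algebra $C^{k-1,\alpha}(\mathbb{T}\times\mathbb{T})$, openness of $U^{k,\alpha}$ follows, and inversion is real-analytic on the units via the Neumann series. All of that is correct.

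The genuine gap is the final, load-bearing step. Your claim that the symbol-to-operator map $m\mapsto\mathrm{Op}(m)$, with $\mathrm{Op}(m)f(z)=\frac{1}{2\pi i}\,\mathrm{p.v.}\int_{\mathbb{T}}\frac{m(z,\xi)}{z-\xi}f(\xi)\,d\xi$, is bounded from $C^{k-1,\alpha}(\mathbb{T}\times\mathbb{T})$ into $\mathcal{L}(C^{k,\alpha}(\mathbb{T}))$ is false. Take $m(z,\xi)=u(z)$ with $u\in C^{k-1,\alpha}\setminus C^{k,\alpha}$: then $\mathrm{Op}(m)f=u\cdot(Cf)$ with $C$ the Cauchy singular integral, and already $f\equiv 1$ gives a nonzero constant multiple of $u$, which is not in $C^{k,\alpha}$. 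A symbol controlled only in $C^{k-1,\alpha}$ cannot produce an operator on $C^{k,\alpha}$, because multiplication by the symbol loses precisely the derivative you must preserve. The classical $C^{k,\alpha}$ mapping property is not a property of ``Cauchy kernel times H\"older symbol''; it relies on the specific composed structure $\phi'(\xi)/(\phi(z)-\phi(\xi))$, through the integration-by-parts identity $\partial_z\bigl(C_\phi f\bigr)(z)=\phi'(z)\,\frac{1}{2\pi i}\int_{\mathbb{T}}\frac{f'(\xi)}{\phi(z)-\phi(\xi)}\,d\xi$, which trades each $z$-derivative of the operator for a $\xi$-derivative of the density and sets up the induction on $k$. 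Once you push all the $\phi$-dependence into a rough symbol $m_\phi=\phi'\,g_\phi^{-1}$ and retain only its $C^{k-1,\alpha}$ norm, this structure --- and with it the estimate --- is destroyed. Your closing paragraph flags this as ``regularity bookkeeping,'' but it is not bookkeeping: the intermediate lemma you invoke is untrue, so the factorization cannot be repaired by sharper constants. The actual proof avoids the symbol reduction altogether: it expands $\phi\mapsto C_\phi$ around a fixed $\phi_0$ and estimates each multilinear term directly in $\mathcal{L}(C^{k,\alpha})$, each term being again an iterated singular integral with composed kernel to which the integration-by-parts induction applies with geometrically summable bounds. To salvage your architecture you would need a finer symbol class (for instance, $C^{k,\alpha}$ in $z$ with values in $C^{k-1,\alpha}$ in $\xi$, closed under multiplication and under the inversion $g\mapsto g^{-1}$) in which $\mathrm{Op}$ genuinely is bounded, and then verify that $g_\phi$ and its Neumann series live there --- which is essentially the content of \cite{LP99}.
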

Now we could prove Lemma \ref{L:Real-analyticity} by checking out the  conditions.
\begin{proof}[Proof of Lemma \ref{L:Real-analyticity}]
Note that the function
\[
K(f)(z) :=\frac{1}{2 \pi i} \int_{r\mathbb{T}} \frac{f(\xi)}{\xi-z} d \xi, \quad z \in \mathbb{C} \setminus \{ r \mathbb{T}\}
\]
is holomorphic for $f \in C^{m,\alpha}(r\mathbb{T},\mathbb{C})$. The real-analyticity of $K_j[\phi_1,\phi_2,\overline{\phi_jz(z)-\phi_k(\cdot)}](z)$ was obtained in \cite{Cris2001} for $\phi_1,\phi_2 \in U^{k,\alpha}$. Therefore, to obtain the real analyticity of  $G=(G_1,G_2)$ with respect to $\phi_1$ and $\phi_2$,
it is sufficient to show that $\phi_j \in U^{1,\alpha}$ if $\phi_j= b_j z + f_j(z), f_j(z) \in B_\delta \subset  C^{1,\alpha}(\mathbb{T})$ where $ B_\delta$ is  $\delta$-neighbourhood of $0$ for small $\delta >0$. Indeed we have,
\[
\inf_{s_1 \neq s_2}  \left| \frac{\phi_j(s_1) - \phi_j(s_2)}{s_1-s_2} \right| \geq b_j  - \sup_{s_1 \neq s_2}  \left| \frac{f_j(s_1) - f_j(s_2)}{s_1-s_2} \right|  \geq b_j - \| f_j\|_{Lip} \geq b_j - C \|f_j\|_{C^{1,\alpha}}>\frac{b_j}{2},
\]
where $b_1=1,b_2=b$.
%& \geq b_j - \left| \sum_{n \geq 1} A_{j,n} \frac{ (\frac{1}{s_1})^{2n-1} - (\frac{1}{s_2})^{2n-1} }{s_1 - s_2} \right|  \\
%&  \geq b_j - \sum_{n \geq 1}  |A_{j,n}| \left| \sum_{k=0}^{2n-2} s_1^k s_2^{2n-2-k} \right|  \geq b_j -\sum_{n \geq 1} 2n|A_{j,n}|  \\
%& \geq b_j - 2 \| \varphi_j\|_{W^{1,1}(\mathbb{T})}   \\
%& \geq b_j -  4 \pi \left( \| \varphi_j\|_{C^0} + \| \varphi_j^\prime\|_{C^0} \right)  \\
%& \geq b_j- 4 \pi \|\varphi\|_{C^{1,\alpha}} > \frac{b_j}{2} >0
The proof is complete.
\end{proof}

\subsection{Linearization on annulus}

Consider the $m$-fold symmetric function spaces
\begin{eqnarray}
	\mathbf{X}_m=\left\{ f\in (C^{1+\alpha}(\mathbb{T}))^2, f(w)=\sum_{n=1}^{\infty} A_n \overline{w}^{mn-1}, A_n\in\R^2\right\},
\end{eqnarray}
\begin{eqnarray}\label{Y}
	\mathbf{Y}_m=\left\{ G \in (C^{\alpha}(\mathbb{T}))^2, G=\sum_{n=1}^{\infty} B_n e_{mn}, B_n\in\R^2, \; e_{mn} = {\rm Im}(\overline{w}^{mn}) \right\}.
\end{eqnarray}
In \cite{Hoz2016a}, de la Hoz, Hmidi, Mateu and Verda studied the linearization of $G(\lambda,f):\R \times \mathbf{X}_m  \rightarrow \mathbf{Y}_m$ on the annulus patch $A_b$ then obtain the linearized operator with matrix Fourier multipliers
\begin{eqnarray}\label{dG1}
	\mathcal{L}_{\lambda,b}h=\sum_{n\ge 1} M_{nm}(\lambda)
	\left(
	\begin{array}{cc}
		a_{1,n}\\
		a_{2,n}
	\end{array}
	\right)
	e_{nm},
	\quad
	e_n={\rm{Im}}(\overline{w}^n),
\end{eqnarray}
where
%on the $m$-fold symmetric space $h_j(w)=\sum_{n\ge 1} \frac{a_{j,n}}{w^{nm-1}} \in \mathbf{X}_m$
%  proved the block formula of the linearized operator acts on $h$
%%	\mathcal{L}_{\lambda,b}h\triangleq\frac{\dif}{\dif t} G(\lambda,th)_{|t=0}=\partial_f G(\lambda,0)h, \quad h=(h_1,h_2).
%end{eqnarray*}
%is expressed by the matrix Fourier multipliers
$M_n$ given by
\begin{eqnarray*}
	M_n(\lambda)
	=\left(
	\begin{array}{cc}
		n\lambda-1-nb^2 & b^{n+1}\\
		-b^n & b(n\lambda-n+1)
	\end{array}
	\right), \; n \geq 1.
\end{eqnarray*}
and $h=(h_1,h_2), h_j(w)=\sum_{n\ge 1} \frac{a_{j,n}}{w^{nm-1}}$, $j=1,2$. In particular,
let $m=2$ and $\lambda=\lambda_{2p}=\frac{1+b_{2p}^2}{2}$ where $b=b_{2p}$ solves the equation
\beq
b^{2p}=p-1-pb^2.
\eeq
Due to \eqref{dG1}, the Fourier multipliers are given by
\begin{eqnarray} \label{E:M2n}
	M_{2n}(\lambda_{2p})
	=\left(
	\begin{array}{cc}
		n-1-nb_{2p}^2 & b_{2p}^{2n+1}\\
		-b_{2p}^{2n} & b_{2p}(n b_{2p}^2-n+1)
	\end{array}
	\right), \quad n \geq 1
	.\end{eqnarray}
Clearly $M_{2n}(\lambda_{2p})$ is invertible for $n \ne 1,p$. On the other hand, there are two degenerate multipliers
\begin{eqnarray}\label{E:M2p}
	M_{2}(\lambda_{2p})
	=\left(
	\begin{array}{cc}
		-b^2 & b^{3}\\
		-b^{2} & b^3
	\end{array}
	\right),\quad
	M_{2p}(\lambda_{2p})
	=\left(
	\begin{array}{cc}
		b^{2p} & b^{2p+1}\\
		-b^{2p} & -b^{2p+1}
	\end{array}
	\right)
\end{eqnarray}
with rank one for $b=b_{2p}$.

\subsection{Lyapunov-Schmidt reduction}

The block formula \eqref{E:M2p} implies the two dimensional kernel  ${ \rm ker }(\mathcal{L}_{\lambda_{2p},b_{2p}})$ is generated by the eigenvectors
$x^1=\left(
\begin{array}{cc}
	b\\
	1
\end{array}
\right)
\overline{w}$,
$x^2=\left(
\begin{array}{cc}
	b\\
	-1
\end{array}
\right)
\overline{w}^{2p-1}$. The co-kernel, generated by
\begin{eqnarray}\label{y12}
	y_1=
	\left(
	\begin{array}{cc}
		\frac{1}{\sqrt{2}}\\
		-\frac{1}{\sqrt{2}}
	\end{array}
	\right)e_{2}\triangleq \mathbb{W}_2
	\quad {\rm{ and }}\quad
	y_2=-
	\left(
	\begin{array}{cc}
		\frac{1}{\sqrt{2}}\\
		\frac{1}{\sqrt{2}}
	\end{array}
	\right)e_{2p}\triangleq \mathbb{W}_{2p}.
\end{eqnarray}
is also two dimensional. Due to the spectral properties of \eqref{E:M2n}, $\mathcal{L}_{\lambda_{2p},b_{2p}}$ is a Fredholm operator with index $0$. Then it is standard to implement a Lyapunov-Schmidt reduction. Suppose $\mathcal{X}$ is the complementary of ${\rm Ker} \mathcal{L}_{\lambda_{2p},b_{2p}}$ in $\mathbf{X}_2$ given by
\beq \label{E:h}
h(w)=\sum_{n\ge2,n\ne p}A_n \overline{w}^{2n-1}
+\alpha_1
\begin{pmatrix}
	1\\
	0
\end{pmatrix}
\overline{w}
+\alpha_2
\begin{pmatrix}
	1\\
	0
\end{pmatrix}
\overline{w}^{2p-1}, A_n \in \R^2, \alpha_1,\alpha_2 \in \R,
\eeq
and $\mathcal{Y}$ is the complement of co-kernel of $ \mathcal{L}_{\lambda_{2p},b_{2p}}$ in $\mathbf{Y}_2$ given by
\beq \label{E:k}
k(w)=\sum_{n\ge2,n\ne p} B_n e_{2n}
+ \beta_1
\begin{pmatrix}
	1\\
	1
\end{pmatrix}
e_{2}
+\beta_2
\begin{pmatrix}
	1\\
	-1
\end{pmatrix}
e_{2p},  B_n \in \R^2, \beta_1,\beta_2 \in \R.
\eeq
$\mathcal{X}$ and $\mathcal{Y}$ are closed subspaces and projection ${\rm Id}-Q: \mathbf{Y}_2 \rightarrow \mathcal{Y}$ is a bounded operator.

Consider the nonlinear function \eqref{E:Non}
\[
G(\lambda,h+g) = 0, \quad g \in {\rm Ker} \mathcal{L}_{\lambda_{2p},b_{2p}}, \; h \in \mathcal{X}.
\]
Due to \eqref{E:M2n}, the linearized operator $({\rm Id} -Q) \p_f G(\lambda_{2p},0) :\mathcal{X} \rightarrow \mathcal{Y}$ is a bijective mapping and its inverse is also bounded, where $\p_f$ denotes the derivatives with respect to $h$. Applying the implicit function theorem, there exists a small neighbourhood  $U_{\lambda_{2p}}$ of $\lambda_{2p}$ and a unique smooth map $\varphi :\R \times {\rm Ker} \mathcal{L}_{\lambda_{2p},b_{2p}} \rightarrow \mathcal{X}$ such that
\beq \label{E:LS-red}
( {\rm Id} -Q) G(\lambda, g + \varphi(\lambda,g)) = 0, \quad \forall g \in {\rm Ker} \mathcal{L}_{\lambda_{2p},b_{2p}}, \; \lambda \in U_{\lambda_{2p}}.
\eeq
Moreover, suppose that $h \in \mathcal{X},k \in \mathcal{Y}$ solve the linear equation
\[
({\rm Id} -Q) \p_f G(\lambda_{2p},0)h = k.
\]
We have
\beq \label{iG}
\beta_1 = - b^2 \alpha_1, \,\beta_2 = b^{2p} \alpha_2,\, B_n = M_{2n}(\lambda_{2p}) A_n.
\eeq
by straightforward calculations. Seeking for local bifurcations of the system \eqref{E:Non} is equivalent to solve the two dimensional reduced system
\beq \label{E:Reduced-p}
Q G \left(\lambda, g + \varphi(\lambda,g)\right) = 0.
\eeq
A rather general bifurcational approach was proposed by Crandall and Rabinowitz in \cite{CR1971}. The idea is to blow-up the reduced function to eliminate the degeneracy coming from $F(\lambda,0) \equiv 0$ for any $\lambda \in \R $ then consider the expansion up to the second-order terms.

Similar arguments also work for the bifurcation problem involving the higher dimensional kernels in principle, see discussions in \cite{Kie2012} for example. But it very difficult to verify the conditions in general. Inspired by the Crandall-Rabinowitz theorem, we consider the blow-up of the reduced problem
\begin{eqnarray}\label{E:F2}
	F_2(\lambda,t;a)\triangleq \int_0^1 Q\partial_f G(\lambda,stx_a+\varphi(\lambda,stx_a))(x_a+\partial_g\varphi(\lambda,stx_a)x_a) d s=0.
\end{eqnarray}
along a specific direction
\[
x_a = x^1 +a x^2 \in {\rm Ker} \mathcal{L}_{\lambda_{2p},b_{2p}}  := {\rm span}\{ x^1,\; x^2\}, \quad a \in \R
\]
called the bifurcation equation. The idea is that for any given $a$, $F_2(\lambda,t;a)=0$  determines a solution $(\lambda(a),t(a))$ emanating from the trivial solution since it is two dimensional system.
%Then doubly-connected rotating vortex patch is equivalent to the non-trivial solutions of the bifurcation equation \eqref{E:F2}.
In particular, the transversality condition is violated when $\lambda = \lambda_{2p} = \frac{1 + b_{2p}^2}{2}$ implies $\p_\lambda \p_f G(\lambda_{2p},0) g \in \mathcal{Y}$, i.e.,
\beq \label{E:Non-tra}
Q \p_\lambda \p_f G(\lambda_{2p},0) g =0,\quad \forall g \in {\rm Ker} \mathcal{L}_{\lambda_{2p},b_{2p}}.
\eeq
The fact predicts that $\lambda$ would occur as quadratic terms at least in the Taylor expansion of $F_2$. Due to the degeneracy of on the linearization, one has to further explore the nonlinear structure of $F_2$. Firstly, we have
\begin{lemma} \label{L:Real-analy-2}
	$F_2(\lambda,t,a): \; \R^3 \to \R^2$ is a real-analytic mapping.
\end{lemma}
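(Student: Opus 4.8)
The plan is to assemble $F_2$ out of maps that are already known to be real-analytic, invoking only three soft facts: that the derivative of an analytic map between Banach spaces is again analytic, that analytic maps compose, and that integration against a compact parameter interval preserves analyticity. Accordingly, almost nothing needs to be computed; the content is entirely in the bookkeeping of compositions plus one application of the analytic implicit function theorem.

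First I would record that by Lemma \ref{L:Real-analyticity} the nonlinear operator $G$ is real-analytic on the ball $B_\delta(0)$, and hence so is its Fr\'echet derivative $\partial_f G$ as a map into $\mathcal{L}\big((C^{1,\alpha})^2,(C^\alpha)^2\big)$; indeed the derivative of a real-analytic map is real-analytic, and the evaluation pairing $(L,v)\mapsto Lv$ is bilinear and bounded, hence analytic. Next I would upgrade the Lyapunov--Schmidt corrector $\varphi$ from smooth to real-analytic. The defining relation $({\rm Id}-Q)G(\lambda,g+\varphi(\lambda,g))=0$ has an analytic left-hand side in $(\lambda,g,\varphi)$, and the partial derivative $({\rm Id}-Q)\partial_f G(\lambda_{2p},0)\colon \mathcal{X}\to\mathcal{Y}$ is a Banach-space isomorphism, as recorded in the discussion preceding \eqref{E:LS-red}. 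The real-analytic implicit function theorem then produces a real-analytic map $\varphi\colon U_{\lambda_{2p}}\times {\rm Ker}\,\mathcal{L}_{\lambda_{2p},b_{2p}}\to\mathcal{X}$ near the origin, and its partial derivative $\partial_g\varphi$ is likewise real-analytic.

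With these pieces in hand I would exhibit the integrand of \eqref{E:F2} as a finite composition of analytic maps. The substitution $(s,t,a)\mapsto s t\, x_a = s t\,(x^1+a\,x^2)$ is polynomial, hence real-analytic, into the finite-dimensional space ${\rm Ker}\,\mathcal{L}_{\lambda_{2p},b_{2p}}$. Composing with $\varphi$ yields the analytic argument $g^\star := s t\, x_a + \varphi(\lambda, s t\, x_a)$ and the analytic vector $v^\star := x_a + \partial_g\varphi(\lambda, s t\, x_a)\,x_a$; feeding these into the analytic map $(g,v)\mapsto Q\,\partial_f G(\lambda,g)\,v$ gives an integrand
\[
h(\lambda,t,a;s) := Q\,\partial_f G(\lambda,g^\star)\,v^\star
\]
that is real-analytic jointly in $(\lambda,t,a,s)$ and valued in the two-dimensional space ${\rm Ran}\,Q\cong\R^2$ (recall $Q$ is the projection onto the co-kernel spanned by $\mathbb{W}_2$ and $\mathbb{W}_{2p}$). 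This is valid on the neighbourhood where $s t\, x_a$ remains in the analyticity domain $B_\delta(0)$, so that the Cauchy-type operators stay inside $U^{1,\alpha}$.

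Finally I would integrate in $s$ over the compact interval $[0,1]$. Since $h$ is jointly analytic, for each fixed $s$ it expands in an absolutely convergent power series in $(\lambda,t,a)$ whose coefficients depend continuously on $s$, and by uniform convergence on compact parameter sets these series may be integrated term by term; this produces an absolutely convergent power series for $F_2(\lambda,t;a)=\int_0^1 h(\lambda,t,a;s)\,d s$, so $F_2\in C^\omega$. The one genuine technical point is the analytic promotion of $\varphi$: one must verify the hypotheses of the analytic implicit function theorem in the Banach-space setting, and in particular that the corrector keeps $s t\, x_a+\varphi$ within $U^{1,\alpha}$ so that $G$ and $\partial_f G$ remain on their analyticity domain. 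Everything else is composition of analytic maps and a routine interchange of summation and integration.
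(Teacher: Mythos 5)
Your proposal is correct and follows essentially the same route as the paper: both rest on Lemma \ref{L:Real-analyticity} for the analyticity of $G$, the analytic implicit function theorem applied to $({\rm Id}-Q)G(\lambda,g+\varphi)=0$ (using that $({\rm Id}-Q)\partial_f G(\lambda_{2p},0)$ is an isomorphism) to make $\varphi$ real-analytic, and then closure of analyticity under composition with the polynomial map $(s,t,a)\mapsto st\,x_a$. Your additional care with the evaluation pairing and the term-by-term integration over $s\in[0,1]$ simply fills in details the paper leaves implicit.
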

\begin{proof}
	Applying the analytic implicit function theorem (cf. \cite{Ber1977}) on $({\rm Id} -Q) G(\lambda,g+h) = 0$, we have $\varphi: \R \times {\rm Ker} \mathcal{L}_{\lambda_{2p}} \rightarrow \mathcal{X}$ is a $C^\omega$ mapping since $({\rm Id} -Q) G(\lambda,g+h)$ is zero on the constant component of its Taylor expansion and $({\rm Id} -Q) \p_f G(\lambda_{2p},0,0)$ is an isomorphism.
	Therefore we have  $F_2(\lambda,t;a):\;  \R^3\rightarrow \R^2$ is $C^\omega$ mapping for $(\lambda,t,a)$ since $a \rightarrow x_a$ is linear.
\end{proof}

\section{Heuristic discussion and  Proof of the main result} \label{S:Scheme}

We prefer to prove the main theorem \ref{T:main} at first. The proof essentially depends on calculations on the Jacobian, the Hessian and higher-order derivatives of $F_2$ which we left in the following sections.

\subsection{Heuristic discussion on the scheme of bifurcations} \label{SS:Scheme}

Due to the equation \eqref{E:F2}, we are in the position to study the real locus of system
\beq
\begin{cases}
	f_1(\lambda,t,a):=Q_1 F_2(\lambda,t;a)=0, \\
	f_2(\lambda,t,a):= Q_2 F_2(\lambda,t;a)=0.
\end{cases}
\eeq
where $Q_1$ and $Q_{2}$ are the projection operator onto $\mathbb{W}_2$ and  $\mathbb{W}_{2p}$ respectively. Here we let $\lambda:=\lambda-\lambda_{2p}$ for simplicity. Although both $f_1$ and $f_2$ are real-analytical functions and their real locus may possess a complicate structure, it suffices to study local structure of the real locus near $(\lambda,t) = (0,0)$. Here our main idea is to approximate it by considering the real algebraic varieties determined by the leading polynomials. The procedure is summarized as follows:
\begin{itemize}
	\item {\bf Step $1$.} Compute  the Jacobian and Hessian of $f_1$ and $f_2$ with respect to $\lambda,t$ at $(0,0;a)$.  Due to Proposition \ref{JF2c} and \ref{d2F2}, they are expanded as
	\beq \label{E:Sys-Po}
	\begin{cases}
		f_1(\lambda,t;a)= H_2(\lambda,t;a) +  H_{\geq 3}(\lambda,t;a) = 0, \\
		f_2(\lambda,t;a)= J_2(\lambda,t;a) +  J_{\geq 3}(\lambda,t;a) = 0,
	\end{cases}
	\eeq
	where $H_2$, $J_2$ are quadratic homogeneous polynomials concerned with $\lambda$ and $t$ and all the higher-degree terms are left to $H_{\geq 3}, J_{\geq 3}$.
	
	\item {\bf Step $2$.}  Consider the real algebraic variety
	\[
	\mathcal{J}:=\big\{ (\lambda,t) \in \R^2 \big| \;\; H_2(\lambda,t;a)=0, J_2(x,y;a) = 0
	 \big\}
	\]
	for $a \in \R$. If it consists of isolated point $(0,0)$, we claim that manifolds
	\[
	\mathcal{M}_j:=\{(\lambda,t) \big|\; f_j(\lambda,t,a)=0\},\quad j=1,2
	\]
	would intersect with each other at the isolated point $(0,0)$ locally. Indeed, we have
	\[
	\begin{pmatrix}
		f_1(\lambda,t;a) \\
		f_2(\lambda,t;a)
	\end{pmatrix} = \begin{pmatrix} A_{11}(a) & 0 \\
		0 & A_{22}(a)
	\end{pmatrix} \begin{pmatrix} \lambda^2 \\
		t^2
	\end{pmatrix} + \begin{pmatrix} \tilde{H}_{\geq 3}(\lambda,t;a) \\
		\tilde{J}_{\geq 3}(\lambda,t;a)
	\end{pmatrix} = 0
	\]
after a suitable rotation. Note the fact $\mathcal{M}_1$ and $\mathcal{M}_2$ are transversallity intersected implies
$\begin{pmatrix} A_{11}(a) & 0 \\
	0 & A_{22}(a)
\end{pmatrix}$ is inevitable and the inverse operator is bounded. We have the equality
\[
	\begin{pmatrix} \lambda^2 \\
		t^2
	\end{pmatrix} = - \left(\begin{pmatrix} A_{11}(a) & 0 \\
		0 & A_{22}(a)
	\end{pmatrix} \right)^{-1} \begin{pmatrix} \tilde{H}_{\geq 3}(\lambda,t;a) \\
		\tilde{J}_{\geq 3}(\lambda,t;a)
	\end{pmatrix}
	\]
in which the left-hand side is quadratic and the right-hand side is cubic in its leading orders. Then $(\lambda,t)=(0,0)$ follows from a standard fixed-point argument.

\item{\bf Step 3.} Due to Lemma \ref{L:non-deg}, the real algebraic variety $\mathcal{J}$ is not isolated points when $a=0$. We consider system \eqref{E:Sys-Po} separately. For the first component, the Taylor expansion is given as
\beq \label{E:Com-1}
	H_2(\lambda,t;a) +  H_3(\lambda,t;a)+ H_{\geq 4}(\lambda,t;a) = 0, \quad |a| \ll 1.
\eeq
We solve the algebraic equation consists of leading polynomials $H_2(\lambda,t;a) + H_3(\lambda,t;a)$ where $\lambda$ and $a$ are coefficients and $t$ is the unknown. Due to Proposition \ref{d2F2}, one has $H_2(\lambda,t;0)$ is hyperbolic. Solving equation $H_2(\lambda,t;0)=0$ we have
\[
C^2 \lambda^2 - t^2 =0
\]
with some constant $C>0$. We claim that for small $a \neq 0$, the following function
\beq \label{E:t-lambda}
t(\lambda,a) =A(a)\lambda + B \lambda^2+ O(|a| \lambda^2 + \lambda^3)
\eeq
solves \eqref{E:Com-1}, where $B \neq 0$ and $A(0)=C$. Indeed, dividing $\lambda^2$ from both sides of \eqref{E:Com-1}, one has the equation
%\[
%a_1(a) \lambda^2 - a_2(a) t^2 +  H_{\geq 3}(\lambda,t,a) = 0.
%\]
%where $a_1$ is continuous function and let $a_2 \equiv 1$ with lost of generality. It is equivalent to consider
\[
A(a) - \frac{t^2}{\lambda^2} + \frac{1}{\lambda^2}  H_{\geq 3}(\lambda,t,a) = 0, \;\; A(0)=C>0.
\]
Let $\xi = \frac{t}{\lambda}$. Note that the components in $H_{\geq 3}(\lambda,t,a)$ are at least cubic. We have
\[
\mathcal{F}(\lambda,\xi,a):=A(a) - \xi^2 + \lambda \widehat{H_{\geq 3}}(\xi,\lambda,a) = 0.
\]
where $\widehat{H_{\geq 3}}(\xi,\lambda,a) = \frac{1}{\lambda^3} H_{\geq 3} (\lambda,\xi \lambda;a)$ is sums of polynomials. Applying the implicit function theory on $\xi=C, a=0,\lambda=0$, one has
\[
\xi = C + C_1(a + \lambda) + h.o.t,
\]
which is indeed equation \eqref{E:t-lambda}

\item {\bf Step $4$.} Now we in the position to consider the other component of  \eqref{E:Sys-Po}. The Taylor expansion implies that it could be represented as
	\[
	J_2(\lambda,t(\lambda,a);a) + J_{\geq 3}(\lambda,t(\lambda,a);a) = 0.
	\]
We have $J_2(\lambda,t(\lambda,0);0)=0$ since the Hessian is degeneracy when $a=0$.  We claim that there exists integer $n_0 \geq 3$ such that
\[
		 \sum_{2 \leq m \leq n_0-1} \mathbf{Pr}_m^{\lambda,t} J(\lambda,t;0) = 0,\quad  \mathbf{Pr}_{n_0}^{\lambda,t} J(\lambda,t;0)  \neq 0, %+ \sum_{2 \leq m \leq n_0-1}\mathbf{Pr}_m^{\lambda} J(\lambda,t(\lambda);a_0) \neq 0,
	\]
	where $\mathbf{Pr}_{m}^{\lambda,t}$ denotes the projection onto the degree-$m$ homogeneous polynomials concerned with $\lambda,t$. Otherwise, due to the real analyticity of $f_2$, one has the nonlinear function $f_2(\lambda,t;a) \equiv 0$ for small $\lambda \in \R$. One could completely ignore the this component and  it is sufficient to solve the single equation $f_1(\lambda,t;a_0)=0$ only. \\
	
Putting \eqref{E:t-lambda} into the equation $f_2(\lambda,t;a)=0$  and consider its Taylor expansion, we have
\beq\label{E:lambda-a}
	\begin{split}
0= & f_2(\lambda,t(\lambda,a);a) =  J_2(\lambda,t(\lambda,a);a) + \sum_{k=3}^{n_0}J_{k }(\lambda,t(\lambda,a);a) + J_{\geq n_0+1}(\lambda,t(\lambda,a);a) \\
	& = C_{2} a^{k_1} \lambda^2 + C_3a^{k_2} \lambda^3 + \ldots + C_{n_0-1}a^{k_{n_0-2}} \lambda^{n_0-1} + C_{n_0}\lambda^{n_0} + J_{\geq {n_0+1}}(\lambda;a)
	\end{split}
	\eeq
To formulate in a perturbation approach, one needs to found the dominated term.	To this end, dividing $C_2a^{k_1} \lambda^2$ from both sides of \eqref{E:lambda-a}, we have
\[
1 + R(\eta,a) + C_{n_0} \eta^{n_0-2} + a \widetilde{J_{\geq {n_0-1}}}(\eta;a) =0.
\]
where  $\eta = \frac{\lambda}{a^{\frac{k_1}{n_0-2}}}$ and $\widetilde{J_{\geq {n_0-1}}}(\eta,a)$ consists of higher-order terms which are considered as perturbations. For example, suppose $J_{\geq {n_0+1}}(\lambda;a)=a^{k_{n_0}}\lambda^{n_0+1}$=$a^{k_{n_0}+k_1\frac{n_0+1}{n_0-2}}\eta^{n_0+1}$ which consists of the lowest order term. Then
\[
\begin{split}
\widetilde{J_{\geq {n_0-1}}}(\eta,a) & = \frac{J_{\geq {n_0+1}}(\lambda;a)}{C_2 a^{k_1+1}\lambda^2} \\
& = C a^{k_{n_0}-1+k_1\frac{n_0+1}{n_0-2}-k_1-k_1\frac{2}{n_0-2}}\eta^{n_0-1}
=C a^{k_{n_0}+k_1\frac{1}{n_0-2}-1}\eta^{n_0-1}
\end{split}
\]
which the exponents are positive. Therefore one needs to carefully check the lower order terms concerned with $\lambda$ less than $n_0$. Suppose
\[
R(\eta,a) = \sum_{j=2}^{n_0-1} a^{k_j - k_1} \lambda^{j-2} = \sum_{j=2}^{n_0-1} a^{k_j - k_1 + \frac{j-2}{n_0-2}k_1} \eta^{j-2}
=\sum_{j=2}^{n_0-1} a^{k_j -\frac{n_0-j}{n_0-2}k_1} \eta^{j-2}
\]
When $k_j -\frac{n_0-j}{n_0-2}k_1 >0$ for $j=2,\ldots,n_0-1$, the leading term is $C_2 + R(\eta,a) + C_{n_0} \eta^{n_0-2}=0$ then we can apply the implicit function theory to ensure the real solution persists with slightly perturbations. If there exists $k_j -\frac{n-j}{n-2}k_1\leq 0$ for some $j$ otherwise, we choose this term as well as zero-order term $C_2$ as the dominated terms then the implicit function theory could apply. Fortunately, we have $k_1=1$ in our consideration therefore the dominated polynomial consists of zero-order and $n_0$-order terms.

\end{itemize}
In conclusion, there are two key ingredients in our consideration. The critical value $a=0$ and a non-vanishing higher-order terms $J_{n_0}(\lambda,t;a_0), n_0 \geq 3$. In the next section, we prove that the only degenerate direction is $a=0$. It implies that the $\mathbb{W}_{2p}$-component of $F_2$, which is of order $O(a)$, is much more smaller the $\mathbb{W}_{2}$-component with an order $O(1)$. We note that it is sufficient to consider higher-order derivatives when $a=0$ due to the real-analyticity of functional $F_2$. Due to the explicit computations, we found that the higher-order derivatives on the component $\mathbb{W}_{2p}$ vanish except exactly one term for $p \leq 4$ (Lemma \ref{L:Key-L}) up to $p+1$-th order. We further conjecture that

{\it
	For any $p \geq 5$, all derivatives of $Q_2 F_2(\lambda,t;0)$ would be vanished up to the $p+1$-th order at $(\lambda,t)=(0,0)$ except
	%\[
	%\sum_{k_1+k_2 = p+1} \frac{(p+1)!}{k_1!k_2!}  \mathcal{D}_{\lambda^{k_1},t^{k_2}}^{p+1} Q_2 F_2(0,0;0) \lambda^{k_1} t^{k_2}
	%\]	
	%is non-zero. More precisely, there would be exactly only one non-vanishing term
	$$\frac{(p+1)!}{2(p-1)!}  \mathcal{D}_{\lambda^{2},t^{p-1}}^{p+1} Q_2 F_2(0,0;0) \lambda^{2} t^{p-1}.$$}

\begin{remark}
	As pointed out in Step 4 of the scheme, we note that there exists $n_0 \geq 3$ such that the $n_0$-order terms in the Taylor expansion of $Q_2 F_2$ is definitely not vanished, i.e.,
	\[
	\sum_{k_1+k_2 = n_0} \frac{n_0!}{k_1!k_2!}  \mathcal{D}_{\lambda^{k_1},t^{k_2}}^{k_1 +k_2} Q_2 F_2(0,0;0) \lambda^{k_1} t^{k_2} \neq 0.
	\]
	Otherwise, due to the real analyticity of the functional (Lemma \ref{L:Real-analy-2}), the nonlinear functional $Q_2F_2(\lambda,t,0)$ would vanish for all $(\lambda,t)$ near $(\lambda_{2p},0)$. Hence one could deduct the eigenvector $x^2$ from ${\rm Ker} \mathcal{L}_{\frac{1+b_{2p}^2}{2},b_{2p}}$ and
	 $\mathbb{W}_{2p}$ from the target space $Y$. The linearized operator is still a Fredholm operator with index $0$ so it is sufficient to consider the problem with the kernel spanned by $x^1$ only.
\end{remark}

\subsection{Proof of main result Theorem \ref{T:main}}

With the bifurcation scheme considered above, we would like to prove the main result here where some ingredients concerned the computation of derivatives are given in Section \ref{S:Deriv-12} and Section \ref{S:Higher-D}. Denote by $\lambda:= \lambda - \lambda_{2p}$ for simplicity for $p \geq 2$. The proof follows from perturbational analysis on the real locus of truncated Taylor polynomials of $F_2$. Proposition \ref{JF2c} implies first-order derivatives of $F_2(0,0;a)$ vanishes for  $a \in \R$ hence the leading terms are quadratic.

Due to Proposition \ref{d2F2}, we have
\[
\begin{split}
	& \p_{\lambda \lambda} F_2(\lambda_{2p},0;a) = 4 \sqrt{2} \begin{pmatrix}
		b^{-1} \\ap^2 b^{1-2p}
	\end{pmatrix}, \quad\,
	\p_{\lambda t} F_2(\lambda_{2p},0;a) = 0, \\
	& \p_{tt} F_2(\lambda_{2p},0;a) \!=\! \sqrt{2} \begin{pmatrix}
		-b^{-1}(b^2-1)^2 + c_1 \\
		b^{-1}(b^2-1)^2[2p a+(2p-1)p a^3]+c_2
	\end{pmatrix}\!,
\end{split}
\]
where $c_1$ and $c_2$ are given in Proposition \ref{d2F2} with $c_1=O(a^2)$ and $c_2=O(a^3)$. For the quadratic terms, we have
\begin{small}
	\beq
	\begin{split}
		\mathbf{P}^{\lambda,t}_2F_2(\lambda,t;a) = \frac{\sqrt{2}}{2} \times
		\begin{pmatrix}
			4 b^{-1} \lambda^2 + \left[-b^{-1}(b^2-1)^2 +c_1\right]t^2 \\
			a \bigg(4 p^2 b^{1-2p} \lambda^2 \!+\! \left[  b^{-1}(b^2-1)^2(2p +(2p-1)p a^2)+\frac{c_2}{a} \right] t^2 \bigg)
		\end{pmatrix}.
	\end{split}
	\eeq
\end{small}
Lemma \ref{L:non-deg} implies the Hessian of $F_2(0,0 ;a)$ is degenerate if and only if $a=0$.

Recall the projection $Q_1:Y \rightarrow y_1$ where $Y$ and $y_1$ are defined in \eqref{Y} and \eqref{y12}. As \eqref{E:Com-1}, we consider the component $ Q_1 F_2(\lambda,t;a)$ at follows
\[
\begin{split}
	Q_1 F_2(\lambda,t;a) & =   \frac{1}{\sqrt{2}}  \left[-b^{-1}(b^2-1)^2 + c_1\right] t^2  +2 \sqrt{2} b^{-1} \lambda^2+ \sum_{k \geq 3}  \mathbf{P}_{k}(\lambda,t;a) \\
	& =\frac{2\sqrt{2}}{b} \lambda^2 - \frac{(b^2-1)^2}{\sqrt{2}b} t^2 + aC_1(a)t^2 + \sum_{k \geq 3}  \mathbf{P}_{k}(\lambda,t;a).
\end{split}
\]
where $C_j(\cdot)$ denotes continuous bounded function and $\mathbf{P}_k(\lambda,t;a)$ denotes $k$-th degree homogeneous bivariate polynomials concerned $\lambda,t$, which may be different from line to line. By the implicit function theory, one obtains
%The equation $\mathcal{Q}_1 F_2(\lambda,t,a) =0$ is hyperbolic hence it is always solvable therefore $t$ is determined by $\lambda$ and the following asymptotic formula
\beq\label{E:lambda-1}
t^2 = \frac{4}{(b^2-1)^2} \lambda^2 +  a C_2(a) \lambda^2 + O(\lambda^3)
\eeq
holds. It immediately implies
\beq\label{E:lambda-2}
t = \pm \frac{2}{b^2-1} \lambda + aC_3(a) \lambda + O(\lambda^2).
\eeq
Now we are in the position to consider the second component $Q_{2} F_2(\lambda,t;a)=0$ with the asymptotic formula \eqref{E:lambda-2} where $Q_2:Y \rightarrow y_2$ is the projection.

\begin{proof} [Proof of Theorem \ref{T:main}]

We begin with $p=2$. Due to Proposition \ref{P:d3F2a=0}, we have
\beq
\begin{split}
	\p_{\lambda \lambda \lambda} F_2(\lambda_{2p},0,0) & = 12 \sqrt{2} \begin{pmatrix}
		b^{-3} \\
		0
	\end{pmatrix},p\ge2, \\
\p_{\lambda t t} F_2(\lambda_{2p},0,0) & =
	\begin{pmatrix}
		\sqrt{2}(b^{-3}+6b^{-1}-7b) \\
		0
	\end{pmatrix},p\ge2,
\end{split}
\eeq
\beq	
\begin{split}
	\p_{\lambda \lambda t} F_2(\lambda_{2p},0,0) & = \begin{cases}  0, \quad & p \ge 3, \\
		\begin{pmatrix}
			0 \\
			-8 \sqrt{2}
		\end{pmatrix},
		\quad & p =2,
	\end{cases}\\
\p_{ttt} F_2(\lambda_{2p},0,0) & = 0, p \geq 2.
\end{split}
\eeq
	For $p=2$, let us consider the Taylor expansion of $\mathcal{Q}_2F_2(\lambda,t;a)$ up to the third-order. Therefore, due to local real analyticity of $F_2(\lambda,t;a)$, we have
	\[
	\begin{split}
		Q_2 F_2(\lambda,t,a) & =  a\left(8 \sqrt{2}  b^{-3} \lambda^2 + \frac{1}{\sqrt{2}} \big( b^{-1}(b^2-1)^2 (4+6a^2) +\frac{c_2}{a} \big) t^2\right) \\
		& - 4 \sqrt{2} \lambda^2 t + a   \mathbf{P}_{3}(\lambda,t;a)
		+\sum_{k \geq 4}\mathbf{P}_{k}(\lambda,t;a)= 0,
	\end{split}
	\]
	for $a \ll 1$. %To guarantee the above equation exists a real solution, it is sufficient to consider the leading term with suitable non-degeneracy condition.To work out this equation, we consider the
	
	Moreover, jointing with \eqref{E:lambda-1} and \eqref{E:lambda-2}, we have
	\begin{align} \label{E:Q2F2-p}
		Q_2 F_2(\lambda,t,a) & =Q_2 F_2(\lambda,t(\lambda,a);a) \nonumber\\
		&= \lambda^2\bigg[  8 \sqrt{2}a b^{-3} \!+\! \frac{a}{\sqrt{2}} \big( b^{-1}(b^2\!-\!1)^2 (4\!+\!6a^2) +\frac{c_2}{a} \big) \!
		\left( \frac{4}{(b^2-1)^2}\! +\! a C_2(a)\!+\! O(\lambda)\right)  \nonumber\\
		& - 4\sqrt{2}(\pm \frac{2}{b^2-1}) \lambda  \bigg] + aC_5(a) \lambda^3 + O(\lambda^4) = 0.
	\end{align}
	Note that $c_2=O(a^3)$. We first neglect higher-order terms for example $O(\lambda^2 a^2)$ and $O(\lambda^3 a)$ in \eqref{E:Q2F2-p} and consider the real polynomial
	\beq \label{E:leading-2}
	\lambda^2(8\sqrt{2}a b^{-3} + 8 \sqrt{2}a b^{-1} \mp \frac{8\sqrt{2}}{1-b^2} \lambda) = 0.
	\eeq
	Solving the polynomial concludes that there are non-trivial solutions
	\[
	\lambda(a) =\pm a (1-b^2) (\frac{1}{b^3} + \frac{1}{b}).
	\]
	On the next we prove that this solution is robust as one come back to the analytic function. Firstly, the equation $Q_2 F_2(\lambda,t(\lambda,a);a)=0$ implies
	\[
\mathcal{F}_3(\lambda,a)\triangleq\frac{1}{b^3} + b^{-1} \mp \frac{1}{1-b^2} \frac{\lambda}{a} + C_5(a)\lambda + a^2C_6(a)  + O(\frac{\lambda^2}{a}) =0.
	\]
	Let $\eta = \frac{\lambda}{a}$. We have
	\[
	\mathcal{F}_3(\eta,a) = \frac{1}{b^3} + b^{-1} \mp \frac{1}{1-b^2} \eta +a C_5(a) \eta + a^2 C_6(a) + O(a \eta^2) = 0
	\]
	Applying the implicit function theory on $\mathcal{F}_3(\eta,a)=0$ at $\eta = \mp (1-b^2) (\frac{1}{b^3} + \frac{1}{b})$ and $a=0$, we have
	\[
	\lambda(a) =\pm a (1-b^2) (\frac{1}{b^3} + \frac{1}{b}) + o(a), \; \text{ hence } t(a) = \pm 2 a(\frac{1}{b^3} + \frac{1}{b}) + o(a)
	\]
	for $a \ll 1$ which implies a transcritical bifurcation occurs. \\
	
	The problem is more degenerate for $p=3$ since all the third-order terms of $Q_2 F_2(\lambda,t;0)$ vanish due to Proposition \ref{P:d3F2a=0}. Therefore we have to consider the local expansion up to the fourth-order terms. From Proposition \ref{P:d4F2a=0}, we can see that all the fourth-order terms in the Taylor series expansion of $Q_2 F_2(\lambda,t;0)$ is $0$ at $(\lambda_{6},0)$ except $\p_{\lambda \lambda t t} F_2(\lambda_6,0;0) = - 24 \sqrt{2} b \; y_2$. Therefore, due to Proposition \ref{d2F2}, we have
	\[
	\begin{split}
		Q_2 F_2(\lambda,t;a) & = \frac{\sqrt{2}a}{2} \bigg( \frac{36}{b^5} \lambda^2 + \left(\frac{3(b^2-1)^2}{b}(2+5a^2) +\frac{c_2}{a}\right)  t^2 \bigg) \\
		& + a \mathbf{P}_3 (\lambda,t,a) - 6 \sqrt{2}b \lambda^2 t^2 + a \mathbf{P}_4(\lambda,t,a) + \sum_{k \geq 5} \mathbf{P}_k(\lambda,t;a)
	\end{split}
	\]
	%Similar with the scenario for $p=2$, we obtain
	%\beq \label{E:lambda-3}
	%t^2 = \frac{4}{(b^2-1)^2} \lambda^2 + aC_6(a) \lambda^2 + O(\lambda^3), \;\; t =\pm \frac{2}{b^2-1} \lambda + aC_7(a) \lambda + O(\lambda^3).
	%\eeq
	%by solving the first component of the equation $\mathcal{Q}_1 F_2(\lambda,t,a)=0$, ,
	
	Inserting the formula \eqref{E:lambda-1}-\eqref{E:lambda-2}, since the leading terms in $a P_3(\lambda, t(\lambda,a),a)$ is in the form of $a C_{6}(a)\lambda^3$, we have
	\[
	\begin{split}
		Q_2 F_2(\lambda;a) &  =  \frac{\sqrt{2}a}{2} \bigg[ \frac{36}{b^5} \lambda^2 + \left(\frac{3(b^2-1)^2}{b}(2+5a^2)+ \frac{c_2}{a}\right) \frac{4}{(b^2-1)^2}
		\big( \lambda^2+ aC_7(a)\lambda^2 + O(\lambda^3) \big) \bigg] \\
		%	& \times \big( \lambda^2+ aC_7(a)\lambda^2 + O(\lambda^3) \big) \bigg]
		&+ a \mathbf{P}_3(\lambda, t(\lambda,a),a) - \frac{24 \sqrt{2}b}{(b^2-1)^2}  \lambda^4 + O(\lambda^5 + a \lambda^4) \\
		& = \lambda^2 \bigg( \frac{18\sqrt{2}a}{b^5} + \frac{12 \sqrt{2}a}{b} + a^2C_8(a) + a C_9(a) \lambda -\frac{24\sqrt{2}b}{(b^2-1)^2} \lambda^2 \bigg)  + O(\lambda^5 + a \lambda^4).
	\end{split}
	\]
	Clearly
	\[
	\frac{18\sqrt{2}a}{b^5} + \frac{12 \sqrt{2}a}{b}-\frac{24\sqrt{2}b}{(b^2-1)^2} \lambda^2 = 0
	\]
	is always solvable for $a>0$. Then $aC_9(a) \lambda$ would be a term of $a^\frac{3}{2}$  and the leading terms in $Q_2F_2(\lambda,a) = 0$ is represented by
	\[
	\frac{18\sqrt{2}a}{b^5} + \frac{12 \sqrt{2}a}{b}-\frac{24\sqrt{2}b}{(b^2-1)^2} \lambda^2 = 0.
	\]
	Then we obtain a pair of non-zero solutions
	\[
	\lambda = \pm \frac{\sqrt{a}}{2}(b^2-1) \sqrt{\frac{3}{b^6}+\frac{2}{b^2}}, \;\; a>0
	\]
	Note that $C_9(a) \lambda = \sqrt{a}  C_9(a) \frac{\lambda}{\sqrt{a}}$. We have
	\[
	\lambda(a) = \pm \frac{\sqrt{a}}{2}(b^2-1) \sqrt{\frac{3}{b^6}+\frac{2}{b^2}} + o(a^{\frac{1}{2}}),\;\; t(a) = \pm \sqrt{a} \sqrt{\frac{3}{b^6}+\frac{2}{b^2}} + o(a^{\frac{1}{2}}),
	\]
	for small $a>0$ by the implicit function theorem as the case $p=2$. It is also a transcritical bifurcation.
	
	For $p=4$, we have ${\bf P}_{3}(\lambda,t;0)={\bf P}_4(\lambda,t;0)=0$ and ${\bf P}_5(\lambda,t;0) = - 8 \sqrt{2}b^2 \lambda^2 t^3$.
	Then the nonlinear functional $Q_2 F_2$ can be expanded in the form of
	\[
	\mathcal{Q} F_2(\lambda,a) = 8\sqrt{2} \lambda^2 \left( \left(\frac{4}{b^7} + \frac{2}{b} \right)a + a \left( C_{11}(a) + C_{12}(a) \lambda + C_{13}(a) \lambda^2 \right) \pm  \frac{8b^2}{(b^2-1)^3} \lambda^3 \right) + O(a \lambda^5 + \lambda^6).
	\]
	Following a similar argument as $p=2$, we can solve the solution for small $a \in \R$, and hence the conclusion holds.
	
\end{proof}

\section{Jacobian and Hessian of $F_2(\lambda,t;a)$} \label{S:Deriv-12}

The rest of paper is devoted to explicit calculations on the derivatives of $F_2(\lambda,t;a)$ with respect to $\lambda,t$ at $(\lambda_{2p},0)$. As shown in Section \ref{S:Scheme}, they are essential ingredients in our consideration. Although these computations are elementary that they are not involved advanced topics in the discipline except basic complex analysis, for example the residue theorem, the computation is still considerably complicated, especially when $p$ is kept as a parameter. On the other hand, these calculations are indeed very similar when $p$ varies. Therefore, we prefer to focus on the calculation for $p=2$ and give all the details. For a general $p \geq 3$, we shall not give proofs with full details but a sketch.

The main results of this section is Proposition \ref{JF2c}, Proposition \ref{d2F2}, which give the explicit information on the first and second-order derivatives for any parameter $a \in \R$ respectively. The calculation follows from the following formula.
\begin{lemma}\label{acformal}
Recall $x_a = x^1 + a x^2 \in {\rm Ker} \mathcal{L}_{\lambda_{2p},b_{2p}}$ for $a \in \R$ and $F_2$ given in \eqref{E:F2}. We have the following computational formula
\begin{enumerate}
	\item The explicit formula of the first-order derivatives:
		\begin{eqnarray*}
			\begin{split}
				&\partial_{\lambda}F_2(\lambda_{2p},0)=Q\partial_\lambda\partial_fG(\lambda_{2p},0) x_a, \\
				&\partial_t F_2(\lambda_{2p},0)=Q\partial_{ff}G(\lambda_{2p},0)[x_a,x_a]
				= \frac{1}{2} \frac{\dif^2}{\dif t^2}QG(\lambda_{2p}, tx_a)_{|t=0}.
			\end{split}
		\end{eqnarray*}
		
		\item The explicit formula for $\partial_{tt} F_2(\lambda_{2p},0)$.
		\begin{eqnarray*}
			\partial_{tt}F_2(\lambda_{2p},0)=\frac{1}{3}\frac{\dif^3}{\dif t^3}QG(\lambda_{2p},tx_a)_{|t=0}+Q\partial_{ff}G(\lambda_{2p},0)[x_a,\bar{x}]
		\end{eqnarray*}
		in which
		\[
		\begin{split}
			\bar{x}& =\frac{\dif^2}{\dif t^2}\varphi(\lambda_{2p}, tx_a)_{|t=0}=\partial_{gg}\varphi(\lambda_{2p},0)[x_a,x_a] \\
			& =-\left[\partial_{f} G(\lambda_{2p},0)\right]^{-1}\frac{\dif^2}{\dif t^2}({\rm {Id}-Q})G(\lambda_{2p}, tx_a)_{|t=0}
		\end{split}
		\]
		and
		\begin{eqnarray*}
			Q\partial_{ff}G(\lambda_{2p},0)[x_a,\bar{x}]=Q\partial_t\partial_s G(\lambda_{2p},t x_a+s\bar{x})_{|t=0,s=0}.\nonumber
		\end{eqnarray*}
		
		\item  The explicit formula for $\partial_t\partial_\lambda F_2(\lambda_{2p},0)$.
		\begin{eqnarray*}
			\begin{split}
				\partial_t\partial_\lambda F_2(\lambda_{2p},0)=&\frac{1}{2}Q\partial_\lambda\partial_{ff}G(\lambda_{2p},0)[x_a,x_a]
				+Q\partial_{ff}G(\lambda_{2p},0)\left[\tilde{x},x_a\right]\\
				&+\frac{1}{2}Q\partial_\lambda\partial_f G(\lambda_{2p},0)\bar{x}
			\end{split}
		\end{eqnarray*}
		in which
		\begin{eqnarray*}
			\tilde{x}=\partial_\lambda\partial_g \varphi(\lambda_{2p},0)x_a =-\left[\partial_f G(\lambda_{2p},0)\right]^{-1}({\rm {Id}}-Q)\partial_{\lambda}\partial_f G(\lambda_{2p},0)x_a.
		\end{eqnarray*}
		
		\item  The explicit formula for $\partial_{\lambda\lambda} F_2(\lambda_{2p},0)$.
		\begin{eqnarray*}
			\partial_{\lambda\lambda} F_2(\lambda_{2p},0)=-2Q\partial_{\lambda}\partial_f G(\lambda_{2p},0) \left[\partial_f G(\lambda_{2p},0)\right]^{-1}({\rm {Id}}-Q)\partial_{\lambda}\partial_f G(\lambda_{2p},0)x_a.
		\end{eqnarray*}
	\end{enumerate}
\end{lemma}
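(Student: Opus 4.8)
The plan is to reduce every derivative of $F_2$ at $(\lambda_{2p},0)$ to a single $t$-Taylor coefficient of the \emph{full} reduced map, and then to evaluate those coefficients by the chain rule while exploiting a few structural features that make almost all terms collapse. The first step is a Hadamard-type factorization: writing the path $\sigma\mapsto \sigma t x_a+\varphi(\lambda,\sigma t x_a)$, the integrand of \eqref{E:F2} is exactly $t^{-1}\frac{\dif}{\dif\sigma}QG(\lambda,\sigma t x_a+\varphi(\lambda,\sigma t x_a))$, so the $\int_0^1$ telescopes and the endpoint at $\sigma=0$ drops out because $\varphi(\lambda,0)=0$ (by uniqueness in the reduction, since $G(\lambda,0)\equiv0$). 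This yields
$$\Phi(\lambda,t):=QG(\lambda,\Gamma_\lambda(t))=t\,F_2(\lambda,t;a),\qquad \Gamma_\lambda(t):=tx_a+\varphi(\lambda,tx_a).$$
Applying Leibniz's rule to $\partial_\lambda^i\Phi=t\,\partial_\lambda^i F_2$ gives the dictionary $\partial_\lambda^i\partial_t^j F_2(\lambda_{2p},0)=\frac{1}{j+1}\,\partial_\lambda^i\partial_t^{j+1}\Phi(\lambda_{2p},0)$, which turns the four target quantities into the mixed derivatives $\partial_\lambda\partial_t\Phi$, $\tfrac12\partial_t^2\Phi$, $\tfrac13\partial_t^3\Phi$, $\tfrac12\partial_\lambda\partial_t^2\Phi$, and $\partial_{\lambda\lambda}\partial_t\Phi$ respectively.

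Next I would record three facts that kill terms. (a) Since $\varphi(\lambda,0)\equiv0$ in $\lambda$, all pure $\lambda$-derivatives of $\Gamma_\lambda(0)$ vanish, and differentiating the defining identity $({\rm Id}-Q)G(\lambda,g+\varphi(\lambda,g))=0$ once in $g$ at the base point forces $\partial_g\varphi(\lambda_{2p},0)=0$, because $\mathcal{L}_{\lambda_{2p},b_{2p}}x_a=0$ and $({\rm Id}-Q)\partial_fG(\lambda_{2p},0)|_{\mathcal{X}}$ is an isomorphism. (b) The annihilation property $Q\,\partial_fG(\lambda_{2p},0)=0$ holds on all of $\mathbf{X}_2$, since the range of $\mathcal{L}_{\lambda_{2p},b_{2p}}$ equals $\ker Q$; hence every chain-rule term carrying an \emph{undifferentiated} $\partial_fG(\lambda_{2p},0)$ — in particular all higher $t$-derivatives of $\varphi$ along $\Gamma_\lambda$ — disappears after $Q$. (c) The map $G$ is affine in $\lambda$ (it enters \eqref{E:Non} only through $(1-\lambda)\overline{\phi_j}$), so $\partial_{\lambda\lambda}G\equiv0$ and $\partial_{\lambda\lambda}\partial_fG=0$.

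I would then extract the two auxiliary directions from the reduction identity. Differentiating $({\rm Id}-Q)G(\lambda,g+\varphi)=0$ twice in $g$ and inverting gives $\bar x=\partial_{gg}\varphi(\lambda_{2p},0)[x_a,x_a]=-[\partial_fG(\lambda_{2p},0)]^{-1}({\rm Id}-Q)\partial_{ff}G(\lambda_{2p},0)[x_a,x_a]$, where $[\partial_fG]^{-1}$ is the inverse of the isomorphism $({\rm Id}-Q)\partial_fG(\lambda_{2p},0):\mathcal{X}\to\mathcal{Y}$; differentiating once in $g$ and once in $\lambda$ gives $\tilde x=\partial_\lambda\partial_g\varphi(\lambda_{2p},0)x_a=-[\partial_fG(\lambda_{2p},0)]^{-1}({\rm Id}-Q)\partial_\lambda\partial_fG(\lambda_{2p},0)x_a$. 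Feeding these, together with $\partial_t\Gamma_{\lambda_{2p}}(0)=x_a$, $\partial_t^2\Gamma_{\lambda_{2p}}(0)=\bar x$, $\partial_\lambda\partial_t\Gamma(0)|_{\lambda_{2p}}=\tilde x$, into the higher chain rule (Fa\`a di Bruno) for $\Phi=QG\circ\Gamma_\lambda$ and discarding everything annihilated by (a)–(c), each formula falls out: $\partial_tF_2$ and $\partial_\lambda F_2$ from the first-order coefficients (the latter being $Q\partial_\lambda\partial_fG(\lambda_{2p},0)x_a$, which \eqref{E:Non-tra} then shows is zero); $\partial_{tt}F_2=\tfrac13\partial_t^3\Phi$, whose survivors are $\tfrac13 Q\partial_{fff}G[x_a,x_a,x_a]=\tfrac13\frac{\dif^3}{\dif t^3}QG(\lambda_{2p},tx_a)$ and the cross term $Q\partial_{ff}G[x_a,\bar x]$; $\partial_t\partial_\lambda F_2=\tfrac12\partial_\lambda\partial_t^2\Phi$, whose survivors are $\tfrac12 Q\partial_\lambda\partial_{ff}G[x_a,x_a]$, $Q\partial_{ff}G[\tilde x,x_a]$ and $\tfrac12 Q\partial_\lambda\partial_fG\,\bar x$; and $\partial_{\lambda\lambda}F_2=\partial_{\lambda\lambda}\partial_t\Phi$, where (c) removes the $\partial_{\lambda\lambda}\partial_fG$ term and only $2Q\partial_\lambda\partial_fG\,\tilde x$ remains, which is precisely the stated expression after substituting $\tilde x$.

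The computation is essentially pure bookkeeping, so the real work is organizing the chain-rule expansion so the cancellations are transparent. The delicate point is to see exactly which terms vanish and by which of the three distinct mechanisms — the vanishing of the low-order $\varphi$-derivatives, the range/cokernel identity $Q\mathcal{L}_{\lambda_{2p},b_{2p}}=0$, and the affine-in-$\lambda$ structure — since carelessly retaining any of them would spoil the clean formulas. This is most conspicuous for $\partial_{\lambda\lambda}F_2$, where the entire content of the statement is that a single cross term is the sole survivor.
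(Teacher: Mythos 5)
Your proposal is correct and follows essentially the same route the paper intends: a chain-rule expansion of the blown-up reduced functional in which terms are killed by $\partial_\lambda\varphi(\lambda_{2p},0)=\partial_g\varphi(\lambda_{2p},0)=\partial_{\lambda\lambda}\varphi(\lambda_{2p},0)=0$ (Lemma \ref{L:phi-1}), by the cokernel identity $Q\,\partial_fG(\lambda_{2p},0)=0$, and by the affine dependence of $G$ on $\lambda$; your telescoping identity $t\,F_2(\lambda,t;a)=QG(\lambda,tx_a+\varphi(\lambda,tx_a))$ is merely a clean bookkeeping device for the same formal computation. One remark: your derivation yields $\partial_tF_2(\lambda_{2p},0)=\tfrac12 Q\partial_{ff}G(\lambda_{2p},0)[x_a,x_a]=\tfrac12\frac{\dif^2}{\dif t^2}QG(\lambda_{2p},tx_a)_{|t=0}$, which matches the form actually used in the proof of Proposition \ref{JF2c}; the middle expression printed in item (1) of the lemma omits the factor $\tfrac12$ (harmless only because the quantity turns out to vanish).
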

It follows from formal calculations on the first and second-order derivatives of composition of functions concerned with the implicit function $\varphi$ satisfying the following lemma, which is crucial. The proof is similar with Lemma 1 in \cite{Hmidi2016a} hence we omit here for brevity.
\begin{lemma}\label{L:phi-1}
	%\begin{eqnarray*}
	%\partial_{\lambda}\varphi(\lambda_{4},0)=\partial_g\varphi(\lambda_{4},0)=0,\text{ and }\partial_{\lambda\lambda}\varphi(\lambda_{4},0)=0,
	%\end{eqnarray*}
	%these formulas are true for all $p\ge2$, that is
	Recall the implict function $\varphi$ defined in \eqref{E:LS-red}. One has
	\begin{eqnarray*}
		\partial_{\lambda}\varphi(\lambda_{2p},0)=\partial_g\varphi(\lambda_{2p},0)=0,\text{ and }\partial_{\lambda\lambda}\varphi(\lambda_{2p},0)=0,
	\end{eqnarray*}
	where $b_{2p}$ is determined by \eqref{E:b-2m} and $\lambda_{2p}= \frac{1+b_{2p}^2}{2}$.
\end{lemma}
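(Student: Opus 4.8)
The plan is to differentiate the defining relation \eqref{E:LS-red} for the implicit map $\varphi$ and to exploit two structural facts already established in the excerpt: that the annulus is a uniformly rotating patch for \emph{every} angular velocity, so that $G(\lambda,0)\equiv 0$ in the parameterization \eqref{E:Non}; and that $({\rm Id}-Q)\partial_f G(\lambda_{2p},0)$ restricts to a bijection from $\mathcal{X}$ onto $\mathcal{Y}$ with bounded inverse, which is exactly the hypothesis under which $\varphi$ was produced by the implicit function theorem. The whole lemma is then a bookkeeping exercise in implicit differentiation; notably, the failure of transversality \eqref{E:Non-tra} plays no role, so the conclusions hold by the same argument as in the non-degenerate case. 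Their purpose is to clean up the composite-derivative formulas of Lemma \ref{acformal}.

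First I would record that $\varphi(\lambda,0)=0$ for all $\lambda$ near $\lambda_{2p}$. Indeed $f\equiv 0$ corresponds to the annulus $A_{b_{2p}}$, which solves \eqref{E:Non} for every $\lambda$, hence $G(\lambda,0)\equiv 0$ and in particular $({\rm Id}-Q)G(\lambda,0+0)=0$. By the local uniqueness clause of the implicit function theorem defining $\varphi$, the branch through $(\lambda_{2p},0)$ must coincide with this trivial one, i.e. $\varphi(\lambda,0)\equiv 0$ on a neighbourhood of $\lambda_{2p}$. Since $\varphi$ is $C^\omega$, differentiating the identity $\varphi(\lambda,0)\equiv 0$ in $\lambda$ gives at once $\partial_\lambda\varphi(\lambda_{2p},0)=0$ and $\partial_{\lambda\lambda}\varphi(\lambda_{2p},0)=0$, which disposes of the first and third assertions with no computation.

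It remains to treat $\partial_g\varphi(\lambda_{2p},0)$. Here I would differentiate \eqref{E:LS-red} with respect to $g$ in an arbitrary direction $h\in{\rm Ker}\,\mathcal{L}_{\lambda_{2p},b_{2p}}$, obtaining
\[
({\rm Id}-Q)\,\partial_f G\big(\lambda,g+\varphi(\lambda,g)\big)\big[h+\partial_g\varphi(\lambda,g)h\big]=0.
\]
Evaluating at $(\lambda_{2p},0)$ and using $\varphi(\lambda_{2p},0)=0$ together with $\partial_f G(\lambda_{2p},0)=\mathcal{L}_{\lambda_{2p},b_{2p}}$ and $h\in{\rm Ker}\,\mathcal{L}_{\lambda_{2p},b_{2p}}$, the $h$-term drops out because $\mathcal{L}_{\lambda_{2p},b_{2p}}h=0$, leaving
\[
({\rm Id}-Q)\,\partial_f G(\lambda_{2p},0)\,\partial_g\varphi(\lambda_{2p},0)h=0.
\]
Since $\partial_g\varphi(\lambda_{2p},0)h\in\mathcal{X}$ and $({\rm Id}-Q)\partial_f G(\lambda_{2p},0)\colon\mathcal{X}\to\mathcal{Y}$ is a bijection with bounded inverse, we get $\partial_g\varphi(\lambda_{2p},0)h=0$; letting $h$ range over the kernel yields $\partial_g\varphi(\lambda_{2p},0)=0$.

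In short, there is no genuine analytic obstacle: the statement is a purely structural consequence of the Lyapunov--Schmidt splitting once the kernel/complement decomposition coming from the rank-one blocks \eqref{E:M2p} and the invertibility of $({\rm Id}-Q)\partial_f G(\lambda_{2p},0)|_{\mathcal{X}}$ are in hand. The only mild point of care is the correct application of the chain rule to the composite $G(\lambda,g+\varphi(\lambda,g))$ and the verification that $\partial_f G(\lambda_{2p},0)$ is precisely the Fourier-multiplier operator $\mathcal{L}_{\lambda_{2p},b_{2p}}$ whose kernel contains $x^1,x^2$; this is why the construction uses the exterior conformal parameterization so that $f=0$ is the annulus. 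These identities are then exactly what reduces the Jacobian and Hessian formulas of $F_2$ in Lemma \ref{acformal} to the compact forms used throughout the later computations.
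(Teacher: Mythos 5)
Your proposal is correct and follows exactly the standard argument that the paper invokes by reference to Lemma~1 of \cite{Hmidi2016a}: the identity $G(\lambda,0)\equiv 0$ plus uniqueness in the implicit function theorem gives $\varphi(\lambda,0)\equiv 0$, hence all pure $\lambda$-derivatives vanish, and differentiating the reduction equation in $g$ together with $h\in{\rm Ker}\,\mathcal{L}_{\lambda_{2p},b_{2p}}$ and the invertibility of $({\rm Id}-Q)\partial_f G(\lambda_{2p},0)|_{\mathcal{X}}$ gives $\partial_g\varphi(\lambda_{2p},0)=0$. No gaps; this is the same route the paper intends.
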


\subsection{First-order derivatives} \label{SS:Jacobian}
We begin with explicitly computating on the Jabobian of $F_2(\lambda,t;a)$.
\begin{proposition}\label{JF2c}
	For any $p \geq 2$, the first-order derivatives $F_2$ vanish at $(\lambda_{2p},0)$ for any $a \in \R$, namely
	\[
	\partial_{\lambda}F_2(\lambda_{2p},0;a)=\partial_{t}F_2(\lambda_{2p},0;a)=0.
	\]
\end{proposition}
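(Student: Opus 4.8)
The plan is to reduce each identity to the closed forms provided by Lemma \ref{acformal}(1) and then evaluate them with the explicit multipliers $M_{2n}(\lambda)$ and the co-kernel data in \eqref{y12}. I write the kernel generators as $x^1=(b,1)^t\overline w$ and $x^2=(b,-1)^t\overline w^{2p-1}$, so that $x_a=x^1+ax^2$ is supported on the frequencies $\overline w$ and $\overline w^{2p-1}$, whereas the co-kernel $\{y_1,y_2\}$ lives on $e_2$ and $e_{2p}$ with transverse directions $(1,-1)^t$ and $(1,1)^t$.

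For the $\lambda$-derivative, Lemma \ref{acformal}(1) gives $\partial_\lambda F_2(\lambda_{2p},0;a)=Q\,\partial_\lambda\partial_f G(\lambda_{2p},0)x_a$, which is precisely the transversality-failure quantity \eqref{E:Non-tra}; so the vanishing is immediate once \eqref{E:Non-tra} is known. To keep this self-contained I would verify \eqref{E:Non-tra} by hand: since $\partial_f G(\lambda,0)=\mathcal{L}_{\lambda,b}$ acts through the blocks $M_{2n}(\lambda)$ of \eqref{dG1}, one has $\partial_\lambda M_{2n}(\lambda)=2n\,\mathrm{diag}(1,b)$, whence
\[
\partial_\lambda\partial_f G(\lambda_{2p},0)\,x^1=2\,\mathrm{diag}(1,b)(b,1)^t\,e_2=2b\,(1,1)^t e_2,
\]
\[
\partial_\lambda\partial_f G(\lambda_{2p},0)\,x^2=2p\,\mathrm{diag}(1,b)(b,-1)^t\,e_{2p}=2pb\,(1,-1)^t e_{2p}.
\]
Because $(1,1)^t\perp(1,-1)^t$, the first is annihilated by $Q_1$ (projection onto $e_2$ along $(1,-1)^t$) and the second by $Q_2$ (projection onto $e_{2p}$ along $(1,1)^t$); by linearity $Q\,\partial_\lambda\partial_f G(\lambda_{2p},0)x_a=0$, i.e. $\partial_\lambda F_2(\lambda_{2p},0;a)=0$ for every $a$.

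For the $t$-derivative, Lemma \ref{acformal}(1) reduces the claim to $Q\,\partial_{ff}G(\lambda_{2p},0)[x_a,x_a]=\tfrac12\frac{d^2}{dt^2}QG(\lambda_{2p},tx_a)\big|_{t=0}=0$, Lemma \ref{L:phi-1} having already removed the $\varphi$-contributions so that only the bare second variation of $G$ along the kernel remains. I would insert $\phi_1=w+tb(\overline w+a\overline w^{2p-1})$ and $\phi_2=bw+t(\overline w-a\overline w^{2p-1})$ into \eqref{E:Non}, expand each $G_j$ to order $t^2$, and read off the Fourier coefficients at the two co-kernel frequencies $e_2=\mathrm{Im}(\overline w^2)$ and $e_{2p}=\mathrm{Im}(\overline w^{2p})$. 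Frequency matching sharply limits which products of the kernel modes $\overline w$ and $\overline w^{2p-1}$ (and of the conjugate modes $w,\,w^{2p-1}$ arising in $\overline{\phi_j}$) can reach these frequencies: the $e_{2p}$-coefficient carries exactly one factor of the $\overline w^{2p-1}$-mode and is therefore $O(a)$, while the $e_2$-coefficient is built mainly from the self-interaction of the fundamental mode $\overline w$. It then remains to verify that, after collecting all such products, the two coefficients are orthogonal to $(1,-1)^t$ and $(1,1)^t$ respectively.

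The main obstacle is exactly this last check, and within it the order-$t^2$ expansion of the nonlocal Cauchy operator $I(\phi_j)$ in \eqref{E:Cauchy-m}. Unlike the algebraic term $(1-\lambda)\overline{\phi_j}\,w\phi_j'$, the integral must be expanded by residue calculus on $\mathbb{T}$, tracking separately the singular self-interaction $I_j[\phi_j,\cdot]$ and the regular cross-interaction $K_j[\phi_1,\phi_2,\cdot]$ between the two contours; the delicate bookkeeping is matching the $\overline w$-powers generated by $w\phi_j'$ against those generated by $\overline{\phi_j}$ and $I(\phi_j)$. Once the two relevant coefficients are assembled, their projections onto the co-kernel directions cancel, giving $\partial_t F_2(\lambda_{2p},0;a)=0$. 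This is precisely the computation carried out in full for $p=2$, and sketched for general $p$, in the material following Lemma \ref{acformal}.
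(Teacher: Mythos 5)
Your reduction via Lemma \ref{acformal}(1) and your treatment of $\partial_{\lambda}F_2(\lambda_{2p},0;a)$ follow exactly the paper's route: where the paper simply invokes \eqref{E:Non-tra}, you verify it from the Fourier multipliers, computing $\partial_\lambda M_{2n}(\lambda)=2n\,\mathrm{diag}(1,b)$ so that $\partial_\lambda\partial_f G(\lambda_{2p},0)x^1=2b\,(1,1)^t e_2$ and $\partial_\lambda\partial_f G(\lambda_{2p},0)x^2=2pb\,(1,-1)^t e_{2p}$, each orthogonal to the corresponding co-kernel direction $(1,-1)^t$, resp.\ $(1,1)^t$. This half is complete and correct, and it is consistent with the identity $({\rm Id}-Q)\partial_{\lambda}\partial_f G(\lambda_{2p},0)x_a=\partial_{\lambda}\partial_f G(\lambda_{2p},0)x_a$ used later in the proof of Proposition \ref{d2F2}.

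The gap is in the $\partial_t$ half. You correctly reduce the claim to $Q\,\frac{\dif^2}{\dif t^2}G(\lambda_{2p},tx_a)\big|_{t=0}=0$ and correctly observe, by frequency matching, which interactions of the modes $\overline{w}$ and $\overline{w}^{2p-1}$ can land on $e_2$ and $e_{2p}$. But the decisive fact --- that the $e_2$-coefficient of $\frac{\dif^2}{\dif t^2}G$ comes out proportional to $(1,1)^t$ (in the paper's $p=2$ computation both components equal $2a(b^2+b^4)$), and that the $e_{2p}$-coefficient vanishes --- is precisely what must be computed; it does not follow from mode counting. Establishing it requires the order-$t^2$ residue expansion of the Cauchy operator $I(\phi_j)$, i.e.\ the evaluation of the constants $\mu_{ij},\eta_{ij},\gamma_{ij},\kappa_{ij}$ and $\overline{\mu}_{ij},\dots,\overline{\rho}_{ij}$, which is essentially the entire content of the paper's proof. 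Writing that ``it remains to verify that the two coefficients are orthogonal to $(1,-1)^t$ and $(1,1)^t$'' defers the only nontrivial step: there is no a priori structural reason (symmetry or conservation law) forcing the two components of the $e_2$-coefficient to coincide, so until those integrals are evaluated the proposition is not proved. A minor further point: your expectation that the $e_{2p}$-coefficient is a nonzero $O(a)$ vector annihilated by $Q_2$ is not quite what happens; for $p=2$ the computation shows this coefficient is identically zero, the only other surviving frequency being $e_8$, which $Q$ does not see.
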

Now let $p=2$. One has $\lambda=\lambda_4=\frac{1+b^2_4}{2}$ where $b=b_4$ solves $b^4=1-2b^2$ and the kernel of $\mathcal{L}_{\lambda_{4},b_4}$ is generated by the eigenvectors
$$x^1=\left(
\begin{array}{cc}
	b\\
	1
\end{array}
\right)
\overline{w}, \quad x^2=\left(
\begin{array}{cc}
	b\\
	-1
\end{array}
\right)
\overline{w}^{3},$$ while the co-kernel is generated by
$$
y_1=
\left(
\begin{array}{cc}
	\frac{1}{\sqrt{2}}\\
	-\frac{1}{\sqrt{2}}
\end{array}
\right)e_{2}, \quad
y_2=-
\left(
\begin{array}{cc}
	\frac{1}{\sqrt{2}}\\
	\frac{1}{\sqrt{2}}
\end{array}
\right)e_{4}.
$$
Note that $x_a=\left(
\begin{array}{cc}
	b\\
	1
\end{array}
\right)
\overline{w}
+a
\left(
\begin{array}{cc}
	b\\
	-1
\end{array}
\right)
\overline{w}^{3}.$
For convenience, we introduce symbols
\begin{eqnarray}\label{notation1}
	b_1=1,\, b_2=b,\, \alpha_1=b,\, \alpha_2=1,\, \theta_1=ab,\, \theta_2=-a.
\end{eqnarray}
Consider the deformation of $\p D$ with respect to $x_a$, that is,
\begin{eqnarray}\label{phi}
	\phi_j(t,w)=b_jw+t\alpha_j\overline{w}+t\theta_j\overline{w}^{3}.
\end{eqnarray}
The function $G=(G_1,G_2)$ is written as
\begin{align} \label{Gc}
	G_j(\lambda,tx_a)=\mathrm{Im}\left\{\left[(1-\lambda)\overline{\phi_j(t,w)}+I(\phi_j(t,w))\right]w (b_j-t\alpha_j \overline{w}^2-3t\theta_j\overline{w}^{4})\right\},
\end{align}
where
\begin{eqnarray*}
	I(\phi_j(t,w))=I_1(\phi_j(t,w))-I_2(\phi_j(t,w)), \\
	I_i(\phi_j(t,w))=\frac{1}{2\pi i} \int_{\mathbb{T}}\frac{\overline{\phi_j(t,w)}-\overline{\phi_i(t,\tau)}}{\phi_j(t,w)-\phi_i(t,\tau)}\phi_i'(t,\tau)\dif \tau.
\end{eqnarray*}

\begin{proof}[Proof of Proposition \ref{JF2c} for $p=2$]
	Due to Lemma \ref{acformal}, we have
	\begin{eqnarray*}
		\begin{split}
			\partial_{\lambda}F_2(\lambda_{4},0;a)&=Q\partial_\lambda\partial_fG(\lambda_{4},0)x_a,\\
			2\partial_t F_2(\lambda_{4},0;a)
			&=\frac{\dif^2}{\dif t^2}QG(\lambda_{4}, tx_a)_{|t=0}.
		\end{split}
	\end{eqnarray*}
and	$\partial_{\lambda}F_2(\lambda_{4},0)\!\!=\!0$ follows from the condition \eqref{E:Non-tra} immediately. It suffices to consider $\frac{\dif^2}{\dif t^2}QG(\lambda_{4}, tx_a)_{|t\!=\!0}$.
	
	Due to the formulation of $G$ given in \eqref{Gc}, a straightforward computation indicates
	\begin{small}
		\begin{align}\label{dG2c}
			\frac{\dif^2}{\dif t^2}G_j(\lambda, tx_a)_{|t=0}
			=&\mathrm{Im}\left\{ \!b_j w \frac{\dif^2}{\dif t^2}I(\phi_j(t,w))_{|t=0}\!\right\}
			\!-\!2\mathrm{Im}\!\left\{  (\alpha_j \overline{w}+3\theta_j \overline{w}^{3})
			\frac{\dif}{\dif t}I(\phi_j(t,w))_{|t=0}\!\right\}\nonumber\\
			&-4(1-\lambda)\alpha_j\theta_j e_{2},
		\end{align}
	\end{small}where $e_{2}=\mathrm{Im}\{ \overline{w}^{2}\}$. It suffices for us to calculate
	terms $\frac{\dif^2}{\dif t^2}I(\phi_j(t,w))_{|t=0}$ and $\frac{\dif}{\dif t}I(\phi_j(t,w))_{|t=0}$.
	
	To this end, for bervity, we introduce some notations at first
	\begin{eqnarray*}
		A=b_jw-b_i\tau,\, B=\alpha_j w-\alpha_i \tau,\, C=\theta_jw^{3}-\theta_i\tau^{3},\, D=\alpha_i\overline{\tau}^2,\,E=3\theta_i \overline{\tau}^{4},
	\end{eqnarray*}
	hence
	\begin{eqnarray}\label{E:Iij}
		I_i(\phi_j(t,w))=\frac{1}{2\pi i}\int_{\mathbb {T}}\frac{\overline{A}+t(B+C)}{A+t(\overline{B}+\overline{C})}[b_i-t(D+E)]\dif \tau,
	\end{eqnarray}
	where $\phi_j(t,w)$ is defined in \eqref{phi}. For simplicity, we use $\int$ to instead the full notation $\frac{1}{2\pi i}\int_{\mathbb {T}}$.
	
	\subsubsection{Compute $\frac{\dif}{\dif t}I_i(\phi_j(t,w))_{|t=0}$}
	
	According to \eqref{E:Iij}, it is easy to check that
	\[
	\begin{split}
		\frac{\dif}{\dif t}I_i(\phi_j(t,w))_{|t=0}=&\int\frac{b_iB+b_iC-\overline{A}D-\overline{A}E}{A}\dif \tau
		-\int\frac{b_i\overline{A}(\overline{B}+\overline{C})}{A^2}\dif \tau\\
		=&\int\frac{b_i(\alpha_j w-\alpha_i \tau)}{b_jw-b_i\tau}\dif \tau
		+\int\frac{b_i(\theta_jw^{3}-\theta_i\tau^{3})}{b_jw-b_i\tau}\dif\tau
		\\
		-&\int\frac{3(b_j \overline{w}-b_i\overline{\tau})\theta_i\overline{\tau}^{4}}{b_jw-b_i\tau}\dif \tau
		-\int\frac{b_i(b_j\overline{w}-b_i\overline{\tau})(\alpha_j \overline{w}-\alpha_i\overline{\tau})}{(b_jw-b_i\tau)^2}\dif \tau\\
		-&\int\frac{b_i(b_j\overline{w}-b_i\overline{\tau})(\theta_j \overline{w}^{3}-\theta_i\overline{\tau}^{3})}{(b_jw-b_i\tau)^2}\dif \tau
		-\int\frac{(b_j\overline{w}-b_i\overline{\tau})\alpha_i\overline{\tau}^2}{b_jw-b_i\tau}\dif \tau.
	\end{split}
	\]
	Changing variables to get the homogeneity of $w$, we could find
	\[
	\begin{split}
		\frac{\dif}{\dif t}I_i(\phi_j(t,w))_{|t=0}=&w\int\frac{b_i(\alpha_j -\alpha_i \tau)}{b_j-b_i\tau}\dif \tau
		+w^{3}\int\frac{b_i(\theta_j-\theta_i\tau^{3})}{b_j-b_i\tau}\dif\tau
		\\
		&- \overline{w}^{5}\int\frac{3(b_j-b_i\overline{\tau})\theta_i\overline{\tau}^{4}}{b_j-b_i\tau}\dif \tau
		-\overline{w}^3\int\frac{b_i(b_j-b_i\overline{\tau})(\alpha_j-\alpha_i\overline{\tau})}{(b_j-b_i\tau)^2}\dif \tau\\
		&-\overline{w}^{5}\int\frac{b_i(b_j-b_i\overline{\tau})(\theta_j -\theta_i\overline{\tau}^{3})}{(b_j-b_i\tau)^2}\dif \tau
		-\overline{w}^3\int\frac{(b_j-b_i\overline{\tau})\alpha_i\overline{\tau}^2}{b_j-b_i\tau}\dif \tau\\
		=&\mu_{ij} w+\eta_{ij}w^{3}+\gamma_{ij}\overline{w}^3+\kappa_{ij}\overline{w}^{5}
		,
	\end{split}
	\]
	where $\mu_{ij}$, $\eta_{ij}$, $\gamma_{ij}$, $\kappa_{ij}$, $i,j=1,2$ are real constants such that
	\begin{eqnarray*}
		\begin{split}
			&\mu_{ij}=\int\frac{b_i(\alpha_j -\alpha_i \tau)}{b_j-b_i\tau}\dif \tau,\qquad \eta_{ij}=\int\frac{b_i(\theta_j-\theta_i\tau^{3})}{b_j-b_i\tau}\dif\tau,\\
			&\gamma_{ij}=-\int\frac{(b_j-b_i\overline{\tau})\alpha_i\overline{\tau}^2}{b_j-b_i\tau}\dif\tau
			-\int\frac{b_i(b_j-b_i\overline{\tau})(\alpha_j-\alpha_i\overline{\tau})}{(b_j-b_i\tau)^2}\dif \tau,\\
			& \kappa_{ij}=-\int\frac{3(b_j-b_i\overline{\tau})\theta_i\overline{\tau}^{4}}{b_j-b_i\tau}\dif \tau
			-\int\frac{b_i(b_j-b_i\overline{\tau})(\theta_j -\theta_i\overline{\tau}^{3})}{(b_j-b_i\tau)^2}\dif \tau.
		\end{split}
	\end{eqnarray*}
	
	Now we compute $\mu_{ij}$ first. With notations in \eqref{notation1}, when $i=j$ we may write
	\[
	\mu_{ii}=\int\alpha_i\dif\tau=0,
	\]
	where the last equality comes from the holomorphism of constants. Again with notations in \eqref{notation1}, we write
	\[
	\mu_{12}
	=\int\frac{(1 -b \tau)}{b-\tau}\dif \tau
	=\int\frac{(\tau -b )}{(b\tau-1)\tau^2}\dif \tau=b^2-1
	\]
	where the last two equalities comes from the residue theorem. Similarly, using \eqref{notation1} and the residue theorem at $\infty$ and the holomorphism of integrals, we have
	\[
	\mu_{21}
	=\int\frac{b(b - \tau)}{1-b\tau}\dif \tau
	=\int\frac{b(b\tau -1 )}{(\tau-b)\tau^2}\dif \tau=0.
	\]
	
	To compute $\eta_{ij}$, we just need to repeat the above computation process of $\mu_{ij}$, that is, using the holomorphism, when $i=j$ we write
	\[
	\eta_{ii}=\int\theta_i(\tau^2+\tau+1)\dif\tau=0,
	\]
	and with \eqref{notation1} and the residue theorem, we have
	\[
	\eta_{12}
	=-\int\frac{(a +ab \tau^3)}{b-\tau}\dif \tau
	=-\int\frac{(a\tau^3 +ab )}{(b\tau-1)\tau^4}\dif \tau=a(b^4+1)
	\]
	and
	\[
	\eta_{21}
	=\int\frac{b(ab +a \tau^3)}{1-b\tau}\dif \tau
	=\int\frac{b(ab\tau^3 +a )}{(\tau-b)\tau^4}\dif \tau=0.
	\]
	
	For $\gamma_{ij}$, when $i=j$, we could write
	\[
	\gamma_{ii}=-\int\frac{(1-\overline{\tau})\alpha_i\overline{\tau}^2}{1-\tau}\dif\tau
	-\int\frac{\alpha_i(1-\overline{\tau})^2}{(1-\tau)^2}\dif \tau=0
	\]
	because the integrands are extended holomorphically to $\mathbb{C}^*$. By \eqref{notation1}, we write
	\[
	\gamma_{12}=-\int\frac{(b-\overline{\tau})b\overline{\tau}^2}{b-\tau}\dif\tau
	-\int\frac{(b-\overline{\tau})(1-b\overline{\tau})}{(b-\tau)^2}\dif \tau
	\]
	where the integrands decay quickly more than $\frac{1}{\tau^2}$. So using the residue theorem at $\infty$, we have $\gamma_{12}=0$. Again, with \eqref{notation1}, we write
	\[
	\gamma_{21}=-\int\frac{(1-b\overline{\tau})\overline{\tau}^2}{1-b\tau}\dif\tau
	-\int\frac{b(1-b\overline{\tau})(b-\overline{\tau})}{(1-b\tau)^2}\dif \tau.
	\]
	Applying the residue theorem, we get
	\[
	\gamma_{21}=-\int\frac{(1-b\tau)\tau}{\tau-b}\dif\tau
	-\int\frac{b(1-b\tau)(b-\tau)}{(\tau-b)^2}\dif \tau=(b^3-b)+(b-b^3)=0.
	\]
	
	Finally, we compute $\kappa_{ij}$. We just need to repeat the computation process of $\gamma_{ij}$. When $i=j$, the integral terms of $\kappa_{ii}$ are  extended holomorphically, so
	\[
	\kappa_{ii}=-\int\frac{3(1-\overline{\tau})\theta_i\overline{\tau}^{4}}{1-\tau}\dif \tau
	-\int\frac{(1-\overline{\tau})(\theta_i -\theta_i\overline{\tau}^{3})}{(1-\tau)^2}\dif \tau=0.
	\]
	With \eqref{notation1} and using the residue theorem on the fast decay integrands, we get
	\[
	\kappa_{12}=-\int\frac{3(b-\overline{\tau})ab\overline{\tau}^{4}}{b-\tau}\dif \tau
	-\int\frac{(b-\overline{\tau})(-a -ab\overline{\tau}^{3})}{(b-\tau)^2}\dif \tau=0.
	\]
	Again with \eqref{notation1} and applying the residue theorem, we get
	\begin{align*}
		\kappa_{21}&=-\int\frac{3(1-b\overline{\tau})(-a)\overline{\tau}^{4}}{1-b\tau}\dif \tau
		-\int\frac{b(1-b\overline{\tau})(ab +a\overline{\tau}^{3})}{(1-b\tau)^2}\dif \tau\\
		&=
		\int\frac{3a(1-b\tau)\tau^{3}}{\tau-b}\dif \tau
		-\int\frac{b(1-b\tau)(ab +a\tau^{3})}{(\tau-b)^2}\dif \tau\\
		&=3ab^3(1-b^2)+a(4b^5-2b^3)=a(b^3+b^5).
	\end{align*}
	%By directly computation
	%\begin{eqnarray*}
	%&&\mu_{11}=\mu_{22}=\mu_{21}=0,\,\mu_{12}=b^2-1,\\
	%&& \eta_{11}=\eta_{22}=\eta_{21}=0,\,\eta_{12}=a(1+b^{4}),\\
	%&&\gamma_{11}=\gamma_{22}=\gamma_{12}=\gamma_{21}=0,\\
	%&& \kappa_{11}=\kappa_{22}=\kappa_{12}=0,\,\kappa_{21}=a(b^3+b^{4+1}),
	%\end{eqnarray*}
	
	Recall that $\frac{\dif}{\dif t}I_i(\phi_j(t,w))_{|t=0}=\mu_{ij} w+\eta_{ij}w^{3}+\gamma_{ij}\overline{w}^3+\kappa_{ij}\overline{w}^{5}
	$, with the computation results of these constants above and the definition of $I$ in \eqref{E:Cauchy-m}, we can easily see
	\begin{align}\label{dI1c}
		&\frac{\dif}{\dif t}I(\phi_1(t,w))_{|t=0}
		\!=\!\frac{\dif}{\dif t}I_1(\phi_1(t,w))_{|t=0}-\frac{\dif}{\dif t}I_2(\phi_1(t,w))_{|t=0}
		\!=\!-a(b^3+b^{5})\overline{w}^{5},\\
		& \frac{\dif}{\dif t}I(\phi_2(t,w))_{|t=0}
		\!=\!\frac{\dif}{\dif t}I_1(\phi_2(t,w))_{|t=0}-\frac{\dif}{\dif t}I_2(\phi_2(t,w))_{|t=0}
		\!=(b^2-1)w+a(1+b^{4})w^{3}.\nonumber
	\end{align}
	
	\subsubsection{Compute $\frac{\dif^2}{\dif t^2}I_i(\phi_j(t,w))_{|t=0}$}
	
	For $\frac{\dif^2}{\dif t^2}I_i(\phi_j(t,w))_{|t=0}$, we differentiate \eqref{E:Iij} with respect to $t$ twice directly and get
	\begin{small}
		\begin{eqnarray*}
			\begin{split}
				\frac{\dif^2}{\dif t^2}I_i(\phi_j(t,w))_{|t=0}\!=\!&-2\!\int\frac{BD+BE+CD+CE}{A}\dif \tau
				\!-\!2\!\int\frac{b_iB+b_iC-\overline{A}D-\overline{A}E}{A^2}(\overline{B}+\overline{C})\dif \tau\\
				& +2\!\int\frac{b_i\overline{A}(\overline{B}+\overline{C})^2}{A^3}\dif \tau.\\
			\end{split}
		\end{eqnarray*}
	\end{small}For clearly, we expand the above equality in the following way
	\begin{small}
		\begin{align*}
			&\frac{1}{2}\frac{\dif^2}{\dif t^2}I_i(\!\phi_j(t,w))_{|t=0}\\
			=\!&
			-\!\!\int\!\!\frac{(\alpha_j w-\alpha_i \tau)\alpha_i\overline{\tau}^2}{b_jw-b_i\tau}\dif \tau
			\!-\!\!\int\!\!\frac{3(\alpha_j w-\alpha_i \tau)\theta_i \overline{\tau}^{4}}{b_jw-b_i\tau}\dif \tau\\
			&-\!\!\int\!\!\frac{(\theta_jw^{3}-\theta_i\tau^{3})\alpha_i\overline{\tau}^2}{b_jw-b_i\tau}\dif\tau
			\!-\!\!\int\!\!\frac{3(\theta_jw^{3}-\theta_i\tau^{3})\theta_i\overline{\tau}^{4}}{b_jw-b_i\tau}\dif\tau\\
			&-\!\!\int\!\!\frac{b_i(\alpha_j w-\alpha_i \tau)(\alpha_j \overline{w}-\alpha_i\overline{\tau})}{(b_jw-b_i\tau)^2}\dif \tau
			\!-\!\!\int\!\!\frac{b_i(\theta_jw^{3}-\theta_i\tau^{3})(\alpha_j \overline{w}-\alpha_i\overline{\tau})}{(b_jw-b_i\tau)^2}\dif \tau
			\\
			&+\!\!\int\!\!\frac{(b_j\overline{w}-b_i\overline{\tau})(\theta_j\overline{w}^{4-1}-\theta_i\overline{\tau}^{4-1})\alpha_i\overline{\tau}^2}{(b_jw-b_i\tau)^2}\dif \tau
			+\!\!\int\!\!\frac{(b_j \overline{w}-b_i\overline{\tau})(\alpha_j \overline{w}-\alpha_i\overline{\tau})\alpha_i\overline{\tau}^{2}}{(b_jw-b_i\tau)^2}\dif \tau
			\\
			&
			-\!\!\int\!\!\frac{b_i(\alpha_j w-\alpha_i \tau)(\theta_j \overline{w}^{3}-\theta_i\overline{\tau}^{3})}{(b_jw-b_i\tau)^2}\dif \tau
			+\!\!\int\frac{3(b_j\overline{w}-b_i\overline{\tau})(\alpha_j \overline{w}-\alpha_i\overline{\tau})\theta_i\overline{\tau}^{4}}{(b_jw-b_i\tau)^2}\dif \tau
			\\
			&-\!\!\int\!\!\frac{b_i(\theta_jw^{3}-\theta_i\tau^{3})(\theta_j \overline{w}^{3}-\theta_i\overline{\tau}^{3})}{(b_jw-b_i\tau)^2}\dif \tau
			+\!\!\int\!\!\frac{(3)(b_j\overline{w}-b_i\overline{\tau})(\theta_j \overline{w}^{3}-\theta_i\overline{\tau}^{3})\theta_i\overline{\tau}^{4}}{(b_jw-b_i\tau)^2}\dif \tau\\
			&+\!\!\int\!\!\frac{b_i(b_j\overline{w}-b_i\overline{\tau})(\alpha_j \overline{w}-\alpha_i\overline{\tau})^2}{(b_jw-b_i\tau)^3}\dif \tau
			+\!\!\int\!\!\frac{b_i(b_j\overline{w}-b_i\overline{\tau})(\theta_j \overline{w}^{4-1}-\theta_i\overline{\tau}^{4-1})^2}{(b_jw-b_i\tau)^3}\dif \tau\\
			&+\!\!\int\!\!\frac{2b_i(b_j\overline{w}-b_i\overline{\tau})(\alpha_j \overline{w}-\alpha_i\overline{\tau})(\theta_j \overline{w}^{4-1}-\theta_i\overline{\tau}^{4-1})}{(b_jw-b_i\tau)^3}\dif \tau
		\end{align*}
		Similar as the method to compute $\frac{\dif}{\dif t}I_i(\!\phi_j(t,w))_{|t=0}$, we write the homogeneity of $w$ by the change of variables
		\begin{align*}
			&\frac{1}{2}\frac{\dif^2}{\dif t^2}I_i(\phi_j(t,w))_{|t=0}\\
			=&
			-\overline{w}\!\!\int\!\!\frac{(\alpha_j -\alpha_i \tau)\alpha_i\overline{\tau}^2}{b_j-b_i\tau}\dif \tau
			-\overline{w}^{3}\!\!\int\!\!\frac{3(\alpha_j -\alpha_i \tau)\theta_i \overline{\tau}^{4}}{b_j-b_i\tau}\dif \tau
			-w\!\!\int\!\!\frac{(\theta_j-\theta_i\tau^{3})\alpha_i\overline{\tau}^2}{b_j-b_i\tau}\dif\tau
			\\&
			-\overline{w}\!\!\int\!\!\frac{3(\theta_j-\theta_i\tau^{3})\theta_i\overline{\tau}^{4}}
			{b_j-b_i\tau}\dif\tau
			-\overline{w}\!\!\int\!\!\frac{b_i(\alpha_j -\alpha_i \tau)(\alpha_j-\alpha_i\overline{\tau})}
			{(b_j-b_i\tau)^2}\dif \tau
			\\&
			-\overline{w}\!\!\int\!\!\frac{b_i(\theta_j-\theta_i\tau^{3})(\theta_j -\theta_i\overline{\tau}^{3})}
			{(b_j-b_i\tau)^2}\dif \tau
			-\!w\!\!\int\!\!\frac{b_i(\theta_j-\theta_i\tau^{3})(\alpha_j-\alpha_i\overline{\tau})}
			{(b_j-b_i\tau)^2}\dif \tau
			\\&
			-\overline{w}^{3}\!\!\int\!\!\frac{b_i(\alpha_j-\alpha_i \tau)(\theta_j-\theta_i\overline{\tau}^{3})}{(b_j-b_i\tau)^2}\dif \tau
			+\overline{w}^5\!\!\int\!\!\frac{(b_j-b_i\overline{\tau})(\alpha_j-\alpha_i\overline{\tau})\alpha_i\overline{\tau}^{2}}
			{(b_j-b_i\tau)^2}\dif \tau
			\\&
			+\overline{w}^{7}\!\!\int\!\!\frac{(b_j-b_i\overline{\tau})(\theta_j-\theta_i\overline{\tau}^{3})\alpha_i\overline{\tau}^2}
			{(b_j-b_i\tau)^2}\dif \tau\
			+\overline{w}^{7}\!\!\int\!\!\frac{3(b_j-b_i\overline{\tau})(\alpha_j-\alpha_i\overline{\tau})
				\theta_i\overline{\tau}^{4}}
			{(b_j-b_i\tau)^2}\dif \tau
			\\&+\overline{w}^{9}\!\!\int\!\!\frac{3(b_j-b_i\overline{\tau})(\theta_j-\theta_i\overline{\tau}^{3})
				\theta_i\overline{\tau}^{4}}{(b_j-b_i\tau)^2}\dif \tau
			+\overline{w}^5\!\!\int\!\!\frac{b_i(b_j-b_i\overline{\tau})(\alpha_j-\alpha_i\overline{\tau})^2}
			{(b_j-b_i\tau)^3}\dif \tau
			\\&
			+\overline{w}^{9}\!\!\int\!\!\frac{b_i(b_j-b_i\overline{\tau})(\theta_j-\theta_i\overline{\tau}^{4-1})^2}
			{(b_j-b_i\tau)^3}\dif \tau
			+\overline{w}^{7}\!\!\int\!\!\frac{2b_i(b_j-b_i\overline{\tau})(\alpha_j-\alpha_i\overline{\tau})(\theta_j-\theta_i\overline{\tau}^{3})}
			{(b_j-b_i\tau)^3}\dif \tau\\
			=&
			\overline{\mu}_{ij}w +\overline{\eta}_{ij}\overline{w}+\overline{\kappa}_{ij}\overline{w}^{3}+\overline{\gamma}_{ij}\overline{w}^5
			+\overline{\nu}_{ij}\overline{w}^{7}+\overline{\rho}_{ij}\overline{w}^{9},
		\end{align*}
	\end{small}
	where $\overline{\mu}_{ij}$, $\overline{\eta}_{ij}$, $\overline{\kappa}_{ij}$, $\overline{\gamma}_{ij}$, $\overline{\nu}_{ij}$, $\overline{\rho}_{ij}$, $i,j=1,2$ are real constants and we write these constants in the following form
	\begin{eqnarray*}
		\overline{\mu}_{ij}&=&
		-\!\int\!\frac{(\theta_j-\theta_i\tau^{3})\alpha_i\overline{\tau}^2}{b_j-b_i\tau}\dif\tau
		-\int\frac{b_i(\theta_j-\theta_i\tau^{3})(\alpha_j-\alpha_i\overline{\tau})}{(b_j-b_i\tau)^2}\dif \tau,\\
		\overline{\eta}_{ij}&=&
		-\int\frac{(\alpha_j -\alpha_i \tau)\alpha_i\overline{\tau}^2}{b_j-b_i\tau}\dif \tau
		-\int\frac{3(\theta_j-\theta_i\tau^{3})\theta_i\overline{\tau}^{4}}{b_j-b_i\tau}\dif\tau\\
		&&-\int\frac{b_i(\alpha_j -\alpha_i \tau)(\alpha_j -\alpha_i\overline{\tau})}{(b_j-b_i\tau)^2}\dif \tau
		-\int\frac{b_i(\theta_j-\theta_i\tau^{3})(\theta_j -\theta_i\overline{\tau}^{3})}{(b_j-b_i\tau)^2}\dif \tau ,\\
		\overline{\kappa}_{ij}&=&
		-\int\frac{3(\alpha_j -\alpha_i \tau)\theta_i \overline{\tau}^{4}}{b_j-b_i\tau}\dif \tau
		-\int\frac{b_i(\alpha_j-\alpha_i \tau)(\theta_j-\theta_i\overline{\tau}^{3})}{(b_j-b_i\tau)^2}\dif \tau\\
		\overline{\gamma}_{ij}&=&
		\int\frac{(b_j-b_i\overline{\tau})(\alpha_j-\alpha_i\overline{\tau})\alpha_i\overline{\tau}^{2}}{(b_j-b_i\tau)^2}\dif \tau
		+\int\frac{b_i(b_j-b_i\overline{\tau})(\alpha_j-\alpha_i\overline{\tau})^2}{(b_j-b_i\tau)^3}\dif \tau\\
		\overline{\nu}_{ij}&=&
		\int\frac{(b_j-b_i\overline{\tau})(\theta_j-\theta_i\overline{\tau}^{3})
			\alpha_i\overline{\tau}^2}{(b_j-b_i\tau)^2}\dif \tau
		+\int\frac{3(b_j-b_i\overline{\tau})(\alpha_j-\alpha_i\overline{\tau})\theta_i\overline{\tau}^{4}}{(b_j-b_i\tau)^2}\dif \tau\\
		&&+\int\frac{2b_i(b_j-b_i\overline{\tau})(\alpha_j-\alpha_i\overline{\tau})(\theta_j-\theta_i\overline{\tau}^{3})}{(b_j-b_i\tau)^3}\dif \tau\\
		\overline{\rho}_{ij}&=&
		\int\frac{3(b_j-b_i\overline{\tau})(\theta_j-\theta_i\overline{\tau}^{3})
			\theta_i\overline{\tau}^{4}}{(b_j-b_i\tau)^2}\dif \tau
		+\int\frac{b_i(b_j-b_i\overline{\tau})(\theta_j-\theta_i\overline{\tau}^{3})^2}{(b_j-b_i\tau)^3}\dif \tau.
	\end{eqnarray*}
	
	\textbf{Some useful identities.} There is the position for us to give some useful identities. Applying the residue theorem at $\infty$, we have
	\begin{align}\label{E:0}
		\int\frac{\tau^{k_1}\overline{\tau}^{k_2}}{(1-\tau)^{k_3}} \dif \tau
		=&\int\frac{\tau^{k_2+k_3-k_1-2}}{(\tau-1)^{k_3}} \dif \tau,\nonumber\\
		\int\frac{\tau^{k_1}\overline{\tau}^{k_2}}{(b-\tau)^{k_3}} \dif \tau
		=&\int\frac{\tau^{k_2+k_3-k_1-2}}{(b\tau-1)^{k_3}} \dif \tau,\\
		\int\frac{\tau^{k_1}\overline{\tau}^{k_2}}{(1-b\tau)^{k_3}} \dif \tau
		=&\int\frac{\tau^{k_2+k_3-k_1-2}}{(\tau-b)^{k_3}} \dif \tau,\nonumber
	\end{align}
	where $k_1$, $k_2$ are nonnegative integers and $k_3$ is a positive integer. Denote $k_2+k_3-k_1-2=-k$. Then using the residue theorem, when $k_2+k_3-k_1-2\ge 0$, due to the holomorphism of the integrals, it is easy to check that $\int\frac{\tau^{k_2+k_3-k_1-2}}{(\tau-1)^{k_3}} \dif \tau=0$, which is,
	\begin{align}\label{E:01}
		\int\frac{\tau^{k_1}\overline{\tau}^{k_2}}{(1-\tau)^{k_3}} \dif \tau=0;
	\end{align}
	when $k_2+k_3-k_1-2< 0$, again using the residue theorem, we get
	\begin{align}\label{E:1}
		\int\frac{\tau^{k_1}\overline{\tau}^{k_2}}{(1-\tau)^{k_3}} \dif \tau
		=&\int\frac{1}{(\tau-1)^{k_3}\tau^k} \dif \tau
		=\frac{(-1)^{k_3}(k_3+k-2)!}{(k-1)!(k_3-1)!}\nonumber\\
		=&\frac{(-1)^{k_3}(k_1-k_2)!}{(1+k_1-k_2-k_3)!(k_3-1)!}.
	\end{align}
	Similarly, we obtain that when $k_2+k_3-k_1-2\ge 0$,
	\begin{align}\label{E:02}
		\int\frac{\tau^{k_1}\overline{\tau}^{k_2}}{(b-\tau)^{k_3}} \dif \tau=0
	\end{align}
	and when $k_2+k_3-k_1-2< 0$
	\begin{align}\label{E:2}
		\int\frac{\tau^{k_1}\overline{\tau}^{k_2}}{(b-\tau)^{k_3}} \dif \tau
		=\frac{(-1)^{k_3}(k_1-k_2)!}{(1+k_1-k_2-k_3)!(k_3-1)!}b^{1+k_1-k_2-k_3}.
	\end{align}
	Now we compute the value of the last identity, $\int\frac{\tau^{k_1}\overline{\tau}^{k_2}}{(1-b\tau)^{k_3}} \dif \tau
	=\int\frac{\tau^{-k}}{(\tau-b)^{k_3}} \dif \tau$. When $-k=k_2+k_3-k_1-2\ge 0$, applying the residue theorem, we obtain that
	\begin{align}\label{E:03}
		\int\frac{\tau^{k_1}\overline{\tau}^{k_2}}{(1-b\tau)^{k_3}} \dif \tau
		=&\int\frac{\tau^{-k}}{(\tau-b)^{k_3}} \dif \tau
		=\frac{(-k)!}{(-k-k_3+1)!(k_3-1)!}b^{-k-k_3+1}\nonumber\\
		=&\left\{
		\begin{array}{ccc}
			\frac{(k_2+k_3-k_1-2)!}{(k_2-k_1-1)!(k_3-1)!}b^{k_2-k_1-1}&\text{ for } k_2-k_1-1\ge 0, \\
			0 &\text{ for } k_2-k_1-1< 0,
		\end{array}
		\right.
	\end{align}
	or we can set (negative integer)$!=\infty$;
	when $-k=k_2+k_3-k_1-2< 0$, again applying the residue theorem, we get
	\begin{align}\label{E:3}
		&\int\frac{\tau^{k_1}\overline{\tau}^{k_2}}{(1-b\tau)^{k_3}} \dif \tau
		=\int\frac{1}{(\tau-b)^{k_3}\tau^{k}} \dif \tau\nonumber\\
		=&\frac{(-1)^{-k_3}(k+k_3-2)!}{(k_3-1)!(k-1)!}b^{-k-k_3+1}
		+\frac{(-1)^{k_3-1}(k+k_3-2)!}{(k_3-1)!(k-1)!}b^{-k-k_3+1}
		\nonumber\\
		=&0.
	\end{align}

	\subsubsection{Compute $\overline{\mu}_{ij}$}
	
	Now we use the above equalities to compute $\overline{\mu}_{ij}=-\!\int\!\frac{(\theta_j-\theta_i\tau^{3})\alpha_i\overline{\tau}^2}{b_j-b_i\tau}\dif\tau
	-\int\frac{b_i(\theta_j-\theta_i\tau^{3})(\alpha_j-\alpha_i\overline{\tau})}{(b_j-b_i\tau)^2}\dif \tau$. First, we rewrite
	\begin{align*}%\label{omuij}
		\overline{\mu}_{ij}
		=&-\int\frac{\theta_j\alpha_i\overline{\tau}^2}{b_j-b_i\tau}\dif\tau
		+\int\frac{\theta_i\alpha_i\tau^{3}\overline{\tau}^2}{b_j-b_i\tau}\dif\tau
		-\int\frac{b_i\theta_j\alpha_j}{(b_j-b_i\tau)^2}\dif \tau
		\nonumber\\&
		+\int\frac{b_i\theta_i\alpha_j\tau^{3}}{(b_j-b_i\tau)^2}\dif \tau
		+\int\frac{b_i\theta_j\alpha_i\overline{\tau}}{(b_j-b_i\tau)^2}\dif \tau
		-\int\frac{b_i\theta_i\alpha_i\tau^{3}\overline{\tau}}{(b_j-b_i\tau)^2}\dif \tau
	\end{align*}
	For $i=j$, we can easily obtain
	\begin{align*}%\label{omuij}
		\overline{\mu}_{ii}
		=&-\int\frac{\theta_i\alpha_i\overline{\tau}^2}{b_i(1-\tau)}\dif\tau
		+\int\frac{\theta_i\alpha_i\tau^{3}\overline{\tau}^2}{b_i(1-\tau)}\dif\tau
		-\int\frac{\theta_i\alpha_i}{b_i(1-\tau)^2}\dif \tau
		\nonumber\\&
		+\int\frac{\theta_i\alpha_i\tau^{3}}{b_i(1-\tau)^2}\dif \tau
		+\int\frac{\theta_i\alpha_i\overline{\tau}}{b_i(1-\tau)^2}\dif \tau
		-\int\frac{\theta_i\alpha_i\tau^{3}\overline{\tau}}{b_i(1-\tau)^2}\dif \tau.
	\end{align*}
	For the first term $-\int\frac{\theta_i\alpha_i\overline{\tau}^2}{b_i(1-\tau)}\dif\tau$, we observe that $k_1=0$, $k_2=2$ and $k_3=1$ with the notations in the \eqref{E:0}, so $k_2+k_3-k_1-2=1>0$. Applying \eqref{E:01}, we get $-\int\frac{\theta_i\alpha_i\overline{\tau}^2}{b_i(1-\tau)}\dif\tau=0$. Similarly, the third and the fifth term are equal to zero, that is,
	$-\int\frac{\theta_i\alpha_i}{b_i(1-\tau)^2}\dif \tau
	=\int\frac{\theta_i\alpha_i\overline{\tau}}{b_i(1-\tau)^2}\dif \tau=0$. For the second term, $\int\frac{\theta_i\alpha_i\tau^{3}\overline{\tau}^2}{b_i(1-\tau)}\dif\tau$, again, we find $k_1=3$, $k_2=2$ and $k_3=1$ with the notations in the \eqref{E:0}, so $k_2+k_3-k_1-2=-2<0$. From \eqref{E:1}, we have
	\[
	\int\frac{\theta_i\alpha_i\tau^{3}\overline{\tau}^2}{b_i(1-\tau)}\dif\tau
	=\frac{(-1)(1!)}{(1)!(0)!}\theta_i\alpha_i
	=-\frac{\theta_i\alpha_i}{b_i}.
	\]
	By the same idea, we could find the value of the forth and the sixth term,
	\begin{align*}
		\int\frac{\theta_i\alpha_i\tau^{3}}{b_i(1-\tau)^2}\dif \tau=3\frac{\theta_i\alpha_i}{b_i},\,
		-\int\frac{\theta_i\alpha_i\tau^{3}\overline{\tau}}{b_i(1-\tau)^2}\dif \tau=-2\frac{\theta_i\alpha_i}{b_i}
	\end{align*}
	Inserting these values of every term into the formula of $\overline{\mu}_{ii}$, we know $\overline{\mu}_{ii}=0$.
	Next, we consider $\overline{\mu}_{12}$ with \eqref{notation1},
	\begin{align*}%\label{omuij}
		\overline{\mu}_{12}
		=&-\int\frac{\theta_2\alpha_1\overline{\tau}^2}{b-\tau}\dif\tau
		+\int\frac{\theta_1\alpha_1\tau^{3}\overline{\tau}^2}{b-\tau}\dif\tau
		-\int\frac{\theta_2\alpha_2}{(b-\tau)^2}\dif \tau
		\nonumber\\&
		+\int\frac{\theta_1\alpha_2\tau^{3}}{(b-\tau)^2}\dif \tau
		+\int\frac{\theta_2\alpha_1\overline{\tau}}{(b-\tau)^2}\dif \tau
		-\int\frac{\theta_1\alpha_1\tau^{3}\overline{\tau}}{(b-\tau)^2}\dif \tau.
	\end{align*}
	As the computation of $\overline{\mu}_{ii}$, the first, the third and the fifth terms are zero from \eqref{E:02} and we only need to compute the rest terms using \eqref{E:2}. With the notations in \eqref{E:0}, to compute the second term, we know $k_1=3$, $k_2=2$, $k_3=1$ and hence by \eqref{notation1} and \eqref{E:2},
	\[
	\int\frac{\theta_1\alpha_1\tau^{3}\overline{\tau}^2}{b-\tau}\dif\tau=-\theta_1\alpha_1 b=-a b^3.
	\]
	Using the same way, we have
	\[
	\int\frac{\theta_1\alpha_2\tau^{3}}{(b-\tau)^2}\dif \tau=3\theta_1\alpha_2 b^2=3 a b^3,\,
	-\int\frac{\theta_1\alpha_1\tau^{3}\overline{\tau}}{(b-\tau)^2}\dif \tau=-2 \theta_1\alpha_1 b= -2 a b^3.
	\]
	Inserting these results into the formula of $\overline{\mu}_{12}$, we have $\overline{\mu}_{12}=0$. Finally, we compute $\overline{\mu}_{21}$ with \eqref{notation1},
	\begin{align*}%\label{omuij}
		\overline{\mu}_{21}
		=&-\int\frac{\theta_1\alpha_2\overline{\tau}^2}{(1-b\tau)}\dif\tau
		+\int\frac{\theta_2\alpha_1\tau^{3}\overline{\tau}^2}{b(1-b\tau)}\dif\tau
		-\int\frac{b\theta_1\alpha_1}{(1-b\tau)^2}\dif \tau
		\nonumber\\&
		+\int\frac{b\theta_2\alpha_1\tau^{3}}{(1-b\tau)^2}\dif \tau
		+\int\frac{b\theta_1\alpha_2\overline{\tau}}{(1-b\tau)^2}\dif \tau
		-\int\frac{b\theta_2\alpha_2\tau^{3}\overline{\tau}}{(1-b\tau)^2}\dif \tau.
	\end{align*}
	From \eqref{E:3}, we know the second, the forth and the sixth terms are zero and we need to compute the rest terms by \eqref{E:03}. For the first term $-\int\frac{\theta_1\alpha_2\overline{\tau}^2}{b(1-b\tau)}\dif\tau$, observe that $k_1=0$, $k_2=2$ and $k_3=1$ with notations in \eqref{E:0}. So it is easy to check
	\[
	-\int\frac{\theta_1\alpha_2\overline{\tau}^2}{(1-b\tau)}\dif\tau=-\theta_1\alpha_2b= -a b^2.
	\]
	For the third term and the fifth term, we can compute
	\[
	-\int\frac{b\theta_1\alpha_1}{(1-b\tau)^2}\dif \tau=0, \,
	\int\frac{b\theta_1\alpha_2\overline{\tau}}{(1-b\tau)^2}\dif \tau=b\theta_1\alpha_2=a b^2  .
	\]
	Also, inserting these terms into the formula of $\overline{\mu}_{21}$, we get $\overline{\mu}_{21}=0$. Actually, using the method of computing $\frac{\dif}{\dif t}I_i(\phi_j(t,w))_{|t=0}$, which is directly applying the residue theorem at $\infty$ and using the holomorphism of some integers, we can also obtain the results that $\overline{\mu}_{ij}=0$ for $i,j=1,2$.
	
	Using the same method of calculating $\overline{\mu}_{ij}$, applying \eqref{E:01}, \eqref{E:1}, \eqref{E:02}, \eqref{E:2}, \eqref{E:03} and \eqref{E:3}
	and after a series of calculations with some cancellations, we have
	\begin{eqnarray*}
		%&&\overline{\mu}_{11}=\overline{\mu}_{22}=\overline{\mu}_{12}=\overline{\mu}_{21}=0,
		%\\
		&&\overline{\eta}_{11}=b^2+3a^2b^2,\,\overline{\eta}_{22}=b^{-1}+3a^2b^{-1},\,
		\overline{\eta}_{12}=b-3a^2b^{3},\, \overline{\eta}_{21}=1+3a^2,
		\\
		&&\overline{\gamma}_{11}=\overline{\gamma}_{22}=\overline{\gamma}_{12}=\overline{\gamma}_{21}=0,
		\\
		&&\overline{\kappa}_{11}=ab^2,\,\overline{\kappa}_{22}=-ab^{-1},\,\overline{\kappa}_{12}=-ab,\,\overline{\kappa}_{21}=-ab^{2},\\
		&&\overline{\nu}_{11}=\overline{\nu}_{22}=\overline{\nu}_{12}=0,\,\overline{\nu}_{21}=a(b^4+b^{6}),\\
		&&\overline{\rho}_{11}=\overline{\rho}_{22}=\overline{\rho}_{12}=0,\,\overline{\rho}_{21}=-a^2(b^4+5b^{6}+4b^{8}).
	\end{eqnarray*}
	We would like to omit details in these calculation since the arguments is exactly the same as the previous analysis. \\

	%\red{Since the methods using in the computation process are totally the same as above and it is too long to write, we omit it here and in the following of this article.}
	
	Recall the expression that $\frac{1}{2}\frac{\dif^2}{\dif t^2}I_i(\phi_j(t,w))_{|t=0}=\overline{\mu}_{ij}w +\overline{\eta}_{ij}\overline{w}+\overline{\kappa}_{ij}\overline{w}^{3}+\overline{\gamma}_{ij}\overline{w}^5
	+\overline{\nu}_{ij}\overline{w}^{7}+\overline{\rho}_{ij}\overline{w}^{9}$. Then with the value of these constants, we have
	\begin{align}\label{dI2c1}
		\frac{\dif^2}{\dif t^2}I(\phi_1(t,w))_{|t=0}
		&=\frac{\dif^2}{\dif t^2}I_1(\phi_1(t,w))_{|t=0}-\frac{\dif^2}{\dif t^2}I_2(\phi_1(t,w))_{|t=0}\nonumber\\
		&=2(b^2-1)(1+3a^2)\overline{w}
		+4a b^2\overline{w}^{3}\nonumber\\
		&-2a(b^4+b^{6})\overline{w}^{7}
		+2a^2(b^4+5b^{6}+4b^{8})\overline{w}^{9},
	\end{align}
	and
	\begin{align}\label{dI2c2}
		\frac{\dif^2}{\dif t^2}I(\phi_2(t,w))_{|t=0}
		&=\frac{\dif^2}{\dif t^2}I_1(\phi_2(t,w))_{|t=0}-\frac{\dif^2}{\dif t^2}I_2(\phi_2(t,w))_{|t=0}\nonumber\\
		&=2[b-b^{-1}-3a^2(b^{-1}+b^{3})]\overline{w}
		+2a(b^{-1}-b)\overline{w}^{3}.
	\end{align}
	Inserting \eqref{dI1c}, \eqref{dI2c1} and \eqref{dI2c2} into \eqref{dG2c}, we have
	\begin{small}
		\begin{align*}
			\!\frac{\dif^2}{\dif t^2}G_1(\lambda_{4},tx_a)_{|t=0}
			\!=&4a b^2e_{2}\!-\!2a(b^4+b^{6})e_{6}+2a^2(b^4+5b^{6}+4b^{8})e_{8}\\
			&+2ab(b^3+b^{5})e_{6}\!+\!6a^2(b^4+b^{6})e_{8}\!-\!4a(1-\lambda_{4})b^2e_{2}\\
			=&2a(b^{2}+b^4)e_{2}+8a^2(b^4+2b^{6}+b^{8}) e_{8}, \\
			\frac{\dif^2}{\dif t^2}G_2(\lambda_{4},tx_a)_{|t=0}
			\!=&2a(1-b^2)e_{2}+2a(1+b^{4})e_{2}+6a(b^2-1)e_{2}\\
			&+4(1-\lambda_{4})ae_{2}\\
			=&2a(b^2+b^{4})e_{2},
		\end{align*}
	\end{small}
	where $\lambda_4=\frac{1+b^2}{2}$ and $b=b_4$. With the definition of projection operator $Q$ in Section \ref{S:Pre} and $G=(G_1,G_2)$, we have
	\begin{eqnarray*}
		Q\frac{\dif^2}{\dif t^2}G(\lambda_2,tx_a)_{|t=0}
		=
		\left\langle
		\left(
		\begin{array}{cc}
			2a(b^{2}+b^4)\\
			2a(b^2+b^{4})
		\end{array}
		\right),
		\left(
		\begin{array}{cc}
			\frac{1}{\sqrt{2}}\\
			-\frac{1}{\sqrt{2}}
		\end{array}
		\right)
		\right\rangle
		y_1=0,
	\end{eqnarray*}
	which completes our proof.
	
\end{proof}

\subsection{Second-order derivatives} \label{SS:Hessian}

Moreover, we continue to compute second-order derivatives of the reduced functional $F_2$.
\begin{proposition}\label{d2F2}
Let  $p \geq 2$ and $b_{2p}^{2p} = p -1-pb_{2p}^2$. Let
\[
	y_1=
	\left(
	\begin{array}{cc}
		\frac{1}{\sqrt{2}}\\
		-\frac{1}{\sqrt{2}}
	\end{array}
	\right)e_{2}, \quad
	y_2=-
	\left(
	\begin{array}{cc}
		\frac{1}{\sqrt{2}}\\
		\frac{1}{\sqrt{2}}
	\end{array}
	\right)e_{2p}
\]
be the basis of the co-kernel of $\mathcal{L}_{\frac{1+b_{2p}^2}{2},b_{2p}}$. The second-order derivatives of $F_2$ are calculated as follows:
\begin{enumerate}
		\item $\partial_{\lambda\lambda} F_2(\lambda_{2p},0;a)=4\sqrt{2}\left(b_{2p}^{-1}y_1+ap^2b_{2p}^{1-2p}y_2\right).$
		\item $\partial_t\partial_\lambda F_2(\lambda_{2p},0;a)=0.$
		\item
		\[\partial_{tt} F_2(\lambda_{4},0;a)= \sqrt{2}\left[-b_4^{-1}(b_4^2-1)^2+c_1\right]y_1 +\sqrt{2}\left[b_4^{-1}(b_4^2-1)^2(4a+6a^3)+c_2\right]y_2,
		\]
		where
		\begin{eqnarray*}
			c_1=-2a^2b_4(b_4^2+1)^2 ,\quad
			c_2= \frac{2 a^3 (b_4^2-1)^2(4b_4^2-3)}{  b(2b_4^4-1)}.
		\end{eqnarray*}
		for $p=2$. On the other hand, for $p \geq 3$, we have
		\[
		\begin{split}
			\partial_{tt} F_2(\lambda_{2p},0;a)& = \sqrt{2}\left[-b_{2p}^{-1}(b_{2p}^2-1)^2+c_1\right]y_1 \\
			& +\sqrt{2}\left\{b_{2p}^{-1}(b_{2p}^2-1)^2[2pa+(2p-1)pa^3]+c_2\right\}y_2,
		\end{split}
		\]
		where
		\[
		c_1=-2a^2(b_{2p}^2-1)^2b_{2p}^{-1}[p+(p-1)b_{2p}^2],
		\]
		\[
		c_2= \frac{4a^3(p-1)^4  (b_{2p}^2-1)^2 [2pb_{2p}^3+(1-2p)b_{2p}]}{ b_{2p}^2[2+4p(b_{2p}^2-1)+p^2(b_{2p}^2-1)^2]}.
		\]
	\end{enumerate}	
\end{proposition}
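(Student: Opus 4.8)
The plan is to evaluate each of the three second-order derivatives through the closed formulas of Lemma \ref{acformal}, reducing everything to the same building blocks that already drive the proof of Proposition \ref{JF2c}: the matrix multipliers $M_{2n}(\lambda_{2p})$ of \eqref{E:M2n}, their $\lambda$-derivatives, the inversion relations \eqref{iG} for $(\mathrm{Id}-Q)\partial_fG(\lambda_{2p},0)$ on the complement $\mathcal X$, and the residue identities \eqref{E:01}--\eqref{E:3}. Concretely, I substitute the deformation $\phi_j(t,w)=b_jw+t\alpha_j\overline w+t\theta_j\overline w^{2p-1}$ of \eqref{phi} into $G$ as in \eqref{Gc}, differentiate the Cauchy integrals $I_i(\phi_j)$ in $t$ and in $\lambda$, push every scalar integral to a residue at $0$ or $\infty$, and read off the Fourier coefficients on $e_2$ and $e_{2p}$. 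The one structural simplification worth isolating at the outset is that the $\lambda$-dependence of $G_j$ enters only through the local term $(1-\lambda)\overline{\phi_j}\,w\phi_j'$; hence $\partial_\lambda\partial_fG(\lambda_{2p},0)$ is the diagonal multiplier $\mathrm{diag}(2n,2nb)$ on the mode $e_{2n}$, while $\partial_\lambda\partial_{ff}G$ is the constant Hessian of the polynomial $-\mathrm{Im}\{w\overline{f_j}f_j'\}$, with no nonlocal contribution.

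For part (1) I would apply Lemma \ref{acformal}(4). Acting with the diagonal $\partial_\lambda\partial_fG(\lambda_{2p},0)$ on $x_a=x^1+ax^2$ produces $\binom{2b}{2b}e_2+a\binom{2pb}{-2pb}e_{2p}$, whose $Q$-projection vanishes, which is precisely \eqref{E:Non-tra}. Its complementary part lies in $\mathcal Y$, so \eqref{iG} inverts it to $\alpha_1=-2b^{-1}$, $\alpha_2=2pab^{1-2p}$; applying $\partial_\lambda\partial_fG(\lambda_{2p},0)$ once more and projecting onto $y_1,y_2$ gives $4\sqrt2\,(b^{-1}y_1+p^2ab^{1-2p}y_2)$. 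Part (2) I would obtain from Lemma \ref{acformal}(3): the two terms carrying $\partial_\lambda$ reduce to residues of the rational integrand $w\overline{f_j}f_j'$ together with the correction vector $\bar x$ introduced below, while the middle term $Q\partial_{ff}G[\tilde x,x_a]$ uses the bilinear Cauchy form and $\tilde x=-[\partial_fG(\lambda_{2p},0)]^{-1}(\mathrm{Id}-Q)\partial_\lambda\partial_fG(\lambda_{2p},0)x_a$. I expect the three contributions to cancel after projection, the mechanism being the same matching of the $e_2$-components of the two scalar equations $G_1,G_2$ that already forced $\partial_tF_2(\lambda_{2p},0;a)=0$ in Proposition \ref{JF2c}, together with the absence of a surviving $e_{2p}$-mode at this order.

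Part (3) is the substantial one, handled through Lemma \ref{acformal}(2). The term $\tfrac13\frac{d^3}{dt^3}QG(\lambda_{2p},tx_a)|_{t=0}$ is a direct third-order expansion of \eqref{Gc} along $x_a$ and supplies the leading pieces $-b^{-1}(b^2-1)^2$ on $y_1$ and $b^{-1}(b^2-1)^2\cdot 2pa$ on $y_2$. The term $Q\partial_{ff}G(\lambda_{2p},0)[x_a,\bar x]$ requires the second-order Lyapunov--Schmidt correction $\bar x=-[\partial_fG(\lambda_{2p},0)]^{-1}\frac{d^2}{dt^2}(\mathrm{Id}-Q)G(\lambda_{2p},tx_a)|_{t=0}$, whose ingredients are the analogues of the expansions \eqref{dI2c1}--\eqref{dI2c2} computed in Proposition \ref{JF2c}. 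The quadratic self-interaction of $x_a$, whose two pieces live on the modes $\overline w$ and $\overline w^{2p-1}$, excites the Fourier modes $e_2$, $e_{2p}$ and $e_{4p-2}$, so inverting $(\mathrm{Id}-Q)\partial_fG(\lambda_{2p},0)$ brings in the reciprocals $1/\det M_{2n}(\lambda_{2p})$ together with the complement relations \eqref{iG}; these are the source of the denominators in $c_1,c_2$. Organizing the output by powers of $a$, with the $y_1$-component even and the $y_2$-component odd in $a$, isolates the corrections $c_1=O(a^2)$ and $c_2=O(a^3)$.

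The main obstacle is the sheer bookkeeping in part (3) with $p$ kept symbolic: tracking which residue feeds which Fourier mode, and then simplifying the rational expressions arising from $\det M_{2n}(\lambda_{2p})=b\,[\,b^{4n}-(1-n(1-b^2))^2\,]$ by means of the defining relation $b^{2p}=p-1-pb^2$ of \eqref{E:b-2m}. It is exactly this reduction that collapses the denominator of $c_2$ to the closed form $b^2[\,2+4p(b^2-1)+p^2(b^2-1)^2\,]$, which degenerates to $b(2b^4-1)$ when $p=2$. My strategy is therefore to carry out $p=2$ in complete detail, where only the modes $e_2$ and $e_8$ survive in $\frac{d^2}{dt^2}G$, and then to indicate that the identical chain of residue evaluations and algebraic simplifications yields the stated formulas for every $p\ge3$.
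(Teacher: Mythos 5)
Your plan is essentially the paper's own proof: all three derivatives are evaluated through the closed formulas of Lemma \ref{acformal}, the $\lambda$-derivatives reduce to the diagonal multiplier $\mathrm{diag}(2n,2nb)$ and the inversion relations \eqref{iG}, the $t$-derivatives reduce to residue evaluations of the Cauchy integrals, and the case $p=2$ is done in full with the general $p$ indicated; your part (1) is carried out correctly and reproduces the paper's computation of $\tilde x$ and of $\partial_{\lambda\lambda}F_2$. Two small corrections to your sketch. In part (2) the cancellation is not the orthogonality of a $\binom{1}{1}$-vector to $y_1$ that killed $\partial_tF_2$ in Proposition \ref{JF2c}; in the paper the three terms of Lemma \ref{acformal}(3) project individually to $\sqrt2\,a(1+b^2)y_1$, $-2\sqrt2\,a(1+b^2)y_1$ and $\sqrt2\,a(1+b^2)y_1$, and it is the sum of these nonzero contributions that vanishes, so each term must actually be computed. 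In part (3) the quadratic self-interaction $\frac{d^2}{dt^2}(\mathrm{Id}-Q)G(\lambda_{2p},tx_a)|_{t=0}$ excites the modes $e_{2p-2}$ and $e_{4p}$ (for $p=2$: $e_2$ and $e_8$), not $e_{4p-2}$; consequently $\bar x$ lives on $\overline w^{2p-3}$ and $\overline w^{4p-1}$ and the denominator of $c_2$ comes from $M_{4p}^{-1}(\lambda_{2p})$ (i.e.\ $M_8^{-1}$ when $p=2$) after simplification with $b^{2p}=p-1-pb^2$, exactly as you anticipate for the determinant identity.
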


\begin{proof}[Proof of Proposition \ref{d2F2}]
(1)For $p=2$,
$x_a=\left(
\begin{array}{cc}
b\\
1
\end{array}
\right)
\overline{w}
+a
\left(
\begin{array}{cc}
b\\
-1
\end{array}
\right)
\overline{w}^{3}$.
Since $\partial_{\lambda\lambda}G(\lambda_{4},0)=0$ is obviously by definition of $G$, from Proposition \ref{acformal}, we know
\begin{eqnarray*}
\partial_{\lambda\lambda} F_2(\lambda_{4},0;a)=-2Q\partial_{\lambda}\partial_f G(\lambda_{4},0) \left[\partial_f G(\lambda_{4},0)\right]^{-1}({\rm {Id}}-Q)\partial_{\lambda}\partial_f G(\lambda_{4},0)x_a.
\end{eqnarray*}

By the representation of $\partial_fG$ in \eqref{dG1} and the definition of the projection region of $Q$ in \eqref{E:k}, we have
\begin{eqnarray*}
\begin{split}
\partial_{\lambda}\partial_f G(\lambda_{4},0)x_a&=
\left(
\begin{array}{cc}
2&0\\
0&2b
\end{array}
\right)
\left(
\begin{array}{cc}
b\\
1
\end{array}
\right)e_2
+a
\left(
\begin{array}{cc}
4&0\\
0&4b
\end{array}
\right)
\left(
\begin{array}{cc}
b\\
-1
\end{array}
\right)e_{4}\\
&=
2b
\left(
\begin{array}{cc}
1\\
1
\end{array}
\right)e_2
+
4ab
\left(
\begin{array}{cc}
1\\
-1
\end{array}
\right)e_{4}\\
&=(\mathrm{Id}-Q)\partial_{\lambda}\partial_f G(\lambda_{4},0)x_a.
\end{split}
\end{eqnarray*}
With the relationships in \eqref{iG}, we could easily compute the inverse of $\partial_fG$
\begin{eqnarray*}
-\left[\partial_f G(\lambda_{4},0)\right]^{-1}({\rm {Id}}-Q)\partial_{\lambda}\partial_f G(\lambda_{4},0)x_a
=2b^{-1}
\left(
\begin{array}{cc}
1\\
0
\end{array}
\right)\overline{w}
-
4ab^{-3}
\left(
\begin{array}{cc}
1\\
0
\end{array}
\right)\overline{w}^{3},
\end{eqnarray*}
that is,
\begin{eqnarray}\label{E:philga2}
\tilde{x}=\partial_\lambda\partial_g \varphi(\lambda_{4},0)x_a
=2b^{-1}
\left(
\begin{array}{cc}
1\\
0
\end{array}
\right)\overline{w}
-
4ab^{-3}
\left(
\begin{array}{cc}
1\\
0
\end{array}
\right)\overline{w}^{3}.
\end{eqnarray}
Again, by \eqref{dG1} and the definition of the projection operator $Q$ in Section \ref{S:Pre}, we have
\begin{eqnarray*}
\begin{split}
\partial_{\lambda\lambda} F_2(\lambda_{4},0;a)&=Q\left[
4b^{-1}
\left(
\begin{array}{cc}
2&0\\
0&2b
\end{array}
\right)
\left(
\begin{array}{cc}
1\\
0
\end{array}
\right)e_2
-8ab^{-3}
\left(
\begin{array}{cc}
4&0\\
0&4b
\end{array}
\right)
\left(
\begin{array}{cc}
1\\
0
\end{array}
\right)e_{4}
\right]\\
&=
4\sqrt{2}(b^{-1}y_1+4ab^{-3}y_2).
\end{split}
\end{eqnarray*}
The same approach can be used to obtain the result for $p\ge3$, which requires complex computations that we omit here.

(2) For $p=2$, from Proposition \ref{acformal}, we have
\begin{eqnarray*}
\begin{split}
\partial_t\partial_\lambda F_2(\lambda_{4},0;a)=&
\frac{1}{2}Q\partial_\lambda\partial_{ff}G(\lambda_{4},0)[x_a,x_a]
+Q\partial_{ff}G(\lambda_{4},0)\left[\partial_\lambda\partial_g\varphi(\lambda_{4},0)x_a,x_a\right]\\
&+\frac{1}{2}Q\partial_\lambda\partial_f G(\lambda_{4},0)\bar{x}.
\end{split}
\end{eqnarray*}

The direct computations of \eqref{Gc} yield that
\begin{eqnarray*}
\partial_\lambda\partial_{ff}G(\lambda_{4},0)[x_a,x_a]=\partial_\lambda\partial_t^2G(\lambda_4,t x_a)|_{t=0}=
4a
\left(
\begin{array}{cc}
b^2\\
-1
\end{array}
\right)
e_{2},
\end{eqnarray*}
or we may apply Lemma \ref{lem1} to get the same result.
%So, if $p\ne 2$,
%$\frac{1}{2}Q\partial_\lambda\partial_{ff}G(\lambda_{4},0)[x_a,x_a]=0$, and if $p=2$
Then by the definition of $Q$ in \ref{S:Pre}, we get the first term in the formation of $\partial_t\partial_\lambda F_2(\lambda_{4},0;a)$, which is,
\begin{eqnarray*}
\frac{1}{2}Q\partial_\lambda\partial_{ff}G(\lambda_{4},0)
=Q\left[2
a
\left(
\begin{array}{cc}
b^2\\
-1
\end{array}
\right)e_2\right]
=\sqrt{2}a(b^2+1)y_1.
\end{eqnarray*}

For the second term, notice that
\begin{eqnarray*}
\partial_{ff}G(\lambda_{4},0)\left[\partial_\lambda\partial_g\varphi(\lambda_{4},0)x_a,x_a\right]=
\frac{\partial^2}{\partial t\partial s}G(\lambda_{4},t x_a+s\partial_\lambda\partial_g\varphi(\lambda_{4},0)x_a)_{|t,s=0}
\end{eqnarray*}
and
$\tilde{x}=
\partial_\lambda\partial_g \varphi(\lambda_{4},0)x_a
=2b^{-1}
\left(
\begin{array}{cc}
1\\
0
\end{array}
\right)\overline{w}
-
4ab^{-3}
\left(
\begin{array}{cc}
1\\
0
\end{array}
\right)\overline{w}^{3}
$
from \eqref{E:philga2} in the proof of (1).
Then we still use the notation of $\phi_j$ without changes for convenience and denote
\begin{eqnarray*}
\phi_j(t,s,w)=b_jw+(t\alpha_j+s\tilde{\alpha}_j)\overline{w}+(t\theta_j+s\tilde{\theta}_j)\overline{w}^{3},
\end{eqnarray*}
where $\tilde{\alpha}_1=2b^{-1}$, $\tilde{\alpha}_2=0$, $\tilde{\theta}_1=-4ab^{-3}$, $\tilde{\theta}_2=0$ %comes from \eqref{E:philga},
and $\alpha_j$, $b_j$, $\theta_j$ are in \eqref{notation1}.
Now, we write
\begin{eqnarray*}
G_j(\lambda_{4},t x_a+s\partial_\lambda\partial_g\varphi(\lambda_{4},0)x_a)
\!=\!\mathrm{Im}\!\left\{\! \left[(1-\lambda_{4})\overline{\phi_j(t,s,w)}+I(\phi_j(t,s,w))\right]w\phi_j'(t,s,w)\!\right\}\!.
\end{eqnarray*}
The direct computations yield that
\begin{align}
&\frac{\partial^2}{\partial t\partial s}
G_j(\lambda_{4},tx_a+s\partial_\lambda\partial_g\varphi(\lambda_{4},0)x_a)_{|t,s=0}\nonumber\\ \nonumber
=&\mathrm{Im}\left\{b_j w \frac{\partial^2}{\partial t\partial s}I(\phi_j)_{|t,s=0} \right\}
-\mathrm{Im}\left\{(\tilde{\alpha}_j \overline{w}+3\tilde{\theta}_j\overline{w}^{3}) \frac{\partial}{\partial t}I(\phi_j)_{|t,s=0} \right\}\\
&-\mathrm{Im}\left\{(\alpha_j \overline{w}+3\theta_j\overline{w}^{3}) \frac{\partial}{\partial s}I(\phi_j)_{|t,s=0} \right\}+2(\lambda_{4}-1)(\tilde{\alpha}_j\theta_j+\alpha_j\tilde{\theta}_j)e_{2}.\label{dGst}
\end{align}

From the definition of $I(z)$ in \eqref{E:Cauchy-m}, we have
\begin{eqnarray*}
I_i(\phi_j(t,s,w))=\int \frac{\overline{A}+t(B+C)+s(H+J)}{A+t(\overline{B}+\overline{C})+s(\overline{H}+\overline{J})}
[b_i-t(D+E)-s(K+L)]\dif \tau,
\end{eqnarray*}
where
\begin{eqnarray*}
&&A=b_jw-b_i\tau,\, B=\alpha_j w-\alpha_i \tau,\, C=\theta_jw^{3}-\theta_i\tau^{3},\, D=\alpha_i\overline{\tau}^2,\,E=3\theta_i \overline{\tau}^{4},\\
&& H=\tilde{\alpha}_j w-\tilde{\alpha}_i \tau,\,   J=\tilde{\theta}_jw^{3}-\tilde{\theta}_i\tau^{3},\,
K=\tilde{\alpha}_i\overline{\tau}^2,\, L=3\tilde{\theta}_i \overline{\tau}^{4}.
\end{eqnarray*}
It is easy to check that
\begin{eqnarray*}
\frac{\partial}{\partial t}I_i(\phi_j(t,s,w))_{|s,t=0}=
\int \frac{b_i(B+C)-\overline{A}(D+E)}{A}\dif \tau
-\int \frac{b_i\overline{A}(\overline{B}+\overline{C})}{A^2}\dif \tau,
\end{eqnarray*}
\begin{eqnarray*}
\frac{\partial}{\partial s}I_i(\phi_j(t,s,w))_{|s,t=0}=
\int \frac{b_i(H+J)-\overline{A}(K+L)}{A}\dif \tau
-\int \frac{b_i\overline{A}(\overline{H}+\overline{J})}{A^2}\dif \tau,
\end{eqnarray*}
and
\begin{small}
\begin{align*}
\frac{\partial^2}{\partial t\partial s}\!I_i(\!\phi_j(t,s,w)\!)_{|s,t=0}
\!=\!&
-\!\!\int\!\! \frac{(B+C)(K+L)+(H+J)(D+E)}{A}\dif \tau
\!+\!2\!\!\int\!\!\frac{b_i\overline{A}(\overline{B}+\overline{C})(\overline{H}+\overline{J})}{A^3}\dif \tau\\
&-\!\!\int\!\! \frac{b_i(B+C)-\overline{A}(D+E)}{A^2}(\overline{H}+\overline{J})\dif \tau\\
&-\!\!\int\!\! \frac{b_i(H+J)-\overline{A}(K+L)}{A^2}(\overline{B}+\overline{C})\dif \tau.
\end{align*}
\end{small}
Recall that in the proof of Proposition \ref{acformal},
\begin{eqnarray*}
\frac{\partial}{\partial t}I_i(\phi_j(t,s,w))_{|s,t=0}
=\mu_{ij}w+\eta_{ij}w^{3}+\gamma_{ij}\overline{w}^3+\kappa_{ij}\overline{w}^{5}.
\end{eqnarray*}
Since $\tilde{x}$ has the same form of $x_a$, we have
\begin{eqnarray*}
\frac{\partial}{\partial s}I_i(\phi_j(t,s,w))_{|s,t=0}
=\tilde{\mu}_{ij}w+\tilde{\eta}_{ij}w^{3}+\tilde{\gamma}_{ij}\overline{w}^3+\tilde{\kappa}_{ij}\overline{w}^{5}.
\end{eqnarray*}
Using the change of variables and expand the equation of $\frac{\partial^2}{\partial t\partial s}I_i(\phi_j(t,s,w))_{|s,t=0}$ in the above, we may write
\begin{eqnarray*}
\frac{\partial^2}{\partial t\partial s}I_i(\phi_j(t,s,w))_{|s,t=0}
=\hat{\eta}_{ij}w+\hat{\mu}_{ij}\overline{w}+\hat{\kappa}_{ij}\overline{w}^{3}+\hat{\gamma}_{ij}\overline{w}^5
+\hat{\nu}_{ij}\overline{w}^{7}+\hat{\rho}_{ij}\overline{w}^{9}.
\end{eqnarray*}
Of course, $\tilde{\mu}_{ij},\tilde{\eta}_{ij},\tilde{\gamma}_{ij},\tilde{\kappa}_{ij},
\hat{\eta}_{ij}, \hat{\mu}_{ij}, \hat{\kappa}_{ij},\hat{\gamma}_{ij},\hat{\nu}_{ij},\hat{\rho}_{ij}$ are real constants, and can be written as follows
\begin{eqnarray}\label{1}
&&\tilde{\mu}_{ij}w=\int \frac{b_iH}{A}\dif \tau,\quad \tilde{\eta}_{ij}w^{3}=\int \frac{b_iJ}{A}\dif \tau,\quad
\tilde{\gamma}_{ij}\overline{w}^3=-\int \frac{\overline{A}K}{A}\dif \tau-\int \frac{b_i\overline{A}\overline{H}}{A^2}\dif \tau,\nonumber\\
&&\hat{\eta}_{ij}w=-\int \frac{CK+JD}{A}\dif \tau-\int \frac{b_iC\overline{H}}{A^2}\dif \tau
-\int\frac{b_i\overline{B}J}{A^2}\dif \tau,\nonumber\\
&&\hat{\kappa}_{ij}\overline{w}^{3}=-\int \frac{BL+HE}{A}\dif \tau-\int \frac{b_iB\overline{J}}{A^2}\dif \tau
-\int \frac{b_iH\overline{C}}{A^2}\dif \tau,\nonumber\\
&&\hat{\gamma}_{ij}\overline{w}^5=\int \frac{\overline{A}D\overline{H}}{A^2}\dif \tau
+\int \frac{\overline{A}K\overline{B}}{A^2}\dif \tau+\int \frac{2b_i\overline{A}\overline{B}\overline{H}}{A^3}\dif \tau.
\end{eqnarray}
%Inserting the above results into \eqref{dGst}, we obtain
%\begin{eqnarray*}
%\frac{\partial^2}{\partial t\partial s}
%G_j(\lambda_{4},tx_a+s\partial_\lambda\partial_g\varphi(\lambda_{4},0)x_a)_{|t,s=0}
%=C_{j,1}e_{4-2}+C_{j,2}e_4+C_{j,3}e_{4+2}+C_{j,4}e_{4p}
%\end{eqnarray*}
%for some real constants $C_{j,1},C_{j,2},C_{j,3},C_{j,4}$. So if $p\ne2$, it is easy to see that
%\begin{eqnarray*}
%Q\partial_{ff}G(\lambda_{4},0)\left[\partial_\lambda\partial_g\varphi(\lambda_{4},0)x_a,x_a\right]
%=Q\frac{\partial^2}{\partial t\partial s}
%G(\lambda_{4},t x_a+s\partial_\lambda\partial_g\varphi(\lambda_{4},0)x_a)_{|t,s=0}=0.
%\end{eqnarray*}
Because the last step to compute the second term is to apply the projection operator onto equation \eqref{dGst}, we only write the constants that would work in this step, that is, we only concern the constants causing $e_2$ and $e_4$ terms (when $p\ne2$, $e_2$ and $e_{2p}$ terms). So in \eqref{1}, we do not give the forms of all constants.
Applying the methods in the proof of Proposition \ref{JF2c}, directly using the holomorphism of integrals and the residue theorem at $\infty$ or using the useful identities \eqref{E:0}, \eqref{E:01}, \eqref{E:1}, \eqref{E:02}, \eqref{E:2}, \eqref{E:03}, \eqref{E:3}, we obtain the following results
\begin{eqnarray*}
&&\tilde{\mu}_{11}=\tilde{\mu}_{22}=\tilde{\mu}_{21}=0,\,\tilde{\mu}_{12}=2,\\
&&\tilde{\eta}_{11}=\tilde{\eta}_{22}=\tilde{\eta}_{21}=0,\, \tilde{\eta}_{12}=-4a,\\
&&\tilde{\gamma}_{11}=\tilde{\gamma}_{22}=\tilde{\gamma}_{12}=0,\,\tilde{\gamma}_{21}=2b,\\
&&\hat{\eta}_{11}=\hat{\eta}_{22}=\hat{\eta}_{21}=0,\,\hat{\eta}_{12}=-6ab,\\
&&\hat{\kappa}_{22}=\hat{\kappa}_{21}=0,\,\hat{\kappa}_{11}=2a(1-2b^{-2}),\,\hat{\kappa}_{12}=-2ab^{-1},\\
&&\hat{\gamma}_{11}=\hat{\gamma}_{22}=\hat{\gamma}_{12}=0,\,\hat{\gamma}_{21}=2b^2,
\end{eqnarray*}
where we omit the details of computation process because it is cumbersome and all the methods are the same as we used in the proof of Proposition \ref{JF2c}. Then by the definition of $I(z)$ defined in \eqref{E:Cauchy-m} and \eqref{dI1c}, using the value of these real constants,  we have
\begin{align*}
\left(
\begin{array}{cc}
\frac{\partial}{\partial t}I(\phi_1(t,s,w))_{|s,t=0}\\
\frac{\partial}{\partial t}I(\phi_2(t,s,w))_{|s,t=0}
\end{array}
\right)&
=
\left(
\begin{array}{cc}
0\\
a(1+b^4)
\end{array}
\right)w^3+
\left(
\begin{array}{cc}
0\\
b^2-1
\end{array}
\right)w-
\left(
\begin{array}{cc}
a(b^3+b^5)\\
0
\end{array}
\right)\overline{w}^5,
\\
\left(
\begin{array}{cc}
\frac{\partial}{\partial s}I(\phi_1(t,s,w))_{|s,t=0}\\
\frac{\partial}{\partial s}I(\phi_2(t,s,w))_{|s,t=0}
\end{array}
\right)&
=
\left(
\begin{array}{cc}
0\\
2
\end{array}
\right)w
-
\left(
\begin{array}{cc}
0\\
4a
\end{array}
\right)w^3-
\left(
\begin{array}{cc}
2b\\
0
\end{array}
\right)\overline{w}^3+
\left(
\begin{array}{cc}
\tilde{\kappa}_{11}-\tilde{\kappa}_{21}\\
\tilde{\kappa}_{12}-\tilde{\kappa}_{22}
\end{array}
\right)\overline{w}^5,
\\
\left(
\begin{array}{cc}
\frac{\partial^2}{\partial t\partial s}I(\phi_1(t,s,w))_{|s,t=0}\\
\frac{\partial^2}{\partial t\partial s}I(\phi_2(t,s,w))_{|s,t=0}
\end{array}
\right)
&=
-\left(
\begin{array}{cc}
0\\
6ab
\end{array}
\right)w+
\left(
\begin{array}{cc}
2a(1-2b^{-2})\\
-2ab^{-1}
\end{array}
\right)\overline{w}^3-
\left(
\begin{array}{cc}
2b^2\\
0
\end{array}
\right)\overline{w}^5
\\&+
\left(
\begin{array}{cc}
\hat{\mu}_{11}-\hat{\mu}_{21}\\
\hat{\mu}_{12}-\hat{\mu}_{22}
\end{array}
\right)\overline{w}+
\left(
\begin{array}{cc}
\hat{\nu}_{11}-\hat{\nu}_{21}\\
\hat{\nu}_{12}-\hat{\nu}_{22}
\end{array}
\right)\overline{w}^7+
\left(
\begin{array}{cc}
\hat{\rho}_{11}-\hat{\rho}_{21}\\
\hat{\rho}_{12}-\hat{\rho}_{22}
\end{array}
\right)\overline{w}^9.
\end{align*}
Plugging the above results into \eqref{dGst}, and we have
\begin{eqnarray*}
Q\frac{\partial^2}{\partial t\partial s}
G(\lambda_{4},tx_a+s\tilde{x})_{|t,s=0}
=
Q
\left[
\left(
\begin{array}{cc}
2a(b^2-2)\\
6ab^2
\end{array}
\right)e_2
\right]
=-2\sqrt{2}a(1+b^{2})y_1.
\end{eqnarray*}

To compute the third term, we need to compute $\bar{x}$ first, which is given by
\begin{eqnarray*}
\bar{x}=-\left[\partial_{f} G(\lambda_{4},0)\right]^{-1}\frac{\dif^2}{\dif t^2}({\rm {Id}-Q}) G(\lambda_{4}, tx_a)_{|t=0}.
\end{eqnarray*}
Using the expression of $\frac{\dif^2}{\dif t^2}G(\lambda_{4}, tx_a)_{|t=0}$ in the proof of Proposition \ref{JF2c} and \eqref{iG}, we have
\begin{align*}
\bar{x}&=
2a(1+b^2)
\left(
\begin{array}{cc}
1\\
0
\end{array}
\right)\overline{w}
-M_{8}^{-1}(\lambda_2)
\left(
\begin{array}{cc}
8a^2(b^2+b^{4})^2\\
0
\end{array}
\right)\overline{w}^{7}\\
&=
2a(1+b^2)
\left(
\begin{array}{cc}
1\\
0
\end{array}
\right)\overline{w}
-\frac{8a^2(b^2+b^4)^2}{b^{17}-b(4b^2-3)^2}
\left(
\begin{array}{cc}
4b^3-3b\\
b^8
\end{array}
\right)
\overline{w}^{7}\\
&=
2a(1+b^2)
\left(
\begin{array}{cc}
1\\
0
\end{array}
\right)\overline{w}
-\frac{a^2(b^2+b^4)^2}{(b^2-1)^2[1+4(b^2-1)+2(b^2-1)^2]}
\left(
\begin{array}{cc}
4b^2-3\\
b^7
\end{array}
\right)
\overline{w}^{7},
\end{align*}
where the last equation comes by using the identity $(b^4)^4=(1-2b^2)^4$, that is $b^4=1-2b^2$. For the convenience of future use, we denote
\begin{align}\label{notation2}
\nu_1&=2a(1+b^2), \quad\quad\quad\nu_2=0,\\
\eta_1&= -\frac{a^2(b^2+b^4)^2(4b^2-3)}{(b^2-1)^2[1+4(b^2-1)+2(b^2-1)^2]}
\text{ and }\eta_2=-\frac{a^2(b^2+b^4)^2b^7}{(b^2-1)^2[1+4(b^2-1)+2(b^2-1)^2]},\nonumber
\end{align}
and hence $\bar{x}=
\left(
\begin{array}{cc}
\nu_1\\
\nu_2
\end{array}
\right)\overline{w}+
\left(
\begin{array}{cc}
\eta_1\\
\eta_2
\end{array}
\right)\overline{w}^7.
$
According to the representation of $\partial_f G$ in \eqref{dG1} and the definition of the projection operator $Q$, we get
\begin{eqnarray*}
\frac{1}{2}Q\partial_\lambda\partial_fG(\lambda_{4},0)\overline{x}
=
Q
\left[
a(1+b^2)
\left(
\begin{array}{cc}
2&0\\
0&2b
\end{array}
\right)
\left(
\begin{array}{cc}
1\\
0
\end{array}
\right)e_2
\right]
=\sqrt{2}a(1+b^{2})y_1.
\end{eqnarray*}
Combining this with other two terms, we have
\begin{eqnarray*}
\partial_t\partial_\lambda F_2(\lambda_{4},0;a)
=2\sqrt{2}a(1+b^2) y_1- 2\sqrt{2}a(1+b^{2})y_1=0.
\end{eqnarray*}
The same approach can be used to obtain the result for $p\ge3$, which requires complex computations that we omit here.

(3) For $p=2$, from Proposition \ref{acformal}, we have
\begin{eqnarray*}
\partial_{tt}F_2(\lambda_{4},0;a)=\frac{1}{3}\frac{\dif^3}{\dif t^3}QG(\lambda_{4},tx_a)_{|t=0}+Q\partial_{ff}G(\lambda_{4},0)[x_a,\bar{x}],
\end{eqnarray*}
where
\begin{eqnarray*}
Q\partial_{ff}G(\lambda_{4},0)[x_a,\bar{x}]=Q\partial_t\partial_s G(\lambda_{4},t x_a+s\bar{x})_{|t=0,s=0}.
\end{eqnarray*}
Since the projection operator $Q$ project space $Y$ into space $\langle y_1\rangle\oplus \langle y_2\rangle$, in the following computations of $\frac{\dif^3}{\dif t^3}G(\lambda_{4},tx_a)_{|t=0}$ and $\partial_t\partial_s G(\lambda_{4},t x_a+s\bar{x})_{|t=0,s=0}$, we only need to focus on the $e_2$ and $e_{4}$ terms.

With $G$ defined in \eqref{Gc}, it is easy to check that
%\begin{small}
\begin{eqnarray*}
\frac{\dif^3}{\dif t^3}G_j(\lambda_{4},tx_a)_{|t=0}
\!=\!\mathrm{Im}\!\left\{\!b_j w\frac{\dif^3}{\dif t^3}I(\phi_j(t,w))_{|t=0} \!\right\}\!
-\!3\mathrm{Im}\!\left\{\!(\alpha_j \overline{w}+3\theta_j\overline{w}^{3})\frac{\dif^2}{\dif t^2}I(\phi_j(t,w))_{|t=0} \!\right\}\!,
\end{eqnarray*}
%\end{small}
Inserting \eqref{notation1}, \eqref{dI2c1} and \eqref{dI2c2} into the second term of the above equation, we know the $e_2$ term and $e_{4}$ term in the second term of the formula of $\frac{\dif^3}{\dif t^3}G(\lambda_{4},tx_a)|_{t=0}$ are,
\begin{eqnarray}\label{secondterm}
6\left(
\begin{array}{cc}
(b-b^3)(1+3a^2)\\
b^{-1}-b+3(b^{-1}+b^{3})a^2
\end{array}
\right)e_2\,\,%, \nonumber\\
\text{ and }\,\,
6\left(
\begin{array}{cc}
3(b-b^3)a+9(b-b^3)a^3\\
4(b-b^{-1})a-9(b^{-1}+b^{3})a^3
\end{array}
\right)e_{4},
\end{eqnarray}
respectively. Now we compute the first term of the formula of $\frac{\dif^3}{\dif t^3}G(\lambda_{4},tx_a)_{|t=0}$. Recall that
\begin{eqnarray*}
I_i(\phi_j(t,w))=\int\frac{\overline{A}+t(B+C)}{A+t(\overline{B}+\overline{C})}[b_i-t(D+E)]\dif \tau,
\end{eqnarray*}
with
\begin{eqnarray*}
A=b_jw-b_i\tau,\, B=\alpha_j w-\alpha_i \tau,\, C=\theta_jw^{3}-\theta_i\tau^{3},\, D=\alpha_i\overline{\tau}^2,\,E=3\theta_i \overline{\tau}^{4}.
\end{eqnarray*}
By directly computation and the change of variables, we have
\begin{eqnarray*}
\begin{split}
\frac{1}{6}\frac{\dif^3}{\dif t^3}I_i(\phi_j(t,w))_{|t=0}
=&\!\!\int\!\!\frac{(B\!+\!C)(D\!+\!E)(\overline{B}\!+\!\overline{C})}{A^2}\dif \tau
\!+\!\!\int\!\!\frac{b_i(B\!+\!C)\!-\!\overline{A}(D\!+\!E)}{A^3}(\overline{B}+\overline{C})^2\dif \tau\\
&-\!\!\int\!\!\frac{b_i\overline{A}(\overline{B}+\overline{C})^3}{A^4}\dif \tau\\
=& \breve{\mu}_{ij}\overline{w}^3\! +\!\breve{\eta}_{ij}\overline{w}^{5}\!+\!\breve{\kappa}_{ij}\overline{w}
\!+\!\breve{\gamma}_{ij}\overline{w}^{7}\!+\!\breve{\rho}_{ij}\overline{w}^{9}\!+\!\breve{\sigma}_{ij}\overline{w}^{11}
\!+\!\breve{\xi}_{ij}\overline{w}^{13}
\end{split}
\end{eqnarray*}
where $\breve{\mu}_{ij}$, $\breve{\eta}_{ij}$, $\breve{\kappa}_{ij}$, $\breve{\gamma}_{ij}$, $\breve{\rho}_{ij}$, $\breve{\sigma}_{ij}$, $\breve{\xi}_{ij}$ are real constants. Since we only concern the $e_2$ and $e_4$ terms in $\frac{\dif^3}{\dif t^3}G_j(\lambda_{4},tx_a)_{|t=0}$, then we just write the formulas of parts of them,
\begin{eqnarray*}
\begin{split}
& \breve{\mu}_{ij}\overline{w}^3
=\int\frac{BD\overline{B}}{A^2}\dif \tau
+\int\frac{CD\overline{C}}{A^2}\dif \tau
+\int\frac{CE\overline{B}}{A^2}\dif \tau
+\int\frac{b_iB\overline{B}^2}{A^3}\dif \tau
+\int\frac{2b_iC\overline{B}\overline{C}}{A^3}\dif \tau,\\
&\breve{\eta}_{ij}\overline{w}^{5}
=\int\frac{BD\overline{C}}{A^2}\dif \tau
+\int\frac{BE\overline{B}}{A^2}\dif \tau
+\int\frac{CE\overline{C}}{A^2}\dif \tau
+\int\frac{2b_iB\overline{B}\overline{C}}{A^3}\dif \tau
+\int\frac{b_iC\overline{C}^2}{A^3}\dif \tau.
%\\
%&\breve{\kappa}_{ij}w^{4-5}
%=\int\frac{CD\overline{B}}{A^2}\dif \tau
%+\int\frac{b_iC\overline{B}^2}{A^3}\dif \tau,
%\,\,\breve{\rho}_{ij}\overline{w}^{7}
%=-\int\frac{\overline{A}(\overline{B})^2D}{A^3}\dif \tau
%-\int\frac{b_i\overline{A}(\overline{B})^3}{A^4}\dif \tau.
\end{split}
\end{eqnarray*}
By a series computations with the same methods in the proof of Proposition \ref{JF2c}, using the residue theorem at $\infty$ or some useful identities defined in \eqref{E:0} and computed in \eqref{E:01}-\eqref{E:3}, we get
\begin{eqnarray*}
&& \breve{\mu}_{11}=3a^2b^3,\,\breve{\mu}_{21}=3a^2b,\,\breve{\mu}_{12}=-3a^2b^{2},\,\breve{\mu}_{22}=3a^2b^{-2},\\
&&\breve{\eta}_{11}=3a^3b^3,\,\breve{\eta}_{21}=-a(b+b^{3})-3a^3 (b+2b^{3}),\,
\breve{\eta}_{12}=3a^3b^{2},\,\breve{\eta}_{22}=-3a^3b^{-2},
%\\
%&&\breve{\kappa}_{11}=\breve{\kappa}_{12}=\breve{\kappa}_{21}=\breve{\kappa}_{22}=0,\,\,
%\breve{\rho}_{11}=\breve{\rho}_{12}=\breve{\rho}_{21}=\breve{\rho}_{22}=0,
\end{eqnarray*}
From \eqref{E:Cauchy-m} and the representations of $\frac{\dif^3}{\dif t^3}I_i(\phi_1(t,w))_{|t=0}$ with these real constants, we obtain
\begin{align*}
&\frac{1}{6}
\left(
\begin{array}{cc}
\frac{\dif^3}{\dif t^3}I(\phi_1(t,w))_{|t=0}\\
\frac{\dif^3}{\dif t^3}I(\phi_2(t,w))_{|t=0}
\end{array}
\right)\\
=&-3a^2
\left(
\begin{array}{cc}
b-b^3\\
b^2+b^{-2}
\end{array}
\right)\overline{w}^3+
\left(
\begin{array}{cc}
a(b+b^{3})+3a^3 (b+3b^{3})\\
3a^3(b^2+b^{-2})
\end{array}
\right)\overline{w}^5+
\left(
\begin{array}{cc}
\breve{\kappa}_{11}-\breve{\kappa}_{21}\\
\breve{\kappa}_{12}-\breve{\kappa}_{22}
\end{array}
\right)\overline{w}
\\&+
\left(
\begin{array}{cc}
\breve{\gamma}_{11}-\breve{\gamma}_{21}\\
\breve{\gamma}_{12}-\breve{\gamma}_{22}
\end{array}
\right)\overline{w}^7+
\left(
\begin{array}{cc}
\breve{\rho}_{11}-\breve{\rho}_{21}\\
\breve{\rho}_{12}-\breve{\rho}_{22}
\end{array}
\right)\overline{w}^9+
\left(
\begin{array}{cc}
\breve{\sigma}_{11}-\breve{\sigma}_{21}\\
\breve{\sigma}_{12}-\breve{\sigma}_{22}
\end{array}
\right)\overline{w}^{11}+
\left(
\begin{array}{cc}
\breve{\xi}_{11}-\breve{\xi}_{21}\\
\breve{\xi}_{12}-\breve{\xi}_{22}
\end{array}
\right)\overline{w}^{13}.
\end{align*}
Inserting the above results into the formula of $\frac{\dif^3}{\dif t^3}G(\lambda_{4},tx_a)_{|t=0}$, we know the $e_2$ and $e_{4}$ terms in the first term of the formula of $\frac{\dif^3}{\dif t^3}G(\lambda_{4},tx_a)_{|t=0}$ are
\begin{eqnarray*}
6\left(
\begin{array}{cc}
3a^2(b^3-b)\\
-3a^2(b^{3}+b^{-1})
\end{array}
\right)e_2\quad
\text{ and }\quad
6\left(
\begin{array}{cc}
a(b+b^{3})+3a^3(b+3b^3)\\
3a^3(b^{3}+b^{-1})
\end{array}
\right)e_{4}.
\end{eqnarray*}
Combining the above results with \eqref{secondterm}, we have,
\begin{align}\label{dG3c}
\frac{1}{3}Q\frac{\dif^3}{\dif t^3}G(\lambda_{4},tx_a )_{|t=0}%\nonumber\\
&=
2Q\left[
\left(
\begin{array}{cc}
b-b^3\\
b^{-1}-b
\end{array}
\right)e_2
+\left(
\begin{array}{cc}
4a(b-b^3)+12a^3\\
4a(b-b^{-1})-6a^3(b^{-1}+b^{3})
\end{array}
\right)e_{4}
\right]\nonumber\\
&= -\sqrt{2}b^{-1}(b^2-1)^2\left[y_1-2a(2+3a^2)y_2\right].
\end{align}

To compute $Q\partial_t\partial_s G(\lambda_{4},t x_a+s\bar{x})_{|t=0,s=0}$, the second term of the formula of $\partial_{tt}F_2(\lambda_{4},0;a)$, we denote
\begin{eqnarray*}
\phi_j(t,s,w)=b_jw+t\alpha_j\overline{w}+t\theta_j\overline{w}^{3}+s\nu_j\overline{w}+s\eta_j\overline{w}^{7},
\end{eqnarray*}
where $b_j, \alpha_j,\theta_j$ are in \eqref{notation1} and $\nu_j,\eta_j$ are in \eqref{notation2}. Now we can write
\begin{eqnarray*}
G_j(\lambda_{4},t x_a+s\overline{x})
=\mathrm{Im}\left\{ \left[(1-\lambda_{4})\overline{\phi_j(t,s,w)}+I(\phi_j(t,s,w))\right]w\phi_j'(t,s,w)\right\}.
\end{eqnarray*} By direct computations, we obtain
\begin{align}\label{dGst2}
&\frac{\partial^2}{\partial t\partial s}
G_j(\lambda_{4},tx_a+s\overline{x})_{|t,s=0}\nonumber\\
=&\mathrm{Im}\left\{b_j w \frac{\partial^2}{\partial t\partial s}I(\phi_j)_{|t,s=0} \right\}
-\mathrm{Im}\left\{(\nu_j \overline{w}+7\eta_j\overline{w}^{7}) \frac{\partial}{\partial t}I(\phi_j)_{|t,s=0} \right\}
+6(\lambda_{4}-1)\alpha_j\eta_je_{6}
\nonumber\\
&-\mathrm{Im}\left\{(\alpha_j \overline{w}+3\theta_j\overline{w}^{3}) \frac{\partial}{\partial s}I(\phi_j)_{|t,s=0} \right\}
+2(\lambda_{4}-1)\theta_j\nu_je_{2}+4(\lambda_{4}-1)\theta_j\eta_je_{4}.
\end{align}
Similarly as before, we will focus on the terms of $e_2$ and $e_{4}$. By the definition of $I_i(z)$, we rewritten that
\begin{eqnarray*}
I_i(\phi_j(t,s,w))=\int\frac{\overline{A}+t(B+C)+s(M+N)}{A+t(\overline{B}+\overline{C})+s(\overline{M}+\overline{N})}
[b_i-t(D+E)-s(O+P)]\dif \tau,
\end{eqnarray*}
with
\begin{align*}
&A=b_jw-b_i\tau,\, B=\alpha_j w-\alpha_i \tau,\, C=\theta_jw^{3}-\theta_i\tau^{3},\, D=\alpha_i\overline{\tau}^2,\,E=3\theta_i \overline{\tau}^{4},\\
&M=\nu_j w-\nu_i \tau,\, N=\eta_j w^{7}-\eta_i \tau^{7},\, O=\nu_i \overline{\tau}^{2},\,
P=7\eta_i\overline{\tau}^{8}.
\end{align*}
Then it is easy to check that
\begin{eqnarray*}
\frac{\partial}{\partial t}I_i(\phi_j(t,s,w))_{|s,t=0}=
\int \frac{b_i(B+C)-\overline{A}(D+E)}{A}\dif \tau
-\int \frac{b_i\overline{A}(\overline{B}+\overline{C})}{A^2}\dif \tau,
\end{eqnarray*}
\begin{eqnarray*}
\frac{\partial}{\partial s}I_i(\phi_j(t,s,w))_{|s,t=0}=
\int \frac{b_i(M+N)-\overline{A}(O+P)}{A}\dif \tau
-\int \frac{b_i\overline{A}(\overline{M}+\overline{N})}{A^2}\dif \tau,
\end{eqnarray*}
and
\begin{footnotesize}
\begin{eqnarray*}
\begin{split}
\frac{\partial^2}{\partial t\partial s}I_i(\phi_j(t,s,w))_{|s,t=0}
=&
\!-\!\!\int\!\! \frac{(B+C)(O+P)+(M+N)(D+E)}{A}\dif \tau
\!+\!2\!\!\int\!\! \frac{b_i\overline{A}(\overline{B}+\overline{C})(\overline{M}+\overline{N})}{A^3}\dif \tau
\\
&-\!\!\int\!\! \frac{b_i(M+N)-\overline{A}(O+P)}{A^2}(\overline{B}+\overline{C})\dif \tau
-\!\!\int\!\! \frac{b_i(B+C)-\overline{A}(D+E)}{A^2}(\overline{M}+\overline{N})\dif \tau
.
\end{split}
\end{eqnarray*}
\end{footnotesize}
Observe that the value of $
\frac{\partial}{\partial t}I(\phi_j(t,s,w))_{|s,t=0}$ is the same as the one in \eqref{dI1c}, then we consider $\frac{\partial}{\partial s}I_i(\phi_j(t,s,w))_{|s,t=0}$ and $\frac{\partial^2}{\partial t\partial s}I_i(\phi_j(t,s,w))_{|s,t=0}$. By the change of variable, we may write
%\begin{eqnarray*}
%\frac{\partial}{\partial t}I_i(\phi_j(t,s,w))_{|s,t=0}
%=\mu_{ij}w+\eta_{ij}w^{4-1}+\gamma_{ij}\overline{w}^3+\kappa_{ij}\overline{w}^{4+1}.
%\end{eqnarray*}
\begin{eqnarray*}
\frac{\partial}{\partial s}I_i(\phi_j(t,s,w))_{|s,t=0}
=\dot{\mu}_{ij}w+\dot{\eta}_{ij}w^{7}+\dot{\gamma}_{ij}\overline{w}^{3}+\dot{\kappa}_{ij}\overline{w}^{9},
\end{eqnarray*}
and
\begin{align*}
\frac{\partial^2}{\partial t\partial s}I_i(\phi_j(t,s,w))_{|s,t=0}
=&\acute{\mu}_{ij}w+\acute{\eta}_{ij}w^{3}+\acute{\kappa}_{ij}\overline{w}^{3}
+\acute{\rho}_{ij}\overline{w}^{5}\\
&+\acute{\xi}_{ij}w^{5}+\acute{\gamma}_{ij}\overline{w}+ \acute{\chi}_{ij}\overline{w}^{7}+ \acute{\tau}_{ij}\overline{w}^{11}+ \acute{\iota}_{ij}\overline{w}^{13},
\end{align*}
where $\dot{\mu}_{ij},\dot{\eta}_{ij},\dot{\gamma}_{ij},\dot{\kappa}_{ij},
\acute{\mu}_{ij},\acute{\eta}_{ij},\acute{\kappa}_{ij},\acute{\rho}_{ij},\acute{\xi}_{ij},\acute{\gamma}_{ij},
\acute{\chi}_{ij},\acute{\tau}_{ij},\acute{\iota}_{ij}$ are real constants. From \eqref{dGst2}, we could just write the formula of the constants that would result $e_2$ and $e_4$ terms, which are
\begin{eqnarray*}
&&\dot{\mu}_{ij}w=\int \frac{b_iM}{A}\dif \tau,\quad \dot{\eta}_{ij}w^{7}=\int \frac{b_iN}{A}\dif \tau,\quad
\dot{\gamma}_{ij}\overline{w}^{3}=-\int \frac{\overline{A}O}{A}\dif \tau-\int \frac{b_i\overline{A}\overline{M}}{A^2}\dif \tau,\nonumber\\
&&\acute{\mu}_{ij}w=-\int \frac{CO}{A}\dif \tau-\int \frac{b_iC\overline{M}}{A^2}\dif \tau\dif \tau,
\quad\acute{\eta}_{ij}w^{3}=-\int \frac{NE}{A}\dif \tau-\int \frac{b_iN\overline{C}}{A^2},\nonumber\\
%&&\acute{\gamma}_{ij}w^{4-5}=-\int \frac{MD}{A}\dif \tau-\int \frac{b_iM\overline{B}}{A^2}\dif \tau,
&&\acute{\kappa}_{ij}\overline{w}^{3}=-\int \frac{ME}{A}\dif \tau-\int \frac{b_iM\overline{C}}{A^2}\dif \tau,\nonumber\\
%&&\acute{\nu}_{ij}\overline{w}^{4-3}=-\int \frac{BO}{A}\dif \tau-\int \frac{b_iB\overline{M}}{A^2}\dif \tau,\nonumber\\
&&\acute{\rho}_{ij}\overline{w}^{5}=-\int \frac{CP}{A}\dif \tau
+\int \frac{\overline{A}O\overline{B}-b_iC\overline{N}+\overline{A}D\overline{M}}{A^2}\dif \tau
+\int \frac{2b_i\overline{ABM}}{A^3}\dif \tau.
\end{eqnarray*}
By a series computations with the same methods in the proof of Proposition \ref{JF2c}, where we use the
residue theorem at $\infty$ or some useful equation, \ref{E:0}, \ref{E:01}-\ref{E:3}, then we get
\begin{align*}
&\dot{\mu}_{11}=\dot{\mu}_{22}=\dot{\mu}_{21}=0,\,\,\dot{\mu}_{12}=b\nu_1-\nu_2,\\
&\dot{\eta}_{11}=\dot{\eta}_{22}=\dot{\eta}_{21}=0,\, \,\dot{\eta}_{12}=b^{7}\eta_1-\eta_2,\\
&\dot{\gamma}_{11}=\dot{\gamma}_{22}=\dot{\gamma}_{12}=0,\,\,\dot{\gamma}_{21}=b^2\nu_1-b^{3}\nu_2,\\
&\acute{\mu}_{11}=\acute{\mu}_{22}=\acute{\mu}_{21}=0,\,\,\acute{\mu}_{12}=3a(\nu_2b^{3}-\nu_1b^2),\\
&\acute{\eta}_{11}=\acute{\eta}_{22}=\acute{\eta}_{21}=0,\,\,\acute{\eta}_{12}=-7a\eta_1(b^{4}+b^{6}),\\
%&&\text{For }p\!\ne\!2,\,\acute{\gamma}_{11}=\acute{\gamma}_{22}=\acute{\gamma}_{12}=\acute{\gamma}_{21}=0;
%\text{ For }p\!=\!2,\,
%\acute{\gamma}_{11}\!=b\nu_1,\,\acute{\gamma}_{22}\!=b^{-1}\nu_2,\,\acute{\gamma}_{12}\!=\nu_1,\,\acute{\gamma}_{21}\!=\nu_2,\\
&\acute{\kappa}_{11}=ab\nu_1,\,\,\acute{\kappa}_{22}=-ab^{-1}\nu_2,\,\,\acute{\kappa}_{12}=-a\nu_1,\,\,\acute{\kappa}_{21}=-ab^2\nu_2,\\
%&&\acute{\nu}_{11}=b\nu_1,\,\acute{\nu}_{22}=b^{-1}\nu_2,\,\acute{\nu}_{12}=b\nu_2,\,\acute{\nu}_{21}=b^{4-4}\nu_2,\\
&\acute{\rho}_{11}=3a\eta_1b,\,\,\acute{\rho}_{22}=-3a\eta_2b^{-1},\,\,
\acute{\rho}_{12}=3a\eta_2b^{3},\,\,
\acute{\rho}_{21}=-3a\eta_2b^{4}+\nu_1b^3-\nu_2b^{4}.
\end{align*}
With the expression of $\frac{\partial}{\partial s}I_i(\phi_1(t,s,w))_{|s,t=0}$, $\frac{\partial^2}{\partial t\partial s}I_i(\phi_j(t,s,w))_{|s,t=0}$, the definition of $I(z)$ in \eqref{E:Cauchy-m}  and the value of above constants, we get
\begin{align*}
\left(
\begin{array}{cc}
\frac{\partial}{\partial s}I(\phi_1(t,s,w))_{|s,t=0}\\
\frac{\partial}{\partial s}I(\phi_2(t,s,w))_{|s,t=0}
\end{array}
\right)
=&
\left(
\begin{array}{cc}
0\\
b\nu_1-\nu_2
\end{array}
\right)w+
\left(
\begin{array}{cc}
0\\
b^7\eta_1-\eta_2
\end{array}
\right)w^7+
\left(
\begin{array}{cc}
b^3\nu_2- b^2\nu_1\\
0
\end{array}
\right)\overline{w}^3
\\&+
\left(
\begin{array}{cc}
\dot{\kappa}_{11}-\dot{\kappa}_{21}\\
\dot{\kappa}_{12}-\dot{\kappa}_{22}
\end{array}
\right)\overline{w}^9
\end{align*}
and
\begin{align*}
&\left(
\begin{array}{cc}
\frac{\partial^2}{\partial t\partial s}I(\phi_1(t,s,w))_{|s,t=0}\\
\frac{\partial^2}{\partial t\partial s}I(\phi_2(t,s,w))_{|s,t=0}
\end{array}
\right)
\\=&
\left(
\begin{array}{cc}
0\\
3a(\nu_2b^3-\nu_1 b^2)
\end{array}
\right)w-
\left(
\begin{array}{cc}
0\\
7a\eta_1(b^4+b^6)
\end{array}
\right)w^3+
\left(
\begin{array}{cc}
a(b\nu_1+b^2\nu_2)\\
a(b^{-1}\nu_2-\nu_1)
\end{array}
\right)\overline{w}^3
\\&+
\left(
\begin{array}{cc}
3a(\eta_1 b+\eta_2b^4)-\nu_1b^3+\nu_2b^4\\
3a\eta_2(b^3+b^{-1})
\end{array}
\right)\overline{w}^5
+
\left(
\begin{array}{cc}
\acute{\xi}_{11}-\acute{\xi}_{21}\\
\acute{\xi}_{12}-\acute{\xi}_{22}
\end{array}
\right)w^5
+
\left(
\begin{array}{cc}
\acute{\gamma}_{11}-\acute{\gamma}_{21}\\
\acute{\gamma}_{12}-\acute{\gamma}_{22}
\end{array}
\right)\overline{w}
\\&+
\left(
\begin{array}{cc}
\acute{\chi}_{11}-\acute{\chi}_{21}\\
\acute{\chi}_{12}-\acute{\chi}_{22}
\end{array}
\right)\overline{w}^7
+
\left(
\begin{array}{cc}
\acute{\tau}_{11}-\acute{\tau}_{21}\\
\acute{\tau}_{12}-\acute{\tau}_{22}
\end{array}
\right)\overline{w}^{11}
+
\left(
\begin{array}{cc}
\acute{\iota}_{11}-\acute{\iota}_{21}\\
\acute{\iota}_{12}-\acute{\iota}_{22}
\end{array}
\right)\overline{w}^{13}.
\end{align*}
Inserting these above results and \eqref{dI1c} into \eqref{dGst2}, we have
\begin{align*}
&Q\frac{\partial^2}{\partial t\partial s}
G(\lambda_{4},tx_a+s\overline{x})_{|t,s=0}\nonumber\\
=&Q\left [ a
\left(
\begin{array}{cc}
b^3\nu_1+b^2\nu_2\\
(3b^3+2b)\nu_1-(b^2+2b^{4})\nu_2
\end{array}
\right)
e_{2}
+a
\left(
\begin{array}{cc}
b\eta_1+2b^3\eta_1+3b^{4}\eta_2\\
(7b^{5}+4b^{7})\eta_1+(1-2b^2-4b^{4})\eta_2
\end{array}
\right)e_{4}
\right],
\end{align*}
which implies that
\begin{align}\label{dffGa}
Q\partial_{ff}G(\lambda_{4},0)[x_a,\bar{x}]
=&a
\left\langle
\left(
\begin{array}{cc}
b^3\nu_1+b^2\nu_2\\
(3b^3+2b)\nu_1-(b^2+2b^{4})\nu_2
\end{array}
\right),
\left(
\begin{array}{cc}
\frac{1}{\sqrt{2}}\\
-\frac{1}{\sqrt{2}}
\end{array}
\right)
\right\rangle y_1\nonumber\\
&-a
\left\langle
\left(
\begin{array}{cc}
b\eta_1+2b^3\eta_1+3b^{4}\eta_2\\
(7b^{5}+4b^{7})\eta_1+(1-2b^2-4b^{4})\eta_2
\end{array}
\right),
\left(
\begin{array}{cc}
\frac{1}{\sqrt{2}}\\
\frac{1}{\sqrt{2}}
\end{array}
\right)
\right\rangle y_2
\nonumber\\
\triangleq &\sqrt{2}c_1 y_1+\sqrt{2}c_2y_2,
\end{align}
by the definition of $Q$ in Section \ref{S:Pre} and
\begin{align*}
\sqrt{2}c_1
=\frac{2a}{\sqrt{2}}[(b^2+b^{4})\nu_2-(b^3+b)\nu_1]
= \sqrt{2}a[(1-b^2)\nu_2-(b^3+b)\nu_1]
\end{align*}
and
\begin{align*}
\sqrt{2}c_2&=-\frac{a}{\sqrt{2}}[ \left(b + 2 b^3 + 4 b^{ 7}  + 7b^{5} \right)\eta_1+(1-2 b^2-b^{4})\eta_2]\\
&=
-\frac{a}{\sqrt{2}b}[ \left(b^2 + 2 b^4 + 4 (b^{4})^2  + 7b^{4} b^2 \right)\eta_1\\
&=-\frac{a}{\sqrt{2}b}[ \left(b^2 + 2 b^4 + 4 (1-2b^2)^2  + 7(1-2b^2) b^2 \right)\eta_1\\
&=-2\sqrt{2}a(b^2-1)^2 b^{-1} \eta_1
\end{align*}
by the fact $b^{4}=1-2b^2$. So with the value of $\nu_i$ and $\eta_i$, $i=1,2$ in \eqref{notation2}, we could directly compute that
\[
c_1=a[(1-b^2)\nu_2-(b^3+b)\nu_1]=-2a^2(1+b^2)(b+b^3)=-2a^2b(1+b^2)^2
\]
and
\[
c_2=-2a(b^2-1)^2 b^{-1} \eta_1=2a^3 \frac{(b^2+b^4)^2(4b^2-3)}{b(2b^4-1)}= \frac{2a^3(1-b^2)^2(4b^2-3)}{b(2b^4-1)}.
\].

Combing \eqref{dG3c} and \eqref{dffGa}, we have the result of $\partial_{tt}F_2(\lambda_4,0;a)$ in Proposition \ref{d2F2} (3).

For the results of $\partial_{tt}F_2(\lambda_{2p},0;a)$, $p\ge3$, we will list the value of some key components and omit all the computational details because they are cumbersome without new methods used in the proof of (1)-(3) of Proposition \ref{d2F2} for $p=2$ above. Here we list
\begin{align}\label{E:philga}
\tilde{x}=2b^{-1}
\left(
\begin{array}{cc}
1\\
0
\end{array}
\right)\overline{w}
-2a p b^{1-2p}
\left(
\begin{array}{cc}
1\\
0
\end{array}
\right)\overline{w}^{2p-1},
\end{align}
\begin{align}\label{E:phigga}
\bar{x}=
\left(
\begin{array}{cc}
\nu_1\\
\nu_2
\end{array}
\right)\overline{w}^{2p-3}+
\left(
\begin{array}{cc}
\eta_1\\
\eta_2
\end{array}
\right)\overline{w}^{4p-1},
\end{align}
where
\begin{eqnarray} \label{E:notation-3}
&\nu_1=2ab^2,\quad\quad\nu_{2}=-2a b^{-1},\nonumber\\
&\eta_{1}=-\frac{4a^2(b^2+b^{2p})^2[2pb^2+(1-2p)]}{(b^2-1)^2p [2+4p(b^2-1)+p^2(b^2-1)^2 ]}.
\end{eqnarray}
For $c_1$ and $c_2$, they have the following representations
\begin{eqnarray*}
c_1
=a[((p-2)b+(1-p)b^3-b^{2p-3})\nu_1+(1-b^2)\nu_2],
\end{eqnarray*}
and
\begin{eqnarray*}
c_2= -a(p-1)^2p(b^2-1)^2 b^{-1} \eta_1,
\end{eqnarray*}
which is equivalent to the ones in Proposition \ref{d2F2} (3).
\end{proof}

\section{admissible parameters for non-isolated intersections} \label{S:Degenerate}

As introduced in Section \ref{S:Scheme}, our aim is to find admissible parameter $a \in \R$ such that the quadratic real curves $H_2(\lambda,t;a)$ and $J_2(\lambda,t;a)$ are not intersected with each other transversallity. Due to the computations on the second-order derivatives of $F_2$ given in Proposition \ref{d2F2}, we have
\begin{lemma} \label{L:non-deg}
	For any $a \neq 0$, the bifurcation equation \eqref{E:F2} has only trivial solutions $(\lambda,t)=(\lambda_{2p},0)$ near $(\lambda_{2p},0)$.
\end{lemma}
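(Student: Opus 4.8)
The plan is to reduce the claim to a non-degeneracy statement about the purely quadratic part of $F_2$ at $(\lambda_{2p},0;a)$, and then to upgrade this to the full real-analytic equation by the scaling argument sketched in Step~2 of Section~\ref{SS:Scheme}. First I would read off from Proposition~\ref{d2F2} the two scalar quadratic forms obtained by projecting $\mathbf{P}^{\lambda,t}_2F_2$ onto the co-kernel basis $y_1,y_2$. Because $\partial_t\partial_\lambda F_2(\lambda_{2p},0;a)=0$ there is no mixed $\lambda t$ term, so these forms are already diagonal,
\[
H_2(\lambda,t;a)=\alpha_1\lambda^2+\beta_1 t^2,\qquad J_2(\lambda,t;a)=\alpha_2\lambda^2+\beta_2 t^2,
\]
with $\alpha_1=2\sqrt2\,b^{-1}$, $\alpha_2=2\sqrt2\,a\,p^2 b^{1-2p}$, $\beta_1=\tfrac{\sqrt2}{2}\left(-b^{-1}(b^2-1)^2+c_1\right)$ and $\beta_2=\tfrac{\sqrt2}{2}\left(b^{-1}(b^2-1)^2[2pa+(2p-1)pa^3]+c_2\right)$, where $c_1=O(a^2)$ and $c_2=O(a^3)$.

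Second, I would observe that the common real zero locus $\mathcal J$ of $H_2$ and $J_2$ is governed by the linear system in the nonnegative unknowns $u=\lambda^2$, $v=t^2$, and hence reduces to the single point $(0,0)$ precisely when the coefficient determinant $\alpha_1\beta_2-\alpha_2\beta_1$ does not vanish. Computing this determinant and using $c_1=O(a^2)$, $c_2=O(a^3)$ gives
\[
\alpha_1\beta_2-\alpha_2\beta_1 = 2a\,(b^2-1)^2\bigl(2p\,b^{-2}+p^2 b^{-2p}\bigr) + O(a^3).
\]
Since the leading coefficient $(b^2-1)^2(2p\,b^{-2}+p^2 b^{-2p})$ is strictly positive for $0<b<1$ and $p\ge2$, this determinant vanishes to exactly first order at $a=0$; in particular it is nonzero for all sufficiently small $a\neq0$, which is the non-degeneracy I need and which also yields the ``degenerate iff $a=0$'' dichotomy invoked in the proof of Theorem~\ref{T:main}.

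Third, I would promote this to the full equation exactly as in Step~2 of the scheme. Writing $f_j=Q_jF_2$ as $H_2+H_{\ge3}$ and $J_2+J_{\ge3}$ and collecting the quadratic parts into the invertible matrix $M=\begin{pmatrix}\alpha_1&\beta_1\\ \alpha_2&\beta_2\end{pmatrix}$, the system becomes $M(\lambda^2,t^2)^{\!\top}=-(H_{\ge3},J_{\ge3})^{\!\top}$, so that $(\lambda^2,t^2)^{\!\top}=-M^{-1}(H_{\ge3},J_{\ge3})^{\!\top}$. With $r^2:=\lambda^2+t^2$, the bound $|H_{\ge3}|+|J_{\ge3}|\le C r^3$ together with $\|M^{-1}\|\le C_a<\infty$ forces $r^2\le C_a' r^3$, i.e. $r\ge 1/C_a'$ for any nonzero solution; hence no nonzero solution survives in the ball of radius $1/C_a'$ about $(\lambda_{2p},0)$, which proves the lemma.

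The main obstacle is not the algebra of the determinant, whose leading term is manifestly positive, but rather controlling the two error contributions $2b^{-1}c_2$ and $-2ap^2b^{1-2p}c_1$ so as to confirm that they are genuinely $O(a^3)$ and cannot cancel the $O(a)$ leading term near $a=0$; this is exactly where the precise expressions for $c_1,c_2$ furnished by Proposition~\ref{d2F2} are used. A secondary point to track carefully is that $\|M^{-1}\|$ grows like $1/|a|$ as $a\to0$, so the radius $1/C_a'$ of the excluded ball shrinks with $a$. This is harmless, and indeed expected, since non-trivial solutions reappear precisely in the limit $a=0$, but it requires the neighborhood in the statement to be allowed to depend on $a$.
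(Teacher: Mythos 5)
Your overall strategy coincides with the paper's: truncate $F_2$ at its quadratic part (which is diagonal in $\lambda^2,t^2$ because $\partial_t\partial_\lambda F_2(\lambda_{2p},0;a)=0$), show that the resulting $2\times 2$ linear system in the nonnegative unknowns $(\lambda^2,t^2)$ admits only the trivial solution, and then absorb the cubic-and-higher remainder by the inversion/scaling argument of Step 2 of Section \ref{SS:Scheme}. The one substantive difference is how the non-degeneracy of the quadratic part is certified. You expand the determinant, $\alpha_1\beta_2-\alpha_2\beta_1=2a(b^2-1)^2\bigl(2pb^{-2}+p^2b^{-2p}\bigr)+O(a^3)$, and conclude non-vanishing only for sufficiently small $a\neq 0$; the paper instead recasts the same condition as the proportionality identity \eqref{E:degenerate} and rules it out for \emph{every} $a\neq 0$ by an exact sign argument, which requires proving $c_1<0$ and $c_2/a>0$ (the latter is not automatic: it uses the defining relation $b^{2p}=p-1-pb^2$ and a short case analysis on the sign of $2+4p(b^2-1)+p^2(b^2-1)^2$). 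Since Lemma \ref{L:non-deg} is stated for all $a\neq0$, your argument as written proves a strictly weaker statement; this costs nothing for Theorem \ref{T:main}, which only uses $|a|\ll 1$, but to obtain the lemma as stated you must carry out the sign analysis of $c_1$ and $c_2/a$ rather than treat them as unexamined $O(a^2)$ and $O(a^3)$ errors that ``cannot cancel'' the leading term. Your closing observation that $\|M^{-1}\|$ grows like $1/|a|$, so the excluded neighborhood shrinks as $a\to0$, is correct and consistent with the degeneracy at $a=0$ exploited later in the paper.
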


\begin{proof}

Due to Proposition \ref{d2F2}, the second-order derivatives of $F_2(\lambda,t;a)$ are given as follows
\[
\begin{split}
	& \p_{\lambda \lambda} F_2(\lambda_{2p},0;a) = 4 \sqrt{2} \begin{pmatrix}
		b^{-1} \\ap^2 b^{1-2p}
	\end{pmatrix}, \; \quad \p_{\lambda t} F_2(\lambda_{2p},0;a) = 0, \\
	& \p_{tt} F_2(\lambda_{2p},0;a) = \sqrt{2} \begin{pmatrix}
		-b^{-1}(b^2-1)^2 + c_1 \\
		b^{-1}(b^2-1)^2 p (2+(2p-1)a )a +c_2
	\end{pmatrix},
\end{split}
\]
where for $p=2$,
\begin{eqnarray}\label{c12p=2}
c_1=-2a^2b(b^2+1)^2 ,\quad
c_2= \frac{2 a^3 (b^2-1)^2(4b^2-3)}{  b(2b^4-1)},
\end{eqnarray}
and for $p\ge3$
\begin{eqnarray}\label{c12p}
\quad\quad\quad c_1=-2a^2(b^2-1)^2b^{-1}[p+(p-1)b^2],\quad c_2= \frac{4a^3(p-1)^4  (b^2-1)^2 [2pb^3+(1-2p)b]}{ b^2[2+4p(b^2-1)+p^2(b^2-1)^2]}.
\end{eqnarray}
Consider the intersection of real quadratic curves
\beq\label{E:deg-1}
\begin{cases}
	4b^{-1} \lambda^2 +    \left[-b^{-1}(b^2-1)^2 +c_1 \right]t^2 = 0, \\
	4ap^2 b^{1-2p} \lambda^2 + \left[ 	b^{-1}(b^2-1)^2 p a(2+(2p-1)a^2 ) +c_2 \right]t^2 =0.
\end{cases}
\eeq
Clearly the intersection is not-isolated when $a=0$. On the other hand, suppose $ a\neq 0$,  system \eqref{E:deg-1} is equivalent to
\beq \label{E:deg-2}
\begin{cases}
	4b^{-1} \lambda^2 +    \left[-b^{-1}(b^2-1)^2 +c_1 \right]t^2 = 0, \\
	4p^2 b^{1-2p} \lambda^2 + \left[ 	b^{-1}(b^2-1)^2 p (2+(2p-1)a^2 ) +\frac{c_2}{a} \right]t^2 =0.
\end{cases}
\eeq

If the system \eqref{E:deg-2} has non-trivial solutions, the following equation  must be hold
\[
\begin{split}
	b^{-1}(b^2-1)^2 p[2+(2p-1)a^2] +\frac{c_2}{a}
=	 p^2 b^{2-2p}[-b^{-1}(b^2-1)^2+c_1],
\end{split}
\]
which is equivalent to
\begin{align}\label{E:degenerate}
	b^{-1}(b^2-1)^2 p(2+p b^{2-2p})
=	-b^{-1}(b^2-1)^2 p(2p-1) a^2-\frac{c_2}{a}+p^2 b^{2-2p}c_1.
\end{align}
Due to the fact $0<b<1$ and $p\ge2$, we have that
\[
b^{-1}(b^2-1)^2 p(2+p b^{2-2p})>0, \;\; -b^{-1}(b^2-1)^2 p(2p-1) a^2<0
\]
for $0 \neq a\in \mathbb{R}$. We claim that $\frac{c_2}{a}>0$ and $c_1<0$. Then \eqref{E:degenerate} becomes
\begin{align*}
	0<b^{-1}(b^2-1)^2 p(2+p b^{2-2p})
=	-b^{-1}(b^2-1)^2 p(2p-1) a^2-\frac{c_2}{a}+p^2 b^{2-2p}c_1<0,
\end{align*}
which is impossible. So the system \eqref{E:deg-2} has only trivial solution as well as system \eqref{E:deg-1} when $a\ne 0$ due to the discussion given in Section \ref{S:Scheme}, which implies the result in Lemma \ref{L:non-deg}.

Now it suffices to prove $\frac{c_2}{a}>0$ and $c_1<0$. Recall the formulation of $c_1$ given in \eqref{c12p=2} and \eqref{c12p}. It is obvious that $c_1<0$. When $p=2$, due to \eqref{c12p=2}, one has
\[
\frac{c_2}{a}=\frac{2 a^2 (b^2-1)^2(4b^2-3)}{  b(2b^4-1)}.
\]
Recall when $p=2$, $b^4=1-2b^2$, then we know $b=\sqrt{\sqrt{2}-1}$ and
\[
\frac{4b^2-3}{  2b^4-1}=\frac{3+8\sqrt{2}}{ 7}>0,
\]
which implies that $\frac{c_2}{a}>0$.
When $p\ge3$, from \eqref{c12p}, we have
\[
\frac{c_2}{a}= \frac{4a^2(p-1)^4  (b^2-1)^2 [2pb^3+(1-2p)b]}{ b^2[2+4p(b^2-1)+p^2(b^2-1)^2]}.
\]
To prove $\frac{c_2}{a}>0$, which is equivalent to prove
\[
[2pb^2+(1-2p)][2+4p(b^2-1)+p^2(b^2-1)^2 ]>0.
\]
From the fact $b^{2p}=p-1-pb^{2}$, we obtain $2pb^2+(1-2p)=-1-2 b^{2p}<0$. Denote $b^2-1=z$, consider the equation $p^2z^2+4p z+2=0$, which solutions are $\frac{-2\pm\sqrt{2}}{p}$. If $b^2-1\ge\frac{\sqrt{2}-2}{p} $, then $b^2\ge1+\frac{\sqrt{2}-2}{p} $,
\[
b^{2p}=p-1-p b^2\leq p-1-p (1+\frac{\sqrt{2}-2}{p})=1-\sqrt{2}<0;
\]
if $b^2-1\leq\frac{-\sqrt{2}-2}{p} $, then $b^2\leq1-\frac{\sqrt{2}+2}{p} $,
\[
b^{2p}=p-1-p b^2\ge p-1-p (1-\frac{\sqrt{2}+2}{p})=1+\sqrt{2}>1.
\]
By the fact $0<b<1$, $\forall p\ge3$,  we know $-\frac{\sqrt{2}+2}{p}\leq b^2-1\leq\frac{\sqrt{2}-2}{p} $, and hence $2+4p(b^2-1)+p^2(b^2-1)^2 <0$. The claim holds true.

%So we prove our claim that $\frac{c_1}{a}>0$.

\end{proof}

\section{Higher-order derivatives of $F_2(\lambda,t;0)$} \label{S:Higher-D}

In this section, we shall calculate the explicit formulae for the derivatives of $F_2(\lambda,t;0)$ up to the $p+1$-order for $p=2,3,4$ The main result of this section roughly reads as follow:
\begin{lemma} \label{L:Key-L}
For $p=2,3,4$, only $Q_2 \p_{\lambda \lambda}^2 \p_t^{p-1} F_{2}(\lambda_{2p},0;0) $ is non-zero.
\end{lemma}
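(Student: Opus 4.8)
The plan is to reduce the statement to a finite list of Taylor-coefficient computations and then to exploit two structural features of $F_2$ at $a=0$. By Lemma \ref{L:Real-analy-2} the map $F_2(\lambda,t;0)$ is real-analytic, so it suffices to show that every coefficient $c_{k_1,k_2}:=\tfrac{1}{k_1!k_2!}Q_2\partial_\lambda^{k_1}\partial_t^{k_2}F_2(\lambda_{2p},0;0)$ with $k_1+k_2\le p+1$ vanishes, with the single exception $(k_1,k_2)=(2,p-1)$. First I would extend the reduction formulae of Lemma \ref{acformal} to the orders we need, expressing each $c_{k_1,k_2}$ as a combination of the quantities $Q\partial_\lambda^{l}\partial_f^{m}G(\lambda_{2p},0)$ evaluated on the kernel direction $x_a|_{a=0}=x^1=(b,1)^t\overline{w}$ together with the mixed derivatives of the implicit map $\varphi$. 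The cases $k_1+k_2\le 2$ are already settled: Proposition \ref{JF2c} and the $a=0$ specializations of Proposition \ref{d2F2} give $c_{k_1,k_2}=0$ for all $k_1+k_2\le 2$, which is consistent with the claim.

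The first structural input is that $G_j$ is affine in $\lambda$: in \eqref{Gc} the parameter $\lambda$ appears only through the factor $(1-\lambda)\overline{\phi_j}$, so $\partial_\lambda^2 G\equiv 0$ and $\partial_\lambda G=\mathrm{Im}\{-\overline{\phi_j}\,w\phi_j'\}$. Combined with Lemma \ref{L:phi-1}, which yields $\partial_\lambda\varphi=\partial_g\varphi=\partial_{\lambda\lambda}\varphi=0$ at $(\lambda_{2p},0)$, this forces all $\lambda$-dependence of $F_2$ beyond the quadratic level to enter solely through higher mixed derivatives of $\varphi$; a Fa\`a-di-Bruno bookkeeping then shows $c_{k_1,k_2}=0$ for $k_1\ge 3$ in the range $k_1+k_2\le p+1$, and that every surviving contribution carries at most two factors of $\lambda$. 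The failure of transversality \eqref{E:Non-tra} similarly removes the $k_1=1$ terms at the lowest orders. This reduces the analysis to the two families $k_1=0$ and $k_1=2$.

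The second input is the single-mode structure at $a=0$. There $\theta_j=0$ in \eqref{phi}, so $\phi_j(t,w)=b_jw+t\alpha_j\overline{w}$ is a pure two-fold deformation of the annulus; by the two-fold symmetry the Cauchy operator $I(\phi_j)$, and hence $G$, produce only the even modes $e_{2n}=\mathrm{Im}(\overline{w}^{2n})$, and a degree count on the residue integrals \eqref{E:01}--\eqref{E:3} shows that the mode $e_{2p}$ extracted by $Q_2$ is first reachable at $t$-order $p-1$. Consequently $c_{0,k_2}$ and $c_{2,k_2}$ vanish for $k_2\le p-2$, while $(k_1,k_2)=(2,p-1)$, of total order $p+1$, is the first admissible pair that can be nonzero. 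I would then carry out these residue computations explicitly for the finitely many admissible pairs when $p=2,3,4$ — the content of Sections \ref{S:Third}, \ref{S:Fourth} and \ref{S:Fifth} — verifying that $c_{0,\cdot}$ and $c_{1,\cdot}$ indeed vanish and that $Q_2\partial_\lambda^2\partial_t^{p-1}F_2(\lambda_{2p},0;0)\neq 0$, with values $-8\sqrt{2}$, $-24\sqrt{2}\,b$, $-8\sqrt{2}\,b^2$ for $p=2,3,4$ respectively.

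The main obstacle is the bookkeeping of the reduction map $\varphi$. Precisely because $G$ is affine in $\lambda$, the single nonvanishing coefficient $c_{2,p-1}$ is produced entirely through the nonlinear $\lambda$-dependence of $\varphi$, so one cannot avoid computing the relevant mixed derivatives $\partial_\lambda^2\partial_g^{\,k}\varphi$ by differentiating $(\mathrm{Id}-Q)G=0$ and inverting $\partial_fG(\lambda_{2p},0)$ through \eqref{iG} and \eqref{E:M2n}. The second difficulty is purely computational: each coefficient unwinds into a large number of Cauchy integrals that must be evaluated and shown to cancel using \eqref{E:0}--\eqref{E:3}, and it is precisely the desire to keep $p$ symbolic that makes the general conjecture hard, which is why the explicit verification is carried out only for $p\le 4$.
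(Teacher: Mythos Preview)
Your bottom line---reduce $\partial_\lambda^{k_1}\partial_t^{k_2}F_2$ to derivatives of $G$ and of $\varphi$ via the chain rule, then evaluate by residues---is exactly what the paper does in Propositions~\ref{acformal2a=0}, \ref{acformal4a=0} and the fifth-order analogue. The gap is in the two structural shortcuts you layer on top.

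The first shortcut is wrong as stated. Affinity of $G$ in $\lambda$ does \emph{not} force $c_{k_1,k_2}=0$ for $k_1\ge 3$: the $\lambda$-dependence of $F_2$ funnels through $\varphi$, which is genuinely nonlinear in $\lambda$, and indeed $\partial_\lambda^3 F_2(\lambda_{2p},0;0)=12\sqrt{2}\,b^{-3}y_1\neq 0$ (Proposition~\ref{P:d3F2a=0}). What makes its $Q_2$-component vanish is that the only Fourier mode produced is $e_2$; this comes from tracking that $\partial_\lambda^l\partial_g\varphi(\lambda_{2p},0)x_0$ stays in the single mode $\overline{w}$ for every $l$ (Proposition~\ref{d3phia=0} and \S\ref{subS:forth}). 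That is a mode computation, not a consequence of Fa\`a di Bruno plus $\partial_\lambda^2 G=0$. So the cases $k_1\ge 3$ are not covered by your argument and must be computed, exactly as the paper does.

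The second shortcut is only a crude upper bound. The degree count says $e_{2p}$ \emph{can} appear once $k_2\ge p-1$, but it does not tell you where it \emph{does} appear, nor does it exclude the remaining pairs with $k_2\ge p-1$. For $p=4$ one finds $\partial_\lambda\partial_t^4 F_2$ carries modes $e_2,e_6,e_{10}$ but \emph{not} $e_8$, while $\partial_t^5 F_2$ produces only $e_4$; these absences rely on cancellations special to $x_0=(b,1)^t\overline{w}$ (for instance, the coefficient $c_5$ in Lemma~\ref{lem2} vanishes on $[x_0,x_0,x_0]$), which a degree count does not see. Finally, your numerical value at $p=4$ is off by the multinomial factor: the paper gets $Q_2\partial_\lambda^2\partial_t^3 F_2(\lambda_8,0;0)=-96\sqrt{2}\,b^2$, not $-8\sqrt{2}\,b^2$.
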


\subsection{The third-order derivatives for $a=0$.} \label{S:Third}
In this section, we shall compute the third order derivatives concern $F_2$ given in \eqref{E:F2} with $a=0$. For simple, in the following we just use $F_2(\lambda, t) $ to represent $F_2(\lambda,t;0)$.

\subsubsection{Preparations}
%We give some friendly using lemmas for calculations here, which proofs are written in the Appendix.
\begin{lemma}\label{lem1}
With $G$ defined in \eqref{E:Non}, for $f\!=\!\sum_{n=1}^{\infty} \!
\left(\!
	\begin{array}{cc}
		a_n^1\\
		a_n^2
	\end{array}
	\!\right) \overline{w}^{2n-1}, \,
 g_k\!=\!\sum_{n=1}^{\infty}\!
\left(\!
	\begin{array}{cc}
		\beta_n^{k,1}\\
		\beta_n^{k,2}
	\end{array}
	\!\right) \overline{w}^{2n-1}$, we have
\begin{eqnarray*}
\partial_{\lambda}\partial_fG(\lambda_{2p},0)f\!=\!
\sum_{n=1}^{\infty}
\left(
	\begin{array}{cc}
		2n a_n^1\\
		2n b a_n^2
	\end{array}
	\right) e_{2n},\,
\partial^l_{\lambda}\partial^k_{f}G(\lambda_{2p},0)[g_1,...,g_k]\!=\!0,\,l\ge2, k\ge 0\text{ or } l\!=\!1, k\ge3
\end{eqnarray*}
and
\begin{small}
\begin{eqnarray*}
\partial_{\lambda}\partial^2_{f}G_j(\lambda_{2p},0)[g_1,g_2]\!=\!\mathrm{Im}\!
\left[
\sum_{n=1}^\infty (\beta_n^{1,j} w^{2n})\sum_{n=1}^\infty (2n\!-\!1)(\beta_n^{2,j} \overline{w}^{2n})
\!+\!\sum_{n=1}^\infty (\beta_n^{2,j} w^{2n})\sum_{n=1}^\infty (2n\!-\!1)(\beta_n^{1,j} \overline{w}^{2n})
\right].
\end{eqnarray*}
\end{small}
\end{lemma}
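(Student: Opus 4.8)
The plan is to exploit two structural features of the nonlinear map $G=(G_1,G_2)$ defined in \eqref{E:Non}: that it is \emph{affine} in $\lambda$, and that its $\lambda$-derivative is a \emph{polynomial of degree two} in $f$. Writing $\phi_j=b_jw+f_j$, the parameter $\lambda$ enters $G_j$ only through the term $(1-\lambda)\overline{\phi_j(w)}\,w\,\phi_j'(w)$, while the Cauchy-type operator $I(\phi_j(w))$ is independent of $\lambda$. Consequently
\[
\partial_\lambda G_j(\lambda,f)=-\mathrm{Im}\left\{\overline{\phi_j(w)}\,w\,\phi_j'(w)\right\},\qquad \partial_\lambda^2 G_j\equiv 0 .
\]
The identity $\partial_\lambda^2 G_j\equiv 0$ holds for every $f$, so differentiating it further in $f$ yields $\partial_\lambda^l\partial_f^k G=0$ for all $l\ge 2$ and $k\ge 0$, which is the first vanishing assertion.

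Next I would analyse the $f$-dependence of $\partial_\lambda G_j$. Inserting $\phi_j=b_jw+f_j$ and $\phi_j'=b_j+f_j'$ into $\overline{\phi_j}\,w\,\phi_j'$ gives
\[
\overline{\phi_j(w)}\,w\,\phi_j'(w)=\left(b_j\overline{w}+\overline{f_j}\right)\left(b_j+f_j'\right)w,
\]
which is a sum of terms of $f$-degree $0$, $1$ and $2$, the top-order contribution being $\overline{f_j}\,f_j'\,w$. Hence $\partial_\lambda G_j$ is a quadratic polynomial in $f$, so $\partial_f^k\partial_\lambda G\equiv 0$ for $k\ge 3$; this is the second vanishing assertion (the case $l=1$, $k\ge 3$).

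It then remains to read off the degree-one and degree-two parts explicitly. Using $\overline{w}=w^{-1}$ on $\mathbb{T}$, so that $\overline{w}^{2n-1}=w^{1-2n}$ and $f_j'=\sum_n(1-2n)a_n^j\,\overline{w}^{2n}$, the linearization of $\overline{\phi_j}\,w\,\phi_j'$ at $f=0$ in direction $f$ equals $b_j\sum_n a_n^j\big(w^{2n}+(1-2n)\overline{w}^{2n}\big)$, where I have used $\overline{w}\,w=1$. Taking imaginary parts with $\mathrm{Im}(w^{2n})=-e_{2n}$ yields $\mathrm{Im}\{w^{2n}+(1-2n)\overline{w}^{2n}\}=-2n\,e_{2n}$, and the overall minus sign from $\partial_\lambda G_j=-\mathrm{Im}\{\cdots\}$ produces $\partial_\lambda\partial_f G_j(\lambda_{2p},0)f=b_j\sum_n 2n\,a_n^j\,e_{2n}$, i.e.\ the stated vector formula with $b_1=1$, $b_2=b$. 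For the bilinear term, the second $f$-derivative of $-\mathrm{Im}\{\overline{f_j}\,f_j'\,w\}$ in directions $g_1,g_2$ is the symmetrized expression $-\mathrm{Im}\big\{\overline{(g_1)_j}\,(g_2)_j'\,w+\overline{(g_2)_j}\,(g_1)_j'\,w\big\}$, and substituting the Fourier series of $(g_k)_j$ together with $(g_k)_j'=\sum_n(1-2n)\beta_n^{k,j}\,\overline{w}^{2n}$ reproduces the claimed formula for $\partial_\lambda\partial_f^2 G_j(\lambda_{2p},0)[g_1,g_2]$.

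The conceptual content lies entirely in the two structural observations above, so no obstacle of substance is expected. The only point demanding care is the bookkeeping of signs and indices when passing between $w$ and $\overline{w}=w^{-1}$ and when tracking the factor $(1-2n)$ versus $(2n-1)$ under $\mathrm{Im}$, which must be matched consistently against the normalization $e_{2n}=\mathrm{Im}(\overline{w}^{2n})$.
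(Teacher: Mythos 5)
Your proposal is correct and follows essentially the same route as the paper's proof: both reduce everything to the observation that $\lambda$ enters $G$ affinely with $\partial_\lambda G_j=-\mathrm{Im}\{\overline{\phi_j(w)}\,w\,\phi_j'(w)\}$, note that this expression is quadratic in $f$ (which immediately gives the vanishing for $l\ge 2$ or $l=1,\,k\ge 3$), and then read off the linear and bilinear Fourier coefficients by direct computation. Your sign bookkeeping ($\mathrm{Im}(w^{2n})=-e_{2n}$, the factor $(1-2n)$ versus $(2n-1)$) checks out against the stated formulas.
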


\begin{proof}%[Proof of Lemma \ref{lem1}]
	Denote $\phi_j(w)=b_j w+t\sum_{n=1}^\infty a_n^j \overline{w}^{2n-1}+\sum_{l=1}^ks_l \left( \sum_{n=1}^{\infty}\beta_{n}^{l,j}\overline{w}^{2n-1}\right)$. Then by the definition of $G_j(\lambda,\phi_1,\phi_2)(w)$ in \eqref{E:Non}, it is easy to check that
	\begin{eqnarray*}
		\partial_{\lambda}G_j(\lambda_{2p},0)=-{\rm Im}\{\overline{\phi_j(w)}w\phi'_j(w)\}.
	\end{eqnarray*}
	So we can directly compute our results as following
	\begin{small}
		\begin{align*}
			\partial_{\lambda}\partial_fG_j(\lambda_{2p},0)f&=\partial_{\lambda}\partial_t G_j(\lambda_{2p},\phi_1,\phi_2)|_{t,s_1,...,s_k=0}=
			\sum_{n=1}^{\infty}
			2n b_j a_n^j e_{2n},
			\\
			\partial_{\lambda}\partial^2_{f}G_j(\lambda_{2p},0)[g_1,g_2]\!
			&=\partial_{\lambda}\partial_{s_1}\partial_{s_2} G_j(\lambda_{2p},\phi_1,\phi_2)|_{t,s_1,...,s_k=0}
			\\&=
			\mathrm{Im}\!
			\left[
			\sum_{n=1}^\infty (\beta_n^{1,j} w^{2n})\sum_{n=1}^\infty (2n\!-\!1)(\beta_n^{2,j} \overline{w}^{2n})
			\!+\!\sum_{n=1}^\infty (\beta_n^{2,j} w^{2n})\sum_{n=1}^\infty (2n\!-\!1)(\beta_n^{1,j} \overline{w}^{2n})
			\right]
		\end{align*}
	\end{small}
	and
	$
	\partial^l_{\lambda}\partial^k_{f}G(\lambda_{2p},0)[g_1,...,g_k]
	=0,\,l\ge2, k\ge 0\text{ or } l\!=\!1, k\ge3
	$
\end{proof}

\begin{lemma}\label{lem2}
With $G$ defined in \eqref{E:Non}, for
$f_1=
\left(
	\begin{array}{cc}
		a_1\\
		a_2
	\end{array}
	\right) \overline{w}, \,
f_2=
\left(
	\begin{array}{cc}
		\beta_1\\
		\beta_2
	\end{array}
	\right) \overline{w},\,
 f_3=
\left(
	\begin{array}{cc}
		\gamma_1\\
		\gamma_2
	\end{array}
	\right) \overline{w}
$, $f_4=
\left(
	\begin{array}{cc}
		\sigma_1\\
		\sigma_2
	\end{array}
	\right) \overline{w}
$, we have
\begin{align*}
&\partial_{ff}G(\lambda_{2p},0)[f_1,f_2]=4b^2(\alpha_1-b\alpha_2)(\beta_1-b\beta_2)
\left(
	\begin{array}{cc}
		1\\
		0
	\end{array}
	\right) e_{4},\\
&\partial_{fff}G(\lambda_{2p},0)[f_1,f_2,f_3]=
\left(
	\begin{array}{cc}
		c_3\\
		c_4
	\end{array}
	\right) e_{2}+
c_5
\left(
	\begin{array}{cc}
		1\\
		0
	\end{array}
	\right) e_{6},\\
&\partial^4_{f}G(\lambda_{2p},0)[f_1,f_2,f_3,f_4]=c_6
\left(
	\begin{array}{cc}
		1\\
		0
	\end{array}
	\right) e_{4}+
c_7
\left(
	\begin{array}{cc}
		1\\
		0
	\end{array}
	\right) e_{8},
\end{align*}
where
\begin{small}
\begin{align*}
c_3&\!=\!-6\alpha_1\beta_1\gamma_1+4(\alpha_2\beta_2\gamma_1+\alpha_2\beta_1\gamma_2+\alpha_1\beta_2\gamma_2)-6b\alpha_2\beta_2\gamma_2,\\ c_4&\!=\!-2(\alpha_2\beta_2\gamma_1+\alpha_2\beta_1\gamma_2+\alpha_1\beta_2\gamma_2)+6b^{-1}\alpha_2\beta_2\gamma_2, \\ c_5&\!=\!-12b^2\alpha_1\beta_1\gamma_1\!+\!18b^3(\alpha_2\beta_1\gamma_1\!+\!\alpha_1\beta_2\gamma_1\!+\!\alpha_1\beta_1\gamma_2)
\!-\!24b^4(\alpha_2\beta_2\gamma_1\!+\!\alpha_2\beta_1\gamma_2\!+\!\alpha_1\beta_2\gamma_2)\!+\!30b^5\alpha_2\beta_2\gamma_2,\\
c_6&=-8(\alpha_2\beta_2\gamma_1\sigma_1+\alpha_2\beta_1\gamma_2\sigma_1+\alpha_1\beta_2\gamma_2\sigma_1+\alpha_2\beta_1\gamma_1\sigma_2
+\alpha_1\beta_2\gamma_1\sigma_2+\alpha_1\beta_1\gamma_2\sigma_2)\\
&+24b(\alpha_2\beta_2\gamma_2\sigma_1+\alpha_2\beta_2\gamma_1\sigma_2+\alpha_2\beta_1\gamma_2\sigma_2+\alpha_1\beta_2\gamma_2\sigma_2)
-48b^2\alpha_2\beta_2\gamma_2\sigma_2,\\
c_7&=
48b^2 \alpha_1\beta_1\gamma_1\sigma_1-96b^3(\alpha_2\beta_1\gamma_1\sigma_1+\alpha_1\beta_2\gamma_1\sigma_1+\alpha_1\beta_1\gamma_2\sigma_1+\alpha_1\beta_1\gamma_1\sigma_2)
\\&+160b^4(\alpha_2\beta_2\gamma_1\sigma_1+\alpha_2\beta_1\gamma_2\sigma_1+\alpha_1\beta_2\gamma_2\sigma_1+\alpha_2\beta_1\gamma_1\sigma_2
+\alpha_1\beta_2\gamma_1\sigma_2+\alpha_1\beta_1\gamma_2\sigma_2)
\\&-240b^5(\alpha_2\beta_2\gamma_2\sigma_1+\alpha_2\beta_2\gamma_1\sigma_2+\alpha_2\beta_1\gamma_2\sigma_2+\alpha_1\beta_2\gamma_2\sigma_2)
+336b^6\alpha_2\beta_2\gamma_2\sigma_2.
\end{align*}
\end{small}
\end{lemma}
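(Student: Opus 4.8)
The plan is to reduce these symmetric multilinear forms to a finite Taylor expansion of a function of two scalar variables. The crucial observation is that every argument $f_l$ lives in the single Fourier mode $\overline{w}$, so the deformation $\phi_j = b_j w + \sum_l s_l (f_l)_j$ depends on the parameters $s_1,\dots,s_k$ only through the two aggregate coefficients $u_j := \sum_l s_l (f_l)_j$, $j=1,2$. Writing $\widetilde{G}_j(u_1,u_2)$ for $G_j$ evaluated on $\phi_1 = w + u_1\overline{w}$, $\phi_2 = b w + u_2\overline{w}$, we have $G_j(\lambda_{2p},\sum_l s_l f_l) = \widetilde{G}_j(u_1(s),u_2(s))$ with $u_j$ \emph{linear} in $s$. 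The multivariate chain rule with linear inner functions then gives
\[
\partial_f^k G(\lambda_{2p},0)[f_1,\dots,f_k] = \prod_{l=1}^k \big((f_l)_1\,\partial_{u_1} + (f_l)_2\,\partial_{u_2}\big)\,\widetilde{G}(0,0),
\]
the product of operators acting on $\widetilde{G}$ with the scalars $(f_l)_j$ pulled out. Thus it suffices to compute the mixed partials $\partial_{u_1}^a\partial_{u_2}^b\widetilde{G}_j(0,0)$ with $a+b=k$ for $k=2,3,4$; the factorized forms such as $(\alpha_1-b\alpha_2)(\beta_1-b\beta_2)$ and the constants $c_3,\dots,c_7$ emerge by collecting the resulting bi-homogeneous coefficients. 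For instance the second-order case asks precisely that $\partial_{u_1}^2\widetilde{G}(0,0)=4b^2(1,0)^t e_4$, $\partial_{u_1}\partial_{u_2}\widetilde{G}(0,0)=-4b^3(1,0)^t e_4$, $\partial_{u_2}^2\widetilde{G}(0,0)=4b^4(1,0)^t e_4$.

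Next I would expand $\widetilde{G}_j = \mathrm{Im}\{[(1-\lambda_{2p})\overline{\phi_j} + I(\phi_j)]\,w\,\phi_j'\}$ in powers of $u_1,u_2$. The factors $\overline{\phi_j} = b_j\overline{w} + u_j w$ and $\phi_j' = b_j - u_j\overline{w}^2$ are affine in $u_j$, so by the Leibniz rule only the Cauchy integral $I(\phi_j)=I_1(\phi_j)-I_2(\phi_j)$ carries the nonlinear dependence. With $\phi_i(\tau)=b_i\tau + u_i\overline{\tau}$, each $I_i(\phi_j)=\frac{1}{2\pi i}\int_{\mathbb{T}}\frac{\overline{\phi_j(w)}-\overline{\phi_i(\tau)}}{\phi_j(w)-\phi_i(\tau)}\phi_i'(\tau)\,d\tau$ has Taylor coefficients in $u_1,u_2$ which, after the change of variable homogenizing in $w$, are finite sums of residue integrals $\int \tau^{k_1}\overline{\tau}^{k_2}(b_j-b_i\tau)^{-k_3}\,d\tau$. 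Since $b_j-b_i\tau$ factors as $b_i(1-\tau)$, $1-b\tau$, or $b-\tau$ according to the index pair, these are exactly the three cases evaluated in the identities \eqref{E:0}--\eqref{E:3}, so each coefficient is read off by the same holomorphy and residue-at-infinity bookkeeping used in the proof of Proposition \ref{JF2c}, now applied to a single input mode.

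Having $\partial_{u_1}^a\partial_{u_2}^b\widetilde{G}_j(0,0)$ for all $a+b\le 4$, I would take imaginary parts, identify the surviving Fourier modes $e_{2n}$, and substitute $\lambda_{2p}=\tfrac{1+b^2}{2}$ so that $1-\lambda_{2p}=\tfrac{1-b^2}{2}$ enters the $\overline{\phi_j}$ contribution. One then checks that the second-order coefficients live only in the $e_4$ mode and only in the first vector component, that the third-order coefficients populate $e_2$ (both components, giving $c_3,c_4$) and $e_6$ (first component, giving $c_5$), and that the fourth-order coefficients populate $e_4$ and $e_8$ (first component, giving $c_6,c_7$); reassembling via the chain-rule product above yields the stated formulas.

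The main obstacle will be the fourth-order computation. The number of residue integrals contributing to $\partial_{u_1}^a\partial_{u_2}^b\widetilde{G}$ grows rapidly with $a+b$, and one must verify that all Fourier modes other than the claimed $e_4,e_8$ (and $e_2,e_6$ at third order) cancel, and that the higher modes retain only the first vector component. The delicate point is tracking which residue identity applies, since the sign of $k_2+k_3-k_1-2$ decides between vanishing and a nonzero factorial weight; the large integers in $c_7$ (namely $48,96,160,240,336$) are precisely the accumulated residue coefficients $(-1)^{k_3}(k_1-k_2)!/[(1+k_1-k_2-k_3)!(k_3-1)!]$ summed across the contributing terms. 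This is bookkeeping rather than a new idea, and the single-mode reduction of the first paragraph is exactly what keeps the bookkeeping finite and manageable.
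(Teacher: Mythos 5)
Your proposal is correct and is essentially the paper's own proof: the paper likewise reduces everything to scalar deformation parameters (it writes $\phi_j(w)=b_jw+t\alpha_j\overline{w}+s\beta_j\overline{w}+r\gamma_j\overline{w}+h\sigma_j\overline{w}$ and computes $\partial_t\partial_s\partial_r\partial_h I_i(\phi_j)|_{t,s,r,h=0}$), differentiates the Cauchy integrals under the integral sign, homogenizes in $w$ by the same change of variables, and evaluates every coefficient with the residue identities \eqref{E:0}--\eqref{E:3} from the proof of Proposition \ref{JF2c} — exactly the engine you invoke. Your only deviation, aggregating the four slots into two variables $u_1,u_2$ and recovering the symmetric multilinear form by polarization, is a harmless repackaging of the paper's one-parameter-per-argument bookkeeping (the symmetry of the stated coefficients $c_3,\dots,c_7$ confirms its validity), and your side remark about the $1-\lambda_{2p}$ contribution is moot in this single-mode lemma, since $\mathrm{Im}\{(1-\lambda)\overline{\phi_j}\,w\,\phi_j'\}$ is linear in $u_j$ (its quadratic part is real), which is why, unlike Lemma \ref{lem3}, no $(\lambda_{2p}-1)$ term survives in derivatives of order at least two.
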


\begin{proof}%[Proof of Lemma \ref{lem2}]
	Denote $\phi_j(w)=b_j w+t\alpha_j\overline{w}+s \beta_j\overline{w}+r \gamma_j \overline{w}+h \sigma_j\overline{w}$. Then by the definition of $G_j(\lambda,\phi_1,\phi_2)(w)$ in \eqref{E:Non}, we know
	\begin{align}\label{E:Gff}
		&\partial^2_{f}G_j(\lambda_{2p},0)[f_1,f_2]=\partial_t\partial_s G_j(\lambda_{2p},\phi_1,\phi_2)|_{t,s,r,h=0}\\
		=&
		{\rm Im}\{b_jw\frac{\partial^2 }{\partial_t\partial_s}I(\phi_j)|_{t,s,r,h=0}\}
		-{\rm Im}\{\alpha_j\overline{w}\frac{\partial }{\partial_s}I(\phi_j)|_{t,s,r,h=0}\}
		-{\rm Im}\{\beta_j\overline{w}\frac{\partial }{\partial_t}I(\phi_j)|_{t,s,r,h=0}\},\nonumber
	\end{align}
	\begin{align*}
		&\partial^3_{f}G_j(\lambda_{2p},0)[f_1,f_2,f_3]=\partial_t\partial_s\partial_r G_j(\lambda_{2p},\phi_1,\phi_2)|_{t,s,r,h=0}\\
		=&
		{\rm Im}\{b_jw\frac{\partial^3 }{\partial_t\partial_s\partial_r}I(\phi_j)|_{t,s,r,h=0}\}
		-{\rm Im}\{\alpha_j\overline{w}\frac{\partial^2 }{\partial_s\partial_r}I(\phi_j)|_{t,s,r,h=0}\}\\
		-&{\rm Im}\{\beta_j\overline{w}\frac{\partial^2 }{\partial_t\partial_r}I(\phi_j)|_{t,s,r,h=0}\}
		\!-\!{\rm Im}\{\gamma_j\overline{w}\frac{\partial^2 }{\partial_s\partial_t}I(\phi_j)|_{t,s,r,h=0}\},
	\end{align*}
	\begin{align*}
		&\partial^4_{f}G_j(\lambda_{2p},0)[f_1,f_2,f_3,f_4]=\partial_t\partial_s\partial_r\partial_h G_j(\lambda_{2p},\phi_1,\phi_2)|_{t,s,r,h=0}\\
		=&
		{\rm Im}\{b_jw\frac{\partial^4 }{\partial_t\partial_s\partial_r\partial_h}I(\phi_j)|_{t,s,r,h=0}\}
		-{\rm Im}\{\alpha_j\overline{w}\frac{\partial^3 }{\partial_s\partial_r\partial_h}I(\phi_j)|_{t,s,r,h=0}\}\\
		-&{\rm Im}\{\beta_j\overline{w}\frac{\partial^3 }{\partial_t\partial_r\partial_h}I(\phi_j)|_{t,s,r,h=0}\}
		\!-\!{\rm Im}\{\gamma_j\overline{w}\frac{\partial^3 }{\partial_s\partial_t\partial_h}I(\phi_j)|_{t,s,r,h=0}\}
		\!-\!{\rm Im}\{\sigma_j\overline{w}\frac{\partial^3 }{\partial_s\partial_r\partial_t}I(\phi_j)|_{t,s,r,h=0}\}.
	\end{align*}
	where $I(\phi_j)=I_1(\phi_j)-I_2(\phi_j)$ and
	\begin{align*}
		I_i(\phi_j)=\int \frac{\overline{A}+tB+sC+rD+hJ}{A+t\overline{B}+s\overline{C}+r\overline{D}+h\overline{J}}
		(b_i-tE-sF-rH-hK)\dif \tau
	\end{align*}
	with
	\begin{align*}
		&A=b_jw-b_i\tau,\, B=\alpha_j w-\alpha_i \tau,\, C=\beta_j w-\beta_i \tau,\, D=\gamma_j w-\gamma_i \tau, \\
		&J=\sigma_j w-\sigma_i \tau,\,
		E=\alpha_i \overline{\tau}^2,\,
		F=\beta_i \overline{\tau}^2,\,
		H=\gamma_i \overline{\tau}^2,\,
		K=\sigma_i \overline{\tau}^2.
	\end{align*}
	Since $f_1$, $f_2$, $f_3$, $f_4$ are symmetric, we only need to compute the value of $\frac{\partial }{\partial_t}I(\phi_j)|_{t,s,r,h=0}$, $\frac{\partial^2 }{\partial_t\partial_s}I(\phi_j)|_{t,s,r,h=0}$, $\frac{\partial^3 }{\partial_t\partial_s\partial_r}I(\phi_j)|_{t,s,r,h=0}$ and $\frac{\partial^4 }{\partial_t\partial_s\partial_r\partial_h}I(\phi_j)|_{t,s,r,h=0}$.
	
	Observe that
	\begin{align*}
		\frac{\partial}{\partial_t}I_i(\phi_j)|_{t,s,r,h=0}=\int \frac{b_i B-\overline{A}D}{A}\dif \tau-\int \frac{b_i \overline{A}\overline{B}}{A^2}\dif \tau.
	\end{align*}
	By a series calculations like the proof of Proposition \ref{JF2c}, we have
	\begin{align*}
		&\frac{\partial}{\partial_t}I_1(\phi_1)|_{t,s,r,h=0}=\frac{\partial}{\partial_t}I_2(\phi_2)|_{t,s,r,h=0}=0,\\
		&\frac{\partial}{\partial_t}I_1(\phi_2)|_{t,s,r,h=0}=(b \alpha_1-\alpha_2)w,
		\frac{\partial}{\partial_t}I_2(\phi_1)|_{t,s,r,h=0}=(b^2\alpha_1-b^3\alpha_2)\overline{w}^3,\\
		&\frac{\partial}{\partial_t}I(\phi_1)|_{t,s,r,h=0}=(b^3\alpha_2 -b^2\alpha_1)\overline{w}^3
		\text{ and } \frac{\partial}{\partial_t}I(\phi_2)|_{t,s,r,h=0}=(b \alpha_1-\alpha_2)w,
	\end{align*}
	which implies that
	\begin{align*}
		\frac{\partial}{\partial_s}I(\phi_1)|_{t,s,r,h=0}=(b^3\beta_2 -b^2\beta_1)\overline{w}^3
		\text{ and } \frac{\partial}{\partial_t}I(\phi_2)|_{t,s,r,h=0}=(b \beta_1-\beta_2)w.
	\end{align*}
	Similarly, by a series calculations like the proof of Proposition \ref{JF2c}, we have
	\begin{align*}
		&\frac{\partial^2}{\partial_t\partial_s}I(\phi_1)|_{t,s,r,h=0}
		=(2\alpha_1\beta_1-2\alpha_2\beta_2)\overline{w}+[2b^2 \alpha_1\beta_1-3b^3(\alpha_1\beta_2+\alpha_2\beta_1)+4b^4\alpha_2\beta_2]\overline{w}^5,\\ &\frac{\partial^2}{\partial_t\partial_s}I(\phi_2)|_{t,s,r,h=0}=( \alpha_2\beta_1+\alpha_1\beta_2-2b^{-1}\alpha_2\beta_2)\overline{w}.
	\end{align*}
	Inserting the above equations into \eqref{E:Gff}, we obtain the value of $\partial_{ff}G_j(\lambda_{2p},0)[f_1,f_2]$, that is, $\partial_{ff}G(\lambda_{2p},0)[f_1,f_2]=4b^2(\alpha_1-b\alpha_2)(\beta_1-b\beta_2)
	\left(
	\begin{array}{cc}
		1\\
		0
	\end{array}
	\right) e_{4}$.
	
	Since the calculation process is tedious with the same methods in the proof of Proposition \ref{JF2c}, we omit this process and give some important calculation results instead, which are
	\begin{align*}
		\frac{\partial^3 }{\partial_t\partial_s\partial_r}I(\phi_1)|_{t,s,r,h=0}
		=&[2(\alpha_2\beta_2\gamma_1+\alpha_2\beta_1\gamma_2+\alpha_1\beta_2\gamma_2)-6b\alpha_2\beta_2\gamma_2]\overline{w}^3\\
		-&[6b^2 \alpha_1\beta_1\gamma_1-12b^3(\alpha_2\beta_1\gamma_1+\alpha_1\beta_2\gamma_1+\alpha_1\beta_1\gamma_2)\\
		&+20b^4(\alpha_1\beta_2\gamma_2+\alpha_2\beta_1\gamma_2+\alpha_2\beta_2\gamma_1)-30b^5\alpha_2\beta_2\gamma_2]\overline{w}^5,\\
		\frac{\partial^3 }{\partial_t\partial_s\partial_r}I(\phi_2)|_{t,s,r,h=0}=&0.
	\end{align*}
	and
	\begin{align*}
		\frac{\partial^4 }{\partial_t\partial_s\partial_r\partial_h}I(\phi_1)|_{t,s,r,h=0}
		=\tilde{c}_1\overline{w}^5+\tilde{c}_2 \overline{w}^9,\quad \frac{\partial^4 }{\partial_t\partial_s\partial_r\partial_h}I(\phi_2)|_{t,s,r,h=0}=0,
	\end{align*}
	with
	\begin{align*}
		\tilde{c}_1&=-4(
		\alpha_2\beta_2\gamma_1\sigma_1+\alpha_2\beta_1\gamma_2\sigma_1+\alpha_1\beta_2\gamma_2\sigma_1+\alpha_2\beta_1\gamma_1\sigma_2+\alpha_1\beta_2\gamma_1\sigma_2+\alpha_1\beta_1\gamma_2\sigma_2)
		\\
		&+18b(\alpha_2\beta_2\gamma_2\sigma_1+\alpha_2\beta_2\gamma_1\sigma_2+\alpha_2\beta_1\gamma_2\sigma_2+\alpha_1\beta_2\gamma_2\sigma_2)+48b^2\alpha_2\beta_2\gamma_2\sigma_2\\
		\tilde{c}_2&=6[4b^2\alpha_1\beta_1\gamma_1\sigma_1
		-10b^3(\alpha_2\beta_1\gamma_1\sigma_1+\alpha_1\beta_2\gamma_1\sigma_1+\alpha_1\beta_1\gamma_2\sigma_1+\alpha_1\beta_1\gamma_1\sigma_2)
		\\&+20b^4(\alpha_2\beta_2\gamma_1\sigma_1+\alpha_2\beta_1\gamma_2\sigma_1+\alpha_1\beta_2\gamma_2\sigma_1+\alpha_2\beta_1\gamma_1\sigma_2+\alpha_1\beta_2\gamma_1\sigma_2+\alpha_1\beta_1\gamma_2\sigma_2)
		\\
		&-35b^5(\alpha_2\beta_2\gamma_2\sigma_1+\alpha_2\beta_2\gamma_1\sigma_2+\alpha_2\beta_1\gamma_2\sigma_2+\alpha_1\beta_2\gamma_2\sigma_2)+56b^6\alpha_2\beta_2\gamma_2\sigma_2.
	\end{align*}
	
\end{proof}

\begin{lemma}\label{lem3}
With $G$ defined in \eqref{E:Non}, for
$f_1=
\left(
	\begin{array}{cc}
		a_1\\
		a_2
	\end{array}
	\right) \overline{w}, \,
f_2=
\left(
	\begin{array}{cc}
		\beta_1\\
		\beta_2
	\end{array}
	\right) \overline{w}^3\,
$, we have
\begin{align*}
\partial_{ff}G(\lambda_{2p},0)[f_1,f_2]&=
\left(
	\begin{array}{cc}
		b^2 \alpha_1\beta_1-b^2\alpha_2\beta_2\\
		3b^2\alpha_1\beta_1-2b^3\alpha_2\beta1-2b \alpha_1\beta_2+b^2\alpha_2\beta_2
	\end{array}
	\right) e_{2}
\\&+
6b^2(\alpha_1-b \alpha_2 )(\beta_1- b^3\beta_2 )
\left(
	\begin{array}{cc}
		1\\
	    0
	\end{array}
	\right) e_{6}.
\end{align*}
\end{lemma}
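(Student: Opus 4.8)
The plan is to follow verbatim the scheme of the proof of Lemma~\ref{lem2}, adapted to the mixed frequencies $\overline{w}$ and $\overline{w}^3$ carried by $f_1$ and $f_2$. First I would introduce the two-parameter deformation $\phi_j(t,s,w)=b_jw+t\alpha_j\overline{w}+s\beta_j\overline{w}^3$ for $j=1,2$, so that $\partial_{ff}G(\lambda_{2p},0)[f_1,f_2]=\partial_t\partial_sG(\lambda_{2p},\phi_1,\phi_2)|_{t=s=0}$, and expand each component $G_j=\mathrm{Im}\{[(1-\lambda_{2p})\overline{\phi_j}+I(\phi_j)]\,w\phi_j'\}$ by the Leibniz rule in $t$ and $s$. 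Here $w\phi_j'(t,s,w)=b_jw-t\alpha_j\overline{w}-3s\beta_j\overline{w}^3$, the factor $3$ on the $s$-term being the only change relative to Lemma~\ref{lem2}. Since both $\overline{\phi_j}$ and $w\phi_j'$ are affine in $(t,s)$, their mixed second derivatives vanish, so the only second-order contribution from the integral comes through $\partial_t\partial_sI(\phi_j)$.

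The crucial structural difference from Lemma~\ref{lem2} is that the cross terms produced by the explicit factor $(1-\lambda_{2p})\overline{\phi_j}$ no longer drop out. Using $\partial_t\overline{\phi_j}=\alpha_jw$, $\partial_s\overline{\phi_j}=\beta_jw^3$, $\partial_t(w\phi_j')=-\alpha_j\overline{w}$ and $\partial_s(w\phi_j')=-3\beta_j\overline{w}^3$, these contribute $-3(1-\lambda_{2p})\alpha_j\beta_j\overline{w}^2-(1-\lambda_{2p})\alpha_j\beta_jw^2$, whose imaginary part is $-2(1-\lambda_{2p})\alpha_j\beta_j\,e_2=-(1-b^2)\alpha_j\beta_j\,e_2$. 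This genuine $e_2$ contribution is absent in the all-frequency-one computation of Lemma~\ref{lem2}, where the analogous products reduce to the real quantity $w\overline{w}$ and have vanishing imaginary part; here it is exactly what supplies the non-diagonal entries in the stated $e_2$ coefficient.

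Next I would compute the three integral pieces $\partial_tI(\phi_j)|_0$, $\partial_sI(\phi_j)|_0$ and $\partial_t\partial_sI(\phi_j)|_0$. Each splits as $I_1-I_2$ with $I_i(\phi_j)$ a Cauchy-type integral; I would differentiate under the integral, rescale the integration variable to factor out the homogeneity in $w$, and use $\overline{\tau}=1/\tau$ on $\mathbb{T}$ to reduce every integrand to a rational function, then evaluate by the residue theorem together with the identities \eqref{E:0}--\eqref{E:3}, exactly as in the proofs of Proposition~\ref{JF2c} and Lemma~\ref{lem2}. The quantity $\partial_tI$ is already recorded in the proof of Lemma~\ref{lem2} (namely $\partial_tI(\phi_1)|_0=(b^3\alpha_2-b^2\alpha_1)\overline{w}^3$ and $\partial_tI(\phi_2)|_0=(b\alpha_1-\alpha_2)w$); $\partial_sI$ is its frequency-$3$ analogue and $\partial_t\partial_sI$ is the genuinely new mixed integral. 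Finally I would substitute all pieces into the Leibniz expansion, simplify with $\lambda_{2p}=\frac{1+b^2}{2}$, and collect the surviving harmonics, which turn out to be only $e_2$ and $e_6$, reading off the stated vector coefficients; for instance the term $-3\beta_j\overline{w}^3\,\partial_tI(\phi_1)|_0$ already produces $3b^2(\alpha_1-b\alpha_2)\beta_1\,e_6$, part of the claimed $e_6$ coefficient.

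The main obstacle is purely computational: correctly evaluating the mixed second derivative $\partial_t\partial_sI(\phi_j)|_0$, in which the frequency-$1$ and frequency-$3$ perturbations interact and generate several Fourier modes, and tracking the cancellations so that every harmonic other than $e_2$ and $e_6$ disappears. Since the methodology is identical to that already established for Lemma~\ref{lem2}, no new idea is needed, and I would present the residue bookkeeping in the condensed style used elsewhere in the paper, listing only the constants that feed into the $e_2$ and $e_6$ coefficients.
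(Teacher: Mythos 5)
Your proposal follows exactly the paper's own route: the deformation $\phi_j(t,s,w)=b_jw+t\alpha_j\overline{w}+s\beta_j\overline{w}^3$, the Leibniz expansion isolating $\partial_t\partial_sI$, $\partial_tI$, $\partial_sI$, the surviving cross term $2(\lambda_{2p}-1)\alpha_j\beta_j e_2$ from the local part (which you correctly identify as the structural novelty versus Lemma \ref{lem2}), and the residue-theorem evaluation via the identities \eqref{E:0}--\eqref{E:3}. The approach and the checkable intermediate values (e.g.\ the $3b^2(\alpha_1-b\alpha_2)\beta_1 e_6$ contribution) agree with the paper's proof, so this is essentially the same argument.
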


%\begin{lemma}\label{lem4}
%With $G$ defined in \eqref{E:Non}, for
%$f_1=
%\left(
%	\begin{array}{cc}
%		a_1\\
%		a_2
%	\end{array}
%	\right) \overline{w}, \,
%f_2=
%\left(
%	\begin{array}{cc}
%		\beta_1\\
%		\beta_2
%	\end{array}
%	\right) \overline{w}\,
%f_3=
%\left(
%	\begin{array}{cc}
%		\gamma_1\\
%		\gamma_2
%	\end{array}
%	\right) \overline{w}^3
%$, we have
%\begin{align*}
%\partial_{fff}G(\lambda_{2p},0)[f_1,f_2,f_3]&=
%\left(
%	\begin{array}{cc}
%		c_8\\
%		c_9
%	\end{array}
%	\right) e_{4}
%+
%c_{10}
%\left(
%	\begin{array}{cc}
%		1\\
%	    0
%	\end{array}
%	\right) e_{8},
%\end{align*}
%where
%\begin{small}
%\begin{align*}
%c_8&\!=\!8(\alpha_2\beta_2\gamma_1\!-\!\alpha_1\beta_1\gamma_1)\!+\!4b^2(\alpha_2\beta_1\gamma_2\!+\!\alpha_1\beta_2\gamma_2)\!-\!8b^3\alpha_2\beta_2\gamma_2,
%\,c_9\!=\!-4(\alpha_2\beta_1\gamma_2\!+\!\alpha_1\beta_2\gamma_2)\!+\!8b^{-1}\alpha_2\beta_2\gamma_2, \\ c_{10}&\!=\!-16b^2\alpha_1\beta_1\gamma_1\!+\!24b^3(\alpha_2\beta_1\gamma_1\!+\!\alpha_1\beta_2\gamma_1)\!-\!32b^4\alpha_2\beta_2\gamma_1
%\!+\!40b^5\alpha_1\beta_1\gamma_2\!-\!48b^6(\alpha_1\beta_2\gamma_2\!+\!\alpha_2\beta_1\gamma_2)\!\\&+\!56b^7\alpha_2\beta_2\gamma_2.
%\end{align*}
%\end{small}
%\end{lemma}

\begin{proof}%[Proof of Lemma \ref{lem3}]
	Denote $\phi_j(w)=b_j w+t\alpha_j\overline{w}+s \beta_j\overline{w}^3$. Then by the definition of $G_j(\lambda,\phi_1,\phi_2)(w)$ in \eqref{E:Non}, we know
	\begin{align}\label{E:Gff3}
		&\partial^2_{f}G_j(\lambda_{2p},0)[f_1,f_2]=\partial_t\partial_s G_j(\lambda_{2p},\phi_1,\phi_2)|_{t,s=0}\nonumber\\
		=&
		{\rm Im}\{b_jw\frac{\partial^2 }{\partial_t\partial_s}I(\phi_j)|_{t,s=0}\}
		-{\rm Im}\{\alpha_j\overline{w}\frac{\partial }{\partial_s}I(\phi_j)|_{t,s=0}\}\nonumber\\
		-&3{\rm Im}\{\beta_j\overline{w}^3\frac{\partial }{\partial_t}I(\phi_j)|_{t,s=0}\}+2(\lambda_{2p}-1)
		\begin{pmatrix}
			\alpha_1\beta_1 \\ \alpha_2\beta_2
		\end{pmatrix}
		,
	\end{align}
	where $2(\lambda_{2p}-1)=b^2-1$, $I(\phi_j)=I_1(\phi_j)-I_2(\phi_j)$ and
	\begin{align*}
		I_i(\phi_j)=\int \frac{\overline{A}+tB+sC}{A+t\overline{B}+s\overline{C}}
		(b_i-tD-sE)\dif \tau
	\end{align*}
	with
	\begin{align*}
		&A=b_jw-b_i\tau,\, B=\alpha_j w-\alpha_i \tau,\, C=\beta_j w-\beta_i \tau,\, D=\alpha_i \overline{\tau}^2,\,
		E=3\beta_i \overline{\tau}^4.
	\end{align*}
	From the proof of Lemma \ref{lem2}, we know
	\begin{align*}
		\frac{\partial}{\partial_t}I(\phi_1)|_{t,s=0}=(b^3\alpha_2 -b^2\alpha_1)\overline{w}^3
		\text{ and } \frac{\partial}{\partial_t}I(\phi_2)|_{t,s=0}=(b \alpha_1-\alpha_2)w,
	\end{align*}
	Observe that
	\begin{small}
		\begin{align*}
			&\frac{\partial}{\partial_s}I_i(\phi_j)|_{t,s=0}=\int \frac{b_i C-\overline{A}E}{A}\dif \tau-\int \frac{b_i \overline{A}\overline{C}}{A^2}\dif \tau,\\
			&\frac{\partial^2}{\partial_t\partial_s}I_i(\phi_j)|_{t,s=0}=-\int \frac{D C+BE}{A}\dif \tau-\int \frac{b_i C-\overline{A}E}{A^2}\overline{B}\dif \tau
			-\int \frac{b_i B-\overline{A}D}{A^2}\overline{C}\dif \tau+2\int \frac{b_i \overline{A}\overline{B}\overline{C}}{A^3}\dif \tau.
		\end{align*}
	\end{small}
	By a series calculations like the proof of Proposition \ref{JF2c}, we have
	\begin{align*}
		&\frac{\partial}{\partial_s}I(\phi_1)|_{t,s=0}=(b^5\beta_2 -b^2\beta_1)\overline{w}^5,\quad
		\frac{\partial}{\partial_s}I(\phi_2)|_{t,s=0}=(b^3 \beta_1-\beta_2)w^3,
	\end{align*}
	and
	\begin{align*}
		&\frac{\partial^2}{\partial_t\partial_s}I(\phi_1)|_{t,s=0}
		=(\alpha_1\beta_1-b^2\alpha_2\beta_2)\overline{w}^3+(2b^2 \alpha_1\beta_1-3b^3\alpha_2\beta_1-5b^5\alpha_1\beta_2+6b^6\alpha_2\beta_2)\overline{w}^7,\\ &\frac{\partial^2}{\partial_t\partial_s}I(\phi_2)|_{t,s=0}=3\beta_1(-b\alpha_1+b^2\alpha_2)+( \alpha_1\beta_2-b^{-1}\alpha_2\beta_2)\overline{w}^3.
	\end{align*}
	Inserting the above equations into \eqref{E:Gff3}, we obtain the value of $\partial_{ff}G_j(\lambda_{2p},0)[f_1,f_2]$ in the Lemma.
\end{proof}

\subsubsection{The third-order derivatives of $\varphi$ }
With $a=0$, we immediately get that $x_0=x^1=		\left(
		\begin{array}{cc}
			b\\
			1
		\end{array}
		\right)\overline{w}$, $\bar{x}=\partial_{gg}\varphi(\lambda_{2p},0)[x_0,x_0]=0$ and $\tilde{x}=\partial_\lambda\partial_{g}\varphi(\lambda_{2p},0)x_0=2b^{-1}\left(
	\begin{array}{cc}
		1\\
		0
	\end{array}
	\right)\overline{w} $ by \eqref{E:philga} and \eqref{E:phigga}.
\begin{lemma}\label{phi2a=0}
With $G$ defined in \eqref{E:Non}, we have
\begin{align*}
&\partial_{\lambda\lambda\lambda}\varphi(\lambda_{2p},0)=0,\quad\partial_{\lambda\lambda}\partial_g\varphi(\lambda_{2p},0)x_0=-2[\partial_f G(\lambda_{2p},0)]^{-1}({\rm {Id}}-Q)\partial_\lambda\partial_f G(\lambda_{2p},0)\tilde{x},\\
&\partial_{\lambda}\partial_{gg}\varphi(\lambda_{2p},0)[x_0,x_0]=-2[\partial_f G(\lambda_{2p},0)]^{-1}({\rm{Id}}-Q)
\partial_{ff}G(\lambda_{2p},0)[\tilde{x},x_0]\text{ and }\\
&\partial_{ggg}\varphi(\lambda_{2p},0)[x_0,x_0,x_0]=-[\partial_f G(\lambda_{2p},0)]^{-1}({\rm{Id}}-Q)
\partial_{fff} G(\lambda_{2p},0)[x_0,x_0,x_0],
\end{align*}
where $\tilde{x}$ is defined in Proposition \ref{acformal} with $a=0$.
\end{lemma}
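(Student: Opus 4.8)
The plan is to obtain all four identities at once by repeatedly differentiating the defining relation of the Lyapunov–Schmidt correction, namely the identity $(\mathrm{Id}-Q)\,G(\lambda,\,g+\varphi(\lambda,g))=0$ from \eqref{E:LS-red}, which holds for all $(\lambda,g)$ near $(\lambda_{2p},0)$. Writing $v(\lambda,g)=g+\varphi(\lambda,g)$ and abbreviating $P=\mathrm{Id}-Q$, I set $\Phi(\lambda,g)=P\,G(\lambda,v(\lambda,g))$ and apply the Fa\`a di Bruno chain rule to $\Phi$ up to third order in $(\lambda,g)$, always taking the $g$-direction equal to $x_0=x^1$. Note that every derivative of $v$ agrees with the corresponding derivative of $\varphi$ except the first $g$-derivative, which carries the extra inclusion of the kernel. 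I then evaluate at $(\lambda_{2p},0)$, where the lower-order data are already known: Lemma \ref{L:phi-1} gives $\partial_\lambda\varphi=\partial_g\varphi=\partial_{\lambda\lambda}\varphi=0$, so $v_\lambda=0$, $v_g=x_0$ and $v_{\lambda\lambda}=0$; and for $a=0$ the values recorded just before the statement give $v_{gg}=\bar x=0$ and $v_{\lambda g}=\tilde x=2b^{-1}(1,0)^{t}\overline{w}$.

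With this bookkeeping each third-order equation $\partial^{3}\Phi|_{(\lambda_{2p},0)}=0$ collapses, since every term containing a factor $v_\lambda$, $v_{\lambda\lambda}$ or $v_{gg}$ vanishes, and the pure and mixed $G$-derivatives $\partial_\lambda^{\ell}\partial_f^{k}G(\lambda_{2p},0)$ with $\ell\ge2$ (and in particular $\partial_{\lambda\lambda\lambda}G$) vanish by Lemma \ref{lem1}. Because $\partial_f G(\lambda_{2p},0)$ restricted to $\mathcal{X}$ is an isomorphism onto $\mathcal{Y}$ with bounded inverse (the multipliers $M_{2n}(\lambda_{2p})$ are invertible for $n\ne1,p$), the only surviving term on the left is $P\,\partial_f G(\lambda_{2p},0)$ applied to the sought third derivative of $\varphi$, and solving yields the claimed formulas. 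Concretely, $\partial_{ggg}\Phi=0$ reduces to $P\big(\partial_{fff}G[x_0,x_0,x_0]+\partial_f G\,\partial_{ggg}\varphi[x_0,x_0,x_0]\big)=0$, giving the fourth identity; $\partial_{\lambda gg}\Phi=0$ reduces to $P\big(2\,\partial_{ff}G[\tilde x,x_0]+\partial_f G\,\partial_\lambda\partial_{gg}\varphi[x_0,x_0]\big)=0$, giving the third; $\partial_{\lambda\lambda g}\Phi=0$ reduces to $P\big(2\,\partial_\lambda\partial_f G\,\tilde x+\partial_f G\,\partial_{\lambda\lambda}\partial_g\varphi\,x_0\big)=0$, giving the second; and $\partial_{\lambda\lambda\lambda}\Phi=0$ reduces to $P\,\partial_f G\,\partial_{\lambda\lambda\lambda}\varphi=0$, forcing $\partial_{\lambda\lambda\lambda}\varphi(\lambda_{2p},0)=0$.

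The one place where the clean form of the third identity is \emph{not} automatic, and therefore the main (if modest) obstacle, is that the chain rule for $\partial_{\lambda gg}\Phi$ also generates the term $P\,\partial_\lambda\partial_{ff}G(\lambda_{2p},0)[x_0,x_0]$, which is not excluded by Lemma \ref{lem1} (it is the admissible case $\ell=1$, $k=2$). The plan is to show it vanishes for the specific direction $x_0$: inserting $g_1=g_2=x_0=(b,1)^{t}\overline{w}$ into the explicit expression for $\partial_\lambda\partial_f^{2}G_j(\lambda_{2p},0)$ in Lemma \ref{lem1}, only the $n=1$ coefficient survives, and each of the two summands reduces to $(\beta_1^{1,j}\beta_1^{2,j})|w|^{4}=\beta_1^{1,j}\beta_1^{2,j}$; since these coefficients are real, the bracket is real and its imaginary part is zero. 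Once this term is cleared, the three nontrivial identities are exactly the solvable linear equations above. Finally, the whole computation is legitimate because the real-analyticity of $\varphi$ (Lemma \ref{L:Real-analy-2}, via the analytic implicit function theorem) guarantees that all third-order derivatives entering the chain rule exist and that the order of differentiation may be interchanged.
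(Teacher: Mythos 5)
Your proposal is correct and follows essentially the same route as the paper: differentiate the Lyapunov--Schmidt identity $(\mathrm{Id}-Q)G(\lambda,g+\varphi(\lambda,g))=0$ three times in $(\lambda,g)$ along $x_0$, evaluate at $(\lambda_{2p},0)$, and kill the lower-order terms using Lemma \ref{L:phi-1}, $\bar x=0$, and the vanishing derivatives of $G$ from Lemma \ref{lem1}, then invert $\partial_f G(\lambda_{2p},0)$ on $\mathcal{X}$. Your explicit verification that $\partial_\lambda\partial_{ff}G(\lambda_{2p},0)[x_0,x_0]=0$ (which the paper disposes of only by citing Lemma \ref{lem1}) is the one point where you are more careful than the original, and it is exactly right.
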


\begin{proof}
	By the definition of $\varphi$, we know that for any $\lambda$ close to $\lambda_{2p}$,
	\begin{eqnarray}\label{F1phia=0}
		({\rm {Id}}-Q)G(\lambda,tx_0+\varphi(\lambda,tx_0))=0.
	\end{eqnarray}
	By calculating $\partial_{\lambda\lambda\lambda}$, $\partial_{\lambda\lambda}\partial_{t}$, $\partial_{\lambda}\partial_{tt}$ and $\partial_{ttt}$  on the both side of \eqref{F1phia=0} respectively at the point $(\lambda_{2p},0)$, we know
\begin{align}
&({\rm {Id}}-Q)\big\{\partial_{\lambda\lambda\lambda}G(\lambda_{2p},0)
+3\partial_{\lambda\lambda}\partial_fG(\lambda_{2p},0)\partial_{\lambda}\varphi(\lambda_{2p},0)
+3\partial_{\lambda}\partial_fG(\lambda_{2p},0)\partial_{\lambda\lambda}\varphi(\lambda_{2p},0)\label{lll}\\
+&3\partial_{\lambda}\partial_{ff}G(\lambda_{2p},\!0)[\partial_{\lambda}\varphi(\lambda_{2p},\!0),\partial_{\lambda}\varphi(\lambda_{2p},0)]		\!+\!\partial_{fff}G(\lambda_{2p},0)[\partial_{\lambda}\varphi(\lambda_{2p},0),\partial_{\lambda}\varphi(\lambda_{2p},0),\partial_{\lambda}\varphi(\lambda_{2p},0)]
\nonumber
\\+&3\partial_{ff}G(\lambda_{2p},0)[\partial_{\lambda\lambda}\varphi(\lambda_{2p},0),\partial_{\lambda}\varphi(\lambda_{2p},0)]
+\partial_fG(\lambda_{2p},0)\partial_{\lambda\lambda\lambda}\varphi(\lambda_{2p},0)\big\}=0,\nonumber
\end{align}
\begin{align}
&({\rm {Id}}-Q)\big\{\partial_{\lambda\lambda}\partial_fG(\lambda_{2p},0)(x_0+\partial_g\varphi(\lambda_{2p},0)x_0)
\!+\!\partial_{ff}G(\lambda_{2p},0)[\partial_{\lambda\lambda}\varphi(\lambda_{2p},0),x_0\!+\!\partial_{g}\varphi(\lambda_{2p},0)x_0]\label{llt}
\\+&2\partial_{\lambda}\partial_{f}G(\lambda_{2p},0)\partial_{\lambda}\partial_g\varphi(\lambda_{2p},0)x_0 +\partial_{fff}G(\lambda_{2p},0)[\partial_{\lambda}\varphi(\lambda_{2p},0),\partial_{\lambda}\varphi(\lambda_{2p},0),x_0\!+\!\partial_{g}\varphi(\lambda_{2p},0)x_0]\nonumber
\\+&2\partial_{ff}G(\lambda_{2p},0)[\partial_{\lambda}\varphi(\lambda_{2p},0),\partial_{\lambda}\partial_g\varphi(\lambda_{2p},0)x_0]
\!+\!2\partial_{\lambda}\partial_{ff}G(\lambda_{2p},0)[\partial_{\lambda}\varphi(\lambda_{2p},0),x_0\!+\!\partial_{g}\varphi(\lambda_{2p},0)x_0]
\nonumber
\\+&\partial_fG(\lambda_{2p},0)\partial_{\lambda\lambda}\partial_g\varphi(\lambda_{2p},0)x_0\big\}=0,\nonumber
\end{align}
\begin{align}
&({\rm {Id}}-Q)\big\{\partial_{\lambda}\partial_{ff}G(\lambda_{2p},0)[x_0+\partial_{g}\varphi(\lambda_{2p},0)x_0,x_0+\partial_{g}\varphi(\lambda_{2p},0)x_0]
+\partial_{\lambda}\partial_{f}G(\lambda_{2p},0)\bar{x}\label{ltt} \\+&\partial_{fff}G(\lambda_{2p},0)[\partial_{\lambda}\varphi(\lambda_{2p},0),x_0+\partial_{g}\varphi(\lambda_{2p},0)x_0,x_0+\partial_{g}\varphi(\lambda_{2p},0)x_0]
\nonumber
\\+&2\partial_{ff}G(\lambda_{2p},0)[\partial_{\lambda}\partial_{g}\varphi(\lambda_{2p},0)x_0,x_0+\partial_{g}\varphi(\lambda_{2p},0)x_0]
+\partial_{ff}G(\lambda_{2p},0)[\partial_{\lambda}\varphi(\lambda_{2p},0),\bar{x}]\nonumber\\
+&\partial_fG(\lambda_{2p},0)\partial_{\lambda}\partial_{gg}\varphi(\lambda_{2p},0)[x_0,x_0]\big\}=0\nonumber
\end{align}
\begin{align}
&({\rm {Id}}-Q)\big\{\partial_{fff}G(\lambda_{2p},0)[x_0+\partial_{g}\varphi(\lambda_{2p},0)x_0,x_0+\partial_{g}\varphi(\lambda_{2p},0)x_0,x_0+\partial_{g}\varphi(\lambda_{2p},0)x_0]
\label{ttt}
\\+&3\partial_{ff}G(\lambda_{2p},0)[\bar{x},x_0+\partial_{g}\varphi(\lambda_{2p},0)x_0]
+\partial_{f}G(\lambda_{2p},0)\partial_{ggg}\varphi(\lambda_{2p},0)[x_0,x_0,x_0]\big\}=0,\nonumber
\end{align}
	
With the facts that $\partial_\lambda\varphi(\lambda_{2p},0)=\partial_{\lambda\lambda}\varphi(\lambda_{2p},0)=\partial_g\varphi(\lambda_{2p},0)x_0=0$, $\bar{x}=0$, $\partial_{\lambda\lambda\lambda}G(\lambda_{2p},0)=\partial_{\lambda\lambda}\partial_fG(\lambda_{2p},0)=0$, and $Q\partial_fG(\lambda_{2p},0)=0$, using Lemma \ref{lem1}, the equations \eqref{lll}, \eqref{llt}, \eqref{ltt} and \eqref{ttt} turns to be
\begin{eqnarray*}
\partial_fG(\lambda_{2p},0)\partial_{\lambda\lambda\lambda}\varphi(\lambda_{2p},0)=0,
\end{eqnarray*}
\begin{eqnarray*}
2({\rm {Id}}-Q)\partial_\lambda\partial_{f}G(\lambda_{2p},0)\partial_\lambda\partial_g\varphi(\lambda_{2p},0)x_0
+({\rm {Id}}-Q)\partial_{f}G(\lambda_{2p},0)\partial_{\lambda\lambda}\partial_{g}\varphi(\lambda_{2p},0)x_0=0,
\end{eqnarray*}
\begin{align*}
&2({\rm {Id}}-Q)\partial_{ff}G(\lambda_{2p},0)[\partial_\lambda\partial_g\varphi(\lambda_{2p},0)x_0,x_0]
+\partial_{f}G(\lambda_{2p},0)\partial_{\lambda}\partial_{gg}\varphi(\lambda_{2p},0)[x_0,x_0]=0,
\end{align*}
and
\begin{align*}
({\rm {Id}}-Q)\partial_{fff}G(\lambda_{2p},0)[x_0,x_0,x_0]
+&\partial_{f}G(\lambda_{2p},0)\partial_{ggg}\varphi(\lambda_{2p},0)[x_0,x_0,x_0]=0,
\end{align*}
which just implies the results in Lemma \ref{phi2a=0}.
\end{proof}

Next, we will compute the third derivatives of $\varphi$ at $(\lambda_{2p},0)$, that is,
\begin{proposition}\label{d3phia=0}
	We give some results of third derivatives of $\varphi$ at $(\lambda_{2p},0)$:
	
	(1) $	\partial_{\lambda\lambda\lambda} \varphi(\lambda_{2p},0)=0.$

	(2) $		\partial_{\lambda\lambda}\partial_g \varphi(\lambda_{2p},0)x_0
		=4b^{-3}
		\left(
		\begin{array}{cc}
			1\\
			0
		\end{array}
		\right)\overline{w}.$
	
	(3) $	\partial_{\lambda}\partial_{gg} \varphi(\lambda_{2p},0)[x_0,x_0]=0.$

	(4) $		\partial_{ggg} \varphi(\lambda_{2p},0)[x_0,x_0,x_0]
		=
		3(b^{-3}-b)
		\left(
		\begin{array}{cc}
			1\\
			0
		\end{array}
		\right)
		\overline{w}.$

\end{proposition}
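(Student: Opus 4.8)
Part~(1) is nothing new: it is verbatim the first assertion of Lemma~\ref{phi2a=0}, so only~(2)--(4) require evaluation. The plan is to take the three representation formulas produced in Lemma~\ref{phi2a=0} and turn each into a closed form by the same three-step procedure: first evaluate the relevant derivative of $G$ on the eigenvector data $x_0=(b,1)^{t}\overline{w}$ and $\tilde{x}=2b^{-1}(1,0)^{t}\overline{w}$ (the latter being the $a=0$ specialization recorded in~\eqref{E:philga}) using Lemmas~\ref{lem1} and~\ref{lem2}; second, apply the Lyapunov--Schmidt projection ${\rm Id}-Q$, which in any single Fourier frequency amounts to discarding the co-kernel direction $(1,-1)^{t}$ and keeping the $\mathcal{Y}$-direction $(1,1)^{t}$; third, apply $[\partial_f G(\lambda_{2p},0)]^{-1}$ through the scalar relation $\beta_1=-b^2\alpha_1$ from~\eqref{iG}.

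For~(2) I would feed $\tilde{x}$ into Lemma~\ref{lem1} to get $\partial_\lambda\partial_f G(\lambda_{2p},0)\tilde{x}=4b^{-1}(1,0)^{t}e_2$. Since $(1,0)^{t}=\tfrac12(1,1)^{t}+\tfrac12(1,-1)^{t}$, the projection retains $2b^{-1}(1,1)^{t}e_2$; inverting via $\alpha_1=-\beta_1/b^2$ gives $-2b^{-3}(1,0)^{t}\overline{w}$, and the leading factor $-2$ yields the stated $4b^{-3}(1,0)^{t}\overline{w}$. Identity~(3) is even cheaper and rests on a structural cancellation: in Lemma~\ref{lem2} the bilinear form carries the factor $(\beta_1-b\beta_2)$ read off the second argument, and the kernel generator $x_0=(b,1)^{t}\overline{w}$ has $\beta_1-b\beta_2=b-b=0$, so $\partial_{ff}G(\lambda_{2p},0)[\tilde{x},x_0]$ vanishes identically and~(3) follows with no inversion needed.

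The only genuinely computational identity is~(4). Here I would substitute $x_0$ into the trilinear formula of Lemma~\ref{lem2}; the decisive point is that the higher-frequency coefficient $c_5$ collapses, $c_5=-12b^5+54b^5-72b^5+30b^5=0$, leaving $\partial_{fff}G(\lambda_{2p},0)[x_0,x_0,x_0]=(c_3,c_4)^{t}e_2$ with $c_3=6(b-b^3)$ and $c_4=6(b^{-1}-b)$. Projecting onto $(1,1)^{t}$ keeps the coefficient $\tfrac12(c_3+c_4)=3(b^{-1}-b^3)$; inverting through $\alpha_1=-\beta_1/b^2$ and carrying the overall minus sign produces $3(b^{-3}-b)(1,0)^{t}\overline{w}$, as claimed.

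I expect no serious obstacle, only careful bookkeeping: one must track which Fourier frequency each term sits in so that only the $e_2$-part is fed back through $[\partial_f G(\lambda_{2p},0)]^{-1}$, and one must verify the two cancellations, $\beta_1-b\beta_2=0$ in~(3) and $c_5=0$ in~(4), that are responsible for the clean closed forms. Both cancellations are consequences of the special shape of the kernel generator $x^1=(b,1)^{t}\overline{w}$ and hold for every $b$, hence for every $p\ge2$, which is exactly why Proposition~\ref{d3phia=0} can be stated uniformly in $p$.
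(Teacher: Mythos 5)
Your proposal is correct and follows essentially the same route as the paper: it invokes the representation formulas of Lemma~\ref{phi2a=0}, evaluates the derivatives of $G$ on $x_0$ and $\tilde{x}$ via Lemmas~\ref{lem1} and~\ref{lem2}, and then projects with ${\rm Id}-Q$ and inverts through the relation $\beta_1=-b^2\alpha_1$ of~\eqref{iG}. All the key cancellations you identify ($\beta_1-b\beta_2=0$ for~(3) and $c_5=0$ for~(4)) and the resulting constants match the paper's computation.
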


\begin{proof}
Recall that when $a=0$,
$\tilde{x}=
\partial_\lambda\partial_g \varphi(\lambda_{2p},0)x_0
=2b^{-1}
\left(
\begin{array}{cc}
1\\
0
\end{array}
\right)\overline{w}.
$
%\begin{eqnarray*}
%\partial_{gg} \varphi(\lambda_{2p},0)[x_0,x_0]=
%-\left[\partial_{f} G(\lambda_{2p},0)\right]^{-1}\frac{\dif^2}{\dif t^2}({\rm {Id}-Q})G(\lambda_{2p}, tx_0)_{|t=0}=
%\left(
%\begin{array}{cc}
%\nu_1\\
%\nu_2
%\end{array}
%\right)\overline{w}^{2p-3}
%+
%\left(
%\begin{array}{cc}
%\eta_1\\
%\eta_2
%\end{array}
%\right)\overline{w}^{4p-1}=0
%\end{eqnarray*}
%from the proof of (2) of proposition \ref{d2F2} with \eqref{notation3} and \eqref{notation4}.

	(1) We know $\partial_{\lambda\lambda\lambda}\varphi(\lambda_{2p},0)=0$ directly from Lemma \ref{phi2a=0}.
	
	(2) Now we compute $\partial_{\lambda\lambda}\partial_g\varphi(\lambda_{2p},0)x_0$ first. Recall that
	\begin{eqnarray*}
		\partial_{\lambda\lambda}\partial_g\varphi(\lambda_{2p},0)x_0=-2[\partial_f G(\lambda_{2p},0)]^{-1}({\rm {Id}}-Q)\partial_\lambda\partial_f G(\lambda_{2p},0)\tilde{x}.
	\end{eqnarray*}
	By Lemma \ref{lem1}, we have
	\begin{eqnarray*}
		({\rm {Id}}-Q)\partial_\lambda\partial_f G(\lambda_{2p},0)\tilde{x}
		=-2({\rm {Id}}-Q)
		\left(
		\begin{array}{cc}
			4b^{-1}\\
			0
		\end{array}
		\right)
		e_2
		=-4b^{-1}
		\left(
		\begin{array}{cc}
			1\\
			1
		\end{array}
		\right)e_2.
	\end{eqnarray*}
	Using the inverse relationship of $\partial_f G$ in \eqref{iG}, we have
	\begin{eqnarray*}
		\partial_{\lambda\lambda}\partial_g\varphi(\lambda_{2p},0)x_0
		=(-4b^{-1})(-b^{-2})
		\left(
		\begin{array}{cc}
			1\\
			1
		\end{array}
		\right)
		\overline{w}
		=4b^{-3}
		\left(
		\begin{array}{cc}
			1\\
			1
		\end{array}
		\right)
		\overline{w}.
	\end{eqnarray*}
	
(3) Then we compute $\partial_{\lambda}\partial_{gg}\varphi(\lambda_{2p},0)[x_0,x_0]$. Recall that
\begin{eqnarray*}
\begin{split}
\partial_{\lambda}\partial_{gg}\varphi(\lambda_{2p},0)[x_0,x_0]\!=&-2[\partial_f G(\lambda_{2p},0)]^{-1}({\rm{Id}}-Q)\partial_{ff}G(\lambda_{2p},0)[\partial_\lambda\partial_g\varphi(\lambda_{2p},0)\tilde{x},x_0].
\end{split}
\end{eqnarray*}
From Lemma \ref{lem2} and
$x_0=		
\left(
\begin{array}{cc}
	b\\
	1
\end{array}
\right)
\overline{w}$, we obtain $\partial_{ff}G(\lambda_{2p},0)[\partial_\lambda\partial_g\varphi(\lambda_{2p},0)\tilde{x},x_0]=0$ with $a_1=b$, $a_2=1$, and hence
 $	\partial_{\lambda}\partial_{gg} \varphi(\lambda_{2p},0)[x_0,x_0]=0.$
	
(4) Now we compute $\partial_{ggg}\varphi(\lambda_{2p},0)[x_0,x_0,x_0]$ to finish the proof. By Lemma \ref{phi2a=0}, we know
\begin{align*}
\partial_{ggg}\varphi(\lambda_{2p},0)[x_0,x_0,x_0]
=-[\partial_f G(\lambda_{2p},0)]^{-1}({\rm{Id}}-Q)\partial_{fff} G(\lambda_{2p},0)[x_0,x_0,x_0].
\end{align*}
Using Lemma \ref{lem2} with
$x_0=		
\left(
\begin{array}{cc}
	b\\
	1
\end{array}
\right)
\overline{w}$, we have
$\partial_{fff} G(\lambda_{2p},0)[x_0,x_0,x_0]=-6
\left(
		\begin{array}{cc}
			b^3-b\\
			b-b^{-1}
		\end{array}
		\right)
		e_2
$, and hence
	\begin{eqnarray*}
		\partial_{ggg}\varphi(\lambda_{2p},0)[x_0,x_0,x_0]
		=6[\partial_f G(\lambda_{2p},0)]^{-1}({\rm{Id}}-Q)
		\left(
		\begin{array}{cc}
			b^3-b\\
			b-b^{-1}
		\end{array}
		\right)
		e_2
		=
		3(b^{-3}-b)
		\left(
		\begin{array}{cc}
			1\\
			0
		\end{array}
		\right)
		\overline{w}.
	\end{eqnarray*}
\end{proof}

\subsubsection{The third-order derivatives of $F_2$}
By directly calculations with \eqref{E:Reduced-p}, using Lemma \ref{lem1}, we have the following proposition.
\begin{proposition}\label{acformal2a=0}
	The following assertions hold true.
	
(1) Formula for $\partial_{\lambda\lambda\lambda} F_2(\lambda_{2p},0)$.
\begin{eqnarray*}
\partial_{\lambda\lambda\lambda} F_2(\lambda_{2p},0)=3Q\partial_{\lambda}\partial_fG(\lambda_{2p},0)\partial_{\lambda\lambda}\partial_g\varphi(\lambda_{2p},0)x_0.
\end{eqnarray*}
	
(2) Formula for $\partial_{\lambda\lambda} \partial_t F_2(\lambda_{2p},0)$.
\begin{small}
\begin{align*}
\partial_{\lambda\lambda} \partial_t F_2(\lambda_{2p},0)\!=\!
Q\{2\partial_\lambda\partial_{ff}G(\lambda_{2p},0)[x_0,\!\tilde{x}]
\!+\!\partial_{ff}G(\lambda_{2p},0)[\partial_{\lambda\lambda}\partial_g\varphi(\lambda_{2p},0)x_0,x_0]
\!+\!\partial_{ff}G(\lambda_{2p},0)[\tilde{x},\tilde{x}]\}.
\end{align*}
\end{small}
	
(3) Formula for $\partial_\lambda \partial_{tt} F_2(\lambda_{2p},0)$.
\begin{small}
\begin{align*}
\partial_\lambda \partial_{tt} F_2(\lambda_{2p},0)\!=\!
\frac{1}{3}Q\partial_\lambda\partial_f G(\lambda_{2p},0)\partial_{ggg}\varphi(\lambda_{2p},0)[x_0,x_0,x_0]
\!+\!
Q\partial_{fff}G(\lambda_{2p},0)\left[\tilde{x},x_0,x_0\right].
\end{align*}
\end{small}

(4) Formula for $\partial_{ttt} F_2(\lambda_{2p},0)$.
\begin{eqnarray*}
\partial_{ttt} F_2(\lambda_{2p},0)\!=\!\frac{1}{4}Q\partial^4_{f}G(\lambda_{2p},0)\left[x_0,x_0,x_0,x_0\right]
\!+\!Q\partial_{ff}G(\lambda_{2p},0)\left\{\partial_{ggg}\varphi(\lambda_{2p},0)[x_0,x_0,x_0],x_0\right\}.
\end{eqnarray*}
\end{proposition}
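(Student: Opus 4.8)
The plan is to turn every third-order derivative of $F_2$ into an ordinary $t$-derivative of the reduced map and then prune a chain-rule expansion at the bifurcation point. Write $x_0:=x^1$, $\psi(\lambda,t):=tx_0+\varphi(\lambda,tx_0)$ and $\Phi(\lambda,t):=QG(\lambda,\psi(\lambda,t))$. Since the trivial branch solves \eqref{E:Reduced-p} we have $\varphi(\lambda,0)=0$, hence $QG(\lambda,0)=0$ and $\Phi(\lambda,0)\equiv 0$. The integrand in \eqref{E:F2} equals $t^{-1}\partial_s\big[QG(\lambda,stx_0+\varphi(\lambda,stx_0))\big]$, so integrating $s$ from $0$ to $1$ gives the exact identity $t\,F_2(\lambda,t;0)=\Phi(\lambda,t)$. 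Differentiating $t\,F_2=\Phi$ by $\partial_\lambda^l\partial_t^{m+1}$ and setting $t=0$ yields, for all $l+m=3$, the relation $\partial_\lambda^l\partial_t^m F_2(\lambda_{2p},0)=\tfrac{1}{m+1}\,\partial_\lambda^l\partial_t^{m+1}\Phi(\lambda_{2p},0)$, which already produces the prefactors $1,\tfrac12,\tfrac13,\tfrac14$ visible in (1)--(4). It therefore suffices to evaluate $\partial_\lambda^3\partial_t\Phi$, $\partial_\lambda^2\partial_t^2\Phi$, $\partial_\lambda\partial_t^3\Phi$ and $\partial_t^4\Phi$ at $(\lambda_{2p},0)$.

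First I would record the derivatives of the inner map $\psi$ at $(\lambda_{2p},0)$. By Lemma \ref{L:phi-1} and the case $a=0$ (where $\bar x=\partial_{gg}\varphi(\lambda_{2p},0)[x_0,x_0]=0$) one has $\psi=0$, $\partial_t\psi=x_0$, $\partial_\lambda\psi=\partial_{\lambda\lambda}\psi=0$, $\partial_t^2\psi=0$ and $\partial_\lambda\partial_t\psi=\tilde x$, while the surviving third-order pieces are $\partial_{\lambda\lambda}\partial_t\psi=\partial_{\lambda\lambda}\partial_g\varphi(\lambda_{2p},0)x_0$, $\partial_\lambda\partial_t^2\psi=\partial_\lambda\partial_{gg}\varphi(\lambda_{2p},0)[x_0,x_0]$ and $\partial_t^3\psi=\partial_{ggg}\varphi(\lambda_{2p},0)[x_0,x_0,x_0]$, all identified in Lemma \ref{phi2a=0} and Proposition \ref{d3phia=0}. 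With these in hand I would apply the Fa\`a di Bruno expansion to $\Phi=QG(\lambda,\psi)$, organising each term by how the derivatives split among the explicit $\lambda$-slot of $G$, the argument slot $\psi$, and the contracted copies of $\partial_t\psi$ together with its $t$-derivatives.

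The pruning rests on four facts. Because $\partial_\lambda\psi=\partial_{\lambda\lambda}\psi=0$ at the point, every term in which a $\lambda$-derivative descends into the argument slot dies. By Lemma \ref{lem1}, $\partial_\lambda^l\partial_f^k G(\lambda_{2p},0)=0$ whenever $l\ge2$ or $l=1,k\ge3$; hence at most one explicit $\lambda$-derivative may survive on $G$, and if it does it acts on at most $\partial_f^2 G$. Since the range of $\partial_f G(\lambda_{2p},0)$ lies in the complement $\mathcal Y$ of the co-kernel, one has $Q\partial_f G(\lambda_{2p},0)=0$, which kills every surviving term whose outer operator is a single $\partial_f G$; this is exactly what removes the $Q\partial_f G[\partial_t^{k}\psi]$ contributions in (1), (3), (4). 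Finally $\bar x=0$ and $\partial_\lambda\partial_{gg}\varphi(\lambda_{2p},0)[x_0,x_0]=0$ (Proposition \ref{d3phia=0}(3)) annihilate the remaining $\partial_t^2\psi$-type terms. Collecting the survivors with their multinomial weights reproduces (1)--(4); for example the only term in $\partial_\lambda^3\partial_t\Phi$ escaping both the range relation and Lemma \ref{lem1} is $3\,Q\partial_\lambda\partial_f G[\partial_{\lambda\lambda}\partial_g\varphi\,x_0]$, and in $\partial_t^4\Phi$ the survivors are $Q\partial_f^4 G[x_0,x_0,x_0,x_0]$ and $4\,Q\partial_f^2 G[\partial_{ggg}\varphi[x_0,x_0,x_0],x_0]$.

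The main obstacle is bookkeeping rather than analysis: one must keep the multinomial coefficients honest through a long expansion, and be careful that the non-transversality relation \eqref{E:Non-tra} annihilates $Q\partial_\lambda\partial_f G$ only on kernel directions. The vectors $\tilde x$, $\partial_{\lambda\lambda}\partial_g\varphi\,x_0$ and $\partial_{ggg}\varphi[x_0,x_0,x_0]$ are all multiples of $(1,0)^{t}\overline w$, which is not a kernel vector, so the terms $Q\partial_\lambda\partial_f G[\partial_{\lambda\lambda}\partial_g\varphi\,x_0]$ in (1) and $\tfrac13 Q\partial_\lambda\partial_f G[\partial_{ggg}\varphi[x_0,x_0,x_0]]$ in (3) must be retained; conflating them with $x_0\in{\rm Ker}\,\mathcal L_{\lambda_{2p},b_{2p}}$ and erroneously dropping them is the most likely pitfall. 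Once the surviving terms are isolated, the explicit values follow by feeding in the $\varphi$-derivatives of Proposition \ref{d3phia=0} and the multilinear forms of $G$ from Lemma \ref{lem2}.
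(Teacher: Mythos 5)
Your proposal is correct and follows essentially the same route the paper intends (the paper offers no detailed proof beyond ``direct calculation with \eqref{E:Reduced-p} using Lemma \ref{lem1}''): the identity $t\,F_2(\lambda,t;0)=QG(\lambda,tx_0+\varphi(\lambda,tx_0))$ yielding the $\tfrac{1}{m+1}$ prefactors, followed by a chain-rule expansion pruned via $\partial_\lambda\varphi=\partial_{\lambda\lambda}\varphi=\partial_{\lambda\lambda\lambda}\varphi=0$, $\bar x=\partial_\lambda\partial_{gg}\varphi[x_0,x_0]=0$, Lemma \ref{lem1}, and $Q\,\partial_fG(\lambda_{2p},0)=0$, is exactly the computation behind the stated formulae, and your warning about not conflating the non-kernel vectors $\tilde x$, $\partial_{\lambda\lambda}\partial_g\varphi\,x_0$, $\partial_{ggg}\varphi[x_0,x_0,x_0]$ with kernel directions in \eqref{E:Non-tra} is apt.
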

Now we are in the position to calculate  third-order derivatives of $F_2$.
\begin{proposition}\label{P:d3F2a=0}
	Recall $y_1=\mathbb{W}_2$ and $y_2 =-\mathbb{W}_{2p}$. The following assertions hold true.
	
	(1) For $\partial_{\lambda\lambda\lambda} F_2(\lambda_{2p},0)$:
	\begin{eqnarray*}
		\partial_{\lambda\lambda\lambda} F_2(\lambda_{2p},0)
		=
		Q
		\left[24b^{-3}
		\left(
		\begin{array}{cc}
			1\\
			0
		\end{array}
		\right)e_2
		\right]=
		12\sqrt{2}b^{-3} y_1.
	\end{eqnarray*}

	(2) For $\partial_{\lambda\lambda} \partial_t F_2(\lambda_{2p},0)$, if $p\ne2$,
	\begin{eqnarray*}
		\partial_{\lambda\lambda} \partial_t F_2(\lambda_{2p},0)=0,
	\end{eqnarray*}
	and if $p=2$,
	\begin{eqnarray*}
		\partial_{\lambda\lambda} \partial_t F_2(\lambda_{2p},0)
		=Q\left[
		16
		\left(
		\begin{array}{cc}
			1\\
			0
		\end{array}
		\right) e_{4}\right]
		=
		-8\sqrt{2}y_2.
	\end{eqnarray*}

	(3) For $\partial_\lambda \partial_{tt} F_2(\lambda_{2p},0)$,
	\begin{eqnarray*}
		\partial_\lambda \partial_{tt} F_2(\lambda_{2p},0)=Q
		\left[
		\left(
		\begin{array}{cc}
			2b^{-3}+8b^{-1}-14b\\
			-4b^{-1}
		\end{array}
		\right)
		e_2\right]=
		\sqrt{2}(b^{-3}+6b^{-1}-7b) y_1.
	\end{eqnarray*}

	(4) For $\partial_{ttt} F_2(\lambda_{2p},0)$,
	\begin{eqnarray*}
		\partial_{ttt} F_2(\lambda_{2p},0)=0.
	\end{eqnarray*}
	
\end{proposition}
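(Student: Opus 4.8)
The plan is to take each of the four derivatives straight from the formal identities already recorded in Proposition~\ref{acformal2a=0}, which write $\partial_{\lambda\lambda\lambda}F_2$, $\partial_{\lambda\lambda}\partial_t F_2$, $\partial_\lambda\partial_{tt}F_2$ and $\partial_{ttt}F_2$ at $(\lambda_{2p},0)$ as finite combinations of derivatives of $G$ contracted against $x_0$ and $\tilde{x}$, together with the derivatives of the implicit map $\varphi$. The whole argument is then substitution followed by projection. First I would insert the values of the $\varphi$-derivatives furnished by Proposition~\ref{d3phia=0}, namely $\partial_{\lambda\lambda\lambda}\varphi=0$, $\partial_{\lambda\lambda}\partial_g\varphi\,x_0=4b^{-3}\binom{1}{0}\overline{w}$, $\partial_\lambda\partial_{gg}\varphi[x_0,x_0]=0$ and $\partial_{ggg}\varphi[x_0,x_0,x_0]=3(b^{-3}-b)\binom{1}{0}\overline{w}$, recalling that at $a=0$ one has $x_0=\binom{b}{1}\overline{w}$, $\bar{x}=0$ and $\tilde{x}=2b^{-1}\binom{1}{0}\overline{w}$.

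The next step evaluates the $G$-derivatives through Lemma~\ref{lem1} (for $\partial_\lambda\partial_f G$ and $\partial_\lambda\partial_{ff}G$) and Lemma~\ref{lem2} (for $\partial_{ff}G$, $\partial_{fff}G$ and $\partial^4_f G$ on inputs in the block $\overline{w}$); Lemma~\ref{lem3} is not needed here because every argument lies in the lowest Fourier mode $\overline{w}$ (the index $n=1$). Consequently all outputs are supported only on the modes $e_2,e_4,e_6,e_8$, and the projection $Q=Q_1\oplus Q_2$ then extracts exactly the $e_2$ part (feeding $y_1$) and the $e_{2p}$ part (feeding $y_2$), annihilating every other mode. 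This is precisely the mechanism behind the parameter dependence: a term carried by $e_{2k}$ can reach $y_2$ only when $2k=2p$. Thus part~(1) follows at once by pushing $4b^{-3}\binom{1}{0}\overline{w}$ through the Fourier multiplier of Lemma~\ref{lem1} and reading off the $e_2$ coefficient.

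For the remaining parts the decisive inputs are several cancellations forced by $x_0$ being a kernel vector. The factor $\beta_1-b\beta_2$ in Lemma~\ref{lem2} vanishes whenever one slot of $\partial_{ff}G$ equals $x_0$, so $\partial_{ff}G[\,\cdot\,,x_0]$ carries no top mode; likewise I expect the higher structure constants, namely the coefficient $c_5$ of $e_6$ in $\partial_{fff}G[\tilde{x},x_0,x_0]$ and the coefficients $c_6,c_7$ of $e_4,e_8$ in $\partial^4_f G[x_0,x_0,x_0,x_0]$, to collapse to zero once their monomials are summed. Granting this, part~(3) keeps only its $e_2$ content and lands in $y_1$; part~(4) loses both its $e_4$ and $e_8$ content and vanishes for every $p$; and part~(2) reduces to $Q\,\partial_{ff}G[\tilde{x},\tilde{x}]=Q\bigl[16\binom{1}{0}e_4\bigr]$, since $\partial_\lambda\partial_{ff}G[x_0,\tilde{x}]=0$ (at $n=1$ the product $w^2\overline{w}^2$ is real, so its imaginary part is zero) and $\partial_{ff}G[\partial_{\lambda\lambda}\partial_g\varphi\,x_0,x_0]=0$ by the kernel condition. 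This surviving $e_4$ term is killed by $Q$ for $p\ge 3$, but when $p=2$ it coincides with the $e_{2p}$ block and projects to $-8\sqrt{2}\,y_2$.

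I anticipate the main obstacle to be bookkeeping rather than conceptual. Verifying $c_5=c_6=c_7=0$ amounts to expanding the explicit polynomials of Lemma~\ref{lem2} at $\alpha=\beta=\gamma=\sigma=(b,1)$ (with one slot replaced by $\tilde{x}=(2b^{-1},0)$ for $c_5$) and checking that the coefficients of equal powers of $b$ cancel exactly; each such constant runs over up to sixteen monomials, so sign discipline is essential. The secondary subtlety is keeping track, case by case in $p$, of whether a given even mode $e_{2k}$ matches $e_{2p}$, which is exactly what isolates the single nonzero entry at $p=2$ in part~(2).
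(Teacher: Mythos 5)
Your proposal is correct and follows essentially the same route as the paper: substitute the $\varphi$-derivatives from Proposition \ref{d3phia=0} into the formal identities of Proposition \ref{acformal2a=0}, evaluate the $G$-derivatives via Lemmas \ref{lem1} and \ref{lem2}, and let $Q$ pick out the $e_2$ and $e_{2p}$ modes. The cancellations you flag as needing verification do all hold — the factor $\alpha_1-b\alpha_2$ kills every $\partial_{ff}G$ term with an $x_0$ slot, and a direct check gives $c_5=c_6=c_7=0$ for the relevant arguments — which is exactly what the paper's computation confirms.
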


\begin{proof}
	(1) From Proposition \ref{d3phia=0}, we know
$
		\partial_{\lambda\lambda}\partial_g \varphi(\lambda_{2p},0)x_0
		=4b^{-3}
		\left(
		\begin{array}{cc}
			1\\
			0
		\end{array}
\right) \overline{w}.
$
So by Lemma \ref{lem1}, we have
	\begin{eqnarray*}
		\partial_{\lambda\lambda\lambda} F_2(\lambda_{2p},0)=3Q\partial_{\lambda}\partial_fG(\lambda_{2p},0)\partial_{\lambda\lambda}\partial_g\varphi(\lambda_{2p},0)x_0
		=
		Q\left[
        24b^{-3}
		\left(
		\begin{array}{cc}
			1\\
			0
		\end{array}
		\right)e_2\right]
=
		12\sqrt{2}b^{-3} y_1.
	\end{eqnarray*}

(2) From Proposition \ref{acformal2a=0}, we know
{\begin{small}
\begin{align*}
\partial_{\lambda\lambda} \partial_t F_2(\lambda_{2p},0)\!=\!
Q\{2\partial_\lambda\partial_{ff}G(\lambda_{2p},0)[x_0,\!\tilde{x}]
\!+\!\partial_{ff}G(\lambda_{2p},0)[\partial_{\lambda\lambda}\partial_g\varphi(\lambda_{2p},0)x_0,x_0]
\!+\!\partial_{ff}G(\lambda_{2p},0)[\tilde{x},\tilde{x}]\}.
\end{align*}
\end{small}}
Recall that
	$x_0=\left(
	\begin{array}{cc}
		b\\
		1
	\end{array}
	\right)
	\overline{w}$
	and
	$
	\tilde{x}
	=2b^{-1}
	\left(
	\begin{array}{cc}
		1\\
		0
	\end{array}
	\right)\overline{w}.
	$
So by Lemma \ref{lem1}, it is easy to see that $2\partial_\lambda\partial_{ff}G(\lambda_{2p},0)[x_0,\!\tilde{x}]=0$. From above, we know that
$
\partial_{\lambda\lambda}\partial_g \varphi(\lambda_{2p},0)x_0
=4b^{-3}
\left(
\begin{array}{cc}
1\\
0
\end{array}
\right)\overline{w},
$
then using Lemma \ref{lem2}, we can compute $\partial_{ff}G(\lambda_{2p},0)[\partial_{\lambda\lambda}\partial_g\varphi(\lambda_{2p},0)x_0,x_0]=0$ directly. Similarly, by Lemma \ref{lem2} we get
$\partial_{ff}G(\lambda_{2p},0)[\tilde{x},\tilde{x}]
		=16
		\left(
		\begin{array}{cc}
			1\\
			0
		\end{array}
		\right)
		e_4
$ and hence
\begin{align*}
\partial_{\lambda\lambda} \partial_t F_2(\lambda_{2p},0)
=
16Q\left[
\left(
\begin{array}{cc}
1\\
0
\end{array}
\right)
e_4
\right]
=\left\{
\begin{array}{cc}
0,&p\ne2,\\
- 8\sqrt{2} y_2,& p=2.
\end{array}
\right.
\end{align*}

(3) From Proposition \ref{acformal2a=0}, we know
\begin{small}
\begin{align*}
\partial_\lambda \partial_{tt} F_2(\lambda_{2p},0)\!=\!
\frac{1}{3}Q\partial_\lambda\partial_f G(\lambda_{2p},0)\partial_{ggg}\varphi(\lambda_{2p},0)[x_0,x_0,x_0]
\!+\!
Q\partial_{fff}G(\lambda_{2p},0)\left[\tilde{x},x_0,x_0\right].
\end{align*}
\end{small}
Recall that
	$
	\partial_{ggg} \varphi(\lambda_{2p},0)[x_0,x_0,x_0]
	=
	3(b^{-3}-b)
	\left(
	\begin{array}{cc}
		1\\
		0
	\end{array}
	\right)
	\overline{w}.
	$
	From Lemma \ref{lem1}, we have
	\begin{eqnarray}\label{ltt2}
		\frac{1}{3}Q\partial_\lambda\partial_f G(\lambda_{2p},0)\partial_{ggg}\varphi(\lambda_{2p},0)[x_0,x_0,x_0]
		=
		2(b^{-3}-b)Q
		\left[
		\left(
		\begin{array}{cc}
			1\\
			0
		\end{array}
		\right)e_2\right].
	\end{eqnarray}
Recall that
	$x_0=\left(
	\begin{array}{cc}
		b\\
		1
	\end{array}
	\right)
	\overline{w}$
	and
	$
	\tilde{x}
	=2b^{-1}
	\left(
	\begin{array}{cc}
		1\\
		0
	\end{array}
	\right)\overline{w}.
	$
Then by Lemma \ref{lem2},
\begin{eqnarray}\label{ltt3}
		Q\partial_{fff}G(\lambda_{2p},0)\left[\partial_\lambda\partial_{g}\varphi(\lambda_{2p},0)x_0,x_0,x_0\right]
		=Q
		\left[
		\left(
		\begin{array}{cc}
			8b^{-1}-12 b\\
			-4b^{-1}
		\end{array}
		\right)
		e_2\right].
	\end{eqnarray}
Combining \eqref{ltt2} and \eqref{ltt3}, we have
	\begin{align*}
		\partial_\lambda \partial_{tt} F_2(\lambda_{2p},0)=
		Q
		\left[
		\left(
		\begin{array}{cc}
			2b^{-3}+8b^{-1}-14b\\
			-4b^{-1}
		\end{array}
		\right)
		e_2\right]
		=
		\sqrt{2}(b^{-3}+6b^{-1}-7b) y_1.
	\end{align*}

	(4) From Proposition \ref{acformal2a=0}, we get
\begin{eqnarray*}
\partial_{ttt} F_2(\lambda_{2p},0)\!=\!\frac{1}{4}Q\partial^4_{f}G(\lambda_{2p},0)\left[x_0,x_0,x_0,x_0\right]
\!+\!Q\partial_{ff}G(\lambda_{2p},0)\left\{\partial_{ggg}\varphi(\lambda_{2p},0)[x_0,x_0,x_0],x_0\right\}\!.
\end{eqnarray*}
Since $x_0\!=\!\left(
	\begin{array}{cc}
		b\\
		1
	\end{array}
	\right)
	\overline{w}$ and
and
$
		\partial_{ggg} \varphi(\lambda_{2p},0)[x_0,x_0,x_0]
		\!=\!
		3(b^{-3}-b)
		\left(
		\begin{array}{cc}
			1\\
			0
		\end{array}
		\right)
		\overline{w},
$, then using Lemma \ref{lem2}, we have
$\frac{1}{4}\partial^4_{f}G(\lambda_{2p},0)\left[x_0,x_0,x_0,x_0\right]\!=\!0$ and $\partial_{ff}G(\lambda_{2p},0)\!\left\{\partial_{ggg}\varphi(\lambda_{2p},0)[x_0,x_0,x_0],x_0\right\}\!=\!0$.
So we have $\partial_{ttt} F_2(\lambda_{2p},0)=0$ and finish the proof.
\end{proof}

\subsection{The forth-order derivatives} \label{S:Fourth}
In this section, we only consider $p\ge 3$.

\subsubsection{The forth-order derivatives of $\varphi$.}\label{subS:forth}
By the similar methods to prove Lemma \ref{phi2a=0}, we have

\begin{lemma}\label{phi3a=0}
With $G$ defined in \eqref{E:Non}, we have
\begin{eqnarray*}
\partial_{\lambda\lambda\lambda\lambda}\varphi(\lambda_{2p},0)=0,
\end{eqnarray*}
\begin{align*}
\partial_{\lambda\lambda\lambda}\partial_g\varphi(\lambda_{2p},0)x_0
&=-3[\partial_f G(\lambda_{2p},0)]^{-1}({\rm{Id}}-Q)
\partial_{\lambda}\partial_fG(\lambda_{2p},0)\partial_{\lambda\lambda}\partial_g\varphi(\lambda_{2p},0)x_0\\
&=-[\partial_f G(\lambda_{2p},0)]^{-1}({\rm {Id}}-Q)Q^{-1}\partial_{\lambda\lambda\lambda} F_2(\lambda_{2p},0),
\end{align*}
\begin{align*}
\partial_{\lambda\lambda}\partial_{gg}\varphi(\lambda_{2p},0)[x_0,x_0]&=
-[\partial_f G(\lambda_{2p},0)]^{-1}({\rm {Id}}-Q)\big\{
4\partial_\lambda\partial_{ff}G(\lambda_{2p},0)[x_0,\tilde{x}]
\\
&+\!2\partial_{ff}G(\lambda_{2p},0)[\partial_{\lambda\lambda}\partial_g\varphi(\lambda_{2p},0)x_0,x_0]
+2\partial_{ff}G(\lambda_{2p},0)[\tilde{x},\tilde{x}]\big\}\\
&=-2[\partial_f G(\lambda_{2p},0)]^{-1}({\rm {Id}}-Q)Q^{-1}\partial_{\lambda\lambda} \partial_t F_2(\lambda_{2p},0),
\end{align*}
\begin{align*}
&\partial_{\lambda}\partial_{ggg}\varphi(\lambda_{2p},0)[x_0,x_0]\\
=&\!-\![\partial_f G(\lambda_{2p},0)]^{-1}({\rm {Id}}-Q)\big\{\partial_\lambda\partial_f G(\lambda_{2p},0)\partial_{ggg}\varphi(\lambda_{2p},0)[x_0,x_0,x_0]\!+\!
3\partial_{fff}G(\lambda_{2p},0)\left[\tilde{x},x_0,x_0\right]\big\}\\
=&-3[\partial_f G(\lambda_{2p},0)]^{-1}({\rm {Id}}-Q)Q^{-1}\partial_\lambda \partial_{tt} F_2(\lambda_{2p},0)
\end{align*}
and
\begin{align*}
\partial_{gggg}\varphi(\lambda_{2p},0)[x_0,x_0,x_0]=&
\!-[\partial_f G(\lambda_{2p},0)]^{-1}({\rm{Id}}-Q)
\big\{\partial_{ffff}G(\lambda_{2p},0)\left[x_0,x_0,x_0,x_0\right]\\
&+\!4\partial_{ff}G(\lambda_{2p},0)\left\{\partial_{ggg}\varphi(\lambda_{2p},0)[x_0,x_0,x_0],x_0\right\}\!\!\big\}\!\\
=&\!-4[\partial_f G(\lambda_{2p},0)]^{-1}({\rm{Id}}-Q)Q^{-1}\partial_{ttt}F_2(\lambda_{2p},0).
\end{align*}
\end{lemma}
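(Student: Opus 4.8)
The plan is to run the very same differentiation scheme that produced Lemma \ref{phi2a=0}, now one order higher. Starting from the reduced identity \eqref{F1phia=0}, namely $(\mathrm{Id}-Q)G(\lambda,tx_0+\varphi(\lambda,tx_0))=0$ for all $(\lambda,t)$ near $(\lambda_{2p},0)$, I would apply each of the five fourth-order operators $\partial_\lambda^4$, $\partial_\lambda^3\partial_t$, $\partial_\lambda^2\partial_t^2$, $\partial_\lambda\partial_t^3$, $\partial_t^4$ and evaluate at $(\lambda_{2p},0)$. Since the inner argument is $\psi(\lambda,t)=tx_0+\varphi(\lambda,tx_0)$, each differentiation in $t$ brings down a factor $x_0$ and raises the order of a $g$-derivative of $\varphi$; thus $\partial_\lambda^\ell\partial_t^m$ with $\ell+m=4$ produces, as its single top-order contribution, the term $\partial_f G(\lambda_{2p},0)\,\partial_\lambda^\ell\partial_g^m\varphi(\lambda_{2p},0)[x_0^{\otimes m}]$, while every remaining term is a lower-order collision term in which $G$ is differentiated in $f$ at most three times and hits only lower-order derivatives of $\varphi$.

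Next I would discard the terms known to vanish: by Lemma \ref{L:phi-1} and Lemma \ref{phi2a=0} one has $\partial_\lambda\varphi=\partial_{\lambda\lambda}\varphi=\partial_{\lambda\lambda\lambda}\varphi=0$ and $\bar x=\partial_{gg}\varphi[x_0,x_0]=0$ at $a=0$; by Lemma \ref{lem1} every factor $\partial_\lambda^\ell\partial_f^k G$ with $\ell\ge 2$, or with $\ell=1,\,k\ge3$, vanishes, as do $\partial_\lambda^4 G$ and $\partial_{\lambda\lambda}\partial_f G$; and $Q\partial_f G(\lambda_{2p},0)=0$. Because the range of $\partial_f G(\lambda_{2p},0)$ restricted to $\mathcal X$ is exactly $\mathcal Y$, one has $(\mathrm{Id}-Q)\partial_f G=\partial_f G$ there, so applying $(\mathrm{Id}-Q)$ leaves the clean equation $\partial_f G(\lambda_{2p},0)\,\partial_\lambda^\ell\partial_g^m\varphi[x_0^{\otimes m}]=-(\mathrm{Id}-Q)W_{\ell,m}$, where $W_{\ell,m}$ is the surviving collision expression. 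Since $(\mathrm{Id}-Q)\partial_f G(\lambda_{2p},0):\mathcal X\to\mathcal Y$ is invertible with inverse described by \eqref{iG}, this determines the fourth-order derivative. For $(\ell,m)=(4,0)$ every collision term carries either a factor $\partial_\lambda^{\ge2}G$ or a vanishing $\lambda$-derivative of $\varphi$, so $W_{4,0}=0$ and $\partial_{\lambda\lambda\lambda\lambda}\varphi=0$ follows at once.

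To obtain the second, $F_2$-flavoured equality on each line I would use that $QG(\lambda,tx_0+\varphi(\lambda,tx_0))=tF_2(\lambda,t;0)$ (a consequence of \eqref{E:F2} together with $G(\lambda,0)=0$ and $\varphi(\lambda,0)=0$). Applying $Q$ rather than $\mathrm{Id}-Q$ to the very same differentiated identity then yields $Q\,W_{\ell,m}=m\,\partial_\lambda^\ell\partial_t^{m-1}F_2(\lambda_{2p},0)$, which is precisely Proposition \ref{acformal2a=0} read backwards. The structural observation I must verify, term by term through Lemmas \ref{lem1}, \ref{lem2}, \ref{lem3} and the explicit third-order derivatives of $\varphi$ in Proposition \ref{d3phia=0}, is that $W_{\ell,m}$ lies in the two-dimensional span of $\binom{1}{0}e_2$ and $\binom{1}{0}e_{2p}$, on which $\mathrm{Id}-Q$ and $Q$ are linked by the partial inverse $Q^{-1}$ (for instance $Q^{-1}y_1=\sqrt2\,\binom{1}{0}e_2$). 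Granting this, $(\mathrm{Id}-Q)W_{\ell,m}=(\mathrm{Id}-Q)Q^{-1}\big(Q\,W_{\ell,m}\big)=m\,(\mathrm{Id}-Q)Q^{-1}\partial_\lambda^\ell\partial_t^{m-1}F_2(\lambda_{2p},0)$, which turns each first equality into the stated second one with the correct multiplicity $m$.

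The main obstacle is bookkeeping rather than conceptual: one must correctly expand the fourth multilinear Fa\`a di Bruno sum with its binomial coefficients, confirm after the cancellations that $W_{\ell,m}$ genuinely contains no Fourier mode $e_{2n}$ with $n\neq 1,p$ (so that the $Q^{-1}$ rewriting is legitimate and no $\mathcal X$-component is silently dropped), and evaluate the residual one-dimensional integrals through the identities \eqref{E:01}--\eqref{E:3}. These are routine extensions of the estimates already carried out for Lemma \ref{phi2a=0}, which is exactly why the statement may be justified by the \emph{same} methods.
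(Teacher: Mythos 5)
Your proposal is correct and follows essentially the same route as the paper, which proves Lemma \ref{phi3a=0} by exactly the scheme you describe: differentiating $({\rm Id}-Q)G(\lambda,tx_0+\varphi(\lambda,tx_0))=0$ with the five fourth-order operators, discarding the terms killed by Lemma \ref{L:phi-1}, Lemma \ref{phi2a=0} and Lemma \ref{lem1}, and inverting $({\rm Id}-Q)\partial_f G(\lambda_{2p},0)$, with the $F_2$-flavoured second equalities obtained by matching the surviving collision terms against Proposition \ref{acformal2a=0} through $QG(\lambda,tx_0+\varphi(\lambda,tx_0))=tF_2(\lambda,t;0)$. Your multiplicities $m$ in the factors $-m[\partial_f G]^{-1}({\rm Id}-Q)Q^{-1}\partial_\lambda^\ell\partial_t^{m-1}F_2$ agree with the statement, and your caveat about verifying that each collision expression carries only the intended Fourier modes is precisely the bookkeeping the paper performs in Section \ref{subS:forth}.
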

So with the proof of Proposition \ref{P:d3F2a=0}, which is the calculations of the third-order derivatives of $F_2$, it is easy to see that
\begin{align*}
\partial_{\lambda\lambda\lambda}\partial_g\varphi(\lambda_{2p},0)x_0
&=-[\partial_f G(\lambda_{2p},0)]^{-1}({\rm {Id}}-Q)
\left[24b^{-3}
		\left(
		\begin{array}{cc}
			1\\
			0
		\end{array}
		\right)e_2
		\right]
\\
&=
-12b^{-3}[\partial_f G(\lambda_{2p},0)]^{-1}
\left[
		\left(
		\begin{array}{cc}
			1\\
			1
		\end{array}
		\right)e_2
		\right]
=12b^{-5}
\left(
		\begin{array}{cc}
			1\\
			0
		\end{array}
		\right)\overline{w}.
\end{align*}
\begin{align*}
\partial_{\lambda\lambda}\partial_{gg}\varphi(\lambda_{2p},0)[x_0,x_0]
=&-[\partial_f G(\lambda_{2p},0)]^{-1}({\rm {Id}}-Q)
\left[
		32
		\left(
		\begin{array}{cc}
			1\\
			0
		\end{array}
		\right) e_{4}\right]\\
%=&-[\partial_f G(\lambda_{2p},0)]^{-1}
%\left[
%		32
%		\left(
%		\begin{array}{cc}
%			1\\
%			0
%		\end{array}
%		\right) e_{4}\right]\\
=&-32M_4^{-1}(\lambda_{2p})
		\left(
		\begin{array}{cc}
			1\\
			0
		\end{array}
		\right) \overline{w}^3
%=&-32
%\left(
%\begin{array}{cc}
%1-2b^2&b^5\\
%-b^4& 2b^3-b
%\end{array}
%\right)^{-1}
%\left(
%\begin{array}{cc}
%1\\
%			0
%\end{array}
%\right) \overline{w}^3\\
%=&-\frac{32}{b(b^2-1)^2(b^4+2b^2-1)}
%\left(
%\begin{array}{cc}
%2b^3-b\\
%b^4
%\end{array}
%\right) \overline{w}^3\\
\triangleq
\left(
\begin{array}{cc}
\hat{\alpha}_1\\
\hat{\alpha}_2
\end{array}
\right) \overline{w}^3
,
\end{align*}
where
\[
\hat{\alpha}_1=-\frac{32(2b^3-b)}{b(b^2-1)^2(b^4+2b^2-1)},\,
\hat{\alpha}_2=-\frac{32b^4}{b(b^2-1)^2(b^4+2b^2-1)}.
\]

\begin{align*}
\partial_{\lambda}\partial_{ggg}\varphi(\lambda_{2p},0)[x_0,x_0]
=&-3[\partial_f G(\lambda_{2p},0)]^{-1}({\rm {Id}}-Q)
\left[
		\left(
		\begin{array}{cc}
			2b^{-3}+8b^{-1}-14b\\
			-4b^{-1}
		\end{array}
		\right)
		e_2\right]
\\=&
-3[\partial_f G(\lambda_{2p},0)]^{-1}
\left[
(b^{-3}+2b^{-1}-7b)
		\left(
		\begin{array}{cc}
			1\\
			1
		\end{array}
		\right)
		e_2\right]\\
=&
3(b^{-5}+2b^{-3}-7b^{-1})
		\left(
		\begin{array}{cc}
			1\\
			0
		\end{array}
		\right)
		\overline{w},
\end{align*}
and
\begin{align*}
\partial_{gggg}\varphi(\lambda_{2p},0)[x_0,x_0,x_0]=0.
\end{align*}

\subsubsection{The fourth-order derivatives of $F_2$.}
By directly calculations with \eqref{E:Reduced-p}, using Lemma \ref{lem1}, we have the following proposition.
\begin{proposition}\label{acformal4a=0}
Denote $\tilde{x}=\partial_{\lambda}\partial_{g}\varphi(\lambda_{2p},0)x_0$ and $\check{x}=\partial_{ggg}\varphi(\lambda_{2p},0)[x_0,x_0,x_0]$. The following assertions hold true.
	
(1) Formula for $\partial_{\lambda\lambda\lambda\lambda} F_2(\lambda_{2p},0)$.
\begin{eqnarray*}
\partial_{\lambda\lambda\lambda\lambda}F_2(\lambda_{2p},0)
=4Q\partial_{\lambda}\partial_fG(\lambda_{2p},0)\partial_{\lambda\lambda\lambda}\partial_g\varphi(\lambda_{2p},0)x_0.
\end{eqnarray*}

(2) Formula for $\partial_{\lambda\lambda\lambda}\partial_t F_2(\lambda_{2p},0)$.
\begin{align*}
\partial_{\lambda\lambda\lambda} \partial_{t} F_2(\lambda_{2p},0)=&
3Q\partial_\lambda\partial_{ff}G(\lambda_{2p},0)[\partial_{\lambda\lambda}\partial_{g}\varphi(\lambda_{2p},0)x_0,x_0]
\!+\!3Q\partial_\lambda\partial_{ff}G(\lambda_{2p},0)[\tilde{x},\tilde{x}]\\
+&\frac{3}{2}Q\partial_\lambda\partial_{f}G(\lambda_{2p},0)\partial_{\lambda\lambda}\partial_{gg}\varphi(\lambda_{2p},0)[x_0,x_0]
\!+\!3\partial_{ff}G(\lambda_{2p},0)[\partial_{\lambda\lambda}\partial_{g}\varphi(\lambda_{2p},0)x_0,\tilde{x}]\\
+&\partial_{ff}G(\lambda_{2p},0)[\partial_{\lambda\lambda\lambda}\partial_{g}\varphi(\lambda_{2p},0)x_0,x_0]
\end{align*}

(3) Formula for $\partial_{\lambda\lambda} \partial_{tt} F_2(\lambda_{2p},0)$.
\begin{align*}
\partial_{\lambda\lambda} \partial_{tt} F_2(\lambda_{2p},0)=&
\frac{2}{3}Q\partial_\lambda\partial_f G(\lambda_{2p},0)\partial_{\lambda}\partial_{ggg}\varphi(\lambda_{2p},0)[x_0,x_0,x_0]\\
+&
2Q\partial_{fff}G(\lambda_{2p},0)\left[\tilde{x},\tilde{x},x_0\right]
+Q\partial_{fff}G(\lambda_{2p},0)\left[\partial_{\lambda\lambda}\partial_{g}\varphi(\lambda_{2p},0)x_0,x_0,x_0\right]\\
+&Q\partial_{ff}G(\lambda_{2p},0)\left\{\partial_{\lambda\lambda}\partial_{gg}\varphi(\lambda_{2p},0)[x_0,x_0],x_0\right\}.
\end{align*}

(4) Formula for $\partial_{\lambda} \partial_{ttt} F_2(\lambda_{2p},0)$.
\begin{align*}
\partial_{\lambda} \partial_{ttt} F_2(\lambda_{2p},0)=&
Q\partial_\lambda\partial_{ff} G(\lambda_{2p},0)[\check{x},x_0]
+
Q\partial_{ffff}G(\lambda_{2p},0)\left[\tilde{x},x_0,x_0,x_0\right]\\
+&Q\partial_{ff}G(\lambda_{2p},0)[\check{x},\tilde{x}]
+Q\partial_{ff}G(\lambda_{2p},0)\left\{\partial_{\lambda}\partial_{ggg}\varphi(\lambda_{2p},0)[x_0,x_0,x_0],x_0\right\}.
\end{align*}

(5) Formula for $ \partial_{tttt} F_2(\lambda_{2p},0)$.
\begin{align*}
 \partial_{tttt} F_2(\lambda_{2p},0)=&
\frac{1}{5}Q\partial_{fffff}G(\lambda_{2p},0)[x_0,x_0,x_0,x_0,x_0]
+2Q\partial_{fff} G(\lambda_{2p},0)[\check{x},x_0,x_0].
\end{align*}
\end{proposition}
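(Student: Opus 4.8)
The plan is to read (1)--(5) as formal identities produced by differentiating the blow-up definition \eqref{E:F2} of $F_2$ and then pruning the expansion with the vanishing facts already in hand, in exactly the spirit of Proposition \ref{acformal} and Proposition \ref{acformal2a=0}. First I would record the reformulation of the reduced functional at $a=0$. Writing $\psi(\lambda,t)=tx_0+\varphi(\lambda,tx_0)$ and $\Phi(\lambda,t)=QG(\lambda,\psi(\lambda,t))$, the Lyapunov--Schmidt identity \eqref{E:LS-red} gives $({\rm Id}-Q)G(\lambda,\psi)=0$, so $\Phi=G(\lambda,\psi)$; since the annulus is a solution for every $\lambda$ we have $\varphi(\lambda,0)=0$ and hence $\Phi(\lambda,0)\equiv 0$. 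Therefore $F_2(\lambda,t;0)=t^{-1}\Phi(\lambda,t)$, and a Taylor expansion in $t$ yields
\[
\partial_\lambda^i\partial_t^j F_2(\lambda_{2p},0)=\frac{1}{j+1}\,\partial_\lambda^i\partial_t^{\,j+1}\Phi(\lambda_{2p},0),\qquad i+j=4 .
\]
This identity already explains the prefactors $4,3,\tfrac32,2,\tfrac15$ in (1)--(5): each is a binomial/multinomial weight from the Leibniz expansion multiplied by $1/(j+1)$.

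Next I would expand $\partial_\lambda^i\partial_t^{j+1}\Phi$ by the multivariate Leibniz and Fa\`a di Bruno rules applied to the composition $QG(\lambda,\psi)$, in which $G$ depends on $\lambda$ both explicitly and through $\psi$. Four structural facts do almost all of the pruning. (i) By Lemma \ref{L:phi-1}, $\partial_\lambda\varphi=\partial_g\varphi=\partial_{\lambda\lambda}\varphi=0$ and $\psi(\lambda_{2p},0)=0$, so every inner $t$-derivative $\partial_t\psi|_0=x_0$, while the even ones $\partial_t^2\psi|_0=\bar x=0$ and $\partial_t^4\psi|_0=\partial_{gggg}\varphi[x_0,x_0,x_0]=0$ vanish at $a=0$ (Lemma \ref{phi3a=0}); the only surviving higher inner derivatives are $\tilde x=\partial_\lambda\partial_g\varphi\,x_0$, $\check x=\partial_{ggg}\varphi[x_0,x_0,x_0]$ and the mixed ones furnished by Lemmas \ref{phi2a=0}--\ref{phi3a=0}. (ii) By Lemma \ref{lem1}, $\partial_\lambda^l\partial_f^kG(\lambda_{2p},0)=0$ for $l\ge2$ and for $l=1,k\ge3$; in particular $\partial_{\lambda\lambda\lambda}G=\partial_{\lambda\lambda}\partial_fG=0$, so only the explicit orders $l=0,1$ in $\lambda$ contribute. (iii) The range identity $Q\partial_fG(\lambda_{2p},0)=0$ holds because ${\rm Ran}\,\partial_fG(\lambda_{2p},0)=\mathcal{Y}=\ker Q$; this annihilates every term whose outermost factor is $\partial_fG(\lambda_{2p},0)$ contracted with a single vector. (iv) Consequently the ``top'' term, in which all derivatives fall on $\varphi$, does not enter $F_2$ directly but instead fixes the next $\varphi$-derivative through \eqref{iG} --- which is precisely the companion recursion in Lemma \ref{phi3a=0}.

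With these reductions I would carry out the five bidegrees $i+j=4$ in turn. For each I distribute the $\lambda$-derivatives over the product $\partial_fG(\lambda,\psi)\,\partial_t\psi$ via Leibniz, push the $t$-derivatives through $\psi$ by Fa\`a di Bruno (only odd-order blocks survive, by (i)), evaluate the inner derivatives of $\psi$ at $(\lambda_{2p},0)$ using Proposition \ref{d3phia=0}, and finally discard the $Q\partial_fG(\lambda_{2p},0)$ pieces. What remains is exactly the short list of surviving contractions displayed in (1)--(5); for instance in (1) all four $\lambda$-derivatives but one build $\partial_{\lambda\lambda\lambda}\partial_g\varphi\,x_0$, the remaining one lands on the explicit slot to give $\partial_\lambda\partial_fG$, the $\binom41=4$ placements produce the coefficient $4$, and the all-on-$\psi$ term is killed by (iii); in (5) the partition of five $t$-derivatives into one $3$-block and two singletons gives $\binom53=10$ copies of $Q\partial_{fff}G[\check x,x_0,x_0]$, which after the factor $\tfrac15$ becomes $2Q\partial_{fff}G[\check x,x_0,x_0]$. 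I emphasise that these are purely formal identities: no contour integration is done here, the explicit scalars being deferred to the later substitution of Lemmas \ref{lem1}--\ref{lem3} and Proposition \ref{d3phia=0}.

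The main obstacle is therefore not a hard estimate but the combinatorial control of a fourth-order multivariate composite derivative: the raw Leibniz/Fa\`a di Bruno expansion of $\partial_\lambda^i\partial_t^{\,j+1}\Phi$ produces many terms, and the delicate point is to organise them so that every contribution annihilated by $Q$, by the vanishing $\varphi$-derivatives of Lemma \ref{L:phi-1}, or by the vanishing $G$-derivatives of Lemma \ref{lem1}, is removed cleanly while the stated coefficients survive intact. A secondary subtlety is the coefficient bookkeeping itself --- correctly isolating, say, the genuine $\partial_{\lambda\lambda\lambda}\partial_g\varphi$ contribution from lower-order ones and tracking the multinomial weights that combine with the $1/(j+1)$ factor --- since an error there would corrupt the higher-order Taylor expansion used in the bifurcation scheme of Section \ref{S:Scheme}.
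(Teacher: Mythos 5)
Your proposal is correct and follows essentially the same route as the paper, which justifies these identities only by ``direct calculations with \eqref{E:Reduced-p}, using Lemma \ref{lem1}'': differentiating $F_2(\lambda,t;0)=t^{-1}QG(\lambda,tx_0+\varphi(\lambda,tx_0))$ via Leibniz/Fa\`a di Bruno and pruning with $Q\partial_fG(\lambda_{2p},0)=0$, the vanishing derivatives of $\varphi$ from Lemma \ref{L:phi-1} and Lemmas \ref{phi2a=0}--\ref{phi3a=0}, and the vanishing $\partial_\lambda^l\partial_f^kG$ from Lemma \ref{lem1}; I checked that this expansion reproduces all five formulas with the stated coefficients. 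The only caveat is your parenthetical ``only odd-order blocks survive,'' which holds for pure $t$-derivatives but not after two $\lambda$-derivatives fall on $\partial_t^2\psi$ (the term $\partial_{\lambda\lambda}\partial_{gg}\varphi[x_0,x_0]$ does survive in (2) and (3)), a case your list of mixed survivors already covers.
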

	
We will only compute the $e_{2p,p\ge3}$ terms of the forth-derivatives of $F_2$ to verify our idea and use $*$ to represents the constants that we need not to compute in the explicit form.
\begin{proposition}\label{P:d4F2a=0}	
(1) For $\partial_{\lambda\lambda\lambda}\partial_t F_2(\lambda_{2p},0)$,
\begin{align*}
\partial_{\lambda\lambda\lambda\lambda}F_2(\lambda_{2p},0)
=96b^{-5}
Q\left[
\left(
		\begin{array}{cc}
			1\\
			0
		\end{array}
		\right)e_2\right].
\end{align*}

(2) For $\partial_{\lambda\lambda\lambda}\partial_t F_2(\lambda_{2p},0)$,
\begin{align*}
\partial_{\lambda\lambda\lambda} \partial_{t} F_2(\lambda_{2p},0)=Q[*e_4].
\end{align*}

(3) For $\partial_{\lambda\lambda} \partial_{tt} F_2(\lambda_{2p},0)$,
\[
\partial_{\lambda\lambda} \partial_{tt} F_2(\lambda_{2p},0)
=Q\left[*e_2
+\left(
\begin{array}{cc}
48b\\
0
\end{array}
\right)e_6\
\right].
\]

(4) For $\partial_{\lambda} \partial_{ttt} F_2(\lambda_{2p},0)$,
\begin{align*}
\partial_{\lambda} \partial_{ttt} F_2(\lambda_{2p},0)=Q[*e_4].
\end{align*}

(5) For $ \partial_{tttt} F_2(\lambda_{2p},0)$. ,
\begin{align*}
 \partial_{tttt} F_2(\lambda_{2p},0)=Q[*e_2].
\end{align*}

Moreover, when $p=3$, we have
\begin{align*}
&\partial_{\lambda\lambda\lambda\lambda}F_2(\lambda_{2p},0)=*y_1,\,\, \partial_{tttt} F_2(\lambda_{2p},0)=*y_1,
\\
&\partial_{\lambda\lambda\lambda}\partial_t F_2(\lambda_{2p},0)=\partial_{\lambda} \partial_{ttt} F_2(\lambda_{2p},0)=0
\text{ and }\partial_{\lambda\lambda} \partial_{tt} F_2(\lambda_{2p},0)=-24\sqrt{2}b y_2.
\end{align*}
\end{proposition}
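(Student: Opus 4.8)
The plan is to establish Proposition \ref{P:d4F2a=0} by direct substitution into the reduction formulas of Proposition \ref{acformal4a=0}, in exactly the manner used for the third-order derivatives in Proposition \ref{P:d3F2a=0}. First I would assemble the ingredients already at hand: the third-order derivatives $\tilde x=\partial_\lambda\partial_g\varphi(\lambda_{2p},0)x_0$ and $\check x=\partial_{ggg}\varphi(\lambda_{2p},0)[x_0,x_0,x_0]$ from Proposition \ref{d3phia=0}, together with the fourth-order mixed derivatives of $\varphi$ collected in Section \ref{subS:forth}: $\partial_{\lambda\lambda\lambda}\partial_g\varphi(\lambda_{2p},0)x_0=12b^{-5}(1,0)^{t}\overline w$, $\partial_{\lambda\lambda}\partial_{gg}\varphi(\lambda_{2p},0)[x_0,x_0]=(\hat\alpha_1,\hat\alpha_2)^{t}\overline w^3$, $\partial_\lambda\partial_{ggg}\varphi(\lambda_{2p},0)[x_0,x_0]=3(b^{-5}+2b^{-3}-7b^{-1})(1,0)^{t}\overline w$ and $\partial_{gggg}\varphi(\lambda_{2p},0)[x_0,x_0,x_0]=0$. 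Each of the five fourth-order derivatives is then a finite sum of terms, every one of which is evaluated by Lemmas \ref{lem1}, \ref{lem2} and \ref{lem3}.

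The engine of the proof is a Fourier-mode count. Because $x_0$, $\tilde x$, $\check x$ and the pure-$\lambda$ derivatives of $\varphi$ all sit in the $\overline w$ mode while $\partial_{\lambda\lambda}\partial_{gg}\varphi[x_0,x_0]$ sits in $\overline w^3$, Lemmas \ref{lem1}--\ref{lem3} confine every summand to the modes $e_2,e_4,e_6,e_8$. Proceeding derivative by derivative, I would record that $\partial_{\lambda\lambda\lambda\lambda}F_2$ is supported purely on $e_2$ (from $\partial_\lambda\partial_f G$ acting on $\partial_{\lambda\lambda\lambda}\partial_g\varphi\,x_0$), that the odd-order mixed derivatives $\partial_{\lambda\lambda\lambda}\partial_t F_2$ and $\partial_\lambda\partial_{ttt}F_2$ are supported on $e_4$ (and $e_8$), and that $\partial_{\lambda\lambda}\partial_{tt}F_2$ and $\partial_{tttt}F_2$ carry both an $e_2$ and an $e_6$ piece. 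For $p\ge 3$ the projection $Q$ onto $\mathrm{span}\{y_1,y_2\}$ then annihilates every $e_4$ and $e_8$ contribution, leaves the $e_2$ pieces in the $y_1$ direction, and retains an $e_6$ piece only when $p=3$, since $e_6=e_{2p}$ exactly for $p=3$. This already yields the schematic assertions \mbox{(1)--(5)}.

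Specialising to $p=3$, two precise cancellations remain. For $\partial_{\lambda\lambda}\partial_{tt}F_2$ the $e_6$-coefficient is the sum of the $\overline w^3$-contributions of $\partial_{fff}G[\partial_{\lambda\lambda}\partial_g\varphi\,x_0,x_0,x_0]$, of $2\,\partial_{fff}G[\tilde x,\tilde x,x_0]$ and of $\partial_{ff}G[\partial_{\lambda\lambda}\partial_{gg}\varphi[x_0,x_0],x_0]$ (the last via Lemma \ref{lem3}); I must show this sum collapses to $48b$ after imposing the defining relation $b^6=2-3b^2$, the $p=3$ case of \eqref{E:b-2m}, exactly as $b^4=1-2b^2$ was used throughout Proposition \ref{d2F2}. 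Projecting $(48b,0)^{t}e_6$ onto $y_2=-(1,1)^{t}e_6/\sqrt2$ then gives $-24\sqrt2\,b\,y_2$. For $\partial_{tttt}F_2$ I must instead show that its $e_6$-coefficient cancels to zero, so that only the $e_2$ piece survives and lands in $y_1$; together with the purely-$e_2$ computation for $\partial_{\lambda\lambda\lambda\lambda}F_2$ and the vanishing of the $e_4$-supported $\partial_{\lambda\lambda\lambda}\partial_t F_2$ and $\partial_\lambda\partial_{ttt}F_2$ under $Q$, this produces the ``Moreover'' statements. These two cancellations are precisely the $p=3$ instance of Lemma \ref{L:Key-L}.

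The main obstacle will not be conceptual but computational: the evaluations of $\partial_{fff}G$, $\partial_{ffff}G$ and $\partial_{fffff}G$ on the mixed arguments require a large number of residue computations, carried out by the residue-theorem-at-infinity technique and the identities \eqref{E:0}--\eqref{E:3} as in Proposition \ref{JF2c}. The genuinely delicate points are the two cancellations above: verifying that the three $\overline w^3$-contributions to $\partial_{\lambda\lambda}\partial_{tt}F_2$ combine to the clean value $48b$, and that the $e_6$-coefficient of $\partial_{tttt}F_2$ vanishes identically once $b^6=2-3b^2$ is substituted. I would therefore compute only the $e_2$- and $e_6$-coefficients explicitly, recording the $e_4$- and $e_8$-coefficients as $*$ since they are killed by $Q$ for $p\ge 3$.
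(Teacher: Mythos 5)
Your proposal follows essentially the same route as the paper: substitute the known derivatives of $\varphi$ (from Proposition \ref{d3phia=0} and Section \ref{subS:forth}) into the reduction formulas of Proposition \ref{acformal4a=0}, use Lemmas \ref{lem1}--\ref{lem3} to confine each summand to the modes $e_2,e_4,e_6,e_8$, track only the $e_2$ and $e_6$ coefficients (the $e_6$ coefficient $48b$ coming from $2\partial_{fff}G[\tilde x,\tilde x,x_0]$, with the competing $e_6$ contributions cancelling), and project with $Q$ at $p=3$ where $e_6=e_{2p}$. The only minor discrepancy is that the two $e_6$-cancellations hold identically in $b$ rather than requiring the relation $b^6=2-3b^2$, so you are prepared to do slightly more algebra than necessary.
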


\begin{proof}
(1) Since
$
\partial_{\lambda\lambda\lambda}\partial_g\varphi(\lambda_{2p},0)x_0
=12b^{-5}
\left(
		\begin{array}{cc}
			1\\
			0
		\end{array}
		\right)\overline{w},
$
by Lemma \ref{lem1}, we have
\begin{align*}
\partial_{\lambda\lambda\lambda\lambda}F_2(\lambda_{2p},0)
=4Q\left[
\left(
		\begin{array}{cc}
			24b^{-5}\\
			0
		\end{array}
		\right)e_2\right]
=96b^{-5}
Q\left[
\left(
		\begin{array}{cc}
			1\\
			0
		\end{array}
		\right)e_2\right].
\end{align*}

(2) Notice
$
\partial_{\lambda\lambda}\partial_g \varphi(\lambda_{2p},0)x_0
=4b^{-3}
\left(
\begin{array}{cc}
1\\
0
\end{array}
\right)\overline{w},
$
$
\tilde{x}
=2b^{-1}
\left(
\begin{array}{cc}
1\\
0
\end{array}
\right)\overline{w},
$
$
\partial_{\lambda\lambda}\partial_{gg}\varphi(\lambda_{2p},0)x_0
=\left(
\begin{array}{cc}
\hat{\alpha}_1\\
\hat{\alpha}_2
\end{array}
\right)\overline{w}^3
$
and
$
\partial_{\lambda\lambda\lambda}\partial_g\varphi(\lambda_{2p},0)x_0
=12b^{-5}
\left(
		\begin{array}{cc}
			1\\
			0
		\end{array}
		\right)\overline{w}.
$
By Lemma \ref{lem1} and Lemma \ref{lem2}, it is easy to see that
\begin{align*}
&3Q\partial_\lambda\partial_{ff}G(\lambda_{2p},0)[\partial_{\lambda\lambda}\partial_{g}\varphi(\lambda_{2p},0)x_0,x_0]
=3Q\partial_\lambda\partial_{ff}G(\lambda_{2p},0)[\tilde{x},\tilde{x}]=0,\\
&\partial_{ff}G(\lambda_{2p},0)[\partial_{\lambda\lambda\lambda}\partial_{g}\varphi(\lambda_{2p},0)x_0,x_0]=0,
\end{align*}
and
\begin{align*}
\frac{3}{2}Q\partial_\lambda\partial_{f}G(\lambda_{2p},0)\partial_{\lambda\lambda}\partial_{gg}\varphi(\lambda_{2p},0)[x_0,x_0]=Q[*e_4],
3\partial_{ff}G(\lambda_{2p},0)[\partial_{\lambda\lambda}\partial_{g}\varphi(\lambda_{2p},0)x_0,\tilde{x}]=Q[*e_4],
\end{align*}
where we use $*$ to represents some constants that we need not to compute in the explicit form. So we obtain
$
\partial_{\lambda\lambda\lambda} \partial_{t} F_2(\lambda_{2p},0)=Q[*e_4]
$
by the formula of $\partial_{\lambda\lambda\lambda} \partial_{t} F_2(\lambda_{2p},0)$ in Proposition \ref{acformal4a=0}.

(3) For $\partial_{\lambda\lambda} \partial_{tt} F_2(\lambda_{2p},0)$, we want to show the non-vanish of the $e_6$ term for $p=3$, so we use $*$ to omit the value of the $e_2$ term.

Recall $
\partial_{\lambda}\partial_{ggg}\varphi(\lambda_{2p},0)[x_0,x_0]
=
3(b^{-5}+2b^{-3}-7b^{-1})
		\left(
		\begin{array}{cc}
			1\\
			0
		\end{array}
		\right)
		\overline{w}.
$
By Lemma \ref{lem1}, it is easy to see that $\frac{2}{3}\partial_\lambda\partial_f G(\lambda_{2p},0)\partial_{\lambda}\partial_{ggg}\varphi(\lambda_{2p},0)[x_0,x_0,x_0]=[*e_2]$. Since
$x_0
=
\left(
\begin{array}{cc}
b\\
1
\end{array}
\right)\overline{w}$,
$
\tilde{x}
=2b^{-1}
\left(
\begin{array}{cc}
1\\
0
\end{array}
\right)\overline{w},
$
$
\partial_{\lambda\lambda}\partial_{gg}\varphi(\lambda_{2p},0)x_0
=\left(
\begin{array}{cc}
\hat{\alpha}_1\\
\hat{\alpha}_2
\end{array}
\right)\overline{w}^3,
$
using Lemma \ref{lem2} and \ref{lem3}, we have
\begin{small}
\begin{align*}
&2\partial_{fff}G(\lambda_{2p},0)\left[\tilde{x},\tilde{x},x_0\right]
=\left[*e_2+48b
\left(
\begin{array}{cc}
1\\
0
\end{array}
\right)e_6\right],\\
&\partial_{fff}G(\lambda_{2p},0)\!\left[\partial_{\lambda\lambda}\partial_{g}\varphi(\lambda_{2p},0)x_0,x_0,x_0\right]=\left[*e_2\right],
\partial_{ff}G(\lambda_{2p},0)\left\{\partial_{\lambda\lambda}\partial_{gg}\varphi(\lambda_{2p},0)[x_0,x_0],x_0\right\}
=[*e_2],
\end{align*}
\end{small}
and hence by the formula of $\partial_{\lambda\lambda} \partial_{tt} F_2(\lambda_{2p},0)$ in Proposition \ref{acformal4a=0} we obtain
\[
\partial_{\lambda\lambda} \partial_{tt} F_2(\lambda_{2p},0)
=Q\left[*e_2
+\left(
\begin{array}{cc}
48b\\
0
\end{array}
\right)e_6\
\right].
\]

(4) Using Lemma \ref{lem2} with
$x_0
=
\left(
\begin{array}{cc}
b\\
1
\end{array}
\right)\overline{w}$,
$
\tilde{x}
=2b^{-1}
\left(
\begin{array}{cc}
1\\
0
\end{array}
\right)\overline{w}
$,
$
\check{x}=
		3(b^{-3}-b)
		\left(
		\begin{array}{cc}
			1\\
			0
		\end{array}
		\right)
		\overline{w}$
and
$
\partial_{\lambda}\partial_{ggg}\varphi(\lambda_{2p},0)[x_0,x_0]
=
3(b^{-5}+2b^{-3}-7b^{-1})
		\left(
		\begin{array}{cc}
			1\\
			0
		\end{array}
		\right)
		\overline{w}
$, we have
\begin{align*}
\partial_\lambda\partial_{ff} G(\lambda_{2p},0)[\check{x},x_0]=0,\,\,&\partial_{ffff}G(\lambda_{2p},0)\left[\tilde{x},x_0,x_0,x_0\right]=[*e_4],\\
\partial_{ff}G(\lambda_{2p},0)[\check{x},\tilde{x}]=[*e_4],\,\,
&\partial_{ff}G(\lambda_{2p},0)\left\{\partial_{\lambda}\partial_{ggg}\varphi(\lambda_{2p},0)[x_0,x_0,x_0],x_0\right\}=[*e_4].
\end{align*}
So by the formula of $ \partial_{\lambda}\partial_{ttt} F_2(\lambda_{2p},0)$ in Proposition \ref{acformal4a=0} we obtain
$
 \partial_{\lambda}\partial_{ttt} F_2(\lambda_{2p},0)=Q[*e_4].
$

(4)Using Lemma \ref{lem2} with
$x_0
=
\left(
\begin{array}{cc}
b\\
1
\end{array}
\right)\overline{w}$ and
$
\check{x}=
		3(b^{-3}-b)
		\left(
		\begin{array}{cc}
			1\\
			0
		\end{array}
		\right)
		\overline{w}$, we have
\begin{eqnarray*}
\partial_{fffff}G(\lambda_{2p},0)\left[x_0,x_0,x_0,x_0,x_0\right]=0, \,
2\partial_{fff}G(\lambda_{2p},0)\left[x_0,x_0,\check{x}\right]=[*e_2].
\end{eqnarray*}
So by the formula of $ \partial_{tttt} F_2(\lambda_{2p},0)$ in Proposition \ref{acformal4a=0} we obtain
$
 \partial_{tttt} F_2(\lambda_{2p},0)=Q[*e_2]
$ to finish our proof.
\end{proof}

\subsection{The fifth-order derivatives} \label{S:Fifth}
In this section, we only consider $p\ge 4$. Using the same methods in subsection \ref{subS:forth} with Lemma \ref{lem1} and the proof of Proposition \ref{P:d4F2a=0}, we have
\begin{eqnarray*}
\partial^5_{\lambda}\varphi(\lambda_{2p},0)=0,
\end{eqnarray*}
\begin{align*}
\partial^4_{\lambda}\partial_g\varphi(\lambda_{2p},0)x_0
&=-[\partial_f G(\lambda_{2p},0)]^{-1}({\rm {Id}}-Q)Q^{-1}\partial^4_{\lambda} F_2(\lambda_{2p},0)\\
&=-[\partial_f G(\lambda_{2p},0)]^{-1}({\rm {Id}}-Q)\left[96b^{-5}
\left(
		\begin{array}{cc}
			1\\
			0
		\end{array}
		\right)
		e_2\right]
=48b^{-7}
\left(
		\begin{array}{cc}
			1\\
			0
		\end{array}
		\right)
		\overline{w},
\end{align*}
\begin{align*}
\partial^3_{\lambda}\partial^2_{g}\varphi(\lambda_{2p},0)[x_0,x_0]
=-2[\partial_f G(\lambda_{2p},0)]^{-1}({\rm {Id}}-Q)Q^{-1}\partial^3_{\lambda} \partial_t F_2(\lambda_{2p},0)=*\overline{w}^3,
\end{align*}
\begin{align*}
\partial^2_{\lambda}\partial^3_{g}\varphi(\lambda_{2p},0)[x_0,x_0,x_0]
=-3[\partial_f G(\lambda_{2p},0)]^{-1}({\rm {Id}}-Q)Q^{-1}\partial^2_\lambda \partial^2_{t} F_2(\lambda_{2p},0)=*\overline{w}+*\overline{w}^5,
\end{align*}
\begin{align*}
\partial_\lambda\partial^4_{g}\varphi(\lambda_{2p},0)[x_0,x_0,x_0,x_0]=-4[\partial_f G(\lambda_{2p},0)]^{-1}({\rm{Id}}-Q)Q^{-1}\partial_\lambda\partial^3_{t}F_2(\lambda_{2p},0)=*\overline{w}^3,
\end{align*}
and
\begin{align*}
\partial^5_{g}\varphi(\lambda_{2p},0)[x_0,x_0,x_0,x_0,x_0]
=-5[\partial_f G(\lambda_{2p},0)]^{-1}({\rm {Id}}-Q)Q^{-1}\partial^5_{t} F_2(\lambda_{2p},0)=*\overline{w},
\end{align*}
where we use $*$ to represents the constants that we need not to compute in the explicit form.

By directly calculations with \eqref{E:Reduced-p}, using Lemma \ref{lem1}, we have the following proposition.
\begin{proposition}\label{acformal4a=0}
Denote $\tilde{x}=\partial_{\lambda}\partial_{g}\varphi(\lambda_{2p},0)x_0$. The following assertions .
	
(1) Formula for $\partial^5_{\lambda} F_2(\lambda_{2p},0)$.
\begin{eqnarray*}
\partial^5_{\lambda}F_2(\lambda_{2p},0)
=5Q\partial_{\lambda}\partial_fG(\lambda_{2p},0)\partial^4_{\lambda}\partial_g\varphi(\lambda_{2p},0)x_0.
\end{eqnarray*}

(2) Formula for $\partial^4_{\lambda}\partial_t F_2(\lambda_{2p},0)$.
\begin{small}
\begin{align*}
\partial^4_{\lambda} \partial_{t} F_2(\lambda_{2p},0)\!=&
2Q\partial_\lambda\partial_{f}G(\lambda_{2p},0)\partial^3_{\lambda}\partial^2_{g}\varphi(\lambda_{2p},0)[x_0,x_0]
+4\partial^2_{f}G(\lambda_{2p},0)[\partial^3_{\lambda}\partial_{g}\varphi(\lambda_{2p},0)x_0,\tilde{x}]\\
+&3\partial^2_{f}G(\lambda_{2p},0)[\partial^2_{\lambda}\partial_{g}\varphi(\lambda_{2p},0)x_0,\partial^2_{\lambda}\partial_{g}\varphi(\lambda_{2p},0)x_0]
\!+\!\partial^2_{f}G(\lambda_{2p},0)[\partial^4_{\lambda}\partial_{g}\varphi(\lambda_{2p},0)x_0,x_0].
\end{align*}
\end{small}

(3) Formula for $\partial^3_{\lambda} \partial^2_{t} F_2(\lambda_{2p},0)$.
\begin{small}
\begin{align*}
\partial^3_{\lambda} \partial^2_{t} F_2(\lambda_{2p},0)=&
3Q\partial_\lambda\partial^2_f G(\lambda_{2p},0)[\partial^2_{\lambda}\partial^2_{g}\varphi(\lambda_{2p},0)[x_0,x_0],x_0]
+2Q\partial^3_{f}G(\lambda_{2p},0)\left[\tilde{x},\tilde{x},\tilde{x}\right]\\
+&
Q\partial_\lambda\partial_f G(\lambda_{2p},0)\partial^2_{\lambda}\partial^3_{g}\varphi(\lambda_{2p},0)[x_0,x_0,x_0]
+6Q\partial^3_{f}G(\lambda_{2p},0)\left[\partial^2_{\lambda}\partial_{g}\varphi(\lambda_{2p},0)x_0,\tilde{x},x_0\right]\\
+&Q\partial^3_{f}G(\lambda_{2p},0)\left[\partial^3_{\lambda}\partial_{g}\varphi(\lambda_{2p},0)x_0,x_0,x_0\right]
+3Q\partial^2_{f}G(\lambda_{2p},0)\left[\partial^2_{\lambda}\partial^2_{g}\varphi(\lambda_{2p},0)[x_0,x_0],\tilde{x}\right]\\
+&Q\partial^2_{f}G(\lambda_{2p},0)\left[\partial^3_{\lambda}\partial^2_{g}\varphi(\lambda_{2p},0)[x_0,x_0],x_0\right].
\end{align*}
\end{small}

(4) Formula for $\partial^2_{\lambda} \partial^3_{t} F_2(\lambda_{2p},0)$.
\begin{align*}
&\partial^2_{\lambda} \partial^3_{t} F_2(\lambda_{2p},0)\\=&
\frac{1}{2}Q\partial_\lambda\partial_f G(\lambda_{2p},0)\partial_{\lambda}\partial^4_{g}\varphi(\lambda_{2p},0)[x_0,x_0,x_0,x_0]+
3Q\partial^4_{f} G(\lambda_{2p},0)[x_0,x_0,\tilde{x},\tilde{x}]\\
+&
Q\partial^4_{f}G(\lambda_{2p},0)\left[x_0,x_0,x_0, \partial^2_{\lambda}\partial_{g}\varphi(\lambda_{2p},0)x_0\right]
+2 Q\partial^3_{f}G(\lambda_{2p},0)\left[x_0,x_0, \partial^2_{\lambda}\partial^2_{g}\varphi(\lambda_{2p},0)[x_0,x_0]\right]\\
+&2Q\partial_{ff}G(\lambda_{2p},0)[\partial_{\lambda}\partial^3_{g}\varphi(\lambda_{2p},0)[x_0,x_0,x_0],\tilde{x}]
+Q\partial^2_{f}G(\lambda_{2p},0)\left[x_0, \partial^2_{\lambda}\partial^3_{g}\varphi(\lambda_{2p},0)[x_0,x_0,x_0]\right]\\
+&Q\partial_{ff}G(\lambda_{2p},0)\left[\partial^3_{g}\varphi(\lambda_{2p},0)[x_0,x_0,x_0],\partial^2_{\lambda}\partial_{g}\varphi(\lambda_{2p},0)x_0\right].
\end{align*}

(5) Formula for $\partial_\lambda\partial^4_{t} F_2(\lambda_{2p},0)$.
\begin{align*}
&\partial_\lambda\partial^4_{t} F_2(\lambda_{2p},0)\\=&
\frac{1}{5}Q\partial_\lambda\partial_{f}G(\lambda_{2p},0)\partial^5_{g}\varphi(\lambda_{2p},0)[x_0,x_0,x_0,x_0,x_0]
+\partial^5_{f}G(\lambda_{2p},0)[x_0,x_0,x_0,x_0,\tilde{x}]\\
+&4Q\partial^3_{f} G(\lambda_{2p},0)[x_0,\tilde{x}, \partial^3_{g}\varphi(\lambda_{2p},0)[x_0,x_0,x_0]]
+2Q\partial^3_{f} G(\lambda_{2p},0)[x_0,x_0, \partial_\lambda\partial^3_{g}\varphi(\lambda_{2p},0)[x_0,x_0,x_0]]\\
+&Q\partial^2_{f} G(\lambda_{2p},0)[x_0, \partial_\lambda\partial^4_{g}\varphi(\lambda_{2p},0)[x_0,x_0,x_0,x_0]].
\end{align*}

(vi) Formula for $\partial^5_{t} F_2(\lambda_{2p},0)$.
\begin{align*}
\partial^5_{t} F_2(\lambda_{2p},0)=&
\frac{1}{6}\partial^6_{f}G(\lambda_{2p},0)[x_0,x_0,x_0,x_0,x_0,x_0]
+\frac{10}{3}Q\partial^4_{f} G(\lambda_{2p},0)[x_0,x_0, \partial^3_{g}\varphi(\lambda_{2p},0)[x_0,x_0,x_0]]\\
+&\frac{5}{3}Q\partial^2_{f} G(\lambda_{2p},0)[\partial^3_{g}\varphi(\lambda_{2p},0)[x_0,x_0,x_0],\partial^3_{g}\varphi(\lambda_{2p},0)[x_0,x_0,x_0]]\\
+&Q\partial^2_{f} G(\lambda_{2p},0)[x_0, \partial^5_{g}\varphi(\lambda_{2p},0)[x_0,x_0,x_0,x_0,x_0]].
\end{align*}
\end{proposition}

With the value of the third-order derivatives of $\varphi$ in Proposition \ref{d3phia=0}, the forth-order derivatives of $\phi$ in Section \ref{subS:forth} and the form of the forth-order derivatives of $\phi$ in the above, by Lemma \ref{lem1}, Lemma \ref{lem2} and Lemma \ref{lem3} and a series calculations, we can obtain
\begin{eqnarray*}
\partial^5_{\lambda}F_2(\lambda_{2p},0)=Q[*e_2],\,
\partial^4_{\lambda} \partial_{t} F_2(\lambda_{2p},0)=Q[*e_4]=0,\,
\partial^3_{\lambda} \partial^2_{t} F_2(\lambda_{2p},0)=Q[*e_2+*e_6]=Q[*e_2],
\end{eqnarray*}
\begin{align*}
\partial^2_{\lambda} \partial^3_{t} F_2(\lambda_{2p},0)=Q\left[*e_4+192b^2
\left(
		\begin{array}{cc}
			1\\
			0
		\end{array}
		\right)e_8\right]
=Q\left[192b^2
\left(
		\begin{array}{cc}
			1\\
			0
		\end{array}
		\right)e_8\right],
\end{align*}
\begin{align*}
\partial_\lambda\partial^4_{t}F_2(\lambda_{2p},0)=Q[*e_2+*e_6+*e_{10}]=Q[*e_2+*e_{10}],\,\partial^5_{t} F_2(\lambda_{2p},0)=Q[*e_4]=0,
\end{align*}
where we omit the calculation process of the results that
\begin{align*}
&\partial^3_{f}G(\lambda_{2p},0)\left[x_0,x_0, \partial^2_{\lambda}\partial^2_{g}\varphi(\lambda_{2p},0)[x_0,x_0]\right]=*e_4, \\ &\partial^5_{f}G(\lambda_{2p},0)[x_0,x_0,x_0,x_0,\tilde{x}]=*e_6+*e_{10},\quad \partial^6_{f}G(\lambda_{2p},0)[x_0,x_0,x_0,x_0,x_0,x_0]=0,
\end{align*}
which can be obtained using the same method of Section \ref{S:Deriv-12}.

So when $p=4$, we have
\begin{align*}
&\partial^5_{\lambda}F_2(\lambda_{2p},0)=Q[*e_2],\,\,
\partial^3_{\lambda} \partial^2_{t} F_2(\lambda_{2p},0)=Q[*e_2],\,\, \partial_\lambda\partial^4_{t}F_2(\lambda_{2p},0)=Q[*e_2],\\
&
\partial^4_{\lambda} \partial_{t} F_2(\lambda_{2p},0)=\partial^5_{t} F_2(\lambda_{2p},0)=0,\,\,
\partial^2_{\lambda} \partial^3_{t} F_2(\lambda_{2p},0)
=Q\left[192b^2
\left(
		\begin{array}{cc}
			1\\
			0
		\end{array}
		\right)e_8\right]=-96 \sqrt{2}b^2 y_2.
\end{align*}

{\bf Acknowledgement:} The first author would like to thank Prof. Chongchun Zeng for helpful discussion and suggestion. Maolin Zhou is supported by the National Key Research and Development Program of China (2021YFA1002400), Nankai Zhide Foundation and National Science Foundation of China (No. 12271437, 11971498). Yuchen Wang is partially supported by
the National Science Foundation of China No. 11831009 and the funding of innovating activities in Science and Technology of Hubei Province.

\bibliographystyle{plain}
  \setlength{\bibsep}{0.0ex}
\bibliography{VP}

\begin{thebibliography}{10}

\bibitem{Ber1977}
Melvin~S. Berger.
\newblock {\em Nonlinearity and functional analysis}.
\newblock Pure and Applied Mathematics. Academic Press [Harcourt Brace
  Jovanovich, Publishers], New York-London, 1977.
\newblock Lectures on nonlinear problems in mathematical analysis.

\bibitem{Berti2023}
Massimiliano Berti, Zineb Hassainia, and Nader Masmoudi.
\newblock Time quasi-periodic vortex patches of {E}uler equation in the plane.
\newblock {\em Invent. Math.}, 233(3):1279--1391, 2023.

\bibitem{BC94}
A.~L. Bertozzi and P.~Constantin.
\newblock Global regularity for vortex patches.
\newblock {\em Comm. Math. Phys.}, 152(1):19--28, 1993.

\bibitem{Burbea1982}
Jacob Burbea.
\newblock Motions of vortex patches.
\newblock {\em Letters in Mathematical Physics. A Journal for the Rapid
  Dissemination of Short Contributions in the Field of Mathematical Physics},
  6(1):1--16, 1982.

\bibitem{Castro2016}
Angel Castro, Diego Córdoba, and Javier Gómez-Serrano.
\newblock Uniformly rotating analytic global patch solutions for active
  scalars.
\newblock {\em Ann. PDE}, 2(1), 2016.

\bibitem{Che1993}
Jean-Yves Chemin.
\newblock Persistance de structures géométriques dans les fluides
  incompressibles bidimensionnels.(french) [[persistence of geometric
  structures in two-dimensional incompressible fluids]].
\newblock {\em Ann. Sci. École Norm. Sup}, (4):517--542, 1993.

\bibitem{CR1971}
Michael Crandall, Rabinowitz, and H~Paul.
\newblock Bifurcation from simple eigenvalues.
\newblock {\em J. Functional Analysis}, 8:321--340, 1971.

\bibitem{Hmidi2016c}
Francisco de~la Hoz, Zineb Hassainia, and Taoufik Hmidi.
\newblock Doubly connected {V}-states for the generalized surface
  quasi-geostrophic equations.
\newblock {\em Arch. Ration. Mech. Anal.}, 220(3):1209--1281, 2016.

\bibitem{Hoz2016a}
Francisco de~la Hoz, Taoufik Hmidi, Joan Mateu, and Joan Verdera.
\newblock Doubly connected {$V$}-states for the planar {E}uler equations.
\newblock {\em SIAM Journal on Mathematical Analysis}, 48(3):1892--1928, 2016.

\bibitem{HMW2020}
Zineb Hassainia, Nader Masmoudi, and Miles~H. Wheeler.
\newblock Global bifurcation of rotating vortex patches.
\newblock {\em Communications on Pure and Applied Mathematics},
  73(9):1933--1980, 2020.

\bibitem{Hmidi2016}
Taoufik Hmidi and Joan Mateu.
\newblock Bifurcation of rotating patches from {K}irchhoff vortices.
\newblock {\em Discrete and Continuous Dynamical Systems. Series A},
  36(10):5401--5422, 2016.

\bibitem{Hmidi2016a}
Taoufik Hmidi and Joan Mateu.
\newblock Degenerate bifurcation of the rotating patches.
\newblock {\em Advances in Mathematics}, 302:799--850, 2016.

\bibitem{Hmidi2013}
Taoufik Hmidi, Joan Mateu, and Joan Verdera.
\newblock Boundary regularity of rotating vortex patches.
\newblock {\em Archive for Rational Mechanics and Analysis}, 209(1):171--208,
  2013.

\bibitem{Hmidi2017}
Taoufik Hmidi and Coralie Renault.
\newblock Existence of small loops in a bifurcation diagram near degenerate
  eigenvalues.
\newblock {\em Nonlinearity}, 30(10):3821--3852, 2017.

\bibitem{Kie2012}
Hansj\"{o}rg Kielh\"{o}fer.
\newblock {\em Bifurcation theory}, volume 156 of {\em Applied Mathematical
  Sciences}.
\newblock Springer, New York, second edition, 2012.
\newblock An introduction with applications to partial differential equations.

\bibitem{KisLuo2023}
Alexander Kiselev and Xiaoyutao Luo.
\newblock Illposedness of {$C^2$} vortex patches.
\newblock {\em Arch. Ration. Mech. Anal.}, 247(3):Paper No. 57, 49, 2023.

\bibitem{LP99}
Massimo Lanza~de Cristoforis and Luca Preciso.
\newblock On the analyticity of the cauchy integral in schauder spaces.
\newblock {\em J. Integral Equations Appl.}, 1999.

\bibitem{Cris2001}
Massimo Lanza~de Cristoforis and Sergei~V. Rogosin.
\newblock Analyticity of a nonlinear operator associated to the conformal
  representation of a doubly connected domain in {S}chauder spaces.
\newblock {\em Complex Variables Theory Appl.}, 44(3):193--223, 2001.

\bibitem{Liu2013}
Ping Liu, Junping Shi, and Yuwen Wang.
\newblock Bifurcation from a degenerate simple eigenvalue.
\newblock {\em J. Funct. Anal.}, 264(10):2269--2299, 2013.

\bibitem{LWZ2019}
Yiming Long, Yuchen Wang, and Chongchun Zeng.
\newblock Concentrated steady vorticities of the {E}uler equation on 2-d
  domains and their linear stability.
\newblock {\em J. Differential Equations}, 266(10):6661--6701, 2019.

\bibitem{MP1994}
Carlo Marchioro and Mario Pulvirenti.
\newblock {\em Mathematical theory of incompressible nonviscous fluids},
  volume~96 of {\em Applied Mathematical Sciences}.
\newblock Springer-Verlag, New York, 1994.

\bibitem{Roulley2023}
Emeric Roulley.
\newblock Vortex rigid motion in quasi-geostrophic shallow-water equations.
\newblock {\em Asymptot. Anal.}, 133(3):397--446, 2023.

\bibitem{Serfati1994}
Philippe Serfati.
\newblock Une preuve directe d'existence globale des vortex patches {$2$}{D}.
\newblock {\em C. R. Acad. Sci. Paris S\'{e}r. I Math.}, 318(6):515--518, 1994.

\bibitem{Yud1963}
V~Yudovich.
\newblock Non-stationary flows of an ideal incompressible fluid.
\newblock {\em Z. Vycisl. Mat. i. Mat. Fiz (Russian)}, 3:1032--1066, 1963.

\end{thebibliography}

\end{document}